\documentclass[10pt]{amsart}

\addtolength{\textheight}{0.5truein}
\addtolength{\voffset}{-0.15truein}
\addtolength{\textwidth}{0.8truein}
\addtolength{\hoffset}{-0.4truein}

\usepackage{mathrsfs}
\usepackage{amsmath}
\usepackage{mathtools}
\usepackage{amssymb}
\usepackage{amscd}
\usepackage{appendix}
\usepackage{amsthm}
\usepackage{amsfonts}
\usepackage[all]{xy}
\usepackage{tikz-cd}
\usepackage{array}
\usepackage{graphicx}
\usepackage{hyperref}
\usepackage{longtable}
\usepackage[OT2,T1]{fontenc}
\DeclareSymbolFont{cyrletters}{OT2}{wncyr}{m}{n}
\DeclareMathSymbol{\Sha}{\mathalpha}{cyrletters}{"58}


\theoremstyle{plain}
\newtheorem{thm}{Theorem}[section]
\newtheorem{lem}[thm]{Lemma}
\newtheorem{prop}[thm]{Proposition}
\newtheorem{cor}[thm]{Corollary}

\theoremstyle{definition}
\newtheorem{defn}[thm]{Definition}

\newtheorem{apn}[thm]{Assumption}

\theoremstyle{remark}
\newtheorem{rem}[thm]{Remark}



\renewcommand{\AA}{\mathbb{A}}
\newcommand{\FF}{{\mathbb F}}

\newcommand{\ZZ}{{\mathbb Z}}
\newcommand{\QQ}{{\mathbb Q}}

\newcommand{\RR}{{\mathbb R}}
\newcommand{\CC}{{\mathbb C}}

\newcommand{\NN}{{\mathbb N}}

\newcommand{\PP}{{\mathbb P}}

\newcommand{\boz}{\boldsymbol{z}}
\newcommand{\boell}{\boldsymbol{\ell}}

\newcommand{\afk}{\mathfrak{a}}

\newcommand{\cfk}{\mathfrak{c}}
\newcommand{\dfk}{\mathfrak{d}}
\newcommand{\mfk}{\mathfrak{m}}
\newcommand{\nfk}{\mathfrak{n}}
\newcommand{\pfk}{\mathfrak{p}}

\newcommand{\Hfk}{\mathfrak{H}}

\newcommand{\Ecal}{\mathcal{E}}

\newcommand{\Mcal}{\mathcal{M}}

\newcommand{\Ocal}{\mathcal{O}}

\newcommand{\Scal}{\mathcal{S}}

\newcommand{\Ical}{\mathcal{I}}
\newcommand{\Ccal}{\mathcal{C}}
\newcommand{\Dcal}{\mathcal{D}}

\newcommand{\Kcal}{\mathcal{K}}

\newcommand{\Zcal}{\mathcal{Z}}

\newcommand{\Bscr}{\mathscr{B}}
\newcommand{\Dscr}{\mathscr{D}}


\newcommand{\tr}{\operatorname{Tr}}

\newcommand{\Nr}{\operatorname{Nr}}
\newcommand{\Mat}{\operatorname{Mat}}

\newcommand{\ord}{\operatorname{ord}}

\newcommand{\GL}{\operatorname{GL}}
\newcommand{\SL}{\operatorname{SL}}

\newcommand{\Oo}{\operatorname{O}}

\newcommand{\gal}{\operatorname{Gal}}

\newcommand{\re}{\operatorname{Re}}

\newcommand{\Stab}{\operatorname{Stab}}


\numberwithin{equation}{section}

\newcommand\subfrac[2]{\genfrac{}{}{0pt}{}{#1}{#2}}

\begin{document}

\title[Class number relations, intersections, and GL(2)--tale over function fields]{On class number relations, intersections, and GL(2)--tale over the function field side}

\author[Jia-Wei Guo]{Jia-Wei Guo}
\address{Department of Mathematics, National Taiwan Univeristy, Taiwan}
\email{jiaweiguo312@gmail.com}

\author[Fu-Tsun Wei]{Fu-Tsun Wei}
\address{Department of Mathematics, National Tsing Hua Univeristy, Taiwan}
\email{ftwei@math.nthu.edu.tw}

\subjclass[2010]{11R58, 11R29, 11F30, 11G18, 11F27}

\keywords{Function Field, Class Number Relation, Hirzebruch-Zagier Divisor, Drinfeld-type Automorphic Form, Metaplectic Form}

\maketitle

\begin{abstract}
The aim of this paper is to 
study class number relations over function fields and the intersections of Hirzebruch-Zagier type divisors on the Drinfeld-Stuhler modular surfaces.
The main bridge is a particular ``harmonic'' theta series with nebentypus.
Using the strong approximation theorem, the Fourier coefficients of this series are expressed in two ways; one comes from modified Hurwitz class numbers and another gives the intersection numbers in question. 
An elaboration of this approach enables us to interpret these class numbers as a ``mass sum'' over the CM points on the Drinfeld-Stuhler modular curves, and even realize the generating function as a metaplectic automorphic form.
\end{abstract}

\section{Introduction}

\subsection{Classical story}
Given a negative integer $d$ with $d\equiv 0 \text{ or } 1 \bmod 4$, let $h(d)$ be the proper ideal class number of the imaginary quadratic order $\Ocal_d$ with discriminant $d$. Put $w(d):=\#(\Ocal_d^\times)/2$.
The classical Kronecker-Hurwitz class number relation says that for a non-square $n \in \NN$,
\begin{eqnarray}\label{eqn: HCNR}
\sum_{\subfrac{t \in \ZZ}{t^2< 4n}}\left(\sum_{\subfrac{d \in \NN}{d^2\mid (t^2-4n)}}\frac{h\big((t^2-4n)/d^2\big)}{w\big((t^2-4n)/d^2\big)}\right)&=&\sum_{\subfrac{m \in \NN}{m\mid n}} \max(m,n/m).
\end{eqnarray}
One can derive the above identity via ``modular polynomial'', i.e.\ the defining equation of the graph of the Hecke correspondence $T_n$, $n \in \NN$, on the modular curve $X$ of full level (cf.\ \cite{G-K}).
In particular, the quantity in \eqref{eqn: HCNR} is equal to the ``finite part'' of the intersection number of the divisors $T_1$ and $T_n$ on the surface $X\times X$. Taking the ``infinite part'' (from cuspidal intersections) into account, the total intersection number of $T_1$ and $T_n$ becomes
$$T_1\cdot T_n = 2 \sigma(n),$$
where $\sigma(n) :=  \sum_{m\mid n} m$ is precisely the $n$-th Fourier coefficient of the weight-two Eisenstein series (normalized so that the first Fourier coefficient equals to $1$).
This provides a very concrete example in the following connections:
$$
\xymatrix{
\{\text{Class numbers}\}\ar@{<->}[rr]\ar@{<->}[dr]  &  & \{\text{Intersections}\} \ar@{<->}[dl] \\
&\{\text{Fourier coefficients}\} &
}
$$

In the celebrated work of Hirzebruch and Zagier \cite{H-Z}, the whole theory on the ground of the Hilbert modular surfaces associated with real quadratic fields is well-established.
More precisely, they express the intersections of certain special divisors in terms of Hurwitz class numbers, and show that the generating function associated with these intersection numbers is actually a particular Eisenstein series with nebentypus.
The interpretations for the Fourier coefficients of Eisenstein series, which have been generalized to the ``Kudla-Millson'' theta integrals (cf.\ \cite{K-MI} and \cite{K-M}) on the quotients of symmetric spaces for orthogonal and unitary groups, are viewed as \textit{geometric Siegel-Weil formula} and have various applications (cf.\ \cite{Kud2}, \cite{Cog}, \cite{K-MII}, \cite{Kud}, and \cite{Fun} ).
Moreover, connections with the class numbers make it possible to compute explicitly the intersections in question (cf.\ \cite{H-Z}, \cite{Fun}, and \cite{G-Y}).

The purpose of this paper is to attempt an exploration of this phenomenon in the function field setting, and to derive a Hirzebruch-Zagier style geometric interpretation for the class number relations in the world of positive characteristic.

\subsection{Function Field analogue}
Let $A = \FF_q[\theta]$, the polynomial ring with one variable $\theta$ over a finite field $\FF_q$ with $q$ elements, and let $k$ be the field of fractions of $A$.
Let $k_\infty$ be the completion of $k$ with respect to the ``degree valuation'' (cf.\ Section~\ref{sec: BS}), and denote by $\CC_\infty$ the completion of a chosen algebraic closure of $k_\infty$.
The \textit{Drinfeld half plane} is $\Hfk:= \CC_\infty - k_\infty$, equipped with the M\"obius left action of $\GL_2(k_\infty)$.
Let $B$ be a quaternion algebra over $k$ which is split at $\infty$ (i.e.\ $B\otimes_k k_\infty \cong \Mat_2(k_\infty)$), and $O_B$ be an Eichler $A$-order in $B$ of type $(\nfk^+,\nfk^-)$ (cf.\ Section~\ref{sec: TSP}).
Then the embedding $B^\times \hookrightarrow \GL_2(k_\infty)$ induces an action of $\Gamma(\nfk^+,\nfk^-):= O_B^\times$ on $\Hfk$.
The quotient space $$
X(\nfk^+,\nfk^-)\ :=\
\Gamma(\nfk^+,\nfk^-)\backslash \Hfk
$$ 
is called the \textit{Drinfeld-Stuhler modular curve for $\Gamma(\nfk^+,\nfk^-)$}.
When $B = \Mat_2(k)$, the group $\Gamma(\nfk^+,\nfk^-)$ coincides (up to conjugations) with the congruence subgroup 
$$
\Gamma_0(\nfk^+):=
\left\{\begin{pmatrix} a&b \\ c&d\end{pmatrix} \in \GL_2(A)\ \bigg|\ c \equiv 0 \bmod \nfk^+\right\},
$$
and the compactification of $X(\nfk^+,\nfk^-)$ is the so-called \textit{Drinfeld modular curve for $\Gamma_0(\nfk^+)$}.
\begin{rem}
As in the classical case, the study of Drinfeld modular polynomials in \cite{Bae}, \cite{B-L}, and \cite{Hsia} give an analogue of the Kronecker-Hurwitz class number relation for ``imaginary'' quadratic $A$-orders (cf. \cite{JKYu} and \cite{W-Y}). Also, the connection with the  intersections of the Hecke correspondence on the Drinfeld modular curves is derived in \cite{JKYu} when $q$ is odd. 
Moverover, these intersection numbers appear in the Fourier expansion of the ``improper'' Eisenstein series on $\GL_2(k_\infty)$ which is introduced by Gekeler (cf.\ \cite{Gek1} and \cite{Gek2}). Thus, a parallel story for the Kronecker-Hurwitz case over rational function fields is developed. We may also expect to see these connections when the base field $k$ is an arbitrary global function field.
\end{rem}

From now on, we always assume that $q$ is \textbf{odd}.
Fix a monic square-free $\dfk \in A$ with even degree. Then the quadratic field $F:= k(\sqrt{\dfk})$ is \textit{real} over $k$, (i.e.\ the infinite place of $k$ is split in $F$). The embedding $F\hookrightarrow F\otimes_k k_\infty \cong k_\infty\times k_\infty$ induces
$$\GL_2(F) \hookrightarrow \GL_2(k_\infty) \times \GL_2(k_\infty),$$
providing an action of $\GL_2(F)$ on $\Hfk_F:= \Hfk \times \Hfk$.
Let $O_F$ be the integral closure of $A$ in $F$.
Given a monic $\nfk \in A$, put
$$
\Gamma_{0,F}(\nfk) := \left\{
\begin{pmatrix} a&b \\ c&d \end{pmatrix} \in \GL_2(O_F)
\ \bigg|\ ad-bc \in \FF_q^\times \text{ and } c \equiv 0 \bmod \nfk\right\}.
$$
The \textit{Drinfeld-Stuhler modular surface for $\Gamma_{0,F}(\nfk)$} is
$$\Scal_{0,F}(\nfk):= \Gamma_{0,F}(\nfk) \backslash \Hfk_F,$$
which is a coarse moduli scheme for the so-called \textit{Frobenius-Hecke sheaves} (with additional ``level-$\nfk$ structure'') introduced by Stuhler in \cite{Stu} (see also \cite{Ozg}).

We are interested in the intersections between the ``Hirzebruch-Zagier-type divisors'' on $\Scal_{0,F}(\nfk)$ defined as follows.
For $x = \begin{pmatrix}a&b \\ c&d\end{pmatrix} \in \Mat_2(F)$, we put
$$
\bar{x} := \begin{pmatrix}d&-b \\ -c&a\end{pmatrix}
\quad \text{ and } \quad
x' := \begin{pmatrix} a'&b' \\ c'&d'\end{pmatrix},
$$
where for every $\alpha \in F$, $\alpha'$ is the conjugate of $\alpha$ under the action of the non-trivial element in $\gal(F/k)$.
Consider the involution $*$ on $\Mat_2(F)$ defined by
$$ x^* \ :=\ \begin{pmatrix}0&1/\nfk \\ 1&0\end{pmatrix} \bar{x}'\begin{pmatrix}0&1 \\ \nfk & 0 \end{pmatrix}, \quad \forall x \in \Mat_2(F).
$$
Let
$$
\Lambda:= \{x \in \Mat_2(O_F)\mid x^* = x\}.
$$
We have a left action of $\Gamma:= \Gamma_{0,F}(\nfk)$ on $\Lambda$ by
$$ \gamma \star x := \gamma x \gamma^* \cdot (\det \gamma )^{-1}, \quad \gamma \in \Gamma_{0,F}(\nfk) \text{ and } x \in \Lambda.
$$
For each $x$ in $\Lambda$ with $\det x \neq 0$, let
$$B_x:= \{b \in \Mat_2(F) \mid x b^* = \bar{b} x\}
\quad \text{ and } \quad 
\Gamma_x := B_x^\times \cap \Gamma.
$$
From Lemma~\ref{lem: B_x}, we know that $B_x$ is an \textit{indefinite} quaternion algebra over $k$ (i.e.\ unramified at the infinite place of $k$), and so the quotient $\Ccal_x:= \Gamma_x \backslash \Hfk$ becomes the Drinfeld-Stuhler modular curve for $\Gamma_x$ (cf.\ Section~\ref{sec: CMBES}).
Put
$$S_x := \begin{pmatrix}0&1 \\ \nfk &0\end{pmatrix} \bar{x}.$$
The embedding from $\Hfk$ into $\Hfk_F$ defined by $(z \mapsto (z,S_x z))$ gives rise to a (rigid analytic) morphism $f_x: \Ccal_x \rightarrow \Scal_{0,F}(\nfk)$, and we set
$$\Zcal_x := f_{x,*}(\Ccal_x),$$
the push-forward divisor of $f_x$ on $S_{0,F}(\nfk)$.
For non-zero $a \in A$, the \textit{Hirzebruch-Zagier divisor of discriminant $a$} is:
$$\Zcal(a) := \sum_{x\in\Gamma \backslash \Lambda_a}\Zcal_x, \quad \text{ where } \Lambda_a:= \{x \in \Lambda \mid \det(x) = a\}.$$
${}$

Notice that we may identify $B_1$ with the quaternion algebra
$$\left(\frac{\dfk, \nfk}{k}\right) := k+k\mathbf{i}+k\mathbf{j}+k\mathbf{ij} \quad \text{ with } \mathbf{i}^2 = \dfk,\ \mathbf{j}^2 = \nfk, \text{ and } \mathbf{ji} = -\mathbf{ij}.
$$
In particular, suppose $\nfk$ is square-free and coprime to $\dfk$.
Write $\nfk = \nfk^+\cdot \nfk^-$ and $\dfk = \dfk^+ \cdot \dfk^-$, where for each prime factor $\pfk$ of $\nfk^{\pm}$ (resp.\ $\dfk^{\pm}$) we have the Legendre quadratic symbol $\left(\frac{\dfk}{\pfk}\right) = \pm 1$ (resp.\ $\left(\frac{\nfk}{\pfk}\right) = \pm 1$).
Then $B_1$ is ramified precisely at the prime factors of $\dfk^-\nfk^-$, 
and 
$O_{B_1}:= B_1\cap \Mat_2(O_F)$ is an Eichler $A$-order of type $(\dfk^+\nfk^+,\dfk^-\nfk^-)$ in $B_1$.
In other words, $\Ccal_1$ is actually the Drinfeld-Stuhler modular curve for $\Gamma(\dfk^+\nfk^+,\dfk^-\nfk^-)$.
We pick $\Zcal_1$ as our ``base'' divisor on $\Scal_{0,F}(\nfk)$, and determine the intersection number of $\Zcal_1$ and $\Zcal(a)$ for non-zero $a \in A$ in the following:


\begin{thm}\label{thm: MT1}
Suppose $\nfk \in A_+$ is square-free with coprime to $\dfk$ and $\deg (\dfk^-\nfk^-) >0$. 
The intersection number of $\Zcal_1$ and $\Zcal(a)$ for non-zero $a \in A$ is equal to
$$
\Zcal_1\cdot \Zcal(a) = 2\cdot \sum_{\subfrac{t \in A}{t^2-4a\preceq 0}}H^{\dfk^+\nfk^+,\dfk^-\nfk^-}(\dfk(t^2-4a)).
$$
Here for $d \in A$, we write $d\preceq 0$ if $d=0$ or $k(\sqrt{d})$ is an ``imaginary'' quadratic extension of $k$ (i.e.\ the infinite place of $k$ does not split), and $H^{\dfk^+\nfk^+,\dfk^-\nfk^-}(d)$ is the modified Hurwitz class number in {\rm Definition~\ref{defn: HCN}}
and Remark~{\rm \ref{rem: H(0)}}.
\end{thm}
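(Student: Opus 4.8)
The plan is to compute the intersection number $\Zcal_1 \cdot \Zcal(a)$ by expressing it as a sum of local intersection multiplicities along the CM points where the curves $\Zcal_1$ and $\Zcal_x$ (for $x \in \Gamma\backslash\Lambda_a$) meet, and then matching these local contributions against modified Hurwitz class numbers. First I would reduce the global intersection number to a count on the Drinfeld-Stuhler curve $\Ccal_1$: since $\Zcal_1 = f_{1,*}(\Ccal_1)$, the projection formula gives $\Zcal_1 \cdot \Zcal(a) = \deg\big(f_1^*\Zcal(a)\big)$, so it suffices to understand how the pullback of each $\Zcal_x$ meets the image of $f_1$. Concretely, a point of $\Ccal_1$ lies in the support of $f_1^*\Zcal_x$ precisely when the corresponding point $z \in \Hfk$ satisfies $(z, S_1 z) \in \Gamma \cdot (w, S_x w)$ for some $w$, which — after unwinding the $\Gamma$-action $\gamma \star x = \gamma x \gamma^*(\det\gamma)^{-1}$ on $\Lambda$ — translates into the existence of $\gamma \in \Gamma$ with $\gamma \star 1 =: x_t$ lying in a specific $\Gamma$-orbit inside $\Lambda_a$ and $z$ being a fixed point of an optimal embedding. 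The upshot should be that the intersection points are indexed by $t \in A$ with $t^2 - 4a \preceq 0$, the value $t$ recording the "trace" datum $x + x^* $ or equivalently the entry of $S_x^{-1}S_1$ relevant to the two elliptic fixed points coinciding.

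The second step is the \emph{local} analysis: for each admissible $t$, the locus $\{z \in \Hfk : (z,S_1z) \text{ and } (z,S_x z) \text{ agree in } \Scal_{0,F}(\nfk)\}$ should be a finite set of CM points, one for each $\Gamma$-conjugacy class of optimal embeddings of the quadratic $A$-order of discriminant $\dfk(t^2-4a)$ into the Eichler order $O_{B_1}$ of type $(\dfk^+\nfk^+,\dfk^-\nfk^-)$. By the standard optimal-embedding dictionary (Eichler's theory, in the function-field / Drinfeld-Stuhler form developed in Section~\ref{sec: CMBES}), the number of such classes — weighted by the appropriate unit factors — is exactly the modified Hurwitz class number $H^{\dfk^+\nfk^+,\dfk^-\nfk^-}(\dfk(t^2-4a))$. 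The factor of $2$ I expect to come from the two embeddings $F \hookrightarrow k_\infty \times k_\infty$ (equivalently, from the two points $(z,S_xz)$ and $(S_xz,z)$, or from the Atkin-Lehner-type symmetry exchanging the two $\Hfk$-factors), so that each CM point contributes with multiplicity $2$; I would need to check carefully that the local intersection multiplicity at each such point is $1$ (transversality), which follows because distinct optimal embeddings have distinct fixed points on $\Hfk$ and the divisors $\Zcal_x$ are smooth along their generic points — the hypothesis $\deg(\dfk^-\nfk^-) > 0$ ensures $B_1$ is a division algebra, so $\Ccal_1$ is proper and there are no cuspidal contributions to worry about.

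For the degenerate term $t^2 = 4a$ (possible only when $a$ is a square, forcing $t^2 - 4a = 0$), the divisors $\Zcal_1$ and $\Zcal_x$ may share a common component rather than meeting transversally; here I would invoke the convention $H^{\dfk^+\nfk^+,\dfk^-\nfk^-}(0)$ fixed in Remark~\ref{rem: H(0)}, and verify that the self-intersection-type contribution computed via an adjunction/excess-intersection argument on $\Scal_{0,F}(\nfk)$ matches that convention — this is really a normalization bookkeeping step rather than new input. The main obstacle, I expect, is the \textbf{local intersection multiplicity computation}: proving that at each CM point the two branches $\Zcal_1$ and $\Zcal_x$ meet transversally (multiplicity exactly $1$) in the rigid-analytic / formal-scheme sense on the Drinfeld-Stuhler surface, and correctly accounting for the stabilizer groups $\Gamma_x$ (the unit-factor normalization hidden inside the definition of $H$) when the optimal embedding has nontrivial automorphisms. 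All the rest — the strong approximation reduction, the orbit bookkeeping, the optimal-embedding count — is, modulo care, routine given the structural results of Sections~\ref{sec: TSP} and~\ref{sec: CMBES}.
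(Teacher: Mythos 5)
Your route is genuinely different from the paper's: the paper never computes $\Zcal_1\cdot\Zcal(a)$ by a direct point count against class numbers, but instead computes the $a$-th Fourier coefficient of the theta integral $I(\cdot;\varphi_\Lambda)$ on the four-dimensional space $(V,\det)$ in two ways --- adelically via local optimal-embedding integrals (Theorem~\ref{thm: TINFC}) and via strong approximation plus the geometric identities of Section~\ref{sec: Intersection} (Theorem~\ref{thm: AFCN}, Theorem~\ref{thm: CN-Int}, Lemma~\ref{lem: SIN}, Corollary~\ref{cor: TINFC}) --- and equates the answers. Your plan replaces the nebentypus theta series by a direct decomposition of the intersection number into weighted CM-point counts indexed by the trace $t$, followed by the identity ``mass $=$ class number''. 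This can be made to work: after the reindexing $\bigsqcup_{x\in\Gamma\backslash\Lambda_a}\Gamma_1\backslash\Gamma/\Gamma_x\leftrightarrow\Gamma_1\backslash\Lambda_a$ and sorting by trace, one may quote Theorem~\ref{thm: CNo-geo}. But note that in this paper that mass formula is itself proved by the pure-quaternion theta integral of Section~\ref{sec: TSP}; so your argument avoids only the four-dimensional series, not the theta machinery altogether, unless you actually carry out the global Eichler-style assembly (local embedding numbers glued by Proposition~\ref{prop: VOL-CN} and Lemma~\ref{lem: CN}), which is a real argument and not an off-the-shelf ``dictionary'' from Section~\ref{sec: CMBES}.

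Beyond that, three concrete gaps. First, transversality and multiplicities: ``the branches are smooth and distinct embeddings have distinct fixed points'' does not give multiplicity one --- two smooth curves through a common point can be tangent. The paper proves transversality by an explicit tangent-vector computation on $\Hfk_F$ after lifting to the fine cover $\Scal_F(\mfk)$, where $\Gamma_F(\mfk)$ acts freely, and then descends with the stabilizer-weighted projection formula (Proposition~\ref{prop: projection formula}, Proposition~\ref{prop: Intersection}); without this step the quotient-singularity weights --- exactly the $w$-denominators hidden in $H^{\dfk^+\nfk^+,\dfk^-\nfk^-}$ --- are not accounted for. Second, the factor $2$ is misattributed: it does not come from the two embeddings $F\hookrightarrow k_\infty\times k_\infty$ or an Atkin--Lehner swap of the two $\Hfk$-factors, but from the fact that each relevant elliptic element $\gamma\star x$ has two fixed points $z,\bar z$ on $\Hfk$ (Lemma~\ref{lem: Intersection}); whether these give one or two points downstairs is governed by the $\nu$-bookkeeping of Section~\ref{sec: CMBES}, and conflating the two sources would distort the count. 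Third, the degenerate term: in this paper $\Zcal_1\cdot\Zcal_1$ is a \emph{definition} (Definition~\ref{defn: SIN}, following Hirzebruch--Zagier), evaluated in Lemma~\ref{lem: SIN}; no compactified ambient intersection theory is developed here in which your proposed adjunction/excess-intersection verification could be performed, so that step must simply invoke the definition rather than ``verify'' it.
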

We point out that the self-intersection number of $\Zcal_1$ is defined to be
an analogue of the ``Euler characteristic'' of $\Zcal_1$ in Definition~\ref{defn: SIN}.\\

To establish the equality in Theorem~\ref{thm: MT1}, the main bridge is the (adelic) theta integral $I(\cdot;
\varphi_\Lambda)$ associated with a particular chosen Schwartz function $\varphi_\Lambda$ in Section~\ref{sec: TSN-PSF}.
Our strategy is briefly sketched as follows.
Notice that using adelic language, we may express very naturally the $a$-th Fourier coefficient of $I(\cdot;\varphi_\Lambda)$ for a given non-zero $a \in A$ in terms of the modified Hurwitz class numbers (cf.\ Theorem~\ref{thm: TINFC}).
On the other hand, the strong approximation theorem (for the indefinite quaternion algebra ramified precisely at the prime factors of $\nfk^-$) leads to an alternative expression of the $a$-th Fourier coefficient of $I(\cdot;\varphi_\Lambda)$ (cf.\ Theorem~\ref{thm: AFCN}), which enables us to connect the Fourier coefficient with the intersection number $\Zcal_1\cdot \Zcal(a)$ (Theorem~\ref{thm: CN-Int} and Corollary~\ref{cor: TINFC}).
This completes the proof.\\

The theta integral $I(\cdot;\varphi_\Lambda)$ has nice invariant property and transformation law (cf.\ Propostion~\ref{prop: TLN}).
In particular, the choice of the ``infinite component'' of $\varphi_\Lambda$ in \eqref{eqn: phi_infty} is most crucial in bridging two sides of the equality in Theorem~\ref{thm: MT1}, and ensures as well the ``harmonicity'' of $I(\cdot;\varphi_\Lambda)$ (cf.\ Lemma~\ref{lem: har}).
This allows us to extend $I(\cdot;\varphi_\Lambda)$ to a ``Drinfeld-type'' automorphic form on $\GL_2(k_\infty)$ (an analogue of weight-$2$ modular forms over function fields, see \textit{Remark}~\ref{rem: wt-2} and \cite{G-R}) with nebentypus character $\left(\frac{\cdot}{\dfk}\right)$ for $\Gamma_0^{(1)}(\dfk\nfk):= \Gamma_0(\dfk\nfk)\cap \SL_2(A)$, cf.\ Theorem~\ref{thm: Ext-DTAF}.
In other words, we have the following theorem (cf.\ \textit{Remark}~\ref{rem: DTS-FC}):

\begin{thm}\label{thm: MT2}
Under the assumptions in \text{\rm Theorem~\ref{thm: MT1}}, there exists a Drinfeld-type automorphic form $\vartheta_\Lambda$ on $\GL_2(k_\infty)$ with nebentypus character $\left(\frac{\cdot}{\dfk}\right)$ for the congruence subgroup $\Gamma_0^{(1)}(\dfk\nfk)$ whose Fourier expansion is given by: for $(x,y) \in k_\infty \times k_\infty^\times$,
$$
\vartheta_\Lambda\begin{pmatrix} y & x \\ 0&1 \end{pmatrix} = -\text{\rm vol}(\Zcal_1)\beta_{0,2}(y) + \sum_{0 \neq a \in A}\big(\Zcal_1\cdot \Zcal(a)\big) \cdot \big(\beta_{a,2}(y) \psi_\infty(ax)\big).
$$
Here 
\begin{itemize}
\item $\beta_{a,s}(y)$ is the following truncated analogue of the Bessel function:
$$\beta_{a,s}(y):=|y|_\infty^{s/2}\cdot
\begin{cases} 1, & \text{ if $\deg a + 2 \leq \ord_\infty(y)$;}\\
0, & \text{ otherwise;}
\end{cases}$$
\item $|\cdot|_\infty$ is the absolute value on $k_\infty$ normalized so that $|\theta| = q$, \item $\psi_\infty: k_\infty \rightarrow \CC^\times$ is a fixed additive character on $k_\infty$ defined in {\rm Section~\ref{sec: TMk}},
\item 
$$\text{\rm vol}(\Zcal_1)
:= 2 H^{\dfk^+\nfk^+,\dfk^-\nfk^-}(0).
$$
\end{itemize}
\end{thm}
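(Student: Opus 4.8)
The plan is to produce $\vartheta_\Lambda$ as the incarnation on $\GL_2(k_\infty)$ of the theta integral $I(\cdot;\varphi_\Lambda)$ itself, and to extract both assertions of the theorem --- the automorphy with the prescribed level and nebentypus, and the Fourier expansion --- from the results already assembled in the preceding sections. First I would combine the invariance and transformation law of Proposition~\ref{prop: TLN} with the harmonicity of Lemma~\ref{lem: har}: the former gives that $I(\cdot;\varphi_\Lambda)$ is invariant under $\Gamma_0^{(1)}(\dfk\nfk)$ twisted by the quadratic nebentypus $\left(\frac{\cdot}{\dfk}\right)$, and the latter is exactly the hypothesis of Theorem~\ref{thm: Ext-DTAF}, which upgrades $I(\cdot;\varphi_\Lambda)$ to a genuine Drinfeld-type (``weight-two'') automorphic form on $\GL_2(k_\infty)$. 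We set $\vartheta_\Lambda$ equal to this form; it then remains only to read off its Fourier coefficients along the torus $\left\{\begin{pmatrix} y & x \\ 0 & 1\end{pmatrix}\right\}$.

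For the coefficients indexed by $0\neq a\in A$, I would substitute $g=\begin{pmatrix} y & x \\ 0 & 1\end{pmatrix}$ into the Fourier expansion of $I(\cdot;\varphi_\Lambda)$ computed in Theorem~\ref{thm: TINFC}. By that theorem the ``finite'' part of the $a$-th coefficient is the modified Hurwitz class-number sum $\sum_{t^2-4a\preceq 0}H^{\dfk^+\nfk^+,\dfk^-\nfk^-}(\dfk(t^2-4a))$, which by Theorem~\ref{thm: MT1} --- itself obtained from Theorem~\ref{thm: AFCN}, Theorem~\ref{thm: CN-Int}, and Corollary~\ref{cor: TINFC} --- is identified with $\Zcal_1\cdot\Zcal(a)$, the factor $2$ being accounted for in that identification. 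The dependence on $(x,y)$ is controlled entirely by the chosen infinite component $\varphi_\infty$ of \eqref{eqn: phi_infty}: its local Weil-representation integral paired against $\psi_\infty$ produces precisely the truncated Bessel factor $\beta_{a,2}(y)\psi_\infty(ax)$ attached to harmonic forms of weight two. I would isolate this archimedean evaluation as a separate computation (it is the analytic heart of Lemma~\ref{lem: har}) and multiply it by the finite part to obtain the claimed $a$-th term.

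The constant term comes from taking $a=0$ in the same expansion: the surviving $y$-dependence is $\beta_{0,2}(y)$, and the scalar is the ``$a=0$'' value of the class-number generating series, equal by Remark~\ref{rem: H(0)} to $H^{\dfk^+\nfk^+,\dfk^-\nfk^-}(0)$, hence to $\mathrm{vol}(\Zcal_1)$ up to sign in view of the definition $\mathrm{vol}(\Zcal_1):=2H^{\dfk^+\nfk^+,\dfk^-\nfk^-}(0)$. The overall sign $-$ is the one forced by the normalization used to pass from $I(\cdot;\varphi_\Lambda)$ to its harmonic extension in Theorem~\ref{thm: Ext-DTAF} (the same normalization that produces $\beta_{0,2}$ rather than $|y|_\infty\beta_{0,2}$), and I would track it through that construction.

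The step I expect to be the main obstacle --- the one carrying the real weight behind the cited lemmas --- is the explicit evaluation of the archimedean integral of $\varphi_\infty$: one must show simultaneously that it yields the truncated Bessel function $\beta_{a,2}(y)$, so that the Fourier expansion has the shape of a weight-two Drinfeld-type form, and that the associated combinatorial harmonicity relation on the Bruhat--Tits tree of $\GL_2(k_\infty)$ holds. Matching all normalization constants, pinning down the sign of the constant term, and reconciling $\mathrm{vol}(\Zcal_1)$ with the Euler-characteristic-type convention behind self-intersections (Definition~\ref{defn: SIN}) constitute the remaining, more bookkeeping-intensive, part of the argument.
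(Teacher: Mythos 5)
Your proposal follows essentially the same route as the paper: $\vartheta_\Lambda$ is precisely the normalized extension of $I(\cdot;\varphi_\Lambda)$ constructed in Theorem~\ref{thm: Ext-DTAF} (via Proposition~\ref{prop: TLN} and Lemma~\ref{lem: har}), and its Fourier coefficients are read off from Theorem~\ref{thm: TINFC} and converted into intersection numbers exactly as in Theorem~\ref{thm: AFCN}, Theorem~\ref{thm: CN-Int}, Lemma~\ref{lem: SIN} and Corollary~\ref{cor: TINFC}, i.e.\ Theorem~\ref{thm: MT1}. The only minor slip is attributing the truncated Bessel factor $\beta_{a,2}(y)\psi_\infty(ax)$ to Lemma~\ref{lem: har}: in the paper this factor comes from the support of $\varphi_{\Lambda,\infty}$ together with the invariance in Proposition~\ref{prop: TLN}, while Lemma~\ref{lem: har} is used only for the harmonicity (weight-two) property, so this does not affect correctness.
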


\begin{rem}
(1) Our approach for the geometric interpretation of the Fourier coefficients of $\vartheta_\Lambda$ as the corresponding intersection numbers is basically in the framework of \cite{K-M} and \cite{Fun}, while the rigid analytic geometry is utilized and the test function constructed by Kudla-Millson at the archimedean place is replaced by a proper Schwartz function in \eqref{eqn: phi_infty} for this positive-characteristic setting.

(2) The technical assumption ``$\deg(\dfk^-\nfk^-)>0$'' in Theorem~\ref{thm: MT1} implies that the Drinfeld-Stuhler modular curve $\Ccal_1$ has no ``cusps''.
Therefore there are no contributions of the ``cuspidal intersections'' to $\Zcal_1 \cdot \Zcal(a)$ in Theorem~\ref{thm: MT1} and Theorem~\ref{thm: MT2}.
When $\dfk^-\nfk^- = 1$,
this argument would need to be adjusted by ``regularizing the theta integral $I(\cdot;\varphi_\Lambda)$'' as in \cite{Fun}, and taking the cuspidal intersections into account for $\Zcal_1\cdot \Zcal(a)$ in a suitable ``compactification of the surface $\Scal_{0,F}(\nfk)$''.
However, due to a lack of studies in the literature for these two technical issues in the function field context, we make this assumption in Theorem~\ref{thm: MT1} first.
The general case will be explored in future work.

(3) Given non-zero $d \in A$, a point $\boz$ in $\Ccal_1 = X(\dfk^+ \nfk^+,\dfk^-\nfk^-)$ is \textit{CM with discriminant $d$} if it has a representative $z \in \Hfk$ and there exists $x_z \in O_{B_1}$ with $x_z^2 = d$ and $x_z \cdot z = z$
(cf.\ Definition~\ref{defn: CMP}).
As $z \notin k_\infty$, the existence of such a CM point $\boz$ ensures that $d \prec 0$. Moreover, for non-zero $a \in A$, the intersection of $\Zcal_1$ and $\Zcal(a)$, excluding possible self-intersection of $\Zcal_1$, is actually supported by the image of the CM points on $\Ccal_1$ with discriminant $\dfk(t^2-4a)$ for  $t \in A$ and $t^2-4a \prec 0$ (cf.\ \textit{Remark}~\ref{rem: CM-sup}).
\end{rem}

We also study the ``mass'' over CM points on the Drinfeld-Stuhler modular curves via ``metaplectic'' theta series.
Let $\nfk^+, \nfk^- \in A_+$ be two square-free polynomials with $\text{gcd}(\nfk^+,\nfk^-) = 1$, and the number of prime factors of $\nfk^-$ is positive and even.
Given a point $\boz$ on $X(\nfk^+,\nfk^-)$ represented by $z \in \Hfk$, let $\Stab_{\Gamma(\nfk^+,\nfk^-)}(z)$ be the stablizer of $z$ in $\Gamma(\nfk^+,\nfk^-)$, and put
$$w(\boz):= \frac{q-1}{
\#\big(\text{Stab}_{\Gamma(\nfk^+,\nfk^-)}(z)\big)}.$$
For non-zero $d \in A$,
the \textit{mass} over the CM points with discriminant $d$ on $X(\nfk^+,\nfk^-)$ is:
$$
\Mcal(d) := \sum_{\subfrac{\text{CM } \boz \in X(\nfk^+,\nfk^-)}{\text{with discriminant $d$}}}\frac{1}{w(\boz)}.
$$
Comparing with the classical story, it is natural to expect that this ``mass'' amounts to generating function of a metaplectic form. 
We derive that
(cf.\ Theorem~\ref{thm: CNo-geo} and Corollary~\ref{cor: TSo-FC}):

\begin{thm}\label{thm: Intro.MS-HCN}
Let $\nfk^+, \nfk^- \in A_+$ be two square-free polynomials with $\text{gcd}(\nfk^+,\nfk^-) = 1$, and the number of prime factors of $\nfk^-$ is positive and even.
Given non-zero $d \in A$, the following equality holds:
$$\Mcal(d) = \begin{cases}
H^{\nfk^+,\nfk^-}(d), & \text{ if $d \prec 0$;}\\
0, & \text{ otherwise.}
\end{cases}$$
Moreover, if we put 
$\Mcal(0):= H^{\nfk^+,\nfk^-}(0)$,
then there exists a ``weight-$\frac{3}{2}$'' metaplectic form $\vartheta^o$ on $\widetilde{\GL}_2(k_\infty)$ (introduced in {\rm Section~\ref{sec: Ext-PTI}}) with the following Fourier expansion: for $(x,y) \in k_\infty\times k_\infty^\times$,
$$
\vartheta^o\left(\begin{pmatrix} y&x\\ 0&1\end{pmatrix},1\right)
=
\sum_{d \in A}
\Mcal(d) \cdot \big(\beta_{d,\frac{3}{2}}(y)\psi_\infty(-dx)\big).
$$
\end{thm}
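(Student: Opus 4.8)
The plan is to mimic the proof of Theorems~\ref{thm: MT1} and~\ref{thm: MT2}, but with the quaternary quadratic space $(\Lambda,\det)$ replaced by the \emph{ternary} space
$$V \ :=\ B^0 \ =\ \{x \in B \mid \tr(x) = 0\}, \qquad q(x) \ :=\ \Nr(x) \ =\ -x^2 ,$$
where $B$ is the indefinite quaternion algebra over $k$ ramified precisely at the prime factors of $\nfk^-$, $O_B$ is the Eichler $A$-order of type $(\nfk^+,\nfk^-)$, and $X(\nfk^+,\nfk^-) = \Gamma\backslash\Hfk$ with $\Gamma := O_B^\times$. Since $B$ is a division algebra ($\deg\nfk^-\ge 2$), $V$ is anisotropic over $k$, so $O(V)(k)\backslash O(V)(\AA)$ is compact, while over $k_\infty$ the space $V\otimes_k k_\infty\cong\mathfrak{sl}_2(k_\infty)$ is isotropic. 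Because $\dim V = 3$, the companion of $O(V)$ in the relevant dual pair is the metaplectic group $\widetilde{\GL}_2$ (here $q$ is odd, so the cover splits over $\GL_2(\Ocal_\infty)$), which is why the theta lift lands in half-integral weight $\tfrac32$. Note also that $O(B^0)$ is essentially $B^\times/k^\times$ acting by conjugation, so automorphic forms on $O(V)$ descend to functions on the Drinfeld-Stuhler curve $X(\nfk^+,\nfk^-)$.

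Next I would pin down the Schwartz function $\varphi^o = \bigotimes_v\varphi^o_v\in\mathcal{S}(V(\AA))$ defining $\vartheta^o$. At a finite place $v$ I take $\varphi^o_v$ to be the characteristic function of $B^0(k)\cap(O_B\otimes_A A_v)$, suitably modified at the primes dividing $\nfk^+$ to encode the Eichler/optimal-embedding level condition; this finite datum produces the level $\Gamma_0^{(1)}(\nfk^+\nfk^-)$ and the nebentypus. At $v=\infty$ I take $\varphi^o_\infty$ to be the ``harmonic'' Schwartz function on $V\otimes_k k_\infty$, the ternary analogue of \eqref{eqn: phi_infty}, engineered so that the associated theta kernel transforms on $\widetilde{\GL}_2(k_\infty)$ with weight $\tfrac32$ and the prescribed nebentypus, with Whittaker profile the truncated Bessel functions $\beta_{d,3/2}(y)$, and with vanishing archimedean local factor along the quadric $\{q=-d\}$ whenever $d\succ0$ (this is forced because a $k$-rational $x\in B^0$ with $x^2=d$ and $d$ a square in $k_\infty$ is hyperbolic at $\infty$, on which the harmonic $\varphi^o_\infty$ vanishes). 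Then $\vartheta^o$ is defined (Section~\ref{sec: Ext-PTI}) as the partial theta integral attached to $\varphi^o$, i.e. the theta lift to $\widetilde{\GL}_2$ of the constant function $1$ on $O(V)$; automorphy of the theta kernel makes $\vartheta^o$ a metaplectic automorphic form, and the transformation law and harmonicity analogous to Proposition~\ref{prop: TLN} and Lemma~\ref{lem: har} identify its Drinfeld-type avatar on $\widetilde{\GL}_2(k_\infty)$ as a weight-$\tfrac32$ metaplectic form for $\Gamma_0^{(1)}(\nfk^+\nfk^-)$ with nebentypus. This yields the modularity assertion.

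The heart of the matter is to compute the $d$-th Fourier coefficient of $\vartheta^o$ ($0\ne d\in A$) in two ways, exactly as in the proof of Theorem~\ref{thm: MT1}. On the arithmetic side, the adelic expansion of Theorem~\ref{thm: TINFC} together with an unfolding of the Weil-representation action presents the coefficient as $\beta_{d,3/2}(y)$ times a product over all places of local orbital integrals of $\varphi^o_v$ along $\{q=-d\}$: the archimedean factor is $1$ for $d\prec0$ and $0$ for $d\succ0$, while the finite factors, by Eichler's theory of optimal embeddings of imaginary quadratic $A$-orders into $O_B$, assemble by the very definition (Definition~\ref{defn: HCN}) into the modified Hurwitz class number $H^{\nfk^+,\nfk^-}(d)$ --- this is the function-field ``geometric Siegel-Weil'' input. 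On the geometric side, strong approximation for $B$ (split at $\infty$, hence satisfying strong approximation relative to $\{\infty\}$), applied as in Theorems~\ref{thm: AFCN} and~\ref{thm: CN-Int}, rewrites the same coefficient as a weighted sum over $\Gamma$-conjugacy classes of $x\in O_B^0$ with $x^2=d$; identifying the fixed point of each such $x$ in $\Hfk$ with a CM point of discriminant $d$ on $X(\nfk^+,\nfk^-)$ (Definition~\ref{defn: CMP}) and bookkeeping the centralizer/stabilizer weights turns this into $\sum_{\boz}1/w(\boz)=\Mcal(d)$. Comparing gives $\Mcal(d)=H^{\nfk^+,\nfk^-}(d)$ for $d\prec0$ and $\Mcal(d)=0$ for $d\succ0$ (consistent with the plain absence of CM points of real discriminant). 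The constant term is treated separately: the $0$-term of the theta kernel contributes $\varphi^o_\infty(0)$ times a ``volume'' of $O(V)(k)\backslash O(V)(\AA)$, which one defines to be $\Mcal(0):=H^{\nfk^+,\nfk^-}(0)$, exactly as $\mathrm{vol}(\Zcal_1)$ is handled in Theorem~\ref{thm: MT2}.

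The main obstacle is the archimedean input. There is essentially no literature on the metaplectic double cover of $\GL_2$ over a rational function field, its Weil representation at the degree place, or the correct notion of ``weight $\tfrac32$'' Drinfeld-type form, so one must construct $\varphi^o_\infty$ by hand and verify \emph{both} that it is an eigenvector producing the required weight-$\tfrac32$ transformation law for the theta kernel \emph{and} that it is ``harmonic'' in the sense of the analogue of Lemma~\ref{lem: har} (so that $\vartheta^o$ is a genuine metaplectic form with the truncated-Bessel profile $\beta_{d,3/2}$); this is the positive-characteristic substitute for Kudla--Millson's archimedean construction. A secondary, more computational difficulty is carrying out the finite-place orbital integrals at the primes dividing $\nfk^+\nfk^-$ precisely enough to recover Eichler's local embedding numbers and hence the combinatorial definition of $H^{\nfk^+,\nfk^-}$. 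By contrast, the strong-approximation and unfolding argument on the geometric side is essentially the same as in Theorems~\ref{thm: AFCN} and~\ref{thm: CN-Int} and should be routine once that machinery is in place.
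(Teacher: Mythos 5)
Your proposal follows essentially the same route as the paper: the theta integral on the ternary space of pure quaternions with $\varphi^o_\pfk = \mathbf{1}_{\Lambda^o_\pfk}$ at the finite places and the harmonic $\varphi^o_\infty$ at $\infty$, whose Fourier coefficients are computed once via Eichler's local optimal embeddings to give $H^{\nfk^+,\nfk^-}(d)$ (Theorem~\ref{thm: FC1}) and once via strong approximation and CM-point bookkeeping to give $\Mcal(d)$ (Corollary~\ref{cor: AFC}, Theorem~\ref{thm: CNo-geo}), and which is then extended to a weight-$\frac{3}{2}$ metaplectic form exactly as in Theorem~\ref{thm: FET0}. The only minor deviations are that no extra modification of $\varphi^o_\pfk$ at $\pfk \mid \nfk^+$ is needed (taking the trace-zero part of the Eichler $A$-order of type $(\nfk^+,\nfk^-)$ already encodes the level, and no nebentypus occurs in this ternary case), and the weight-$\frac{3}{2}$ metaplectic framework at $\infty$ need not be built from scratch, since it is available in \cite{CLWY} and \cite{Wei}.
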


\begin{rem}
${}$
\begin{itemize}
\item[(1)] When the number of prime factors of $\nfk^-$ is odd, we are still able to extend the generating function
$$
\sum_{d \in A, \ d \preceq 0}H^{\nfk^+,\nfk^-}(d)\cdot \big(\beta_{d,\frac{3}{2}}(y)\psi_\infty(-dx)\big), \quad \forall (x,y) \in k_\infty\times k_\infty^\times
$$
to a weight-$\frac{3}{2}$ metaplectic form (coming from the theta series associated with ``definite'' pure quaternions ramified precisely at the prime factors of $\nfk^-$ and $\infty$ (cf.\ \cite[Proposition 2.2]{Wei} and \cite[Corollary 5.4]{CLWY}).
\item[(2)] When $\nfk^- = 1$, we would expect that, after further work, the above generating function becomes the ``mock'' part of a particular metaplectic automorphic form as in the classical case (cf.\ \cite{Coh} and \cite{Zag})
by ``regularizing'' the theta integral $I(\cdot; \varphi^o)$ in Section~\ref{sec: PTI1}.
This will be studied in a subsequent paper.
\end{itemize}
\end{rem}

\subsection{Content}
The contents of this paper go as follows:
\begin{itemize}
\item (\textit{Preliminaries.}) We set up basic notations used throughout this paper in Section~\ref{sec: BS}.
The modified Hurwitz class number and the needed properties are reviewed in Section~\ref{sec: HCN}.
The Tamagawa measures on the groups appearing in this paper are given in Section~\ref{sec: TMk}, \ref{sec: HCN}, and \ref{sec: TMB}, respectively.
The definition of the Weil representation and theta series are recalled in Section~\ref{sec: WRTS}.
\item (\textit{Metaplectic theta series and mass over CM points.}) In Section~\ref{sec: TSP}, we focus on the theta integral associated with a particular Schwartz function $\varphi^o$ on the quadratic space of pure quaternions in an indefinite quaternion algebra over $k$.
The Fourier coefficients of the theta integral associated with $\varphi^o$ are expressed via two different ways in Theorem~\ref{thm: FC1} and Corollary~\ref{cor: AFC}, respectively.
In Section~\ref{sec: CMBES}, we study the mass over the CM points with a given discriminant on a Drinfeld-Stuhler modular curve, and establish
the equality between $\Mcal(d)$ and $H^{\nfk^+,\nfk^-}(d)$ in Theorem~\ref{thm: CNo-geo}.
\item (\textit{Class number relations and intersections.}) In Section~\ref{sec: TSN},
we take a particular Schwatz function $\varphi_\Lambda$ associated with the $A$-lattice $\Lambda$, and express the Fourier coefficients of the theta integral $I(\cdot;\varphi_\Lambda)$ explicitly in terms of the modified Hurwitz class numbers in Theorem~\ref{thm: TINFC}.
In Section~\ref{sec: Ext-TI}, we show the harmonicity of $I(\cdot;\varphi_\Lambda)$ and extend it to a Drinfeld-type automorphic form $\vartheta_\Lambda$ on $\GL_2(k_\infty)$ in Theorem~\ref{thm: Ext-DTAF}.
On the other hand, the intersection between $\Zcal_1$ and Hirzebruch-Zagier-type divisors are described in Section~\ref{sec: Intersection}.
Via the alternative expression of the Fourier coefficients of $I(\cdot;\varphi_\Lambda)$ in Theorem~\ref{thm: AFCN}, we prove Theorem~\ref{thm: MT1} and Theorem~\ref{thm: MT2} in the end.
\item (\textit{Appendix: local optimal embeddings.}) The needed results in Eichler's theory of local optimal embeddings are recalled in Appendix~\ref{sec: App-LOE}, and we express the technical local integrals used in Theorem~\ref{thm: FC1} by the number of local optimal embeddings in Appendix~\ref{sec: App-SLI}.
\end{itemize}

\subsection*{Acknowledgements}
This work is initiated during the conference ``2019 Postech-PMI and NCTS Joint workshop on number theory'', at the Pohang University of Science and Technology, Korea.
The authors are very grateful to Professor Jeehoon Park and Professor Chia-Fu Yu for organizing such a wonderful conference. The authors would also like to thank Jing Yu for many helpful suggestions and comments.
The first author is supported by the Ministry of Science and Technology 
(grant no.\ 109-2115-M-002-017-MY2).
The second author is supported by the Ministry of Science and Technology 
(grant no.\ 107-2628-M-007-004-MY4 and 109-2115-M-007-017-MY5)
and the National Center for Theoretical Sciences.

\section{Preliminaries}

\subsection{Basic settings}\label{sec: BS}
Let $\FF_q$ be a finite field with $q$ elements.
Throughout this paper, we always assume $q$ to be \textbf{odd}.
Let $A:=\FF_q[\theta]$, the polynomial ring with one variable $\theta$ over $\FF_q$, and $k:= \FF_q(\theta)$, the field of fractions of $A$.
Let $\infty$ be the infinite place of $k$, i.e.\ the place corresponding to the \lq\lq degree\rq\rq\ valuation $\ord_\infty$ defined by
$$\ord_\infty\left(\frac{a}{b}\right) := \deg b- \deg a, \quad \forall a,b \in A \text{ with $b \neq 0$.}
$$
The associated absolute value on $k$ is normalized by $|\alpha|_\infty := q^{-\ord_\infty(\alpha)}$ for every $\alpha \in k$.
Let $k_\infty$ be the completion of $k$ with respect to $|\cdot|_\infty$, which can be identified with the Laurent series field $\FF_q(\!(\theta^{-1})\!)$.
Put $\varpi := \theta^{-1}$, a fixed uniformizer at $\infty$, and $O_\infty:= \FF_q[\![\varpi]\!]$,
the valuation ring in $k_\infty$.

Let $A_+$ be the set of monic polynomials in $A$.
By abuse of notations, we identify $A_+$ with the set of non-zero ideals of $A$.
In particular, for $\afk \in A_+$ we put
$$\Vert \afk \Vert \ :=\ \#(A/\afk) \quad (= |\afk|_\infty).$$
Given a non-zero prime ideal $\pfk$ of $A$, 
the normalized absolute value associated with $\pfk$ is:
$$|\alpha|_\pfk := \Vert\pfk\Vert^{-\ord_\pfk(\alpha)}, \quad \forall \alpha \in k.
$$
Here $\ord_\pfk(\alpha)$ is the order of $\alpha$ at $\pfk$ for every $\alpha \in k$.
The completion of $k$ with respect to $|\cdot|_\pfk$ is denoted by $k_\pfk$, and put $O_\pfk$ the valuation ring in $k_\pfk$. We also refer the non-zero prime ideals of $A$ to the finite places of $k$. \\

Let $k_\AA:= \prod_v' k_v$, the adele ring of $k$.
The maximal compact subring of $k_\AA$ is denoted by $O_\AA$.
The adelic norm $|\cdot|_\AA$ on the idele group $k_\AA^\times$ is:
$$
|(\alpha_v)_v|_\AA := \prod_v |\alpha_v|_v, \quad \forall (\alpha_v)_v \in k_\AA^\times.
$$

\subsubsection{Additive character and Tamagawa measure}\label{sec: TMk}

Let $p$ be the characteristic of $k$ and $\psi_\infty: k_\infty \rightarrow \CC^\times$ be the additive character defined by:
$$
\psi_\infty\left(\sum_i a_i \varpi^i\right) := \exp\left(\frac{2\pi \sqrt{-1}}{p} \cdot \text{Trace}_{\, \FF_q/\FF_p}(-a_1)\right), \quad \forall \sum_i a_i \varpi^{i} \in k_\infty.
$$
The conductor of $\psi_\infty$ is $\varpi^2 O_\infty$ and $\psi_\infty(A) = 1$.
Since
$$ 
k_\AA = k + \left(k_\infty \times \prod_{\pfk } O_\pfk\right)
\quad \text{ and } \quad 
k \cap \left(\prod_{\pfk} O_\pfk\right) = A,
$$
we may extend $\psi_\infty$ uniquely to an additive character $\psi: k_\AA \rightarrow \CC^\times$
so that $\psi(\alpha) = 1$ for all $\alpha \in k + \big((\varpi^2 O_\infty) \times \prod_{\pfk} O_\pfk\big)$ and $\psi\big|_{k_\infty} = \psi_\infty$.
Put $\psi_\pfk := \psi\big|_{k_\pfk}$ for each finite place $\pfk$ of $k$, which is a non-trivial additive character on $k_\pfk$ with trivial conductor.\\

For each place $v$ of $k$, let $dx_v$ be the ``self-dual'' Haar measure on $k_v$ with respect to $\psi_v$, i.e.\
$$\text{vol}(O_\pfk,dx_\pfk) = 1 \text{ for each finite place $\pfk$ of $k$,} \quad \text{and}
\quad
\text{vol}(O_\infty,dx_\infty) = q.
$$
Define the Haar measure $d^\times x_v$ on $k_v^\times$ by
$$d^\times x_\pfk:= \frac{\Vert\pfk\Vert}{\Vert \pfk \Vert-1} \cdot \frac{dx_\pfk}{|x_\pfk|_\pfk}\quad
\text{ and }\quad
d^\times x_\infty:= \frac{q}{q-1} \cdot \frac{dx_\infty}{|x_\infty|_\infty}.
$$
The \textit{Tamagawa measure} on $k_\AA^\times$ (with respect to $\psi$) is $d^\times x = \prod_v d^\times x_v$.

\subsection{Imaginary quadratic fields and class numbers}\label{sec: HCN}

A quadratic field extension $K/k$ is called \textit{imaginary} if the infinite place of $k$ does not split in $K$.
Let $K_\AA := K\otimes_k k_\AA$ and $\text{T}_{K/k}:= K_\AA \rightarrow k_\AA$ be the trace map.
Then the Tamagawa measure on $K_\AA^\times$ (with respect to the additive character $\psi \circ \text{T}_{K/k}$)
and the one on $k_\AA^\times$ induce a Haar measure $d^\times \alpha$ on the quotient group $K_\AA^\times/k_\AA^\times$.
More precisely,
let $O_K$ (resp.\ $O_{K_\infty}$) be the integral closure of $A$ (resp.\ $O_\infty$) in $K$ (resp.\ $K_\infty:= K\otimes_k k_\infty$).
For each non-zero prime ideal $\pfk$ of $A$, put $K_\pfk := K \otimes_k k_\pfk$ and  $O_{K_\pfk}:= O_K \otimes_A O_\pfk$.
We normalize the Haar measure $d^\times \alpha_v$ on $K_v^\times/k_v^\times$ for each place of $v$ by
$$\text{vol}(O_{K_\pfk}^\times/O_\pfk^\times)
= \Vert\pfk\Vert^{-1+1/e_\pfk(K/k)} \quad
\text{ and } \quad 
\text{vol}(O_{K_\infty}^\times/O_\infty^\times)
= q^{1/e_\infty(K/k)}.
$$
Here $e_v(K/k)$ is the ramification index of the place $v$ of $k$ in $K$.
Then $d^\times \alpha = \prod_v d^\times \alpha_v$.

\begin{prop}\label{prop: VOL-CN}
Let $K$ be an imaginary quadratic field over $k$, and $O_K$ be the integral closure of $A$ in $K$. 
Let $\Delta(O_K/A)$ be the discriminant ideal of $O_K$ over $A$, $h(O_K)$ be the class number of $O_K$, and put 
$w(O_K):= \#(O_K^\times)/(q-1)$.
We have
\begin{eqnarray}
\text{\rm vol}(K^\times  \backslash K_\AA^\times/k_\AA^\times)
&=& \frac{h(O_K)}{w(O_K)} 
\cdot e_\infty(K/k) \cdot \prod_v \text{\rm vol}(O_{K_v}^\times/O_v^\times) \nonumber \\
&=& \frac{h(O_K)}{w(O_K)} \cdot e_\infty(K/k) \cdot q^{1/e_\infty(K/k)}
\cdot \Vert \Delta(O_K/A)\Vert^{-1/2}. \nonumber
\end{eqnarray}
\end{prop}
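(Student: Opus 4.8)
The plan is to compute the volume of $K^\times \backslash K_\AA^\times / k_\AA^\times$ by the standard device of relating the idele class group of $K$ (mod the one of $k$) to the ideal class group and a product of local contributions, keeping careful track of the normalizations fixed in Section~\ref{sec: TMk} and in the present subsection. First I would set up the exact sequence
$$
1 \longrightarrow O_K^\times \backslash \big(\textstyle\prod_v O_{K_v}^\times \big)\big/\big(\prod_v O_v^\times\big) \longrightarrow K^\times \backslash K_\AA^\times / k_\AA^\times \longrightarrow \operatorname{Pic}(O_K) \longrightarrow 1,
$$
using that $A$ (hence its integral closure considerations) has class number one, so that the obstruction to an idele being a unit times a global element is exactly the ideal class in $\operatorname{Pic}(O_K)$, whose order is $h(O_K)$. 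This already produces the factor $h(O_K)$; the task is then to evaluate the volume of the unit part.

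Next I would treat the unit part. The compact group $\prod_v O_{K_v}^\times / \prod_v O_v^\times$ has volume $\prod_v \operatorname{vol}(O_{K_v}^\times/O_v^\times)$ by the product normalization $d^\times\alpha = \prod_v d^\times\alpha_v$, and then one must divide by the image of the global units $O_K^\times$. Since $K$ is imaginary quadratic over $k$, the group $O_K^\times$ is finite, equal to the group of roots of unity, and $O_K^\times \cap k^\times = \FF_q^\times$; hence the image of $O_K^\times$ in the quotient by $k_\AA^\times$ has order $\#(O_K^\times)/(q-1) = w(O_K)$. Because the ambient group is compact and the subgroup finite, dividing the volume by $w(O_K)$ is exact, giving the factor $1/w(O_K)$. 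Combining, $\operatorname{vol}(K^\times\backslash K_\AA^\times/k_\AA^\times) = \frac{h(O_K)}{w(O_K)} \cdot \prod_v \operatorname{vol}(O_{K_v}^\times/O_v^\times)$. The factor $e_\infty(K/k)$ arises because the displayed exact sequence uses $\prod_v O_v^\times$ over \emph{all} $v$ including $\infty$, whereas the local normalization at $\infty$ carries $q^{1/e_\infty(K/k)}$ rather than the full local index; I would reconcile this by noting that the index of $O_{K_\infty}^\times \cdot k_\infty^\times$ inside $K_\infty^\times$ contributes the missing factor $e_\infty(K/k)$ (this is the place where $K$ being imaginary, so $K_\infty$ is a field, is used), matching the first displayed equality in the statement.

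Finally, for the second equality I would substitute the explicit local volumes. At a finite prime $\pfk$ we have $\operatorname{vol}(O_{K_\pfk}^\times/O_\pfk^\times) = \Vert\pfk\Vert^{-1+1/e_\pfk(K/k)}$, which equals $\Vert\pfk\Vert^{-1/2}\cdot\Vert\pfk\Vert^{-1/2+1/e_\pfk(K/k)}$; using the conductor–discriminant relation and the fact that for a quadratic extension the local discriminant exponent is $0$, $1$, or (in the wildly ramified case, impossible here since $q$ is odd) controlled by $e_\pfk$, one checks $\prod_\pfk \Vert\pfk\Vert^{1/2 - 1/e_\pfk(K/k)} = \Vert\Delta(O_K/A)\Vert^{1/2}$ up to the archimedean correction. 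Together with $\operatorname{vol}(O_{K_\infty}^\times/O_\infty^\times) = q^{1/e_\infty(K/k)}$ and the already-extracted factor $e_\infty(K/k)$, the product telescopes into $e_\infty(K/k)\cdot q^{1/e_\infty(K/k)}\cdot \Vert\Delta(O_K/A)\Vert^{-1/2}$. I expect the main obstacle to be the bookkeeping at the infinite place: making sure the factor $e_\infty(K/k)$ is inserted exactly once and that the self-dual measure normalization $\operatorname{vol}(O_\infty,dx_\infty)=q$ (which is why a stray $q$ appears) is threaded through the quotient measure on $K_\infty^\times/k_\infty^\times$ consistently with the finite places, where the analogous constant is $1$.
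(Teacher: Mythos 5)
Your overall strategy is the same as the paper's: split $\text{vol}(K^\times\backslash K_\AA^\times/k_\AA^\times)$ into a class-number count times the volume of a compact ``unit'' piece, divide by the image of the global units $O_K^\times/\FF_q^\times$ (order $w(O_K)$), and then plug in the prescribed local volumes. However, two steps as written do not hold up. First, your displayed sequence is not exact: the kernel of the natural surjection $K^\times\backslash K_\AA^\times/k_\AA^\times \twoheadrightarrow \Pic(O_K)$ is the image of $K_\infty^\times\times\prod_\pfk O_{K_\pfk}^\times$, not of $\prod_v O_{K_v}^\times$; with your smaller unit term the cokernel is $\mathrm{Cl}(K)/\langle e_\infty(K/k)\cdot[\infty_K]\rangle$, an extension of $\Pic(O_K)$ by a cyclic group of order $e_\infty(K/k)$ (the class $[\infty_K]$ has infinite order because of the degree map), so the group you call $\Pic(O_K)$ actually has order $e_\infty(K/k)\,h(O_K)$. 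You do notice the missing $e_\infty(K/k)$ and attribute it to the index $[K_\infty^\times : k_\infty^\times O_{K_\infty}^\times]=e_\infty(K/k)$, which is the right repair, but it must be built into the sequence rather than added afterwards; the clean way (and the paper's) is to use $k_\AA^\times=k^\times\cdot\big(k_\infty^\times\times\prod_\pfk O_\pfk^\times\big)$ and keep the full $K_\infty^\times$ at the infinite place, so that the discrete quotient is honestly of order $h(O_K)$, the global-unit correction is $\#(O_K^\times/\FF_q^\times)=w(O_K)$, and the infinite place contributes $\text{vol}(K_\infty^\times/k_\infty^\times)=e_\infty(K/k)\cdot q^{1/e_\infty(K/k)}$ in one stroke.

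Second, the discriminant step is garbled: the identity you invoke,
\begin{equation*}
\prod_\pfk \Vert\pfk\Vert^{\,1/2-1/e_\pfk(K/k)}=\Vert\Delta(O_K/A)\Vert^{1/2},
\end{equation*}
is false --- each \emph{unramified} prime contributes $\Vert\pfk\Vert^{-1/2}$ to the left side, so the product does not even converge, and your factorization $\Vert\pfk\Vert^{-1+1/e_\pfk}=\Vert\pfk\Vert^{-1/2}\cdot\Vert\pfk\Vert^{-1/2+1/e_\pfk}$ introduces a divergent stray factor. No conductor--discriminant argument is needed: from the given normalization, $\text{vol}(O_{K_\pfk}^\times/O_\pfk^\times)=\Vert\pfk\Vert^{-1+1/e_\pfk(K/k)}$ equals $1$ when $\pfk$ is unramified and $\Vert\pfk\Vert^{-1/2}$ when $\pfk$ is ramified, and since $q$ is odd the ramification is tame, so $\Delta(O_K/A)$ is exactly the (square-free) product of the ramified primes; hence $\prod_\pfk\text{vol}(O_{K_\pfk}^\times/O_\pfk^\times)=\Vert\Delta(O_K/A)\Vert^{-1/2}$ directly. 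With these two corrections your argument coincides with the paper's proof.
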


\begin{proof}
As $A$ is a principal ideal domain, one gets
$$k_\AA^\times = k^\times \cdot (k_\infty^\times \times \prod_{\pfk} O_\pfk^\times).
$$
Thus the exact sequence
$$
1 \longrightarrow \frac{O_K^\times}{\FF_q^\times} \longrightarrow \frac{K_\infty^\times \times \prod_\pfk O_{K_\pfk}^\times}{k_\infty^\times \times \prod_\pfk O_\pfk^\times} \rightarrow 
\frac{K^\times \cdot (K_\infty^\times \times \prod_\pfk O_{K_\pfk}^\times)}{K^\times \cdot (k_\infty^\times \times \prod_\pfk O_\pfk^\times)} \longrightarrow 1
$$
implies
\begin{eqnarray}
\text{vol}(K^\times  \backslash K_\AA^\times/k_\AA^\times)
&=&\text{vol}\Big(K^\times  \backslash K_\AA^\times/(k_\infty^\times \times \prod_{\pfk } O_\pfk^\times)\Big) \nonumber \\
&=& \frac{\#\big(K^\times \backslash K_\AA^\times/(K_\infty^\times \times \prod_\pfk O_{K_\pfk}^\times)\big)}{\#(O_K^\times/\FF_q^\times)} \cdot \text{vol}\left(\frac{K_\infty^\times \times \prod_\pfk O_{K_\pfk}^\times}{k_\infty^\times \times \prod_{\pfk} O_\pfk^\times}\right).
\nonumber
\end{eqnarray}
The result follows from
$$
\frac{\#\big(K^\times \backslash K_\AA^\times/(K_\infty^\times \times \prod_\pfk O_{K_\pfk}^\times)\big)}{\#(O_K^\times/\FF_q^\times)}
= \frac{h(O_K)}{w(O_K)}
$$
and
\begin{eqnarray}
\text{vol}\left(\frac{K_\infty^\times \times \prod_\pfk O_{K_\pfk}^\times}{k_\infty^\times \times \prod_{\pfk} O_\pfk^\times}\right)
&=&
\text{vol}(K_\infty^\times/k_\infty^\times) \cdot \prod_{\pfk} \text{vol}(O_{K_\pfk}^\times/O_\pfk^\times) \nonumber \\
&=&
e_\infty(K/k)\cdot q^{1/e_\infty(K/k)}
\cdot \Vert \Delta(O_K/A)\Vert^{-1/2}.
\nonumber
\end{eqnarray}
\end{proof}

\begin{rem}
Let $\varsigma_K : k^\times\backslash k_\AA^\times \rightarrow \{\pm 1\}$ be the quadratic Hecke character associated with $K/k$, and let $L(s,\varsigma_K)$ be the $L$-function of $\varsigma_K$. 
It is known that (cf.\ \cite[Section 2.2]{CWY}, see also \cite[Theorem 5.9]{Ros})
$$
L(1,\varsigma_K)
= \frac{\#(O_K^\times)}{\#(\FF_q^\times)} \cdot q \cdot  \Big(q^{(1-e_\infty(K/k))/2} \cdot \Vert \Delta(O_K/A)\Vert^{-1/2}\Big) \cdot \frac{h(O_K)}{2/e_\infty(K/k)}.
$$
The above proposition says in particular that
$$
\text{vol}(K^\times  \backslash K_\AA^\times/k_\AA^\times) = 2 \cdot L(1,\varsigma_K).
$$
\end{rem}

Recall the following fact (cf.\ \cite[Chapter I (12.12) Theorem]{Neu}):

\begin{lem}\label{lem: CN}
For each $A$-order $\Ocal$ in an imaginary quadratic extension $K$ of $k$, let $h(\Ocal)$ be the proper ideal class number of $\Ocal$ and $w(\Ocal) := \#(\Ocal^\times)/(q-1)$. Then
$$
\frac{h(\Ocal)}{w(\Ocal)} = \frac{h(O_K)}{w(O_K)} \cdot
\prod_{\pfk} \#\left(\frac{O_{K_\pfk}^\times}{\Ocal_\pfk^\times}\right).
$$
Here $\Ocal_\pfk := \Ocal \otimes_A A_\pfk$ for every non-zero prime ideal $\pfk$ of $A$.
\end{lem}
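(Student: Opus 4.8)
The plan is to argue idelically, in the spirit of the proof of Proposition~\ref{prop: VOL-CN}. Put $U_{O_K} := K_\infty^\times \times \prod_\pfk O_{K_\pfk}^\times$ and $U_\Ocal := K_\infty^\times \times \prod_\pfk \Ocal_\pfk^\times$, so that $U_\Ocal \subseteq U_{O_K} \subseteq K_\AA^\times$ and $\Ocal_\pfk = O_{K_\pfk}$ for every $\pfk$ not dividing the conductor of $\Ocal$ in $O_K$. The first step is to identify the proper ideal class group of $\Ocal$ with the double coset space $K^\times\backslash K_\AA^\times/U_\Ocal$: an invertible fractional $\Ocal$-ideal is locally principal, hence recovered from the class modulo $\prod_\pfk \Ocal_\pfk^\times$ of a finite idele; two ideles represent the same proper ideal class precisely when they differ by an element of $K^\times\cdot\prod_\pfk\Ocal_\pfk^\times$; and enlarging $\prod_\pfk\Ocal_\pfk^\times$ to $U_\Ocal$ by the factor $K_\infty^\times$ changes nothing, since $K_\infty^\times$ is divided out and $K^\times$ embeds injectively into the finite ideles. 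Taking $\Ocal = O_K$ gives $h(O_K) = \#\big(K^\times\backslash K_\AA^\times/U_{O_K}\big)$, and in general $h(\Ocal) = \#\big(K^\times\backslash K_\AA^\times/U_\Ocal\big)$.

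Since every group in sight is a subgroup of the abelian group $K_\AA^\times$, I would then use the tower $K^\times U_\Ocal\subseteq K^\times U_{O_K}\subseteq K_\AA^\times$ to write $h(\Ocal) = h(O_K)\cdot[\,K^\times U_{O_K}:K^\times U_\Ocal\,]$, and compute the index by the second isomorphism theorem. From $K^\times\cap U_{O_K} = O_K^\times$ one gets $K^\times U_{O_K}/K^\times U_\Ocal \cong U_{O_K}/(O_K^\times U_\Ocal)$, and therefore
\[
[\,K^\times U_{O_K}:K^\times U_\Ocal\,] \;=\; \frac{[\,U_{O_K}:U_\Ocal\,]}{[\,O_K^\times : O_K^\times\cap U_\Ocal\,]} \;=\; \frac{\prod_\pfk \#\big(O_{K_\pfk}^\times/\Ocal_\pfk^\times\big)}{\#\big(O_K^\times/\Ocal^\times\big)},
\]
where the product is finite because only the primes dividing the conductor contribute, and where $O_K^\times\cap U_\Ocal = \Ocal^\times$ because an element of $O_K^\times$ lying in every $\Ocal_\pfk^\times$ lies, with its inverse, in $\bigcap_\pfk\Ocal_\pfk = \Ocal$.

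Finally I would unwind the weights: since $\FF_q^\times\subseteq\Ocal^\times\subseteq O_K^\times$, we have $w(O_K)/w(\Ocal) = \#(O_K^\times)/\#(\Ocal^\times) = \#(O_K^\times/\Ocal^\times)$, so substituting into $h(\Ocal) = h(O_K)\cdot\big(\prod_\pfk\#(O_{K_\pfk}^\times/\Ocal_\pfk^\times)\big)/\#(O_K^\times/\Ocal^\times)$ cancels the factor $\#(O_K^\times/\Ocal^\times)$ and, after dividing by $w(\Ocal)$, produces exactly the asserted identity $\tfrac{h(\Ocal)}{w(\Ocal)} = \tfrac{h(O_K)}{w(O_K)}\prod_\pfk\#(O_{K_\pfk}^\times/\Ocal_\pfk^\times)$. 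The only step that requires genuine care — everything after it being formal index bookkeeping — is the first one, namely the idelic description of the \emph{proper} (equivalently, invertible, or locally free rank one) ideal class group of the possibly non-maximal order $\Ocal$, together with the verification that passing between finite and full ideles is harmless; as an alternative, one may simply transcribe the number-field argument of \cite[Chapter~I~(12.12)~Theorem]{Neu}, which uses nothing particular to $\ZZ$.
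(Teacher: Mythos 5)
Your argument is correct and complete. Note that the paper gives no proof of this lemma at all — it is quoted as a known fact with a citation to \cite[Chapter I (12.12) Theorem]{Neu} — and your idelic computation is essentially the function-field transcription of that reference's argument (consistent with the idelic bookkeeping already used in the proof of Proposition~\ref{prop: VOL-CN}): the genuinely delicate points, namely the identification of the proper ideal class group of $\Ocal$ with $K^\times\backslash K_\AA^\times\big/\big(K_\infty^\times\times\prod_\pfk\Ocal_\pfk^\times\big)$ via local principality of invertible $\Ocal$-ideals, and the intersections $K^\times\cap\big(K_\infty^\times\times\prod_\pfk O_{K_\pfk}^\times\big)=O_K^\times$ and $O_K^\times\cap\big(K_\infty^\times\times\prod_\pfk\Ocal_\pfk^\times\big)=\Ocal^\times$, are exactly the ones you flag, and your treatment of them is sound.
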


We write $d \prec 0$ for $d \in A$ if the quadratic extension $k(\sqrt{d})$ is imaginary over $k$.
Given $d \in A$ with $d \prec 0$,
let $\Ocal_d := A[\sqrt{d}]$, $h(d):=h(\Ocal_d)$, and $w(d) := w(\Ocal_d)$.

\begin{defn}\label{defn: HCN}
For square-free $\nfk^+,\nfk^- \in A_+$ with $\text{gcd}(\nfk^+,\nfk^-) = 1$, we recall the following \textit{modified Hurwitz class number}
$$
H^{\nfk^+,\nfk^-}(d) := \sum_{\subfrac{\cfk \in A_+}{\cfk^2 \mid d}} \frac{h(d/\cfk^2)}{w(d/\cfk^2)}\cdot \prod_{\pfk \mid \nfk^+}\left(1+\left\{\frac{d/\cfk^2}{\pfk}\right\}\right)\prod_{\pfk \mid \nfk^-}\left(1-\left\{\frac{d/\cfk^2}{\pfk}\right\}\right).
$$
Here
$$\left\{\frac{d}{\pfk}\right\} :=
\begin{cases}
1, & \text{ if either $\pfk$ split in $k(\sqrt{d})$ or $\pfk^2 \mid d$;} \\
-1, & \text{ if $\pfk$ is inert in $k(\sqrt{d})$ and $\ord_\pfk(d) = 0$;} \\
0, & \text{ if $\ord_\pfk(d) = 1$.}
\end{cases}
$$
\end{defn}

Write $$d = d_0 \cdot \prod_{\pfk} \pfk^{2c_{\pfk}},$$
where $d_0 \in A$ is square-free
(and $c_\pfk = 0$ for almost all irreducible $\pfk \in A_+$).
For each irreducible $\pfk \in A_+$ and integer $\ell_\pfk$ with $0\leq \ell_\pfk \leq c_\pfk$, put
$$
e^{\nfk^+,\nfk^-}_\pfk(\ell_\pfk) :=
\begin{cases}\displaystyle 1 \pm \left\{\frac{d_0 \pfk^{2\ell_\pfk}}{\pfk}\right\},
& \text{ if $\pfk \mid \nfk^{\pm}$;}\\
1, & \text{ otherwise.}
\end{cases}
$$

We obtain the following expression for the modified Hurwitz class numbers in later use:

\begin{prop}\label{prop: HCN}
Given $d \in A$ with $d \prec 0$, write $d = d_0 \prod_\pfk \pfk^{2c_\pfk}$.
Then
$$
H^{\nfk^+,\nfk^-}(d)
= \frac{h(d_0)}{w(d_0)} \cdot  \prod_\pfk \left[
\sum_{0\leq \ell_\pfk \leq c_\pfk}\#\left(\frac{\Ocal_{d_0,\pfk}^\times}{\Ocal_{d_0\pfk^{2\ell_\pfk},\pfk}^\times}\right) \cdot e^{\nfk^+,\nfk^-}_\pfk(\ell_\pfk)
\right].
$$
\end{prop}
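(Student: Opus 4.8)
The plan is to expand the definition of $H^{\nfk^+,\nfk^-}(d)$ given in Definition~\ref{defn: HCN}, reorganize the sum over square divisors of $d$ into a sum over tuples of local exponents, and then recognize the result as an Euler-type product over the primes of $A$.

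First I would parametrize the divisors occurring in the sum. Writing $d = d_0\prod_\pfk\pfk^{2c_\pfk}$ with $d_0$ square-free, a monic $\cfk$ satisfies $\cfk^2\mid d$ precisely when $\cfk = \prod_\pfk\pfk^{c_\pfk-\ell_\pfk}$ for a tuple $(\ell_\pfk)_\pfk$ of integers with $0\le\ell_\pfk\le c_\pfk$ (and $\ell_\pfk = 0 = c_\pfk$ for almost all $\pfk$); it is exactly the square-freeness of $d_0$ that forces $\ell_\pfk\ge 0$. For such $\cfk$ one has $d/\cfk^2 = d_0\prod_\pfk\pfk^{2\ell_\pfk}$, so $k(\sqrt{d/\cfk^2}) = k(\sqrt{d_0}) =: K$, and for every finite prime $\pfk$ the element $d/\cfk^2$ differs from $d_0\pfk^{2\ell_\pfk}$ by the square of a unit in $k_\pfk^\times$. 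Consequently the symbol $\{\,\cdot/\pfk\}$ of Definition~\ref{defn: HCN}, which depends on its argument only through $\ord_\pfk$ and the class modulo squares in $k_\pfk^\times$, satisfies $\{d/\cfk^2/\pfk\} = \{d_0\pfk^{2\ell_\pfk}/\pfk\}$; hence the nebentypus-type factor $\prod_{\pfk\mid\nfk^+}\big(1+\{d/\cfk^2/\pfk\}\big)\prod_{\pfk\mid\nfk^-}\big(1-\{d/\cfk^2/\pfk\}\big)$ equals $\prod_\pfk e^{\nfk^+,\nfk^-}_\pfk(\ell_\pfk)$, the product now over all primes with all but finitely many factors equal to $1$.

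Next I would treat the class-number ratio. Since $q$ is odd and $d_0$ is square-free, $\Ocal_{d_0} = A[\sqrt{d_0}]$ is locally maximal at every finite prime: at $\pfk\nmid d_0$ the element $d_0$ is a unit and $A_\pfk[\sqrt{d_0}]$ is \'etale over $A_\pfk$, while at $\pfk\mid d_0$ one has $\ord_\pfk(d_0) = 1$, so $\sqrt{d_0}$ is a uniformizer of the ramified extension $K_\pfk/k_\pfk$. As $d\prec 0$, $K$ is imaginary, so Lemma~\ref{lem: CN} applies: with $\Ocal = \Ocal_{d_0}$ it gives $h(d_0)/w(d_0) = h(O_K)/w(O_K)$, and with $\Ocal = \Ocal_{d/\cfk^2} = \Ocal_{d_0\prod_\pfk\pfk^{2\ell_\pfk}}$ it gives $h(d/\cfk^2)/w(d/\cfk^2) = \big(h(d_0)/w(d_0)\big)\prod_\pfk\#\big(O_{K_\pfk}^\times/\Ocal_{d/\cfk^2,\pfk}^\times\big)$. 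At each prime $\pfk$ the unit $\prod_{\qfk\ne\pfk}\qfk^{2\ell_\qfk}$ is a square in $A_\pfk^\times$, so $\Ocal_{d/\cfk^2,\pfk} = \Ocal_{d_0\pfk^{2\ell_\pfk},\pfk}$ and $O_{K_\pfk} = \Ocal_{d_0,\pfk}$, whence $h(d/\cfk^2)/w(d/\cfk^2) = \big(h(d_0)/w(d_0)\big)\prod_\pfk\#\big(\Ocal_{d_0,\pfk}^\times/\Ocal_{d_0\pfk^{2\ell_\pfk},\pfk}^\times\big)$.

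Finally I would assemble the pieces: substituting the last two computations into Definition~\ref{defn: HCN} writes $H^{\nfk^+,\nfk^-}(d)$ as $h(d_0)/w(d_0)$ times a sum over tuples $(\ell_\pfk)_\pfk$ of a product over $\pfk$ of the local quantities $\#\big(\Ocal_{d_0,\pfk}^\times/\Ocal_{d_0\pfk^{2\ell_\pfk},\pfk}^\times\big)\cdot e^{\nfk^+,\nfk^-}_\pfk(\ell_\pfk)$; since the range $0\le\ell_\pfk\le c_\pfk$ is independent across $\pfk$, this sum of products factors as the asserted product of sums (with the factor at $\pfk$ with $c_\pfk = 0$ reducing to $e^{\nfk^+,\nfk^-}_\pfk(0)$, which is $1$ unless $\pfk\mid\nfk^+\nfk^-$). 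The only genuinely delicate points are the two localization claims — that $\{\,\cdot/\pfk\}$ and $\#\big(O_{K_\pfk}^\times/\Ocal_{\,\cdot,\pfk}^\times\big)$ depend on their argument only through its $\pfk$-part up to unit squares, and that $A[\sqrt{d_0}]$ is everywhere locally maximal — both of which rely on $d_0$ being square-free and $q$ being odd; once these are settled the remainder is bookkeeping.
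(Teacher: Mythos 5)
Your proposal is correct and follows essentially the same route as the paper's proof: reindexing the sum over square divisors $\cfk^2\mid d$ by tuples $(\ell_\pfk)_\pfk$, observing that the symbol $\{\cdot/\pfk\}$ and the local order $\Ocal_{\cdot,\pfk}$ depend only on the $\pfk$-part $d_0\pfk^{2\ell_\pfk}$, applying Lemma~\ref{lem: CN}, and factoring the sum of products into a product of sums. You are somewhat more explicit than the paper about the local maximality of $\Ocal_{d_0}=A[\sqrt{d_0}]$ (which justifies replacing $O_{K_\pfk}^\times$ by $\Ocal_{d_0,\pfk}^\times$), but this is a refinement of the same argument, not a different one.
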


\begin{proof}
For $\boell = (\ell_\pfk)_\pfk \in \prod_\pfk \ZZ$ with $0\leq \ell_\pfk\leq c_\pfk$,
put $d_0(\boell):= d_0 \prod_\pfk \pfk^{2\ell_\pfk}$.
Then
$$
\left\{\frac{d_0(\boell)}{\pfk}\right\} = \left\{\frac{d_0\pfk^{2\ell_\pfk}}{\pfk}\right\}
\quad \text{ and } \quad
\Ocal_{d_0(\boell),\pfk} = \Ocal_{d_0\pfk^{2\ell_\pfk}, \pfk}.
$$
Therefore
\begin{eqnarray}
H^{\nfk^+,\nfk^-}(d)
&=& \sum_{\subfrac{\boell \in \prod_\pfk \ZZ}{0\leq \ell_\pfk \leq c_\pfk}}
\frac{h(d(\boell))}{w(d(\boell))}\cdot
\prod_{\pfk \mid \nfk^+}\left(1+\left\{\frac{d(\boell)}{\pfk}\right\}\right) \cdot
\prod_{\pfk \mid \nfk^-}\left(1-\left\{\frac{d(\boell)}{\pfk}\right\}\right) \nonumber \\
&=&
\frac{h(d_0)}{w(d_0)} \cdot \sum_{\subfrac{\boell \in \prod_\pfk \ZZ}{0\leq \ell_\pfk \leq c_\pfk}}
\left[\prod_\pfk \#\left(\frac{O_{d_0,\pfk}^\times}{\Ocal_{d_0\pfk^{2\ell_\pfk},\pfk}^\times}\right) \cdot \prod_\pfk e_\pfk^{\nfk^+,\nfk^-}(\ell_\pfk)\right] \nonumber \\
&=& 
\frac{h(d_0)}{w(d_0)} \cdot 
\prod_\pfk \left[
\sum_{0\leq \ell_\pfk \leq c_\pfk}\#\left(\frac{\Ocal_{d_0,\pfk}^\times}{\Ocal_{d_0\pfk^{2\ell_\pfk},\pfk}^\times}\right) \cdot e^{\nfk^+,\nfk^-}_\pfk(\ell_\pfk)
\right]. \nonumber
\end{eqnarray}
\end{proof}

\begin{rem}\label{rem: H(0)}
For convention, we put
$$
H^{\nfk^+,\nfk^-}(0)
:=
-\frac{1}
{q^2-1} \cdot \prod_{\pfk\mid \nfk^+}(\Vert \pfk \Vert + 1) \prod_{\pfk \mid \nfk^-}(\Vert \pfk \Vert -1).
$$
This is related to a volume quantity with respect to the ``Tamagawa measure'' on quaternion algebras discussed below.
\end{rem}

\subsection{Tamagawa measure on quaternion algebras}\label{sec: TMB}

Let $B$ be an \textit{indefinite} quaternion algebra over $k$ (i.e.\ $B_\infty := B\otimes_k k_\infty$ is not division).
Put $B_\AA := B\otimes_k k_\AA$.
Let $\tr: B_\AA \rightarrow k_\AA$  be the reduced trace map.
Choose a Haar measure $db = \prod_v db_v$ on $B_\AA$ which is self-dual with respect to the additive character $\psi \circ \tr$.
More precisely, for each non-zero prime ideal $\pfk$ of $A$,
let $R_\pfk$ be a maximal $O_\pfk$-order in $B_\pfk:= B\otimes_k k_\pfk$.
Then
$$
\text{vol}(R_\pfk, db_\pfk)
= \begin{cases} 1/ \Vert \pfk \Vert, & \text{ if $B$ is ramified at $\pfk$;}\nonumber \\
1, & \text{ otherwise.}
\end{cases}
$$
Let $O_{B_\infty}$ be a maximal $O_\infty$-order in $B_\infty$.
Then
$\text{vol}(O_{B_\infty},db_\infty) = q^4$.

Let $\Nr : B_\AA^\times \rightarrow k_\AA^\times$ be the reduced norm map.
For each non-zero prime ideal $\pfk$ of $A$,
we take the Haar measure  $d^\times b_\pfk$ on $B_\pfk^\times$ defined by
$$
d^\times b_\pfk
:= \frac{\Vert \pfk \Vert}{\Vert \pfk \Vert -1}\cdot \frac{db_\pfk}{|\Nr(b_\pfk)|_\pfk}.
$$
In particular,
$$
\text{vol}(R_\pfk^\times, d^\times b_\pfk) =
\left(1-\frac{1}{\Vert \pfk \Vert^{2}}\right) \cdot 
\begin{cases}
1/(\Vert \pfk \Vert -1), & \text{ if $B$ is ramified at $\pfk$;} \\
1, & \text{ otherwise.}
\end{cases}
$$
Similarly, put
$$d^\times b_\infty := \frac{q}{q-1} \cdot \frac{db_\infty}{|\Nr(b_\infty)|_\infty}.
$$
Then
$$
\text{vol}(O_{B_\infty}^\times, d^\times b_\infty) 
= q^4-q^2.
$$
The \textit{Tamagawa measure} $d^\times b$ on $B_\AA^\times$ is the Haar measure satisfying that for every compact open subgroup $\Kcal = \prod_v \Kcal_v$ of $B_\AA^\times$, one has
$$
\text{vol}(\Kcal, d^\times b)
= \prod_v \text{vol}(\Kcal_v, d^\times b_v).
$$

Let $\nfk^- \in A_+$ be the product of the primes at which $B$ is ramified, and 
$\nfk^+ \in A_+$ be a square-free polynomial coprime to $\nfk^-$.
Let $O_B$ be an Eichler $A$-order of type $(\nfk^+,\nfk^-)$ in $B$,
and put $O_{B_\pfk} := O_B \otimes_A O_\pfk$ for non-zero prime $\pfk$ of $A$.
Let $O_{B_\AA} := \prod_v O_{B_v}$.
Then:

\begin{lem}\label{lem: vol-hcn}
The Tamagawa measures on $B_\AA^\times$ and $k_\AA^\times$ induces a Haar measure on $B_\AA^\times/k_\AA^\times$ so that 
$$
\text{\rm vol}(O_{B_\AA}^\times/O_\AA^\times) = \frac{(q-1)(q^2-1)}{\prod_{\pfk \mid \nfk^+}(\Vert \pfk \Vert + 1) \prod_{\pfk \mid \nfk^-}(\Vert \pfk \Vert -1)}
= -\frac{q-1}{H^{\nfk^+,\nfk^-}(0)}.
$$
\end{lem}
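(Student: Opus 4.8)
The second equality is simply the definition of $H^{\nfk^+,\nfk^-}(0)$ recalled in Remark~\ref{rem: H(0)}, so all the content lies in the first one, which I would prove by computing $\mathrm{vol}(O_{B_\AA}^\times,d^\times b)$ and $\mathrm{vol}(O_\AA^\times,d^\times x)$ separately, place by place, and then dividing.

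The first step is to pin down the quotient-measure convention. Since $B_\AA^\times$ and $k_\AA^\times$ are unimodular, the induced measure on $B_\AA^\times/k_\AA^\times$ is the one characterized by the unfolding identity $\int_{B_\AA^\times}f=\int_{B_\AA^\times/k_\AA^\times}\int_{k_\AA^\times}f(gh)\,d^\times h\,d\dot g$. Because $O_{B_\AA}^\times$ is a compact open subgroup with $O_{B_\AA}^\times\cap k_\AA^\times=O_\AA^\times$, it meets each coset $gk_\AA^\times$ either not at all or in exactly one $O_\AA^\times$-coset; plugging $f=\mathbf{1}_{O_{B_\AA}^\times}$ into the unfolding identity then gives $\mathrm{vol}(O_{B_\AA}^\times)=\mathrm{vol}(O_\AA^\times)\cdot\mathrm{vol}(O_{B_\AA}^\times/O_\AA^\times)$, so it remains to evaluate the two adelic volumes as products of the local volumes fixed in Section~\ref{sec: TMB} and Section~\ref{sec: TMk}. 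On the torus side, the normalizations $\mathrm{vol}(O_\pfk,dx_\pfk)=1$, $\mathrm{vol}(O_\infty,dx_\infty)=q$ together with the definitions of $d^\times x_v$ give $\mathrm{vol}(O_\pfk^\times,d^\times x_\pfk)=1$ at every finite place and $\mathrm{vol}(O_\infty^\times,d^\times x_\infty)=q$, hence $\mathrm{vol}(O_\AA^\times,d^\times x)=q$.

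For the quaternion side I would split the places of $k$ into four kinds. At $\infty$ and at the finite $\pfk\nmid\nfk^+\nfk^-$ the order is maximal, and at $\pfk\mid\nfk^-$ it is the maximal order of the local division algebra, so the local volumes $q^4-q^2$, $1-\Vert\pfk\Vert^{-2}$, and $(1-\Vert\pfk\Vert^{-2})/(\Vert\pfk\Vert-1)$ are exactly the ones already written down in Section~\ref{sec: TMB}. The only genuinely new local computation is at $\pfk\mid\nfk^+$: there $O_{B_\pfk}^\times$ is an Eichler unit group of level $\pfk$, i.e.\ up to conjugacy the preimage of a Borel subgroup under the reduction $\GL_2(O_\pfk)\to\GL_2(O_\pfk/\pfk)$, which has index $\#\PP^1(O_\pfk/\pfk)=\Vert\pfk\Vert+1$ in $\GL_2(O_\pfk)$; since $d^\times b_\pfk$ is a Haar measure this yields $\mathrm{vol}(O_{B_\pfk}^\times,d^\times b_\pfk)=(1-\Vert\pfk\Vert^{-2})/(\Vert\pfk\Vert+1)$.

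Finally I would take the product over all places. The tail $\prod_{\pfk\nmid\nfk^+\nfk^-}(1-\Vert\pfk\Vert^{-2})$ is handled by completing it to $\prod_{\pfk}(1-\Vert\pfk\Vert^{-2})=\zeta_A(2)^{-1}=1-q^{-1}$, the Euler product of $\zeta_A(s)=(1-q^{1-s})^{-1}$, and dividing back the finitely many factors at $\pfk\mid\nfk^+\nfk^-$; the factor at $\pfk\mid\nfk^+$ then collapses to $(\Vert\pfk\Vert+1)^{-1}$ and the one at $\pfk\mid\nfk^-$ to $(\Vert\pfk\Vert-1)^{-1}$, giving
$$\mathrm{vol}(O_{B_\AA}^\times,d^\times b)=(q^4-q^2)(1-q^{-1})\prod_{\pfk\mid\nfk^+}\frac{1}{\Vert\pfk\Vert+1}\prod_{\pfk\mid\nfk^-}\frac{1}{\Vert\pfk\Vert-1}=\frac{q(q-1)(q^2-1)}{\prod_{\pfk\mid\nfk^+}(\Vert\pfk\Vert+1)\prod_{\pfk\mid\nfk^-}(\Vert\pfk\Vert-1)}.$$
Dividing by $\mathrm{vol}(O_\AA^\times,d^\times x)=q$ produces the first displayed equality, and the second follows on inserting $H^{\nfk^+,\nfk^-}(0)=-(q^2-1)^{-1}\prod_{\pfk\mid\nfk^+}(\Vert\pfk\Vert+1)\prod_{\pfk\mid\nfk^-}(\Vert\pfk\Vert-1)$ from Remark~\ref{rem: H(0)}. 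No step here is genuinely hard; the two places where care is needed are (i) the quotient-measure bookkeeping, so that the factor $q=\mathrm{vol}(O_\AA^\times)$ is divided out exactly once, and (ii) recognizing $\prod_\pfk(1-\Vert\pfk\Vert^{-2})=1-q^{-1}$, which is where convergence of the Euler product enters and where the shape $q(q-1)(q^2-1)$ of the numerator comes from.
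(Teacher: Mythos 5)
Your proof is correct and follows essentially the same route as the paper: local volumes of the Eichler unit groups (with the index $\Vert\pfk\Vert+1$ at $\pfk\mid\nfk^+$), the Euler product $\prod_\pfk(1-\Vert\pfk\Vert^{-2})=1-q^{-1}$, and division by $\mathrm{vol}(O_\AA^\times)=q$, the paper writing this directly as the product of local ratios $\mathrm{vol}(O_{B_v}^\times)/\mathrm{vol}(O_v^\times)$. Your explicit unfolding justification of the quotient-measure bookkeeping is a minor elaboration of what the paper leaves implicit, not a different argument.
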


\begin{proof}
For each non-zero prime ideal $\pfk$ of $A$, let $R_\pfk$ be a maximal $O_\pfk$-order containing $O_{B_\pfk}$.
As $\nfk^+$ is square-free, we have
$$
\#(R_\pfk^\times/O_{B_\pfk}^\times) = 
\begin{cases}
\Vert \pfk \Vert + 1, & \text{ if $\pfk \mid \nfk^+$;}\\
1, & \text{ otherwise.}
\end{cases}
$$
Thus
$$
\text{vol}(O_{B_\pfk}^\times)
= \left(1-\frac{1}{\Vert \pfk \Vert^2}\right)\cdot
\begin{cases}
1/(\Vert \pfk \Vert -1), & \text{ if $\pfk \mid \nfk^-$;} \\
1/(\Vert \pfk \Vert + 1), & \text{ if $\pfk \mid \nfk^+$;}\\
1, & \text{ otherwise.}
\end{cases}
$$
Notice that
$$
\prod_\pfk\left(1-\frac{1}{\Vert \pfk \Vert^s}\right)^{-1}
= \frac{1}{1-q^{1-s}}, \quad \re(s)>1.
$$
Therefore we obtain
$$\prod_\pfk\left(1-\frac{1}{\Vert \pfk \Vert^2}\right) = 1-q^{1-2} = \frac{q-1}{q},
$$
and
\begin{eqnarray}
\text{vol}(O_{B_\AA}^\times/O_\AA^\times)
&=& \frac{\text{vol}(O_{B_\infty}^\times)}{\text{vol}(O_\infty^\times)} \cdot \prod_\pfk\frac{\text{vol}(O_{B_\pfk}^\times)}{\text{vol}(O_\pfk^\times)} \nonumber \\ 
&=& \frac{q^4-q^2}{q} \cdot \prod_\pfk\left(1-\frac{1}{\Vert \pfk \Vert^2}\right) \prod_{\pfk \mid \nfk^+}\left(\frac{1}{\Vert \pfk \Vert + 1}\right) \prod_{\pfk \mid \nfk^-}\left(\frac{1}{\Vert \pfk \Vert -1}\right) \nonumber \\ 
&=& 
\frac{(q-1)(q^2-1)}{\prod_{\pfk \mid \nfk^+}(\Vert \pfk \Vert + 1) \prod_{\pfk \mid \nfk^-}(\Vert \pfk \Vert -1)} \nonumber \\
&=& - \frac{q-1}{H^{\nfk^+,\nfk^-}(0)}. \nonumber 
\end{eqnarray}
The last equality follows directly from the definition of $H^{\nfk^+,\nfk^-}(0)$.
\end{proof}

\begin{rem}
The Haar measure on $B_\AA^\times/k_\AA^\times$ induced by the Tamagawa measures on $B_\AA^\times$ and on $k_\AA^\times$ satisfies (cf.\ \cite[Theorem 3.3.1]{Weil2})
$$
\text{vol}(B^\times  \backslash B_\AA^\times/k_\AA^\times)
= 2.
$$
\end{rem}

\subsection{Weil representation and theta series}\label{sec: WRTS}

Given a place $v$ of $k$,
let $\widetilde{\SL}_2(k_v)$ be the metaplectic cover of $\SL_2(k_v)$, i.e.\ the non-trivial central externsion of $\SL_2(k_v)$ by $\{\pm 1\}$ associated with the following ``variant'' Kubota $2$-cocycle:
$$\sigma_v^1(g,g'):= \left(\frac{X(gg')}{X(g)}, \frac{X(gg')}{X(g')}\right)_v \cdot \frac{s_v^1(g)s_v^1(g')}{s_v^1(gg')},$$
where
\begin{itemize}
\item $(\cdot,\cdot)_v$ is the (quadratic) Hilbert symbol;
\item 
$$
X\begin{pmatrix} a&b \\c & d \end{pmatrix}:= 
\begin{cases} c & \text{ if $c \neq 0$;}\\
d & \text{ if $c = 0$;}
\end{cases}
$$
\item
$$
s_v^1\begin{pmatrix}a&b \\ c&d\end{pmatrix} :=
\begin{cases}
(c,d)_v, & \text{ if $\ord_v(c)$ is odd and $d \neq 0$;}\\
1, & \text{ otherwise.}
\end{cases}
$$
\end{itemize}
Identifying $\widetilde{\SL}_2(k_v)$ with $\SL_2(k_v) \times \{\pm 1\}$ as sets, the group law of $\widetilde{\SL}_2(k_v)$ is given by
$$(g_1,\xi_1)\cdot (g_2,\xi_2) := \big(g_1g_2, \xi_1\xi_2 \sigma_v^1(g_1,g_2)\big),\quad
\forall (g_1,\xi_1), (g_2,\xi_2) \in \widetilde{\SL}_2(k_v).
$$
It is known that (cf.\ \cite[Section 2.3]{Gel}) the inclusion map $\SL_2(O_v)\hookrightarrow \widetilde{\SL}_2(k_v)$ defined by
$(\kappa_v \mapsto \tilde{\kappa}_v := (\kappa_v, 1))$
is actually a group homomorphism.
Set
$$\sigma^1(g,g'):= \prod_v(g_v,g'_v), \quad \forall g = (g_v)_v, g' = (g_v')_v \in \SL_2(k_\AA),$$
which is a well-defined $2$-cocycle of $\SL_2(k_\AA)$.
This induces a non-trivial central extension $\widetilde{\SL}_2(k_\AA)$ of $\SL_2(k_\AA)$ by $\{\pm 1\}$, called the metaplectic cover of $\SL_2(k_\AA)$.
Moreover, the inclusions $\SL_2(k)\hookrightarrow \widetilde{\SL}_2(k_\AA)$ and $\SL_2(O_\AA)\hookrightarrow \widetilde{\SL}_2(k_\AA)$ defined by
$$\gamma \longmapsto \tilde{\gamma}:=\big(\gamma, \prod_vs_v(\gamma)\big), \quad \forall \gamma \in \SL_2(k)
\quad \text{ and } \quad \kappa \longmapsto \tilde{\kappa}:= (\kappa,1), \quad \forall \kappa \in \SL_2(O_\AA)
$$
are group homomorphisms.\\

Let $(V,Q_V)$ be a non-degenerated quadratic space over $k$ with $\dim_k(V) = n$.
For each place $v$ of $k$, let $V(k_v) := V\otimes_k k_v$ and $S(V(k_v))$ be the space of Schwartz function on $V(k_v)$.
The {\it Weil representation} $\omega_v^V$ of $\widetilde{\SL}_2(k_v) \times \Oo(V)(k_v)$ on $S(V(k_v))$, where $\Oo(V)$ is the orthogonal group of $V$, is given by (cf.\ \cite[Theorem 2.22]{Gel}):

\begin{eqnarray}
(1) && \omega^{V}_v(h) \phi(x)= \phi(h^{-1} x), \text{ $h \in \Oo(V)(k_v)$}, \phi\in S(V(k_v)); \nonumber \\
(2) && \omega^{V}_v(1,\xi) \phi(x)= \xi^n \cdot \phi(x), \text{ $\xi \in \{\pm 1\}$}; \nonumber \\
(3) && \omega^{V}_v \left(\begin{pmatrix}1&u\\0&1\end{pmatrix},1\right)\phi(x) = \psi_v(u Q_{V}(x))\cdot \phi(x), \text{ $u \in k_v$};\nonumber \\
(4) && \omega^{V}_v \left(\begin{pmatrix}a_v&0\\0&a_v^{-1}\end{pmatrix},1\right)\phi(x)=|a_v|_v^{\frac{n}{2}} \cdot (a_v,a_v)^n_v\cdot  \frac{\varepsilon^{V}_v(a_v)}{\varepsilon^{V}_v(1)} \cdot \phi(a_v x), \text{ $a_v \in k_v^{\times}$;} \nonumber \\
(5) && \omega^{V}_v\left(\begin{pmatrix}0&1\\-1&0\end{pmatrix},1\right)\phi(x)= \varepsilon^{V}_v(1) \cdot \widehat{\phi}(x). \nonumber
\end{eqnarray}
Here:
\begin{itemize}
\item 
$\varepsilon^V_v(a_v)$ is the following \textit{Weil index}:
$$\varepsilon^{V}_v(a_v):= \int_{L_{v}} \psi_v(a_v Q_{V}(x))d_{a_v}x, \quad \forall a_v \in k_v^{\times},$$
where $L_{v}$ is a sufficiently large $O_v$-lattice in $V(k_v)$, and the Haar measure $d_{a_v}x$ is self-dual with respect to the pairing 
$$(x,y)\mapsto \psi_v(a_v \cdot \langle x, y \rangle_V),\quad \forall x,y \in V(k_v),$$
with $\langle \cdot,\cdot \rangle_V$ the bilinear form on $V$ associated with $Q_V$;
\item $\widehat{\phi}(x)$ is the Fourier transform of $\phi$ (with respect to the self-dual Haar measure):
$$\widehat{\phi}(x) :=\int_{V(k_v)} \phi(y)\psi_v(\langle x, y\rangle_V)dy.$$
\end{itemize}

The (global) Weil representation of $\widetilde{\SL}_2(k_\AA)\times \Oo(V)(k_\AA)$ on the Schwartz space $S(V(k_\AA))$, where $V(k_\AA) := V\otimes_k k_\AA$, is $\omega^V:= \otimes_v \omega^V_v$.

\begin{rem}\label{rem: sections}
${}$
\begin{itemize}
\item[(1)] 
Set $V_0 = k$ and $Q_0(x):= x^2$ for $x \in V_0$.
For convention we put
$$\varepsilon_v(a_v) := \varepsilon_v^{V_0}(a_v), \quad \forall a_v \in k_v^\times.$$
It is known that for $a_v,a_v' \in k_v^\times$ one has
$$\frac{\varepsilon_v(a_v)\varepsilon_v(a_v')}{\varepsilon_v(a_va_v')\varepsilon_v(1)} = (a_v,a_v')_v.$$
\item[(2)] Let $(V,Q_V)$ be a non-degenerate quadratic space over $k$ with $\dim_k(V) = n$. 
Choose an orthogonal basis $\{x_1,...,x_n\}$ of $V$ and let $\alpha_i = Q_V(x_i)$, $i=1,...,n$.
Then for every place $v$ of $k$ and $a_v \in k_v^\times$, one has
$$\varepsilon_v^V(a_v) = \prod_{i=1}^n \varepsilon_v(a_v \alpha_i).$$
\item[(3)] For $y = (y_v)_v \in k_\AA^\times$, let
$$
\varepsilon^V(y) := \prod_v \varepsilon_v^V(y_v).
$$
The Weil reciprocity says (cf.\ \cite[Proposition 5]{Weil3}):
$$ \varepsilon^V(\alpha) = 1, \quad \forall \alpha \in k^\times.$$


\item[(3)] When $\dim_k(V)$ is even, the local and global Weil representations actually factor through $\SL_2(k_v)$ and $\SL_2(k_\AA)$, respectively.
\end{itemize}
\end{rem}

Given $\varphi \in S(V(k_\AA))$, the \textit{theta series associated with $\varphi$} is:
$$
\Theta(\tilde{g},h;\varphi):= \sum_{x \in V}\big(\omega^V(\tilde{g},h)\varphi\big)(x), \quad \forall (\tilde{g},h) \in \widetilde{\SL}_2(k_\AA)\times \Oo(V)(k_\AA).
$$
For every $\gamma \in \SL_2(k)$, $\tilde{g} \in \widetilde{\SL}_2(k_\AA)$, and $\varphi \in S(V(k_\AA))$ we have
$$
\Theta(\tilde{\gamma}\tilde{g};\varphi) = \Theta(\tilde{g};\varphi).
$$

Given $a \in k$ and $y \in k_\AA^\times$,
let:
$$
\Theta^*(a,y;h;\varphi) := \int_{k\backslash k_\AA} \Theta\left(\Bigg(\begin{pmatrix}y&uy^{-1}\\0&y^{-1}\end{pmatrix},1\Bigg) ,h;\varphi\right)\psi(-au)du,
$$
where the Haar measure $du$ is normalized so that $\text{vol}(k\backslash k_\AA,du) = 1$.
For $u \in k_\AA$, one has the following Fourier expansion
$$
\Theta\left(\Bigg(\begin{pmatrix}y&uy^{-1}\\0&y^{-1}\end{pmatrix},1\Bigg),h;\varphi\right)
= \sum_{a \in k}\Theta^*(a,y;h;\varphi) \psi(au).
$$
We shall focus on particular quadratic spaces with degree $3$ and $4$, and study the Fourier coefficients of the theta integrals associated with special Schwartz functions.

\section{The case of pure quaternions}\label{sec: TSP}

Let $B$ be an indefinite division quaternion algebra over $k$, and $B^o$ be the subspace of pure quaternions in $B$, i.e.\
$$B^o:= \{b \in B \mid \tr(b) = 0\}.$$
Let $$(V^o, Q^o):= (B^o, \Nr),$$
where $\Nr:B \rightarrow k$ is the reduced norm map.
Then $V^o$ is an anisotropic quadratic space of degree $3$ over $k$.
Note that we have the following exact sequence: 
$$
1\longrightarrow k^\times \longrightarrow B^\times \longrightarrow \text{SO}(V^o)\longrightarrow 1,
$$
where the map from $B^\times$ to $\text{SO}(V^o)$ is defined by
$$b \longmapsto h_b:= (x \mapsto bxb^{-1}), \quad \forall b \in B^\times.$$
Thus the chosen Haar measure on $B_\AA^\times/k_\AA^\times$ in Section~\ref{sec: TMB} gives us a Haar measure $dh$ on $\text{SO}(V^o)(k_\AA)$ satisfying 
$$\text{vol}(\text{SO}(V^o)(k)\backslash \text{SO}(V^o)(k_\AA),dh) = \text{vol}(B^\times  \backslash B_\AA^\times/k_\AA^\times, d^\times b) = 2.$$

For $\varphi \in S(V^o(k_\AA))$, we are interested in the following theta integral:
$$
I(\tilde{g};\varphi):= \int_{\text{SO}(V^o)(k)\backslash \text{SO}(V^o)(\AA)} \Theta(\tilde{g},h;\varphi) dh,
\quad \forall \tilde{g} \in \widetilde{\SL}_2(k_\AA),
$$

Given $a \in k$, the $a$-th Fourier coefficients of $I(\cdot;\varphi)$ is:
$$
I^*(a,y;\varphi):=
\int_{k\backslash k_\AA}I\left(\bigg(\begin{pmatrix} y & uy^{-1} \\ 0 & y^{-1} \end{pmatrix},1\bigg);\varphi\right) \psi(-a u) du.
$$

\begin{lem}\label{lem: FCo}
For $a \in k$ and $y \in k_\AA^\times$,
we have $I^*(a,y;\varphi) = 0$ unless there exists $x \in B^o$ such that $x^2 = -a$.
In this case,
\begin{eqnarray}
I^*(a,y;\varphi) 
&=&
|y|_\AA^{3/2} \cdot (y,y)_\AA \cdot \varepsilon^{V^o}(y) \cdot \text{\rm vol}(K_{x}^\times  \backslash K_{x,\AA}^\times/ k_\AA^\times) \cdot  \int_{K_{x,\AA}^\times \backslash B_\AA^\times} \varphi(y b^{-1} x b) d^\times b. \nonumber
\end{eqnarray}
Here $(y,y)_\AA := \prod_v(y_v,y_v)_v$ for $y = (y_v)_v \in k_\AA^\times$, $K_{x}$ is the centralizer of $x$ in $B$, and $K_{x,\AA} := K_x \otimes_k k_\AA$.  
\end{lem}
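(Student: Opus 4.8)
The plan is to unfold the theta integral in the standard Siegel--Weil/Rallis manner. First I would compute the Weil representation action at the matrix that occurs. Write $n(u) = \begin{pmatrix}1&u\\0&1\end{pmatrix}$ and $m(y) = \begin{pmatrix}y&0\\0&y^{-1}\end{pmatrix}$, so that $n(u)m(y) = \begin{pmatrix}y&uy^{-1}\\0&y^{-1}\end{pmatrix}$. Both $n(u)$ and $m(y)$ have vanishing lower-left entry, so the defining formula for $\sigma_v^1$ forces $s_v^1 = 1$ on each of $n(u)$, $m(y)$, $n(u)m(y)$ and reduces the $X$-factor to a Hilbert symbol of the shape $(\,\cdot\,,1)_v = 1$; hence $\sigma^1\big(n(u),m(y)\big) = 1$ and $\big(n(u)m(y),1\big) = (n(u),1)\cdot(m(y),1)$ in $\widetilde{\SL}_2(k_\AA)$. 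Applying $\omega^{V^o}$ through formulas (1), (3), (4) of Section~\ref{sec: WRTS} with $\dim V^o = 3$, using $(y,y)_\AA^3 = (y,y)_\AA$ and $\prod_v \varepsilon_v^{V^o}(1) = 1$ (Weil reciprocity, Remark~\ref{rem: sections}), I would obtain
$$
\Theta\left(\Bigg(\begin{pmatrix}y&uy^{-1}\\0&y^{-1}\end{pmatrix},1\Bigg),h;\varphi\right) = |y|_\AA^{3/2}(y,y)_\AA\varepsilon^{V^o}(y)\sum_{x\in V^o}\psi\big(uQ^o(x)\big)\varphi(yh^{-1}x).
$$

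Next I would integrate over $u\in k\backslash k_\AA$ against $\psi(-au)$. For $x\in V^o$ one has $Q^o(x) - a\in k$, so orthogonality of additive characters (with $\mathrm{vol}(k\backslash k_\AA) = 1$) annihilates every term except those with $Q^o(x) = a$, leaving
$$
I^*(a,y;\varphi) = |y|_\AA^{3/2}(y,y)_\AA\varepsilon^{V^o}(y)\int_{\text{SO}(V^o)(k)\backslash\text{SO}(V^o)(\AA)}\ \sum_{\substack{x\in V^o\\ Q^o(x)=a}}\varphi(yh^{-1}x)\,dh.
$$
Since a pure quaternion satisfies $x^2 = -\Nr(x) = -Q^o(x)$, the inner sum is empty --- hence $I^*(a,y;\varphi) = 0$ --- unless there is $x\in B^o$ with $x^2 = -a$; this is the first assertion. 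The interchange of the $u$-integral with the $h$-integral and the sum is legitimate by Fubini, since $\text{SO}(V^o)(k)\backslash\text{SO}(V^o)(\AA)$ is compact ($V^o$ being anisotropic) and $\Theta(\cdot,h;\varphi)$ is continuous and bounded on it.

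Now suppose $a\ne 0$ and fix $x_0\in B^o$ with $x_0^2 = -a$. As $B$ is a division algebra, $-a$ is a non-square and $k[x_0]\cong k(\sqrt{-a})$ is a quadratic field equal to its own centralizer $K_{x_0}$ in $B$. By the Skolem--Noether theorem $B^\times$, hence $\text{SO}(V^o)(k) = B^\times/k^\times$, acts transitively on $\{x\in V^o : Q^o(x) = a\}$ with the stabilizer of $x_0$ equal to $K_{x_0}^\times/k^\times$. Rewriting the inner sum as a sum over $\big(K_{x_0}^\times/k^\times\big)\backslash\text{SO}(V^o)(k)$ and unfolding, the $h$-integral becomes $\int_{(K_{x_0}^\times/k^\times)\backslash\text{SO}(V^o)(\AA)}\varphi(yh^{-1}x_0)\,dh$; since $h^{-1}x_0 = x_0$ whenever $h$ lies in the stabilizer $\text{SO}(V^o)_{x_0}(\AA) = K_{x_0,\AA}^\times/k_\AA^\times$ --- the adelic points of a $k$-anisotropic torus, hence of finite covolume --- this factors as
$$
\mathrm{vol}\big(K_{x_0}^\times\backslash K_{x_0,\AA}^\times/k_\AA^\times\big)\cdot\int_{K_{x_0,\AA}^\times\backslash B_\AA^\times}\varphi(yb^{-1}x_0b)\,d^\times b.
$$
Conjugating $x_0$ by an element of $B^\times$ changes neither factor, so this is the stated formula.

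The conceptual structure above is standard; the delicate part, which I expect to be the main obstacle, is the normalization bookkeeping. On the metaplectic side one must carry the cocycle $\sigma^1$ and the Weil indices of Section~\ref{sec: WRTS} through precisely enough that the genuine character $\varepsilon^{V^o}(y)$ (genuinely non-trivial since $\dim V^o$ is odd) comes out with no residual root of unity --- which is exactly where $\prod_v\varepsilon_v^{V^o}(1) = 1$ enters. On the measure side, the splitting that produces $\mathrm{vol}\big(K_{x_0}^\times\backslash K_{x_0,\AA}^\times/k_\AA^\times\big)$ requires the Haar measure on $\text{SO}(V^o)_{x_0}(\AA)$ induced from $dh$ to agree with the one on $K_{x_0,\AA}^\times/k_\AA^\times$ fixed in Section~\ref{sec: HCN}; this should follow from the fact that the reduced trace of $B$ restricts to the field trace of $K_{x_0}$, so that the respective self-dual measures are compatible under restriction, but pinning this down carefully is the crux.
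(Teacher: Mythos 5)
Your proposal follows essentially the same route as the paper's proof: unfold the theta integral, apply the Weil-representation formulas to produce the factor $|y|_\AA^{3/2}\,(y,y)_\AA\,\varepsilon^{V^o}(y)$, use orthogonality of additive characters to isolate the sum over $V^o_a$, and then invoke transitivity of the conjugation action (Skolem--Noether) with stabilizer $K_x^\times$ to unfold and split off $\mathrm{vol}\big(K_x^\times\backslash K_{x,\AA}^\times/k_\AA^\times\big)$. The only points you treat less completely are minor: the paper's argument covers $a=0$ uniformly (there $x=0$ and $K_x=B$), and the measure issue you flag as the crux is essentially built in, since $dh$ on $\mathrm{SO}(V^o)(k_\AA)$ is by definition the Tamagawa measure on $B_\AA^\times/k_\AA^\times$, so the final step is the standard quotient-measure factorization rather than a delicate compatibility check.
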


\begin{proof}
By definition, we get
\begin{eqnarray}
&& I^*(a,y;\varphi) \nonumber \\
&=& \int_{k\backslash k_\AA} 
\left[\int_{B^\times \backslash B_\AA^\times/ k_\AA^\times} \Theta\left(\bigg(\begin{pmatrix} y & uy^{-1} \\ 0 & y^{-1} \end{pmatrix},1\bigg);\varphi\right) d^\times b\right] \psi(-au) du \nonumber \\
&=& \int_{B^\times  \backslash B_\AA^\times/ k_\AA^\times}
\left[ 
\int_{k\backslash k_\AA} \left( \sum_{x \in V^o} \bigg(\omega^{V^o}\bigg(\begin{pmatrix} y & uy^{-1} \\ 0 & y^{-1}\end{pmatrix},1\bigg) \varphi\bigg)(b^{-1}xb)\right) \psi(-au) du\right] d^\times b. \nonumber 
\end{eqnarray}
For $x \in V^o$ and $b \in B_\AA^\times$, it is straightforward that
$$
\Bigg(\omega^{V^o}\bigg(\begin{pmatrix} y & uy^{-1} \\ 0 & y^{-1}\end{pmatrix},1\bigg) \varphi\Bigg)(b^{-1}xb)
= \psi(uQ^o(x)) \cdot |y|_\AA^{3/2} \cdot (y,y)_\AA \cdot \varepsilon^{V^o}(y) \cdot \varphi(y \cdot b^{-1}xb).
$$
Since
$$
\int_{k\backslash k_\AA} \psi\big(uQ^o(x)\big) \cdot \psi(-au) du
= \begin{cases}
1, & \text{ if $Q^o(x) = a$;}\\
0, & \text{ otherwise,}
\end{cases}
$$
we have
\begin{eqnarray}
&& \int_{k\backslash k_\AA} 
\left( \sum_{x \in V^o} \bigg(\omega^{V^o}\bigg(\begin{pmatrix} y & uy^{-1} \\ 0 & y^{-1}\end{pmatrix},1\bigg) \varphi\bigg)(b^{-1}xb)\right) \psi(-au) du \nonumber \\
&=& |y|_\AA^{3/2} \cdot (y,y)_\AA \cdot \varepsilon^{V^o}(y) \cdot \sum_{x \in V_a^o} \varphi(y \cdot b^{-1}xb), \nonumber
\end{eqnarray}
where $V_a^o := \{x \in V^o \mid Q^o(x) = -x^2 = a\}$.
Therefore,
$$
I^*(a,y;\varphi) = 
|y|_\AA^{3/2} \cdot (y,y)_\AA \cdot \varepsilon^{V^o}(y) \cdot \sum_{x \in V_a^o} \int_{B^\times  \backslash B_\AA^\times/ k_\AA^\times} \varphi(y\cdot b^{-1}xb) d^\times b.
$$
In particular, $V_a^o$ is empty unless $a = 0$ or $-a \notin (k^\times)^2$ and $k(\sqrt{a})$ embeds into $B$.
This shows the first assertion of this lemma.

Suppose $V_a^o$ is non-empty, say $x \in V_a^o$.
Note that the conjugation action of $B^\times$ on $V_a^o$ is transitive, and the stablizer of $x$ in $B^\times$ is exactly $K_x^\times$.
Hence
\begin{eqnarray}
I^*(a,y;\varphi) &=&
|y|_\AA^{3/2} \cdot (y,y)_\AA \cdot \varepsilon^{V^o}(y) \cdot
\int_{K_x^\times  \backslash B_\AA^\times/ k_\AA^\times}\varphi(y\cdot b^{-1}xb) d^\times b \nonumber \\
&=& |y|_\AA^{3/2} \cdot (y,y)_\AA \cdot \varepsilon^{V^o}(y) \cdot
\text{vol}(K_x^\times  \backslash K_{x,\AA}^\times/ k_\AA^\times)
\cdot \int_{K_{x,\AA}^\times \backslash B_\AA^\times} \varphi(y\cdot b^{-1}xb) d^\times b, \nonumber
\end{eqnarray}
and the proof is complete.
\end{proof}

Note that for non-zero $x \in B^o$, we have $K_x = k(x)$, the quadratic field generated by $x$ in $B$.
In particular, $I^*(a,y;\varphi) = 0$ unless $a = 0$ or $-a \notin (k^\times)^2$
and the quadratic field $k(\sqrt{-a})$ embeds into $B$.

\begin{rem}
Suppose $\varphi$ is a pure-tensor, i.e.\ $\varphi = \otimes_v \varphi_v$, where $\varphi_v \in S(V^o(k_v))$.
Then for $x \in V^o$, the following equality holds:
$$
\int_{K_{x,\AA}^\times \backslash B_\AA^\times} \varphi(yb^{-1}xb) d^\times b
= \prod_v \int_{K_{x,v}^\times \backslash B_v^\times} \varphi_v(y_vb_v^{-1}x b_v) d^\times b_v.
$$
We shall choose a particular pure-tensor Schwartz function $\varphi^o = \otimes_v \varphi^o_v \in S(V^o(k_\AA))$ so that the associated Fourier coefficients can be expressed in terms of modified Hurwitz class numbers.
\end{rem}

\subsection{Particular theta integral}\label{sec: PTI1}

Let $\nfk^- \in A_+$ be the product of the primes at which $B$ is ramified, and take a square free $\nfk^+ \in A_+$ coprime to $\nfk^-$.
Let $O_B$ be an Eichler $A$-order of type $(\nfk^+,\nfk^-)$ in $B$, and take $\Lambda^o:= O_B \cap B^o$.
For each non-zero prime $\pfk$ of $A$, take
$\varphi^o_v := \mathbf{1}_{\Lambda^o_\pfk}$, the characteristic function of $\Lambda^o_\pfk := \Lambda^o\otimes_A O_\pfk$.
To choose $\varphi^o_\infty$, we first identify $B_\infty:= B\otimes_k k_\infty$ with $\Mat_2(k_\infty)$, and take $O_{B_\infty}:= \Mat_2(O_\infty)$.
Let $L_\infty := \pi_\infty O_{B_\infty}$ and
$$L_\infty':= \left\{\begin{pmatrix} a&b \\ c&d \end{pmatrix} \in L_\infty \ \bigg|\ c \in \pi_\infty^2 O_\infty\right\}.$$
Put $L_\infty^o := L_\infty \cap B^o$ and $L_\infty^{\prime,o}:= L_\infty' \cap B^o$.
We take
$$\varphi_\infty^o := \mathbf{1}_{L_\infty^o} - \frac{q+1}{2} \cdot \mathbf{1}_{L_\infty^{\prime,o}}.$$

\begin{prop}\label{prop: TLo}
Let $\nfk := \nfk^+\nfk^-$ and 
$$
\Kcal_0^{1}(\nfk\infty) := \left\{ \begin{pmatrix} a&b \\c&d \end{pmatrix} \in \SL_2(O_\AA)\ \bigg|\ c \equiv 0  \bmod \nfk\infty\right\}.
$$
Then for $\gamma \in \SL_2(k)$, $\tilde{g} \in \widetilde{\SL_2}(k_\AA)$, and $\kappa \in \Kcal_0^{1}(\nfk \infty)$, we have
$$I(\tilde{\gamma} \tilde{g} \tilde{\kappa}; \varphi^o) = I(\tilde{g};\varphi^o).$$
Here $\tilde{\gamma}$ for $\gamma \in \SL_2(k)$ and $\tilde{\kappa}$ for $\kappa \in \SL_2(O_\AA)$ are introduced in {\rm \textit{Remark}~\ref{rem: sections}}.
\end{prop}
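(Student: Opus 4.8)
The plan is to reduce everything to the single assertion that the Schwartz function $\varphi^o$ is fixed by the Weil representation of (the image in $\widetilde{\SL}_2(k_\AA)$ of) $\Kcal_0^{1}(\nfk\infty)$. Granting this, the right-invariance follows because $\omega^{V^o}$ is a genuine representation of $\widetilde{\SL}_2(k_\AA)$ and $\kappa\mapsto\tilde\kappa$ is a homomorphism on $\SL_2(O_\AA)$, so $\Theta(\tilde g\tilde\kappa,h;\varphi^o)=\Theta\big(\tilde g,h;\omega^{V^o}(\tilde\kappa)\varphi^o\big)=\Theta(\tilde g,h;\varphi^o)$, and one integrates over $h$; the left-invariance $I(\tilde\gamma\tilde g;\varphi^o)=I(\tilde g;\varphi^o)$ for $\gamma\in\SL_2(k)$ is immediate from the recorded automorphy $\Theta(\tilde\gamma\tilde g,h;\varphi^o)=\Theta(\tilde g,h;\varphi^o)$, the $h$-integral being over the compact quotient $\text{SO}(V^o)(k)\backslash\text{SO}(V^o)(k_\AA)$ (it converges since $V^o$ is anisotropic). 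It therefore remains to prove $\omega^{V^o}(\tilde\kappa)\varphi^o=\varphi^o$ for $\kappa\in\Kcal_0^{1}(\nfk\infty)$. Since $\varphi^o=\otimes_v\varphi^o_v$ and $\Kcal_0^{1}(\nfk\infty)=\prod_v\Kcal_v$, with $\Kcal_\pfk=\SL_2(O_\pfk)$ for $\pfk\nmid\nfk$, $\Kcal_\pfk=\Gamma_0(\pfk)\cap\SL_2(O_\pfk)$ for $\pfk\mid\nfk$, and $\Kcal_\infty=\Gamma_0(\pi_\infty)\cap\SL_2(O_\infty)$, this reduces to the local assertion $\omega^{V^o}_v(\tilde\kappa_v)\varphi^o_v=\varphi^o_v$ for every place $v$ and every $\kappa_v\in\Kcal_v$. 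Each $\Kcal_v$ is generated by the elementary matrices $n(b):=\left(\begin{smallmatrix}1&b\\0&1\end{smallmatrix}\right)$, $b\in O_v$, and $\bar n(c):=\left(\begin{smallmatrix}1&0\\c&1\end{smallmatrix}\right)$, $c\in\cfk_v$ (where $\cfk_v=O_\pfk$, $\pfk O_\pfk$, or $\pi_\infty O_\infty$ respectively), together with the torus elements $t(u):=\left(\begin{smallmatrix}u&0\\0&u^{-1}\end{smallmatrix}\right)$, $u\in O_v^\times$; and since the section is a homomorphism on $\SL_2(O_v)$, the lifts of these generators compose without extra cocycle signs, so it suffices to check invariance under each type.

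The generators $n(b)$ and $t(u)$ I would treat directly from the explicit formulas for $\omega^{V^o}_v$. For $n(b)$, $\omega^{V^o}_v(\widetilde{n(b)})\varphi^o_v(x)=\psi_v\big(bQ^o(x)\big)\varphi^o_v(x)$, and $Q^o=\Nr$ is valued in the conductor of $\psi_v$ on the support of $\varphi^o_v$ for $b\in O_v$: at the finite places because $\Lambda^o_\pfk\subseteq O_{B_\pfk}$ has integral reduced norm, and at $\infty$ because $\det$ lies in $\pi_\infty^2O_\infty$ on $L_\infty^o$ — which is precisely why one takes $L_\infty^o=\pi_\infty O_{B_\infty}\cap B^o$ (rather than $O_{B_\infty}\cap B^o$) in the definition of $\varphi^o_\infty$. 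For $t(u)$, $\omega^{V^o}_v(\widetilde{t(u)})\varphi^o_v(x)=|u|_v^{3/2}\,(u,u)_v\,\tfrac{\varepsilon^{V^o}_v(u)}{\varepsilon^{V^o}_v(1)}\,\varphi^o_v(ux)$, where $|u|_v=1$, $(u,u)_v=1$ as $q$ is odd, and $\varphi^o_v(ux)=\varphi^o_v(x)$ because the lattices in sight are $O_v$-modules; so one only needs $\varepsilon^{V^o}_v(u)=\varepsilon^{V^o}_v(1)$. Combining Remark~\ref{rem: sections} with the consequence $\varepsilon_v(u\alpha)/\varepsilon_v(\alpha)=(u,\alpha)_v\,\varepsilon_v(u)/\varepsilon_v(1)$ of the Weil-index cocycle identity gives $\varepsilon^{V^o}_v(u)/\varepsilon^{V^o}_v(1)=\big(u,\operatorname{disc}(V^o)\big)_v\,\big(\varepsilon_v(u)/\varepsilon_v(1)\big)^3=\varepsilon_v(u)/\varepsilon_v(1)$, since $\operatorname{disc}(V^o,\Nr)$ is a square and $\big(\varepsilon_v(u)/\varepsilon_v(1)\big)^2=(u,u)_v=1$; and $\varepsilon_v(u)/\varepsilon_v(1)=1$ for a unit $u$ because the conductor of $\psi_v$ has even valuation (so the Weil index of a unit is trivial, by a quadratic Gauss-sum computation).

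The remaining — and principal — case is the lower-triangular generator $\bar n(c)$, the only one not among the explicit Weil formulas and the only one coupling the two lattices that define $\varphi^o_\infty$. Here I would use $\bar n(c)=w^{-1}n(-c)w$ with $w=\left(\begin{smallmatrix}0&1\\-1&0\end{smallmatrix}\right)\in\SL_2(O_v)$, so that $\omega^{V^o}_v(\widetilde{\bar n(c)})\varphi^o_v=\omega^{V^o}_v(\tilde w)^{-1}\,\omega^{V^o}_v(\widetilde{n(-c)})\,\omega^{V^o}_v(\tilde w)\,\varphi^o_v$, and compute $\omega^{V^o}_v(\tilde w)\varphi^o_v=\varepsilon^{V^o}_v(1)\,\widehat{\varphi^o_v}$ explicitly. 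At $\infty$ one finds that $L_\infty^o$ is $\psi_\infty$-self-dual of covolume $1$, while $L_\infty^{\prime,o}$ has covolume $q^{-1}$ with $\psi_\infty$-dual $L_\infty^{\prime\prime,o}:=\{\left(\begin{smallmatrix}a&b\\c&-a\end{smallmatrix}\right)\in\Mat_2(k_\infty):a,c\in\pi_\infty O_\infty,\ b\in O_\infty\}$, so that $\widehat{\varphi^o_\infty}=\mathbf 1_{L_\infty^o}-\tfrac{q+1}{2q}\,\mathbf 1_{L_\infty^{\prime\prime,o}}$. The crucial point is that $Q^o$ is valued in $\pi_\infty O_\infty$ on both $L_\infty^o$ and $L_\infty^{\prime\prime,o}$, so for $c\in\pi_\infty O_\infty$ the multiplier $\psi_\infty(-cQ^o(x))$ is identically $1$ on the support of $\omega^{V^o}_\infty(\tilde w)\varphi^o_\infty$; hence $\omega^{V^o}_\infty(\widetilde{n(-c)})$ acts trivially there and $\omega^{V^o}_\infty(\widetilde{\bar n(c)})\varphi^o_\infty=\omega^{V^o}_\infty(\tilde w)^{-1}\omega^{V^o}_\infty(\tilde w)\varphi^o_\infty=\varphi^o_\infty$. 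The same mechanism works at the finite primes $\pfk\mid\nfk$, where $\Lambda^o_\pfk$ is an Eichler-type lattice whose $\psi_\pfk$-dual carries $Q^o$ with values in $\pfk^{-1}O_\pfk$, killing $\psi_\pfk(-cQ^o(\cdot))$ for $c\in\pfk O_\pfk$, while at the primes $\pfk\nmid\nfk$ the lattice $\Lambda^o_\pfk$ is $\psi_\pfk$-self-dual and the claim is the standard invariance of the characteristic function of a unimodular lattice under $\omega^{V^o}_\pfk(\SL_2(O_\pfk))$. Assembling these generator checks at all places completes the proof; I expect the main obstacle to be precisely this $\infty$-place computation of $\widehat{\varphi^o_\infty}$, which is where the exact shape of $\varphi^o_\infty$ — the coefficient $(q+1)/2$ and the passage from $L_\infty^{\prime,o}$ to its $\psi_\infty$-dual — is forced, and which makes the lower-triangular invariance hold.
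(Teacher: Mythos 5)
Your proposal is correct and follows essentially the same route as the paper: reduce to the local invariance $\omega^{V^o}_v(\tilde{\kappa}_v)\varphi^o_v=\varphi^o_v$ at every place, factor $\kappa_v$ into unipotent, torus and Weyl pieces, and observe that the lower-triangular part (an upper unipotent conjugated by the Weyl element) acts trivially because $Q^o$ takes values in the conductor of $\psi_v$ on the support of $\widehat{\varphi^o_v}$. The paper compresses these verifications into ``straightforward calculations'' after its explicit five-factor decomposition, so your computations of $\widehat{\varphi^o_\infty}$, the dual lattices, and the unit Weil-index ratio are precisely the content it leaves implicit.
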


\begin{proof}
It suffices to show that 
$\omega^{V^o}(\kappa) \varphi^o = \varphi^o$
for every $\kappa = (\kappa_v)_v \in \Kcal_0^1(\nfk\infty)$, i.e.\
\begin{eqnarray}\label{eqn: Inv}
\omega_v^{V^o}(\kappa_v) \varphi^o_v = \varphi^o_v \quad \text{ for every place $v$ of $k$}.
\end{eqnarray}
Given a place $v$ of $k$, notice that  $\SL_2(O_v)$ is generated by $\begin{pmatrix} 0 & 1 \\ -1&0\end{pmatrix}$ and $\begin{pmatrix}1& u \\ 0&1\end{pmatrix}$, $u \in O_v$, and for $\begin{pmatrix} a&b \\ c&d \end{pmatrix} \in \SL_2(O_v)$ with $c \equiv 0 \bmod v$, one has $d \in O_v^\times$ and
$$
\begin{pmatrix}
a&b \\ c&d
\end{pmatrix}
=
\begin{pmatrix}
1 & bd^{-1} \\ 0 & 1
\end{pmatrix}
\begin{pmatrix}
d^{-1} & 0 \\ 0 & d
\end{pmatrix}
\begin{pmatrix}
0&1 \\ -1&0
\end{pmatrix}
\begin{pmatrix}
1 & -d^{-1} c \\ 0&1 
\end{pmatrix}
\begin{pmatrix}
0&-1 \\ 1&0
\end{pmatrix}
.
$$
The equality~\eqref{eqn: Inv} then follows from straightforward calculations.
\end{proof}

The above invariant property of $I(\cdot;\varphi^o)$ implies in particular that for every $\alpha \in k^\times$, $\varepsilon \in O_\AA^\times$, and $u \in O_\AA$, the Fourier coefficient $I^*(a,y;\varphi^o)$ satisfies
$$
I^*(\alpha^{-2}a,\alpha y \varepsilon;\varphi^o) = (y, \alpha \varepsilon)_\AA \cdot I^*(a,y;\varphi^o)
\quad \text{ and } \quad
I^*(a,y;\varphi^o) = I^*(a,y;\varphi^o)\cdot \psi(a y^2 u).
$$
Since 
$$k_\AA^\times = k^\times \cdot (k_\infty^\times \times \prod_{\pfk\neq \infty} O_\pfk^\times),$$
it suffices to consider $I^*(a,y;\varphi^o)$ for $y \in k_\infty^\times$, and in this case we get
$$I^*(a,y;\varphi^o) = 0 \quad \text{ unless
$a \in A$ with $\deg a + 2 \leq 2 \ord_\infty(y)$.}
$$
Moreover, we may express $I^*(a,y;\varphi^o)$ via modified Hurwitz class numbers in the following:

\begin{thm}\label{thm: FC1}
Given $a \in A$ and $y \in k_\infty^\times$ with $\deg a + 2 \leq 2\ord_\infty(y)$, we have
$$
 I^*(a,y;\varphi^o) = 
\text{\rm vol}(O_{B_\AA}^\times/O_\AA^\times) \cdot \tilde{\beta}(y) \cdot \begin{cases}
 H^{\nfk^+,\nfk^-}(-a), & \text{ if $-a\preceq 0$;}\\
0, & \text{ otherwise.}
\end{cases}
$$
Here
$$
\tilde{\beta}(y) := |y|_\infty^{3/2}\cdot 
\frac{\varepsilon_\infty(y)}{\varepsilon_\infty(1)}, \quad \forall y \in k_\infty^\times.
$$
\end{thm}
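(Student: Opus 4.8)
The plan is to evaluate the orbital--integral formula of Lemma~\ref{lem: FCo} place by place. By the reduction recorded just before the statement it suffices to treat $y\in k_\infty^\times$ and $a\in A$ with $\deg a+2\le 2\ord_\infty(y)$; since $\varphi^o=\otimes_v\varphi^o_v$ is a pure tensor and the Tamagawa measure factorizes, Lemma~\ref{lem: FCo} together with the factorization of the orbital integral over the places of $k$ gives
\[
I^*(a,y;\varphi^o)=|y|_\infty^{3/2}\,(y,y)_\infty\,\varepsilon^{V^o}(y)\cdot\mathrm{vol}\!\left(K_x^\times\backslash K_{x,\AA}^\times/k_\AA^\times\right)\cdot\prod_v\int_{K_{x,v}^\times\backslash B_v^\times}\varphi^o_v\!\left(y_v b_v^{-1}x b_v\right)d^\times b_v,
\]
where $K_x=k(x)=k(\sqrt{-a})$. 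The case $a=0$ is immediate: then $V_0^o=\{0\}$, $\varphi^o(0)=\tfrac{1-q}{2}$, and Lemma~\ref{lem: FCo} with $\mathrm{vol}(B^\times\backslash B_\AA^\times/k_\AA^\times)=2$ yields $I^*(0,y;\varphi^o)=(1-q)\,\tilde{\beta}(y)$, which equals $\mathrm{vol}(O_{B_\AA}^\times/O_\AA^\times)\cdot\tilde{\beta}(y)\cdot H^{\nfk^+,\nfk^-}(0)$ by Lemma~\ref{lem: vol-hcn}. So assume $a\ne 0$; since $B$ is division we have $-a\notin(k^\times)^2$, and $I^*(a,y;\varphi^o)=0$ unless $k(\sqrt{-a})$ embeds into $B$.

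First I would pin down the archimedean scalar $|y|_\infty^{3/2}(y,y)_\infty\varepsilon^{V^o}(y)$. For $y\in k_\infty^\times$, Weil reciprocity (Remark~\ref{rem: sections}) applied to $1\in k^\times$ gives $\varepsilon^{V^o}(y)=\varepsilon^{V^o}_\infty(y)/\varepsilon^{V^o}_\infty(1)$. Writing $V^o$ in a diagonal basis --- for $B=\left(\tfrac{u,v}{k}\right)$ the norm form on $B^o$ is $\langle -u,-v,uv\rangle$, of square discriminant $u^2v^2$ --- and expanding $\varepsilon^{V^o}_\infty$ into a product of rank--one Weil indices via the cocycle relation $\varepsilon_v(a)\varepsilon_v(a')=(a,a')_v\,\varepsilon_v(aa')\varepsilon_v(1)$, one finds that the spurious Hilbert symbols telescope away (here one uses $(u,v)_\infty=1$, since $B$ is split at $\infty$), leaving $(y,y)_\infty\varepsilon^{V^o}(y)=\varepsilon_\infty(y)/\varepsilon_\infty(1)$. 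Hence the scalar prefactor above is exactly $\tilde{\beta}(y)$, and the theorem reduces to showing that $\mathrm{vol}(K_x^\times\backslash K_{x,\AA}^\times/k_\AA^\times)$ times the product of the local orbital integrals equals $\mathrm{vol}(O_{B_\AA}^\times/O_\AA^\times)\cdot H^{\nfk^+,\nfk^-}(-a)$ when $-a\prec 0$, and $0$ otherwise.

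The heart of the argument is the local analysis. At the archimedean place, $\int_{K_{x,\infty}^\times\backslash B_\infty^\times}\varphi^o_\infty(yb^{-1}xb)\,d^\times b$ is computed directly from $\varphi^o_\infty=\mathbf{1}_{L_\infty^o}-\tfrac{q+1}{2}\mathbf{1}_{L_\infty^{\prime,o}}$: when $-a$ is a non--square with $k(\sqrt{-a})$ real, $K_{x,\infty}$ is split and the split orbital integral of this difference of lattice indicators vanishes identically --- the coefficient $\tfrac{q+1}{2}$ is chosen precisely so that this happens --- whence $I^*(a,y;\varphi^o)=0$; when $-a\prec 0$, $K_{x,\infty}$ is a field, the orbital integral of $\mathbf{1}_{L_\infty^o}$ alone is a $\sigma$--type geometric sum in $\ord_\infty(y\sqrt{-a})$, and subtracting $\tfrac{q+1}{2}\mathbf{1}_{L_\infty^{\prime,o}}$ cancels everything but a constant, so that under the truncation hypothesis $\deg a+2\le 2\ord_\infty(y)$ the integral equals $1$. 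At each finite prime $\pfk$ (where $y_\pfk=1$), the integral $\int_{K_{x,\pfk}^\times\backslash B_\pfk^\times}\mathbf{1}_{\Lambda^o_\pfk}(b^{-1}xb)\,d^\times b$ is stratified by which order $\Ocal_{d_0\pfk^{2\ell_\pfk},\pfk}$ is optimally embedded ($0\le\ell_\pfk\le c_\pfk$, where $-a=d_0\prod_\pfk\pfk^{2c_\pfk}$ with $d_0$ squarefree); the change of measure between successive orbit spaces supplies the indices $\#(\Ocal_{d_0,\pfk}^\times/\Ocal_{d_0\pfk^{2\ell_\pfk},\pfk}^\times)$, and Eichler's local optimal--embedding numbers (Appendix~\ref{sec: App-LOE} and~\ref{sec: App-SLI}) supply each stratum's weight --- trivial at $\pfk\nmid\nfk$, $1+\{d_0\pfk^{2\ell_\pfk}/\pfk\}$ at $\pfk\mid\nfk^+$, and $1-\{d_0\pfk^{2\ell_\pfk}/\pfk\}$ at $\pfk\mid\nfk^-$ --- i.e.\ exactly the local factor $e^{\nfk^+,\nfk^-}_\pfk(\ell_\pfk)$ of Definition~\ref{defn: HCN}.

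Finally I would reassemble. The product over finite primes equals $\prod_\pfk\big[\sum_{0\le\ell_\pfk\le c_\pfk}\#(\Ocal_{d_0,\pfk}^\times/\Ocal_{d_0\pfk^{2\ell_\pfk},\pfk}^\times)\,e^{\nfk^+,\nfk^-}_\pfk(\ell_\pfk)\big]$ times explicit local volume constants; multiplying by $\mathrm{vol}(K_x^\times\backslash K_{x,\AA}^\times/k_\AA^\times)=\tfrac{h(d_0)}{w(d_0)}\cdot e_\infty(K_x/k)\,q^{1/e_\infty(K_x/k)}\,\Vert\Delta(O_{K_x}/A)\Vert^{-1/2}$ from Proposition~\ref{prop: VOL-CN} and by the archimedean integral $1$, all leftover volume constants collapse into $\mathrm{vol}(O_{B_\AA}^\times/O_\AA^\times)=\prod_v\mathrm{vol}(O_{B_v}^\times)/\mathrm{vol}(O_v^\times)$, while Proposition~\ref{prop: HCN} identifies what survives as $H^{\nfk^+,\nfk^-}(-a)$. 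The step I expect to be the main obstacle is the archimedean orbital integral: establishing the exact cancellation for split $K_{x,\infty}$ and the precise value $1$ in the field case --- keeping careful track of the nonstandard self--dual normalization $\mathrm{vol}(O_\infty)=q$ --- together with the bookkeeping of Tamagawa normalizations needed so that the global volume constant emerges cleanly after combining with Eichler's counts.
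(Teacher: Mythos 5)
Your proposal follows essentially the same route as the paper's proof: the reduction via Lemma~\ref{lem: FCo}, the Weil-index manipulation identifying the prefactor with $\tilde{\beta}(y)$, the finite-place stratification by local optimal embeddings (Corollary~\ref{cor: OE1} with Lemmas~\ref{lem: OES1} and~\ref{lem: OES2}), the vanishing/constancy of the archimedean orbital integral (Corollary~\ref{cor: OE2}), and reassembly through Proposition~\ref{prop: VOL-CN}, Lemma~\ref{lem: CN}, Proposition~\ref{prop: HCN} and Lemma~\ref{lem: vol-hcn}, including the same treatment of $a=0$. The only quantitative slip is your assertion that the archimedean integral equals $1$ in the imaginary case: with the paper's measures it equals $\frac{1}{e_\infty(K_x/k)}\cdot\mathrm{vol}(O_{B_\infty}^\times/O_\infty^\times)\big/\mathrm{vol}(O_{K_{x,\infty}}^\times/O_\infty^\times)$, and this factor is exactly what cancels the $e_\infty(K_x/k)\cdot\mathrm{vol}(O_{K_{x,\infty}}^\times/O_\infty^\times)$ supplied by Proposition~\ref{prop: VOL-CN} so that the full constant $\mathrm{vol}(O_{B_\AA}^\times/O_\AA^\times)$ emerges --- a normalization point you rightly flag as the delicate step but should carry out explicitly.
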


\begin{proof}
Notice that for each place $v$ of $k$ and $y_v \in k_v^\times$, one has
$$\varepsilon_v(y_v)\cdot \varepsilon_v^{V^o}(y_v)
= \begin{cases}
-1 & \text{ if $B$ is ramified at $v$;} \\
 1 & \text{ otherwise.}
\end{cases}
$$
Thus the product formula of the Weil index in \textit{Remark}~\ref{rem: sections} (3) shows that for $y \in k_\infty^\times$, 
$$|y|_\AA^{3/2}\cdot (y,y)_\AA \cdot \varepsilon^{V^o}(y) = |y|_\infty^{3/2} \cdot (y,y)_\infty \cdot \frac{\varepsilon_\infty(1)}{\varepsilon_\infty(y)} = \tilde{\beta}(y).
$$

If $a = 0$, then $K_x = B$, and Lemma~\ref{lem: FCo} implies
\begin{eqnarray}
I^*(0,y;\varphi^o)
&=& \tilde{\beta}(y) \cdot \text{vol}(B^\times k_\AA^\times\backslash B_\AA^\times) \cdot \varphi^o(0) \nonumber \\
&=& \tilde{\beta}(y) \cdot 2 \cdot \frac{1-q}{2} \nonumber \\
&=& \text{\rm vol}(O_{B_\AA}^\times/O_\AA^\times) \cdot \tilde{\beta}(y) \cdot H^{\nfk^+,\nfk^-}(0). \nonumber
\end{eqnarray}
The last equality follows from Lemma~\ref{lem: vol-hcn}.\\

Suppose $a \neq 0$.
From Lemma~\ref{lem: FCo} we get
\begin{eqnarray}
I^*(a,y;\varphi^o)
&=& \tilde{\beta}(y) \cdot \text{vol}(K_x^\times  \backslash K_{x,\AA}^\times/ k_\AA^\times) \cdot
\int_{K_{x,\infty}^\times \backslash B_\infty^\times} \varphi_\infty^o(y b_\infty^{-1}xb_\infty)d^\times b_\infty
\nonumber \\
&& \quad \quad \quad \quad \quad \quad \quad \quad \quad \quad \cdot 
\prod_\pfk \int_{K_{x,\pfk}^\times \backslash B_\pfk^\times} \varphi_\pfk^o(b_\pfk^{-1}xb_\pfk)d^\times b_\pfk. \nonumber
\end{eqnarray}
Write $a = a_0 \prod_\pfk \pfk^{2c_\pfk}$, where $a_0$ is square-free.
Applying Corollary~\ref{cor: OE1} and \ref{cor: OE2}, we get
$$
\int_{K_{x,\pfk}^\times \backslash B_\pfk^\times} \varphi_\pfk^o(b_\pfk^{-1}xb_\pfk)d^\times b_\pfk
= \frac{\text{vol}(O_{B_\pfk}^\times/O_\pfk^\times)}{\text{vol}(\Ocal_{-a_0,\pfk}^\times/O_\pfk^\times)} \cdot 
\sum_{\ell_\pfk = 0}^{c_{\pfk}}
\#\left(\frac{\Ocal_{-a_0,\pfk}^\times}{\Ocal_{-a_0\pfk^{2\ell_\pfk},\pfk}^\times}\right)\cdot
e(\Ocal_{-a_0\pfk^{2\ell_\pfk},\pfk},O_{B_\pfk}),
$$
and
\begin{eqnarray}
\int_{K_{x,\infty}^\times \backslash B_\infty^\times} \varphi_\infty^o(y b_\infty^{-1}xb_\infty)d^\times b_\infty
&=& \frac{1}{e_\infty(K_x/k)} \cdot 
\frac{\text{vol}(O_{B_\infty}^\times/O_\infty^\times)}{\text{vol}(O_{K_{x,\infty}}^\times/O_\infty^\times)}
\nonumber \\
&& \cdot
\begin{cases}
1, & \text{ if $k(\sqrt{-a})/k$ is imaginary;} \\
0, & \text{ otherwise.}
\end{cases}
\nonumber 
\end{eqnarray}
Thus $I^*(a,y;\varphi^o) = 0$ unless $-a \prec 0$.
In this case,
by Proposition~\ref{prop: VOL-CN} and Lemma~\ref{lem: CN} one has
$$
\text{vol}(K_x^\times  \backslash K_{x,\AA}^\times/k_\AA^\times)
= \frac{h(-a_0)}{w(-a_0)} \cdot e_\infty(K_x/k) \cdot \text{vol}(O_{K_{x,\infty}}^\times/O_\infty^\times) \cdot \prod_{\pfk} \text{vol}(\Ocal_{-a_0,\pfk}^\times/O_\pfk^\times).
$$
Finally, notice that for each non-zero prime ideal $\pfk$ of $A$, from Lemma~\ref{lem: OES1} and \ref{lem: OES2} we have
$$
e(\Ocal_{-a_0\pfk^{2\ell_\pfk},\pfk},O_{B_\pfk})\ = \ 
e^{\nfk^+,\nfk^-}_\pfk(\ell_\pfk).
$$
Hence we conclude that when $-a \prec 0$,
\begin{eqnarray}
I^*(a,y;\varphi^o) &=& \beta(y) \cdot \text{vol}(O_{B_\AA}^\times/O_\AA^\times)
 \nonumber \\
&& \cdot \frac{h(-a_0)}{w(-a_0)} \cdot \prod_\pfk \left[\sum_{\ell_\pfk = 0}^{c_\pfk} 
\#\left(\frac{\Ocal_{-a_0,\pfk}^\times}{\Ocal_{-a_0\pfk^{2\ell_\pfk},\pfk}^\times}\right)\cdot e^{\nfk^+,\nfk^-}(\ell_\pfk)\right]\nonumber \\
&=& \text{vol}(O_{B_\AA}^\times/O_\AA^\times) \cdot \tilde{\beta}(y) \cdot H^{\nfk^+,\nfk^-}(-a). \nonumber
\end{eqnarray}
The last equality follows from Proposition~\ref{prop: HCN}.
\end{proof}

\subsection{Alternative expression of the Fourier coefficients}\label{sec: AFC}
Given $a \in A$, recall that $V^o_a:=\{x \in B^o\mid x^2 = a\}$. 
For $y \in k_\infty^\times$ with $\deg a + 2 \leq 2 \ord_\infty(y)$,
in the proof of Lemma~\ref{lem: FCo} we have the following expression:
$$
I^*(-a,y;\varphi^o)
= \tilde{\beta}(y) \cdot \sum_{x \in V^o_a} \int_{B^\times  \backslash B_\AA^\times/k_\AA^\times}
\varphi^o(yb^{-1}xb)d^\times b.
$$
Put $O_{\widehat{B}}:= O_B\otimes_A \widehat{A}$ with $\widehat{A} := \prod_{\pfk} O_\pfk$.
From the strong approximation theorem one has the following bijection:
$$
O_B^\times  \backslash B_\infty^\times/k_\infty^\times \longleftrightarrow B^\times  \backslash 
B_\AA^\times /k_\AA^\times O_{\widehat{B}}
^\times.$$
Let $\Lambda^o_a:=B^o_a \cap \Lambda^o$.
Then we may write the Fourier coefficient $I^*(-a,y;\varphi^o)$ as
\begin{eqnarray}\label{eqn: AFC}
I^*(-a,y;\varphi^o)
&=& \tilde{\beta}(y)\cdot \text{vol}(O_{\widehat{B}}^\times/\widehat{A}) \cdot \sum_{x \in \Lambda_a^o}
\int_{O_B^\times  \backslash B_\infty^\times/k_\infty^\times} \varphi_\infty^o(y b_\infty^{-1} xb_\infty) d^\times b_\infty. \nonumber \\
&=&
\tilde{\beta}(y)\cdot \text{vol}(O_{\widehat{B}}^\times/\widehat{A}) \cdot \sum_{x \in O_B^\times \backslash \Lambda_a^o}
\int_{(O_B^\times\cap K_x^\times)  \backslash B_\infty^\times/k_\infty^\times} \varphi_\infty^o(y b_\infty^{-1} xb_\infty) d^\times b_\infty.
\end{eqnarray}

\begin{lem}\label{lem: Inf}
Given $a \in A$ with $a \prec 0$ and $x \in \Lambda_a^o$, we have
$$
\int_{(O_B^\times\cap K_x^\times)  \backslash B_\infty^\times/k_\infty^\times} \varphi_\infty^o(y b_\infty^{-1} xb_\infty) d^\times b_\infty
=\text{\rm vol}(O_{B_\infty}^\times/O_\infty^\times) \cdot \frac{q-1}{\#(O_B^\times \cap K_x^\times)}.
$$
\end{lem}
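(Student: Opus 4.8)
The plan is to deduce this from the local integral at the place $\infty$ that was already computed inside the proof of Theorem~\ref{thm: FC1}. Recall from there that, whenever $K_x=k(x)$ is imaginary over $k$ — which holds here because $x^{2}=a$ and $a\prec 0$ — and the standing hypothesis $\deg a+2\le 2\ord_\infty(y)$ is in force (so that $\ord_\infty(\det(yx))=2\ord_\infty(y)-\deg a\ge 2$), one has
\[
\int_{K_{x,\infty}^{\times}\backslash B_\infty^{\times}}\varphi_\infty^{o}\big(y\,b_\infty^{-1}x b_\infty\big)\,d^{\times}b_\infty
=\frac{1}{e_\infty(K_x/k)}\cdot\frac{\mathrm{vol}(O_{B_\infty}^{\times}/O_\infty^{\times})}{\mathrm{vol}(O_{K_{x,\infty}}^{\times}/O_\infty^{\times})}.
\]
It then suffices to relate the coset space $(O_B^{\times}\cap K_x^{\times})\backslash B_\infty^{\times}/k_\infty^{\times}$ appearing in the lemma to $K_{x,\infty}^{\times}\backslash B_\infty^{\times}$.

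First I would record the finiteness and compactness facts forced by $a\prec 0$. The group $\Gamma_x:=O_B^{\times}\cap K_x^{\times}$ coincides with the unit group $(O_B\cap K_x)^{\times}$ of an $A$-order in the imaginary quadratic field $K_x$, hence is finite; moreover $\Gamma_x\cap k_\infty^{\times}=O_B^{\times}\cap k^{\times}=\FF_q^{\times}$, using that $B$ is a division algebra with center $k$ and $O_B\cap k=A$. Consequently the image of $\Gamma_x$ in $K_{x,\infty}^{\times}/k_\infty^{\times}$ has order $\#(O_B^{\times}\cap K_x^{\times})/(q-1)$. On the other hand $K_{x,\infty}^{\times}/k_\infty^{\times}$ is compact, since $K_x/k$ imaginary makes $K_{x,\infty}=K_x\otimes_k k_\infty$ a field.

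Next, the integrand $b_\infty\mapsto\varphi_\infty^{o}(y\,b_\infty^{-1}x b_\infty)$ is invariant under left translation by all of $K_{x,\infty}^{\times}$ (which centralizes $x$) and under right translation by the center $k_\infty^{\times}$, so it descends to a function on $K_{x,\infty}^{\times}\backslash B_\infty^{\times}$. Fibering $\Gamma_x\backslash B_\infty^{\times}/k_\infty^{\times}$ over $K_{x,\infty}^{\times}\backslash B_\infty^{\times}$ with compact fibre $\Gamma_x\backslash K_{x,\infty}^{\times}/k_\infty^{\times}$ (Weil's quotient-measure formula, with the Tamagawa-local normalizations of Sections~\ref{sec: TMk}, \ref{sec: HCN} and \ref{sec: TMB}) yields
\[
\int_{\Gamma_x\backslash B_\infty^{\times}/k_\infty^{\times}}\varphi_\infty^{o}(y\,b_\infty^{-1}x b_\infty)\,d^{\times}b_\infty
=\mathrm{vol}\big(\Gamma_x\backslash K_{x,\infty}^{\times}/k_\infty^{\times}\big)\cdot\int_{K_{x,\infty}^{\times}\backslash B_\infty^{\times}}\varphi_\infty^{o}(y\,b_\infty^{-1}x b_\infty)\,d^{\times}b_\infty,
\]
and because $\Gamma_x$ maps to a finite subgroup of the compact group $K_{x,\infty}^{\times}/k_\infty^{\times}$, the first factor equals $\dfrac{q-1}{\#(O_B^{\times}\cap K_x^{\times})}\,\mathrm{vol}(K_{x,\infty}^{\times}/k_\infty^{\times})$.

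Finally I would insert the displayed $\infty$-formula and invoke the normalization identity $\mathrm{vol}(K_{x,\infty}^{\times}/k_\infty^{\times})=e_\infty(K_x/k)\cdot\mathrm{vol}(O_{K_{x,\infty}}^{\times}/O_\infty^{\times})$ — both sides being $e_\infty(K_x/k)\,q^{1/e_\infty(K_x/k)}$ for the measures fixed in Section~\ref{sec: HCN}, exactly as in the proof of Proposition~\ref{prop: VOL-CN}. The factors $e_\infty(K_x/k)$ and $\mathrm{vol}(O_{K_{x,\infty}}^{\times}/O_\infty^{\times})$ then cancel, leaving precisely $\mathrm{vol}(O_{B_\infty}^{\times}/O_\infty^{\times})\cdot\dfrac{q-1}{\#(O_B^{\times}\cap K_x^{\times})}$, as asserted. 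I expect the only genuine obstacle to be measure bookkeeping: one must make sure that the three Haar measures in the tower $k_\infty^{\times}\subseteq K_{x,\infty}^{\times}\subseteq B_\infty^{\times}$ are normalized so that the fibration identity holds exactly, which is done by unwinding the Tamagawa-local normalizations already used in the proof of Theorem~\ref{thm: FC1} rather than by any new estimate.
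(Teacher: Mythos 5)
Your argument is correct and follows essentially the same route as the paper: factor the integral through $K_{x,\infty}^\times\backslash B_\infty^\times$, compute $\mathrm{vol}\big((O_B^\times\cap K_x^\times)\backslash K_{x,\infty}^\times/k_\infty^\times\big)=\frac{q-1}{\#(O_B^\times\cap K_x^\times)}\cdot e_\infty(K_x/k)\cdot\mathrm{vol}(O_{K_{x,\infty}}^\times/O_\infty^\times)$, and evaluate the remaining local integral by the optimal-embedding computation (Corollary~\ref{cor: OE2}, which is exactly what the proof of Theorem~\ref{thm: FC1} invokes), after which the factors cancel. The only cosmetic difference is that you split the fiber volume into $\frac{q-1}{\#(O_B^\times\cap K_x^\times)}\cdot\mathrm{vol}(K_{x,\infty}^\times/k_\infty^\times)$ and then identify $\mathrm{vol}(K_{x,\infty}^\times/k_\infty^\times)$ with $e_\infty(K_x/k)\cdot\mathrm{vol}(O_{K_{x,\infty}}^\times/O_\infty^\times)$, which the paper states in one step.
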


\begin{proof}
Observe that
\begin{eqnarray}
\int_{(O_B^\times\cap K_x^\times)  \backslash B_\infty^\times/k_\infty^\times} \varphi_\infty^o(y \cdot  b_\infty^{-1} xb_\infty) d^\times b_\infty
&=& \text{vol}\big((O_B^\times\cap K_x^\times)\backslash K_{x,\infty}^\times/k_\infty^\times\big) \nonumber \\
&& \cdot
\int_{K_{x,\infty}^\times \backslash B_\infty^\times}
\varphi_\infty^o(y \cdot b_\infty^{-1} x b_\infty) d^\times b_\infty. \nonumber 
\end{eqnarray}
Since
$$
\text{vol}\big((O_B^\times\cap K_x^\times)\backslash K_{x,\infty}^\times/k_\infty^\times\big)
= \frac{q-1}{\#(O_B^\times \cap K_x^\times)} \cdot e_\infty(K_{x,\infty}/k_\infty) \cdot \text{vol}(O_{K_{x,\infty}}^\times/O_\infty^\times)
$$
and by Corollary~\ref{cor: OE2} (as $a \prec 0$) one has
$$
\int_{K_{x,\infty}^\times \backslash B_\infty^\times}
\varphi_\infty^o(y \cdot b_\infty^{-1} x b_\infty) d^\times b_\infty
= \frac{1}{e_\infty(K_{x,\infty}/k_\infty)} \cdot 
\frac{\text{vol}(O_{B_{\infty}}^\times/O_\infty^\times)}{\text{vol}(O_{K_{x,\infty}}^\times /O_\infty^\times)},
$$
the result holds.
\end{proof}

The equation~\eqref{eqn: AFC} and Lemma~\ref{lem: Inf} show that for $a \in A$ with $a \prec 0$ and $y \in k_\infty^\times$ with $\deg a + 2 \leq 2 \ord_\infty(y)$, we get
$$
I^*(-a,y;\varphi^o)
= \text{\rm vol}(O_{B_\AA}^\times/O_\AA^\times) \cdot \tilde{\beta}(y) \cdot
\sum_{x \in O_B^\times \backslash \Lambda_a^o}
\frac{q-1}{\#(O_B^\times \cap K_x^\times)}.
$$
Moreover,
consider the (right) action of $\{\pm 1\}$ on $O_B^\times \backslash \Lambda_a^o$ by scalar multiplication.
For $[x] \in O_B^\times \backslash \Lambda_a^o$ (represented by $x \in \Lambda_a^o$), let $ [x]\cdot \{\pm 1\}$ be the $\{\pm 1\}$-orbit of $[x]$.
Then
$$
\#\big([x] \cdot \{\pm 1\} \big) = 
\begin{cases}
1, & \text{ if there exists $\gamma \in O_B^\times$ so that $\gamma^{-1} x \gamma = -x$;} \\
2, & \text{ otherwise.}
\end{cases} 
$$
For $x \in \Lambda_a^o$, put $\nu(x) := \#\big([x] \cdot \{\pm 1\}\big)$. 
We conclude that:

\begin{cor}\label{cor: AFC}
Given $a \in A$ with $a \prec 0$ and $y \in k_\infty^\times$ with $\deg a + 2 \leq 2 \ord_\infty(y)$, we have
$$
I^*(-a,y;\varphi^o)
= \text{\rm vol}(O_{B_\AA}^\times/O_\AA^\times) \cdot \tilde{\beta}(y) \cdot
\sum_{x \in O_B^\times \backslash \Lambda_a^o/\{\pm 1\}}
\frac{q-1}{\#(O_B^\times \cap K_x^\times)} \cdot \nu(x).
$$
\end{cor}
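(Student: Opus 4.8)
The plan is to start from the expression for $I^*(-a,y;\varphi^o)$ established immediately before the statement, namely
\[
I^*(-a,y;\varphi^o)
= \text{vol}(O_{B_\AA}^\times/O_\AA^\times) \cdot \tilde{\beta}(y) \cdot
\sum_{x \in O_B^\times \backslash \Lambda_a^o}
\frac{q-1}{\#(O_B^\times \cap K_x^\times)},
\]
and simply reorganize the sum over $O_B^\times\backslash\Lambda_a^o$ according to the orbits of the scalar action of $\{\pm1\}$. First I would observe that the map $x\mapsto -x$ descends to a well-defined involution on $O_B^\times\backslash\Lambda_a^o$ (since $-x\in\Lambda_a^o$ whenever $x\in\Lambda_a^o$, as $(-x)^2 = x^2 = a$, and the action commutes with left multiplication by $O_B^\times$). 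Thus the finite set $O_B^\times\backslash\Lambda_a^o$ is partitioned into $\{\pm1\}$-orbits, each of size $\nu(x)\in\{1,2\}$ as recorded just before the statement.

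Next I would check that the summand $(q-1)/\#(O_B^\times\cap K_x^\times)$ is constant on each $\{\pm1\}$-orbit: since $K_{-x} = k(-x) = k(x) = K_x$, we have $O_B^\times\cap K_{-x}^\times = O_B^\times\cap K_x^\times$, so replacing $x$ by $-x$ (or by $\gamma^{-1}x\gamma$ for $\gamma\in O_B^\times$, which only conjugates $K_x$) leaves the term unchanged. Therefore grouping terms by orbit,
\[
\sum_{x \in O_B^\times \backslash \Lambda_a^o}
\frac{q-1}{\#(O_B^\times \cap K_x^\times)}
= \sum_{x \in O_B^\times \backslash \Lambda_a^o/\{\pm 1\}}
\nu(x)\cdot \frac{q-1}{\#(O_B^\times \cap K_x^\times)},
\]
where in the sum on the right $x$ runs over a set of representatives of the $\{\pm1\}$-orbits and $\nu(x)$ counts the size of each orbit. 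Substituting this into the displayed formula for $I^*(-a,y;\varphi^o)$ gives exactly the asserted identity.

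This argument is entirely bookkeeping, so there is no real obstacle; the only point requiring a moment's care is the verification that the weight $(q-1)/\#(O_B^\times\cap K_x^\times)$ genuinely depends only on the $\{\pm1\}$-orbit and not on the chosen representative, which follows from $K_{-x}=K_x$ as noted above. One should also remark that $\nu(x)=1$ precisely when $-x$ is $O_B^\times$-conjugate to $x$, matching the case distinction stated in the definition of $\nu$, so that the rewriting is consistent with the notation introduced before the corollary.
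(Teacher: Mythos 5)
Your proposal is correct and matches the paper's own argument: the paper likewise derives the preliminary formula from the strong approximation step together with Lemma~\ref{lem: Inf}, and then obtains the corollary by grouping the sum over $O_B^\times\backslash\Lambda_a^o$ into $\{\pm 1\}$-orbits weighted by $\nu(x)$. Your added check that the weight $(q-1)/\#(O_B^\times\cap K_x^\times)$ is constant on orbits (via $K_{-x}=K_x$ and conjugation by elements of $O_B^\times$) is exactly the implicit bookkeeping the paper relies on.
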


This expression allows us to connect $I^*(-a,y;\varphi^o)$ with a weighted sum over the ``CM points'' on the ``Drinfeld-Stuhler modular curves'' in Section~\ref{sec: CMBES}.

\subsection{Extension of $I(g;\varphi^o)$}\label{sec: Ext-PTI}

Notice that the Kubota $2$-cocycle $\sigma^1_\infty$ can be extended to $\GL_2(k_\infty)$ by:
$$\sigma_\infty(g,g'):=
\left(\frac{X(gg')}{X(g)},\frac{X(gg')}{\det(g)X(g')}\right)_\infty \cdot \frac{s_\infty(g) s_\infty(g')}{s_\infty(gg')}
$$
where
$$
s_\infty\left(\begin{pmatrix}a&b \\ c&d \end{pmatrix}\right):=
\begin{cases} \big(c,d/(ad-bc)\big)_\infty, &\text{ if $\ord_\infty(c)$ is odd and $d \neq 0$;}\\
1, & \text{ otherwise.}
\end{cases}
$$
Let $\widetilde{\GL}_2(k_\infty)$ be the central extension of $\GL_2(k_\infty)$ by  $\CC^1:=\{z \in \CC^\times \mid z\bar{z} = 1\}$ associated with $\sigma_\infty$, i.e.\ $\widetilde{\GL}_2(k_\infty)$ is identified with $\GL_2(k_\infty) \times \CC^1$ as sets with the following group law:
$$(g_1,\xi_1)\cdot (g_2,\xi_2) := \big(g_1g_2, \xi_1\xi_2 \sigma_\infty (g_1,g_2)\big), \quad \forall (g_1,\xi_1),(g_2,\xi_2) \in\widetilde{\GL}_2(k_\infty).
$$
Then $\widetilde{\SL}_2(k_\infty)$ becomes a subgroup of $\widetilde{\GL}_2(k_\infty)$.
We remark that $\sigma_\infty$ splits on $\GL_2(O_\infty)$, and the inclusion $k_\infty^\times \hookrightarrow \widetilde{\GL}_2(k_\infty)$ defined by
$$ y \longmapsto \tilde{y}:= \left(y, \frac{\varepsilon_\infty(1)}{\varepsilon_\infty(y)}\right) \in \widetilde{\GL}_2(k_\infty)$$
is a group homomorphism.
Moreover, for $y \in k_\infty^\times$ and $\tilde{g} \in \widetilde{\SL}_2(k_\infty)$, it is checked that
$$ \tilde{y} \cdot \tilde{g} = \tilde{g} \cdot \tilde{y}.$$

Let
$$
\Kcal_\infty^+ := \left\{\begin{pmatrix} a&b \\ c&d \end{pmatrix} \in \GL_2(O_\infty)\ \bigg|\ c \equiv 0 \bmod \pi_\infty \text{ and } (\pi_\infty,d)_\infty = 1\right\},
$$
and let $\widetilde{\Kcal}_\infty^+$ be the corresponding subgroup in $\widetilde{\GL}_2(k_\infty)$.
We introduce the following \textit{weight-$s$} operator $T_{\infty,s}$ on the space of functions $f$ on $\widetilde{\GL}_2(k_\infty)/\widetilde{\Kcal}_\infty^+$ (cf.\ \cite[p.\ 34]{CLWY}):
$$
T_{\infty,s} f (\tilde{g})
:= q^{s/2-1}\cdot \sum_{\epsilon \in \FF_q} f\left(\tilde{g}\cdot \left(\begin{pmatrix} \pi_\infty & \epsilon \\ 0&1\end{pmatrix},1\right)\right), \quad \forall \tilde{g} \in \widetilde{\GL}_2(k_\infty)/\widetilde{\Kcal}_\infty^+.
$$
\begin{defn}
Suppose $s \in \frac{1}{2}\ZZ$. A function $f$ on $\widetilde{\GL}_2(k_\infty)/\widetilde{\Kcal}_\infty^+$ is called a \textit{weight-$s$ metaplectic form} if 
$$ f((1,\xi)\cdot \tilde{g}) = \xi^{2s}\cdot f(\tilde{g})
\quad \text{ and } \quad 
T_{\infty,s} f (\tilde{g}) = f(\tilde{g}) \quad \forall \tilde{g} \in \widetilde{\GL}_2(k_\infty).
$$
\end{defn}

We shall extend $I(\cdot;\varphi^o)$ to a weight-$\frac{3}{2}$ metaplectic form.
Given $(g,\xi) \in \widetilde{\GL}_2(k_\infty)$,
suppose $\ord_\infty(\det(g)) \equiv 0 \bmod 2$.
Write $\det(g) = u \cdot \pi_\infty^{2\ell}$ with $\ell \in \ZZ$ and $u \in O_\infty^\times$.
One gets
$$
g^{(1)}:=  \pi_\infty^{-\ell} \cdot g \cdot \begin{pmatrix} u^{-1} & 0 \\ 0 & 1 \end{pmatrix} \in \SL_2(k_\infty).
$$
Thus we can decompose $(g,\xi)$ into
$$
(g,\xi) = \tilde{\pi}_\infty^{\ell}\cdot (g^{(1)},1) \cdot \left(\begin{pmatrix}u&0 \\ 0&1 \end{pmatrix},1\right) \cdot (1,\xi_1),$$
where
$$
\xi_1 := \xi\cdot \big(\pi_\infty^{\ell}, X(g)\big)_\infty \cdot \frac{\varepsilon_\infty(\pi_\infty^{\ell})}{\varepsilon_\infty(1)} \in \CC^1.
$$
When $\ord_\infty(\det(g)) \equiv 1 \bmod 2$, for $\epsilon \in \FF_q$ we put
$$(g_\epsilon,\xi_\epsilon):= (g,\xi) \cdot \left(\begin{pmatrix} \pi_\infty & \epsilon \\ 0 & 1 \end{pmatrix},1\right).$$
In particular, one has $\ord_\infty(g_\epsilon) \equiv 0 \bmod 2$.
We now define
$$
\vartheta^o(g,\xi):=\frac{1}{\text{vol}(O_{B_\AA}^\times/O_\AA^\times)}\cdot
\begin{cases}
\xi_1^3\cdot I\big((g^{(1)},1);\varphi^o\big), &  \text{ if $\ord_\infty(\det(g)) \equiv 0 \bmod 2$;} \\
& \\
\displaystyle q^{-1/4}\cdot \sum_{\epsilon \in \FF_q} \vartheta^o(g_\epsilon,\xi_\epsilon), & \text{ if $\ord_\infty(\det(g)) \equiv 1 \bmod 2$.}
\end{cases}
$$

Let
$$\Gamma_0^{(1)}(\nfk):= \left\{\begin{pmatrix} a&b \\ c&d \end{pmatrix} \in \SL_2(A)\ \bigg|\ c \equiv 0 \bmod \nfk\right\}.$$
Given $a \in A$ and $s \in \RR$, recall the following analogue of Bessel function introduced in Theorem~\ref{thm: MT2}:
for $y \in k_\infty^\times$,
$$\beta_{a,s}(y):=|y|_\infty^{s/2}\cdot
\begin{cases} 1, & \text{ if $\deg a + 2 \leq \ord_\infty(y)$;}\\
0, & \text{ otherwise.}
\end{cases}$$
We derive that:

\begin{thm}\label{thm: FET0}
The function $\vartheta^o$ is a well-defined weight-$\frac{3}{2}$ metaplectic form on $\widetilde{\GL}_2(k_\infty)/\Kcal_\infty^+$ satisfying that
$$
\vartheta^o(\tilde{z} \tilde{\gamma} \tilde{g}) = \vartheta^o(\tilde{g}),
\quad \forall z \in k_\infty^\times \text{ and }\gamma \in \Gamma_0^{(1)}(\nfk).$$
Moreover, for $(x,y) \in k_\infty\times k_\infty^\times$, we have the following Fourier expansion
\begin{eqnarray}\label{eqn: FE1}
\vartheta^o\left(\begin{pmatrix}y&x \\ 0&1\end{pmatrix},1\right)
&=& \sum_{d \in A,\ d \preceq 0} H^{\nfk^+,\nfk^-}(d) \cdot \big(\beta_{d,\frac{3}{2}}(y)\psi_\infty(-dx)\big).
\end{eqnarray}
\end{thm}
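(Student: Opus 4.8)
The plan is to deduce the statement from the invariance property of the adelic theta integral $I(\cdot;\varphi^o)$ established in Proposition~\ref{prop: TLo}, the explicit Fourier coefficients computed in Theorem~\ref{thm: FC1}, and a careful bookkeeping of the metaplectic cocycle $\sigma_\infty$. For well-definedness, note that when $\ord_\infty(\det g)$ is even the exponent $\ell = \tfrac12\ord_\infty(\det g)$, and hence $u$ and $\xi_1$, are forced, so the even-determinant clause is unambiguous, while the odd-determinant clause is merely a formula built from the even one. The real content is right $\widetilde{\Kcal}_\infty^+$-invariance: for $\kappa\in\Kcal_\infty^+$ one has $(g\kappa)^{(1)} = g^{(1)}\tilde\kappa$ for some $\tilde\kappa\in\SL_2(O_\infty)$ whose lower-left entry lies in $\pi_\infty O_\infty$, hence in the $\infty$-component of $\Kcal_0^1(\nfk\infty)$, under which $I(\cdot;\varphi^o)$ is right-invariant by Proposition~\ref{prop: TLo}; the side condition $(\pi_\infty,d)_\infty = 1$ in the definition of $\Kcal_\infty^+$ is exactly what makes the accompanying Hilbert-symbol and Weil-index factors in $\xi_1$ and $\sigma_\infty$ cancel, and the odd-determinant case follows coset by coset. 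Left $k_\infty^\times$-invariance follows from the commutation $\tilde y\tilde g = \tilde g\tilde y$ and the homomorphism property of $y\mapsto\tilde y$ once the Weil-index factors are matched via Weil reciprocity (Remark~\ref{rem: sections}); left $\Gamma_0^{(1)}(\nfk)$-invariance reduces to the $\SL_2(k)$-invariance of $I(\cdot;\varphi^o)$, since $\det\gamma = 1$ forces $(\gamma g)^{(1)} = \gamma g^{(1)}$ and the section correction factors of $\gamma\in\SL_2(A)$ are absorbed.

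Next I would check that $\vartheta^o$ is a weight-$\tfrac32$ metaplectic form. The equivariance $\vartheta^o((1,\xi)\cdot\tilde g) = \xi^3\vartheta^o(\tilde g) = \xi^{2\cdot 3/2}\vartheta^o(\tilde g)$ follows by induction from the even clause, where $\xi$ enters the factor $\xi_1$ linearly. The Hecke relation $T_{\infty,3/2}\vartheta^o = \vartheta^o$ is tautological on odd-determinant elements, since the normalizing constant $q^{3/4-1} = q^{-1/4}$ of $T_{\infty,3/2}$ is precisely the one appearing in the odd-determinant clause of the definition. On even-determinant elements, one application of $T_{\infty,3/2}$ lands in the odd locus, and unwinding the odd clause once more exhibits $T_{\infty,3/2}\vartheta^o(\tilde g)$ as a weighted average of the values of $\vartheta^o$ at a ``two-step neighbourhood'' of $\tilde g$; via the even clause this is, up to the explicit weight factors, a Hecke-type average of $I(\cdot;\varphi^o)$ on $\widetilde{\SL}_2(k_\infty)$, so the identity $T_{\infty,3/2}\vartheta^o = \vartheta^o$ amounts to the harmonicity of $I(\cdot;\varphi^o)$. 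I expect this to be the main obstacle; I would establish it from the explicit shape of the infinite component $\varphi_\infty^o = \mathbf{1}_{L_\infty^o} - \tfrac{q+1}{2}\mathbf{1}_{L_\infty^{\prime,o}}$, reducing via linearity of the theta series and $I(\tilde g r;\varphi) = I(\tilde g;\omega^{V^o}(r)\varphi)$ to a Schwartz-function identity for the relevant diagonal-times-unipotent translates, which is a direct computation with the Weil-representation formulas of Section~\ref{sec: WRTS}; the coefficient $\tfrac{q+1}{2}$ is dictated precisely by the requirement that this identity hold (this is the $\varphi^o$-analogue of the harmonicity of $I(\cdot;\varphi_\Lambda)$).

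Finally, for the Fourier expansion, left invariance under $\begin{pmatrix}1&n\\0&1\end{pmatrix}$, $n\in A$, yields $\vartheta^o\begin{pmatrix}y&x\\0&1\end{pmatrix} = \sum_{d\in A}c_d(y)\psi_\infty(-dx)$, the characters of $k_\infty$ trivial on $A$ being exactly $x\mapsto\psi_\infty(-dx)$ with $d\in A$ (as $\psi_\infty$ has conductor $\pi_\infty^2 O_\infty$ and is trivial on $A$). For $\ord_\infty(y)$ even, write $y = u\pi_\infty^{2\ell}$; then $\begin{pmatrix}y&x\\0&1\end{pmatrix}^{(1)} = \begin{pmatrix}\pi_\infty^\ell & x\pi_\infty^{-\ell}\\0&\pi_\infty^{-\ell}\end{pmatrix}$, so the even clause together with the Fourier expansion defining $I^*(\cdot;\varphi^o)$ gives $\vartheta^o\begin{pmatrix}y&x\\0&1\end{pmatrix} = \tfrac{\xi_1^3}{\mathrm{vol}(O_{B_\AA}^\times/O_\AA^\times)}\sum_{a\in A}I^*(a,\pi_\infty^\ell;\varphi^o)\,\psi_\infty(ax)$; substituting Theorem~\ref{thm: FC1}, the volume factor cancels, $\xi_1^3\,\tilde{\beta}(\pi_\infty^\ell)$ collapses to $|y|_\infty^{3/4}$ because $(\varepsilon_\infty(\pi_\infty^\ell)/\varepsilon_\infty(1))^4 = 1$ (a consequence of the identity in Remark~\ref{rem: sections}), and the support condition $\deg a + 2 \le 2\ord_\infty(\pi_\infty^\ell) = \ord_\infty(y)$ matches that of $\beta_{d,3/2}$; renaming $d = -a$ and discarding the terms with $d\not\preceq 0$, which vanish by Theorem~\ref{thm: FC1}, yields \eqref{eqn: FE1}. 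For $\ord_\infty(y)$ odd, the odd clause gives $\vartheta^o\begin{pmatrix}y&x\\0&1\end{pmatrix} = q^{-1/4}\sum_{\epsilon\in\FF_q}\vartheta^o\begin{pmatrix}y\pi_\infty & y\epsilon+x\\0&1\end{pmatrix}$ (the cocycle being trivial on these upper-triangular matrices), and inserting the even case together with the character-sum evaluation $\sum_{\epsilon\in\FF_q}\psi_\infty(-dy\epsilon)\in\{0,q\}$, which restricts the range to $\deg d + 2 \le \ord_\infty(y)$, again reproduces \eqref{eqn: FE1}. This completes the proof.
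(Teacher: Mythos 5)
Your treatment of the invariance properties and of the Fourier expansion follows the paper's own route: both invariances come from Proposition~\ref{prop: TLo} together with the definition of $\vartheta^o$, and the expansion \eqref{eqn: FE1} comes from the decomposition of $\left(\begin{smallmatrix} y&x\\0&1\end{smallmatrix}\right)$, Theorem~\ref{thm: FC1}, the observation $\big(\varepsilon_\infty(\pi_\infty^\ell)/\varepsilon_\infty(1)\big)^4=1$, and (in the odd-$\ord_\infty(y)$ case) the character sum over $\epsilon\in\FF_q$ combined with the support of $\beta_{d,\frac{3}{2}}(y\pi_\infty)$ --- all of which you carry out correctly. Where you genuinely diverge is the Hecke relation $T_{\infty,\frac{3}{2}}\vartheta^o=\vartheta^o$ at even-determinant elements: you propose to reduce it to a new ``harmonicity'' identity for $\varphi^o_\infty$, proved by a direct Weil-representation computation (the metaplectic analogue of Lemma~\ref{lem: har}), and you flag this as the main obstacle, leaving it as a sketch. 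The paper avoids this entirely: using the invariances it reduces to $\tilde g=\left(\left(\begin{smallmatrix}y&x\\0&1\end{smallmatrix}\right),1\right)$ and simply compares Fourier expansions of both sides. In fact you already possess everything needed for that softer argument: on odd-determinant elements the relation is tautological (as you note, $q^{s/2-1}=q^{-1/4}$ is built into the odd clause), and on even-determinant upper-triangular elements $T_{\infty,\frac{3}{2}}\vartheta^o$ is exactly the sum $q^{-1/4}\sum_\epsilon\vartheta^o\!\left(\left(\begin{smallmatrix}y\pi_\infty& y\epsilon+x\\0&1\end{smallmatrix}\right),1\right)$ that you evaluate in your last paragraph via the odd-case expansion and the same character sum, which returns \eqref{eqn: FE1} for $y$, i.e.\ $\vartheta^o(\tilde g)$ itself; one only needs to add the reduction to such representatives, which your invariance statements provide. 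So your route is workable in outline but front-loads the difficulty into an unproven Schwartz-function identity (true, and indeed what the coefficient $\frac{q+1}{2}$ in $\varphi^o_\infty$ is designed for, but requiring careful Weil-index bookkeeping at $\infty$), whereas the paper's argument --- and a rearrangement of your own computations --- makes that step unnecessary. If you keep the harmonicity route, that lemma is the one load-bearing step you must actually prove; otherwise, replace it by the Fourier-expansion comparison you have already written down.
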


\begin{proof}
The definition of $\vartheta^o$ assures that
$$\vartheta^o(\tilde{z}\tilde{g}) = \vartheta^o(\tilde{g}) \quad \text{ for every $z \in k_\infty^\times$ and $\tilde{g} \in \widetilde{\GL}_2(k_\infty)$.}$$
Moreover, the invariant property of $I(\cdot;\varphi^o)$ in Proposition~\ref{prop: TLo} guarantees that 
$$\vartheta^o(\tilde{\gamma}\tilde{g}\tilde{\kappa}) = \vartheta^o(\tilde{g}) \quad \text{ for every } \gamma \in \Gamma_0^{(1)}(\nfk),\ \tilde{g} \in \widetilde{\GL}_2(k_\infty),\ \kappa \in \widetilde{\Kcal}_\infty^+.
$$
Notice that for $(x,y) \in k_\infty\times k_\infty^\times$ with $\ord_\infty(y) \equiv 0 \bmod 2$, we write $y = u \pi_\infty^{2\ell}$ with $\ell \in \ZZ$, and get
$$
\left(\begin{pmatrix}y&x \\ 0&1 \end{pmatrix},1\right)
= \tilde{\pi}_\infty^\ell \cdot \left(\begin{pmatrix} \pi_\infty^{\ell}& x \pi_\infty^{-\ell} \\ 0 & \pi_\infty^{-\ell}\end{pmatrix},1\right)
\cdot \left(1, \frac{\varepsilon_\infty(\pi_\infty^\ell)}{\varepsilon_\infty(1)}\right).
$$
Since $\varepsilon_\infty(y)^4/\varepsilon_\infty(1)^4 = 1$, from Theorem~\ref{thm: FC1} the equality~\eqref{eqn: FE1} holds.
The case when $\ord_\infty(y) \equiv 1 \bmod 2$ is similar.

Finally, for each $\tilde{g} \in \widetilde{\GL}_2(k_\infty)$, there exists $z \in k_\infty^\times$, $\gamma \in \Gamma_0^{(1)}(\nfk)$, $(x,y) \in k_\infty\times k_\infty^\times$, $\kappa \in \Kcal_\infty^+$, and $\xi \in \CC^1$ such that
$$\tilde{g} = \tilde{z}\cdot \tilde{\gamma} \cdot \left(\begin{pmatrix} y&x \\ 0&1 \end{pmatrix},1\right) \cdot \tilde{\kappa} \cdot (1,\xi).
$$
To show the weight-$3/2$ property of $\vartheta^o$, it suffices to assume that
$$\tilde{g} = \left(\begin{pmatrix}y&x \\ 0&1 \end{pmatrix},1\right), \quad \text{ for } (x,y) \in k_\infty\times k_\infty^\times.
$$
Therefore the equality $T_{\infty,\frac{3}{2}} \vartheta^o = \vartheta^o$ holds by comparing the Fourier expansions on both sides.
\end{proof}

\section{CM points on the Drinfeld-Stuhler modular curves}\label{sec: CMBES}

Let $\CC_\infty$ be the completion of a chosen algebraic closure of $k_\infty$.
The \textit{Drinfeld half plane} is
$$\Hfk:= \CC_\infty - k_\infty,$$
which equipped with the M\"obius action of $\GL_2(k_\infty)$:
$$\begin{pmatrix} a&b \\ c&d\end{pmatrix} \cdot z := \frac{az+b}{cz+d}, \quad \forall \begin{pmatrix} a&b \\ c&d\end{pmatrix} \in \GL_2(k_\infty),\ z \in \Hfk.
$$

We recall the analytic construction of Drinfeld-Stuhler modular curves as follows.
Let $B$ be an indefinite quaternion algebra over $k$, and $\nfk^- \in A_+$ be the product of the primes at which $B$ is ramified. 
Take a square-free $\nfk^+ \in A_+$ coprime to $\nfk^-$, and let $O_B$ be an Eichler $A$-order in $B$ of type $(\nfk^+,\nfk^-)$.
Fix an isomorphism $B\otimes_k k_\infty \cong \Mat_2(k_\infty)$, which embeds $\Gamma(\nfk^+,\nfk^-):= O_B^\times$ into $\GL_2(k_\infty)$ as a discrete subgroup.
This induces an action of $O_B^\times$ on the Drinfeld half plane $\Hfk$.
Let 
$$X(\nfk^+,\nfk^-):= \Gamma(\nfk^+,\nfk^-)\backslash \Hfk,$$ which is a rigid analytic space (compact if $B$ is division).
From the moduli interpretation of $X(\nfk^+,\nfk^-)$ (parametrizing the ``$\Bscr$-ellptic sheaves with additional level-$\nfk^+$ structure'', cf.\ \cite{LRS} and \cite{Pak}),
we may identify $X(\nfk^+,\nfk^-)$ (rigidly analytically) with the $\CC_\infty$-valued points of a smooth curve (projective if $B$ is division) over $\CC_\infty$,
called the \textit{Drinfeld-Stuhler modular curve for $\Gamma(\nfk^+,\nfk^-)$}.
For our purpose, we shall only use the analytic description of $X(\nfk^+,\nfk^-)$.

Notice that when $B = \Mat_2(k)$,  every Eichler $A$-order $O_B$ of type $(\nfk^+,1)$ is equal (up to conjugation) to
$$\left\{\begin{pmatrix} a&b \\ c&d\end{pmatrix} \in \Mat_2(A)\ \bigg|\ c \equiv 0  \bmod \nfk^+\right\},
$$
and so $\Gamma(\nfk^+,\nfk^-)$ coincides with the congruence subgroup 
$$
\Gamma_0(\nfk^+)
:= \left\{\begin{pmatrix} a&b \\ c&d \end{pmatrix} \in \GL_2(A)\ \bigg|\ c \equiv 0 \bmod \nfk^+\right\}.
$$
The ``compactification'' 
$$ X_0(\nfk^+) := \Gamma_0(\nfk)\backslash \Big(\Hfk\cup \PP^1(k)\Big)$$ 
is called the \textit{Drinfeld modular curve for $\Gamma_0(\nfk^+)$}.\\

Recall that $B^o$ consists of all the pure quaternions in $B$, $\Lambda^o:=O_B\cap B^o$, and $$\Lambda^o_a := \{x \in \Lambda^o_a\mid Q^o(x) = -x^2 = a\}.$$

\begin{defn}\label{defn: CMP}
Given non-zero $d \in A$, a point $\boz  \in X(\nfk^+,\nfk^-)$ is  \textit{CM with discriminant $d$} if there exists a representative $z \in \Hfk$ of $\boz$ and $x \in \Lambda^o_d$
so that $x\cdot z = z$.
\end{defn}

\begin{rem}
The CM points on $X_0(\nfk^+,\nfk^-)$ can be viewed as analogue of CM points on Shimura curves in the classical case.
Indeed, when $\nfk^+ = 1$ (i.e.\ $O_B$ is a maximal $A$-order in $B$), the CM points are in bijection with the isomorphism classes of the Drinfeld-Stuhler $O_B$-modules with ``complex multiplications'', see \cite[Theorem 4.10 and \textit{Remark} 4.11]{Pak}.
\end{rem}

Let $\boz \in X(\nfk^+,\nfk^-)$ be a CM point with discriminant $d \in A$ which is represented by $z \in \Hfk$.
Set
$$
w(z) := \frac{q-1}{\#\big(\text{Stab}_{\Gamma(\nfk^+,\nfk^-)}(z)\big)}.$$
Here $\Stab_{\Gamma(\nfk^+,\nfk^-)}(z)$ is the stablizer of $z$ in $\Gamma(\nfk^+,\nfk^-)$.
Then $w(z)$ only depends on the point $\boz \in X(\nfk^+,\nfk^-)$.
Put $w(\boz) = w(z)$.
For non-zero $d \in A$, we are interested in the following \textit{mass}:
$$
\Mcal(d):= \sum_{\subfrac{\text{CM } \boz \in X(\nfk^+,\nfk^-)}{\text{with discriminant $d$}}}
\frac{1}{w(\boz)}.
$$
Let 
$$
S_d:=\{z \in \Hfk \mid \text{ there exists } x \in \Lambda_d^o \text{ so that } x \cdot z = z\}.
$$
Then $S_d$ is empty unless $-d \prec 0$.
In this case, we have $k_\infty(z) = k_\infty(\sqrt{-d})$ for each $z \in S_d$.
Identifying $\gal(k_\infty(\sqrt{-d})/k_\infty)$ with $\{\pm 1\}$, we have a (right) action of $\{\pm 1\}$ on $S_d$ commuting with the (left) action of $O_B^\times$.
This then induces an action of $\{\pm 1\}$ on $O_B^\times \backslash S_d$.
Denote by $\bar{z} \in k_\infty(\sqrt{-d})$ as the conjugate of $z$.
For $z \in S_d$, put
$$
\nu(z):=
\begin{cases}
1, & \text{ if there exists $\gamma \in O_B^\times$ so that $\gamma \cdot z = \bar{z}$;}\\
2, & \text{ otherwise.}
\end{cases}
$$
Then we may express $\Mcal(d)$ in the following:

\begin{prop}\label{prop: M(a)}
Given $d \in A$ with $d \prec 0$, we have 
$$
\Mcal(d) = \sum_{z \in O_B^\times \backslash S_d/\{\pm 1\}} \frac{\nu(z)}{w(z)}.
$$
\end{prop}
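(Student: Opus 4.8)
The plan is to realize the set of CM points of discriminant $d$ on $X(\nfk^+,\nfk^-)$ as the orbit space $O_B^\times\backslash S_d$, and then to push the mass sum through the further quotient by $\{\pm1\}$; once the relevant compatibilities are checked, the identity is pure bookkeeping. First I would record that $S_d$ is stable under the left action of $O_B^\times$: if $x\cdot z=z$ with $x\in\Lambda_d^o$ and $\gamma\in O_B^\times$, then $(\gamma x\gamma^{-1})\cdot(\gamma z)=\gamma z$ and $\gamma x\gamma^{-1}\in\Lambda_d^o$, since conjugation by $\gamma$ preserves $O_B$, the reduced trace, and the reduced norm. Hence a point $\boz\in X(\nfk^+,\nfk^-)$ is CM of discriminant $d$ precisely when some---equivalently every---representative lies in $S_d$, so Definition~\ref{defn: CMP} yields a bijection between the CM points of discriminant $d$ and $O_B^\times\backslash S_d$. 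Combined with the fact, noted just above, that $w(z)$ depends only on the image $\boz$, this gives $\Mcal(d)=\sum_{[z]\in O_B^\times\backslash S_d}1/w(z)$. If $S_d=\emptyset$ both sides of the asserted identity vanish, so from now on I may assume $S_d\neq\emptyset$, in which case $k_\infty(z)=k_\infty(\sqrt{-d})$ is a quadratic extension of $k_\infty$ for every $z\in S_d$.

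Next I would check that the conjugation $z\mapsto\bar z$ over $k_\infty$ descends to a well-defined action of $\{\pm1\}$ on $O_B^\times\backslash S_d$ that is compatible with $1/w$. The key point is that, through the fixed isomorphism $B\otimes_k k_\infty\cong\Mat_2(k_\infty)$, every element of $O_B^\times$ acts on $\Hfk$ by a M\"obius transformation with coefficients in $k_\infty$, which is fixed by this conjugation; likewise the matrix of any $x\in\Lambda_d^o$ has entries in $k_\infty$. Therefore $\overline{\gamma\cdot z}=\gamma\cdot\bar z$ and $\bar z\in S_d$ for all $\gamma\in O_B^\times$ and $z\in S_d$, so the conjugation induces an action of $\{\pm1\}$ on $O_B^\times\backslash S_d$; from $\gamma\cdot z=z\iff\gamma\cdot\bar z=\bar z$ one gets $\Stab_{\Gamma(\nfk^+,\nfk^-)}(z)=\Stab_{\Gamma(\nfk^+,\nfk^-)}(\bar z)$, hence $w(z)=w(\bar z)$, and more generally $1/w$ is constant on each $\{\pm1\}$-orbit. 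Unwinding the definition of $\nu(z)$, the $\{\pm1\}$-orbit of $[z]$ in $O_B^\times\backslash S_d$ is the singleton $\{[z]\}$ exactly when there exists $\gamma\in O_B^\times$ with $\gamma\cdot z=\bar z$, and otherwise consists of the two elements $\{[z],[\bar z]\}$; in both cases its cardinality is $\nu(z)$ (and one checks similarly that $\nu$ itself is constant on $O_B^\times$- and on $\{\pm1\}$-orbits).

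Finally I would partition $O_B^\times\backslash S_d$ into $\{\pm1\}$-orbits and sum term by term: choosing a representative $z$ of each orbit $\mathcal O$, that orbit contributes $\sum_{[w]\in\mathcal O}1/w(w)=\#\mathcal O\cdot 1/w(z)=\nu(z)/w(z)$ to $\Mcal(d)=\sum_{[z]\in O_B^\times\backslash S_d}1/w(z)$, and summing over the orbits yields exactly $\sum_{z\in O_B^\times\backslash S_d/\{\pm1\}}\nu(z)/w(z)$, which is the claimed formula. There is no genuinely difficult step; the only points needing care are the $O_B^\times$-stability of $S_d$, so that CM points of discriminant $d$ biject with $O_B^\times$-orbits, and the commutation of Galois conjugation with the $O_B^\times$-action, so that the $\{\pm1\}$-action descends and preserves $w$---both immediate from the $k_\infty$-rationality of the matrices involved.
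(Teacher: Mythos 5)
Your argument is correct and is essentially the paper's proof: both reduce $\Mcal(d)$ to the sum $\sum_{[z]\in O_B^\times\backslash S_d}1/w(z)$ and then group the classes into $\{\pm 1\}$-orbits, each of cardinality $\nu(z)$, yielding the stated formula. The extra verifications you supply (the $O_B^\times$-stability of $S_d$, the commutation of the Galois conjugation with the $O_B^\times$-action, and the constancy of $w$ and $\nu$ on orbits) are exactly the facts the paper records, without proof, in the discussion immediately preceding the proposition.
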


\begin{proof}
It is from the definition that
$$
\Mcal(d) = \sum_{z \in O_B^\times \backslash S_d} \frac{1}{w(z)}.
$$
Moreover, for each element in $O_B^\times \backslash S_d$ represented by $z \in S_d$, the cardinality of its $\{\pm 1\}$-orbit equals to $\nu(z)$.
Therefore the result follows.
\end{proof}

We have the following bijection
$$S_d/\{\pm 1\} \quad \longleftrightarrow \quad  \Lambda_d^o/\{\pm 1\}$$
by sending $[z] \in S_d/\{\pm 1\}$ to $[x_z] \in \Lambda_d^o/\{\pm 1\}$, where
$x_z \cdot z = z$.
To see this is, for $z \in S_d$ and $x_z \in \Lambda_d^o$ with $x_z \cdot z = z$,
one has $x_z \cdot \bar{z} = \bar{z}$, which says that two vectors $(z,1)^t$ and $(\bar{z},1)^t$ are distinct eigenvectors of $x_z$. 
If $x \in \Lambda_d^o$ with $x \cdot z = z$, then $x$ and $x_z$ share the same eigenvectors, which implies that $x \in K_{x_z} = k(x_z)\subset B$. Since $x^2 = -d = x_z^2$, we must have $x = \pm x_z$, i.e.\
$$
\{x \in \Lambda_d^o \mid x \cdot z = z\} = \{\pm x_z\}.
$$
On the other hand for $x \in \Lambda_d^o$ and $z_x \in \Hfk$ with $x\cdot z_x = z_x$,
we have
$$\{z \in \Hfk \mid x \cdot z = z\} = \{z_x, \bar{z}_x\}.$$
Therefore the above map is well-defined and bijective.
It is clear that the bijection is actually $O_B^\times$-equivariant.
Moreover:
\begin{lem}
Given $d \in A$ with $d \prec 0$, $z \in S_d$ and $x_z \in \Lambda^o_d$ with $x_z\cdot z = z$, we have
$$
w(z) = \frac{q-1}{\#(O_B^\times \cap K_{x_z}^\times)}
\quad \text{ and } \quad
\nu(z) = \nu(x_z).
$$
\end{lem}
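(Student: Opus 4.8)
The plan is to deduce both identities from the single structural fact that, for $\gamma\in O_B^\times$, one has $\gamma\cdot z=z$ if and only if $\gamma$ commutes with $x_z$. First I would record the local picture: transporting $x_z$ into $\Mat_2(k_\infty)$ via the fixed isomorphism, it has reduced trace $0$ and reduced norm $d$, hence characteristic polynomial $T^2+d$ over $k_\infty$; since $z\notin k_\infty$ and $k_\infty(z)=k_\infty(\sqrt{-d})$, the two roots $\pm\sqrt{-d}$ are distinct and lie outside $k_\infty$, the eigenlines of $x_z$ in $\CC_\infty^2$ are $k_\infty(\sqrt{-d})\cdot(z,1)^t$ and $k_\infty(\sqrt{-d})\cdot(\bar z,1)^t$, and $z,\bar z$ are exactly the two fixed points of $x_z$ on $\Hfk$. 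For the inclusion $O_B^\times\cap K_{x_z}^\times\subseteq\Stab_{O_B^\times}(z)$, such a $\gamma$ commutes with $x_z$, hence preserves each one-dimensional eigenline, in particular fixes $(z,1)^t$ up to a scalar and so fixes $z$. The reverse inclusion is the substantive point (see the last paragraph). Granting it, $\Stab_{O_B^\times}(z)=O_B^\times\cap K_{x_z}^\times$, whence $w(z)=(q-1)/\#\Stab_{O_B^\times}(z)=(q-1)/\#(O_B^\times\cap K_{x_z}^\times)$, which is the first identity.

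For the second identity, since $\nu(z)$ and $\nu(x_z)$ both take values in $\{1,2\}$ it suffices to show $\nu(z)=1$ if and only if $\nu(x_z)=1$. Here I would use, besides the fact above, that $\{x\in\Lambda_d^o : x\cdot\bar z=\bar z\}=\{\pm x_z\}$, which is the identity established just before the lemma, applied with $\bar z$ (which lies in $S_d$ because $x_z\cdot\bar z=\bar z$) in place of $z$. Suppose $\nu(z)=1$, say $\gamma\cdot z=\bar z$ with $\gamma\in O_B^\times$; then $\gamma x_z\gamma^{-1}\in O_B\cap B^o$ has reduced norm $d$, so lies in $\Lambda_d^o$, and since $x_z$ fixes $z$ it fixes $\gamma\cdot z=\bar z$, hence equals $\pm x_z$; it cannot equal $x_z$, for then $\gamma$ would commute with $x_z$ and therefore fix $z$, contradicting $\gamma\cdot z=\bar z\ne z$; thus $\gamma x_z\gamma^{-1}=-x_z$, i.e.\ $\nu(x_z)=1$ (witnessed by $\gamma^{-1}$). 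Conversely, suppose $\nu(x_z)=1$, say $\gamma^{-1}x_z\gamma=-x_z$ with $\gamma\in O_B^\times$; using $(-x_z)\cdot z=x_z\cdot z=z$ for the Möbius action, one computes $x_z\cdot(\gamma\cdot z)=\gamma\cdot((-x_z)\cdot z)=\gamma\cdot z$, so $\gamma\cdot z$ is a fixed point of $x_z$ on $\Hfk$, hence lies in $\{z,\bar z\}$; it cannot be $z$, for then $\gamma$ would commute with $x_z$, forcing $x_z=-x_z$, impossible since $\ch k$ is odd and $x_z\ne 0$; thus $\gamma\cdot z=\bar z$, i.e.\ $\nu(z)=1$.

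The main obstacle is the reverse inclusion $\Stab_{O_B^\times}(z)\subseteq O_B^\times\cap K_{x_z}^\times$, equivalently the claim that a $\gamma\in O_B^\times$ fixing the irrational point $z$ must commute with $x_z$. The plan there is: if $\gamma$ is a scalar matrix it is central and there is nothing to prove; otherwise each eigenline of $\gamma$ in $\CC_\infty^2$ is one-dimensional, the eigenvalue $\lambda$ attached to $(z,1)^t$ satisfies $z\in k_\infty(\lambda)$, so $\lambda\notin k_\infty$ (otherwise $z\in k_\infty$) and $k_\infty(\lambda)=k_\infty(\sqrt{-d})$; applying the nontrivial element of $\gal(k_\infty(\sqrt{-d})/k_\infty)$ carries $(z,1)^t$ to an eigenvector of $\gamma$ for the conjugate eigenvalue, and that eigenvector is $(\bar z,1)^t$ by the definition of $\bar z$. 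Hence $\gamma$ fixes both $z$ and $\bar z$, so $\gamma$ and $x_z$ are simultaneously diagonalized in the basis $\{(z,1)^t,(\bar z,1)^t\}$ and therefore commute; being integral, $\gamma\in O_B^\times\cap K_{x_z}^\times$. All remaining verifications — the norm and trace computations identifying $\gamma x_z\gamma^{-1}\in\Lambda_d^o$, the Möbius-invariance of $x\mapsto -x$, and the one-dimensionality of the eigenlines — are routine.
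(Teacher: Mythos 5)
Your proposal is correct and follows essentially the same route as the paper: you identify $\Stab_{O_B^\times}(z)$ with $O_B^\times\cap K_{x_z}^\times$ by showing a unit fixing $z$ shares the eigenvectors $(z,1)^t,(\bar z,1)^t$ with $x_z$, and you prove $\nu(z)=\nu(x_z)$ by conjugating $x_z$ and using that its only fixed points are $z,\bar z$ together with the rigidity $\{x\in\Lambda^o_d: x\cdot z=z\}=\{\pm x_z\}$, exactly as in the paper. The only differences are cosmetic (you spell out the Galois/eigenvalue argument the paper compresses into one sentence, and you apply the uniqueness statement at $\bar z$ rather than at $z$).
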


\begin{proof}
We first show that $O_B^\times \cap K_{x_z}^\times = \text{Stab}_{O_B^\times}(z)$.
To see this, for $\gamma \in O_B^\times$, one has
$\gamma \in K_{x_z}$ if and only if $\gamma$ and $x_z$ share the same eigenvectors, which is equivalent to that $\gamma$ fixes $z$ (and $\bar{z}$), i.e.\ $\gamma \in \text{Stab}_{O_B^\times}(z)$.

To prove the second assertion, suppose
$\nu(z) = 1$, i.e.\ there exists $\gamma \in O_B^\times$ so that $\gamma z = \bar{z}$.
In this case, we obtain that
$$(\gamma x_z \gamma^{-1})\cdot z = z,$$ 
which implies $\gamma x_z \gamma^{-1} = \pm x_z$.
If $\gamma x_z \gamma^{-1} = x_z$, i.e.\ $\gamma \in K_{x_z}$, then $\gamma \cdot z = z$, a contradiction.
Hence $\gamma x_z \gamma^{-1} = -x_z$, showing that $\nu(x_z) = 1$.
Conversely, if $\nu(x_z) = 1$, i.e.\ there exists $\gamma \in O_B^\times$ so that $\gamma x_z \gamma^{-1} = -x_z$.
Then $$x_z \cdot (\gamma \cdot z) = \gamma \cdot z,$$
showing that $\gamma \cdot z = z$ or $\bar{z}$.
If $\gamma \cdot z = z$, then $\gamma$ and $x_z$ share the same eigenvectors. Thus $\gamma x_z \gamma^{-1} = x_z$, a contradiction.
Therefore $\gamma \cdot z = \bar{z}$, i.e.\ $\nu(z) = 1$.
\end{proof}

From Proposition~\ref{prop: M(a)}, the above lemma enables us to rewrite $\Mcal(d)$ as
$$
\Mcal(d) = \sum_{x \in O_B^\times\backslash \Lambda_d^o/\{\pm 1\}}\frac{q-1}{\#(O_B^\times \cap K_{x}^\times)} \cdot \nu(x).
$$
For convention, let
$$\Mcal(0) := H^{\nfk^+,\nfk^-}(0).$$
Together with Corollary~\ref{cor: AFC}, we arrive at:

\begin{thm}\label{thm: CNo-geo}
Let $\nfk^+,\nfk^- \in A_+$ be two square-free polynomials with $\text{\rm gcd}(\nfk^+,\nfk^-) = 1$. 
Suppose the number of prime factors of $\nfk^-$ is positive and even.
Given $d \in A$ and $y \in k_\infty^\times$ with $\deg d + 2 \leq 2 \ord_\infty(y)$, the following equality holds:
$$
I^*(-d,y;\varphi^o) = \text{\rm vol}(O_{B_\AA}^\times/O_\AA^\times) \cdot \tilde{\beta}(y) \cdot \Mcal^{\nfk^+,\nfk^-}(d).
$$
Consequently, the mass $\Mcal^{\nfk^+,\nfk^-}(d)$ over the CM points with discriminant $d$ coincides with $H^{\nfk^+,\nfk^-}(d)$ for every $d \in A$ with $d \prec 0$.
\end{thm}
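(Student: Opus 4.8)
The plan is to identify two expressions for the Fourier coefficient $I^*(-d,y;\varphi^o)$ that were already produced in Sections~\ref{sec: PTI1}--\ref{sec: CMBES}. First I would pin down the data. Since the number of primes dividing $\nfk^-$ is even, there is a quaternion algebra $B$ over $k$ that is unramified at $\infty$ and ramified at precisely those primes; since this ramification set is nonempty, $B$ is division. Hence $(V^o,Q^o)=(B^o,\Nr)$ is anisotropic, the theta integral $I(\cdot;\varphi^o)$ converges with no regularization needed, and, taking an Eichler $A$-order $O_B$ of type $(\nfk^+,\nfk^-)$ in $B$, the curve $X(\nfk^+,\nfk^-)$ is the compact Drinfeld--Stuhler modular curve of Section~\ref{sec: CMBES} and carries no cusps. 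With $\varphi^o=\otimes_v\varphi^o_v$ as in Section~\ref{sec: PTI1}, Theorem~\ref{thm: FC1} (applied with $a=-d$) gives, for $y\in k_\infty^\times$ satisfying $\deg d+2\le 2\ord_\infty(y)$,
\begin{equation*}
I^*(-d,y;\varphi^o)=\text{vol}(O_{B_\AA}^\times/O_\AA^\times)\cdot\tilde{\beta}(y)\cdot\begin{cases}H^{\nfk^+,\nfk^-}(d),& \text{if }d\preceq 0,\\ 0,& \text{otherwise.}\end{cases}
\end{equation*}

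For $d\prec 0$ I would then invoke the geometric expression. Corollary~\ref{cor: AFC}, applied with $a=d$, already rewrites the same coefficient as
\begin{equation*}
I^*(-d,y;\varphi^o)=\text{vol}(O_{B_\AA}^\times/O_\AA^\times)\cdot\tilde{\beta}(y)\cdot\sum_{x\in O_B^\times\backslash\Lambda_d^o/\{\pm1\}}\frac{q-1}{\#(O_B^\times\cap K_x^\times)}\cdot\nu(x),
\end{equation*}
whereas Proposition~\ref{prop: M(a)}, combined with the $O_B^\times$-equivariant bijection $S_d/\{\pm1\}\leftrightarrow\Lambda_d^o/\{\pm1\}$ and the lemma preceding Theorem~\ref{thm: CNo-geo} (which yields $w(\boz)=(q-1)/\#(O_B^\times\cap K_{x_z}^\times)$ and $\nu(\boz)=\nu(x_z)$), rewrites $\Mcal^{\nfk^+,\nfk^-}(d)$ as exactly that same sum. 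Comparing the two displays gives $I^*(-d,y;\varphi^o)=\text{vol}(O_{B_\AA}^\times/O_\AA^\times)\cdot\tilde{\beta}(y)\cdot\Mcal^{\nfk^+,\nfk^-}(d)$; comparing further with the formula of Theorem~\ref{thm: FC1} and cancelling the nonzero factor $\text{vol}(O_{B_\AA}^\times/O_\AA^\times)\cdot\tilde{\beta}(y)$ yields $\Mcal^{\nfk^+,\nfk^-}(d)=H^{\nfk^+,\nfk^-}(d)$.

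It remains to dispose of the degenerate values of $d$. For $d=0$ the identity $I^*(0,y;\varphi^o)=\text{vol}(O_{B_\AA}^\times/O_\AA^\times)\cdot\tilde{\beta}(y)\cdot H^{\nfk^+,\nfk^-}(0)$ is the $a=0$ case of Theorem~\ref{thm: FC1}, which matches the convention $\Mcal^{\nfk^+,\nfk^-}(0):=H^{\nfk^+,\nfk^-}(0)$. For $d\neq 0$ with $d\not\preceq 0$ both sides vanish: the left-hand side by Theorem~\ref{thm: FC1}, and the right-hand side because no point of $\Hfk$ is fixed by an element of $\Lambda_d^o$ (equivalently $S_d=\emptyset$), so that $X(\nfk^+,\nfk^-)$ has no CM point of discriminant $d$ and $\Mcal^{\nfk^+,\nfk^-}(d)=0$. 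Since every $d\in A$ lies in exactly one of the cases $d=0$, $d\prec 0$, or $d\neq 0$ with $d\not\preceq 0$, the displayed equality of the theorem holds for all $d$, and restricting to $d\prec 0$ gives the final assertion $\Mcal^{\nfk^+,\nfk^-}(d)=H^{\nfk^+,\nfk^-}(d)$.

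The substantive content underlying this proof has already been extracted in the preceding sections: Theorem~\ref{thm: FC1} rests on computing the orbital integrals $\int_{K_{x,v}^\times\backslash B_v^\times}\varphi^o_v$ at the finite places and at $\infty$ via Eichler's theory of local optimal embeddings (Appendices~\ref{sec: App-LOE} and~\ref{sec: App-SLI}), while Corollary~\ref{cor: AFC} rests on strong approximation for the indefinite division algebra $B$. The present proof is therefore a synthesis rather than a new argument, and the one step demanding real care is the termwise comparison of the geometric sum with $\Mcal^{\nfk^+,\nfk^-}(d)$: one must verify that the stabilizer weight $w(\boz)=(q-1)/\#\Stab_{\Gamma(\nfk^+,\nfk^-)}(z)$ and the $\{\pm1\}$-orbit size $\nu(\boz)$, arising from the action of $\Gal(k_\infty(\sqrt{\,\cdot\,})/k_\infty)$ on the fixed points in $\Hfk$, coincide with the weights appearing in Corollary~\ref{cor: AFC}, so that the two sums agree term by term and not merely after summation. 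This matching is precisely what the lemma preceding Theorem~\ref{thm: CNo-geo} supplies.
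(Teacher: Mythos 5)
Your proposal is correct and follows essentially the same route as the paper: the paper obtains the theorem by combining Proposition~\ref{prop: M(a)} and the weight-matching lemma (giving $\Mcal(d)=\sum_{x\in O_B^\times\backslash\Lambda_d^o/\{\pm1\}}\frac{q-1}{\#(O_B^\times\cap K_x^\times)}\nu(x)$) with Corollary~\ref{cor: AFC} and Theorem~\ref{thm: FC1}, exactly as you do. Your explicit treatment of the degenerate cases $d=0$ and $d\neq 0$, $d\not\preceq 0$ only spells out what the paper leaves implicit via the convention $\Mcal(0):=H^{\nfk^+,\nfk^-}(0)$ and the emptiness of $S_d$.
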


Recall that $\vartheta^o$ is a weight-$\frac{3}{2}$ metaplectic form on $\widetilde{\GL}_2(k_\infty)$ extended from $I(\cdot;\varphi^o)$.
From Theorem~\ref{thm: FET0}, the above theorem leads us to:

\begin{cor}\label{cor: TSo-FC}
For $(x,y) \in k_\infty \times k_\infty^\times$, the Fourier expansion of $\vartheta^o$ can be written via masses:
$$
\vartheta^o\left(\begin{pmatrix} y&x\\ 0&1\end{pmatrix},1\right)
=
\sum_{d \in A}
\Mcal^{\nfk^+,\nfk^-}(d)\cdot  \big(\beta_{d,\frac{3}{2}}(y) \psi_\infty(-dx)\big).
$$
\end{cor}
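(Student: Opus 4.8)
The plan is to deduce the corollary directly from Theorem~\ref{thm: FET0} and Theorem~\ref{thm: CNo-geo}; it is a purely formal consequence, so no new computation is required and the only care needed is in matching up the index sets.

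First I would recall the Fourier expansion of $\vartheta^o$ established in Theorem~\ref{thm: FET0}: for $(x,y) \in k_\infty \times k_\infty^\times$,
$$
\vartheta^o\left(\begin{pmatrix} y&x\\ 0&1\end{pmatrix},1\right)
= \sum_{d \in A,\ d \preceq 0} H^{\nfk^+,\nfk^-}(d)\cdot \big(\beta_{d,\frac{3}{2}}(y)\,\psi_\infty(-dx)\big).
$$
Next I would invoke Theorem~\ref{thm: CNo-geo}, which asserts that $\Mcal^{\nfk^+,\nfk^-}(d) = H^{\nfk^+,\nfk^-}(d)$ for every $d \in A$ with $d \prec 0$; together with the convention $\Mcal^{\nfk^+,\nfk^-}(0) := H^{\nfk^+,\nfk^-}(0)$ adopted just before that theorem, this gives $\Mcal^{\nfk^+,\nfk^-}(d) = H^{\nfk^+,\nfk^-}(d)$ for \emph{all} $d$ with $d \preceq 0$. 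Substituting term by term into the displayed expansion then rewrites each coefficient $H^{\nfk^+,\nfk^-}(d)$ as $\Mcal^{\nfk^+,\nfk^-}(d)$.

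The final step is to enlarge the summation from $\{d \in A : d \preceq 0\}$ to all of $A$, which is harmless: for $d \in A$ with $d \not\preceq 0$, combining the vanishing $I^*(-d,y;\varphi^o) = 0$ (Theorem~\ref{thm: FC1}, applied with $a = -d$) with the identity $I^*(-d,y;\varphi^o) = \text{vol}(O_{B_\AA}^\times/O_\AA^\times)\cdot\tilde\beta(y)\cdot \Mcal^{\nfk^+,\nfk^-}(d)$ of Theorem~\ref{thm: CNo-geo}, and choosing $y$ with $\deg d + 2 \le 2\ord_\infty(y)$ (so that both formulas apply and $\tilde\beta(y) \ne 0$), forces $\Mcal^{\nfk^+,\nfk^-}(d) = 0$; equivalently, the set $S_d$ is empty. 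Hence adjoining these indices adds only zero terms, and one obtains
$$
\vartheta^o\left(\begin{pmatrix} y&x\\ 0&1\end{pmatrix},1\right)
= \sum_{d \in A} \Mcal^{\nfk^+,\nfk^-}(d)\cdot \big(\beta_{d,\frac{3}{2}}(y)\,\psi_\infty(-dx)\big),
$$
as claimed. I do not expect any genuine obstacle here: the entire arithmetic content — the equality of the mass over CM points with the modified Hurwitz class number — is packaged in Theorem~\ref{thm: CNo-geo} (which itself rests on Corollary~\ref{cor: AFC} and the local optimal-embedding computations), and the present corollary merely transports that identity through the Fourier expansion of Theorem~\ref{thm: FET0}.
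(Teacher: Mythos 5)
Your proposal is correct and follows exactly the paper's route: the paper likewise obtains this corollary immediately by substituting the identity $\Mcal^{\nfk^+,\nfk^-}(d)=H^{\nfk^+,\nfk^-}(d)$ of Theorem~\ref{thm: CNo-geo} (together with the convention at $d=0$) into the Fourier expansion of Theorem~\ref{thm: FET0}, the enlargement of the sum to all $d\in A$ being harmless since $\Mcal^{\nfk^+,\nfk^-}(d)$ vanishes when $d\not\preceq 0$ (no CM points of such discriminant exist), which is exactly the vanishing you verified via Theorem~\ref{thm: FC1}.
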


\begin{rem}
(1) Using Eichler's theory of optimal embeddings (cf.\ \cite[p.\ 94]{Vie}), the equality
$$H^{\nfk^+,\nfk^-}(d) = \Mcal^{\nfk^+,\nfk^-}(d) \quad \forall d \in A,\ d\prec 0 $$
also holds for $\nfk^- = 1$.\\
(2) When the number of prime factors of $\nfk^-$ is odd, the generating function 
$$
\sum_{d \in A, \ d \prec 0}
H^{\nfk^+,\nfk^-}(d) \cdot \big(\beta_{d,\frac{3}{2}}(y)\psi_\infty(-dx)\big), \quad \forall (x,y) \in k_\infty\times k_\infty^\times
$$
extends as well to a ``weight-$3/2$'' metaplectic form on $\widetilde{\GL}_2(k_\infty)$ (cf.\ \cite[Proposition 2.2]{Wei} and \cite[Corollary 5.4]{CLWY}).
\end{rem}

\section{Theta series with nebentypus}\label{sec: TSN}

Fix a square-free $\dfk \in A_+$ with $\deg \dfk$ even.
Let $F = k(\sqrt{\dfk})$.
For each $\alpha \in F$, the Galois conjugate of $\alpha$ (over $k$) is denoted by $\alpha'$.
Given $x = \begin{pmatrix} a&b \\ c&d\end{pmatrix} \in \Mat_2(F)$, put
$$
\bar{x} := \begin{pmatrix} d&-b \\ -c&a\end{pmatrix} \quad \text{ and } \quad
x' := \begin{pmatrix} a'&b' \\ c'&d'\end{pmatrix}.
$$
Given $\nfk \in A_+$,
let $*$ be the involution on $\Mat_2(F)$
defined by: for $x \in \Mat_2(F)$,
$$
x^* := \begin{pmatrix}0& 1/\nfk \\ 1&0 \end{pmatrix} \bar{x}' \begin{pmatrix} 0 & 1 \\ \nfk&0 \end{pmatrix}.
$$
Let
$$
V:= \{x \in \Mat_2(F)\mid x^* = x\} \quad \text{ and } \quad Q_V := \det\big|_V.
$$
Then $(V,Q_V)$ is a quadratic space with degree $4$ over $k$.
In concrete terms, we have
$$
V = \left\{\begin{pmatrix}a&\beta \\ -\nfk\beta' & d \end{pmatrix}\ \bigg|\
a,d \in k,\ \beta \in F\right\}.
$$

Let 
$$B_1 := \{b \in \Mat_2(F)\mid b^* = \bar{b}\}
= 
\left\{\begin{pmatrix}\alpha&\beta \\ \nfk\beta' & \alpha' \end{pmatrix}\ \bigg|\
\alpha,\beta \in F\right\}.
$$
We may identify $B_1$ with the quaternion algebra 
$$\left(\frac{\dfk,\nfk}{k}\right) := k+k\mathbf{i}+k\mathbf{j}+k\mathbf{ij}, \quad 
\text{where }
\mathbf{i}^2 = \dfk,\ \mathbf{j}^2 = \nfk, \mathbf{ij}=-\mathbf{ji},
$$
where $\mathbf{i}$ corresponds to $\begin{pmatrix} \sqrt{\dfk}&0 \\ 0& - \sqrt{\dfk} \end{pmatrix}$
and $\mathbf{j}$ corresponds to $\begin{pmatrix}0&1 \\ \nfk &0\end{pmatrix}$.
From now on, we make the following assumptions:
\begin{apn}\label{apn: level}
${}$
\begin{itemize}
    \item[(1)] The polynomial $\nfk \in A_+$ is square-free and coprime to $\dfk$.
    \item[(2)] Write $\nfk = \nfk^+\cdot \nfk^-$ (resp.\ $\dfk = \dfk^+\cdot \dfk^-$), where each prime factor $\pfk$ of $\nfk^{\pm}$ (resp.\ $\dfk^{\pm}$) satisfies that the Legendre quadratic symbol $\left(\frac{\dfk}{\pfk}\right) = \pm 1$
    (resp.\ $\left(\frac{\nfk}{\pfk}\right) = \pm 1$). Then $\deg (\dfk^-\nfk^-) > 0$.
\end{itemize}
\end{apn}

\begin{rem}
Under the above assumptions, we observe that $B_1$ is the indefinite division quaternion algebra over $k$ ramified precisely at prime factors of $\dfk^-\nfk^-$.
\end{rem}

Consider the following left exact sequence
$$
1 \longrightarrow k^\times \longrightarrow B_1^\times \longrightarrow \text{SO}(V),
$$
where the map from $B_1^\times$ into $\text{SO}(V)$ is defined by
$$
b \longmapsto h_b := (x \mapsto bxb^{-1}), \quad \forall b \in B_1^\times.
$$

Given $\varphi \in S(V(k_\AA))$, we are interested in the following theta integral:
$$I(g;\varphi) := \int_{B_1^\times  \backslash B_{1,\AA}^\times/k_\AA^\times} \Theta(g,h_b;\varphi) d^\times b, \quad \forall g \in \SL_2(\AA).
$$
For $a \in k$ and $y \in k_\AA^\times$, let
$$
I^*(a,y;\varphi):=
\int_{k\backslash k_\AA}I\left(\bigg(\begin{pmatrix} y & uy^{-1} \\ 0 & y^{-1} \end{pmatrix},1\bigg);\varphi\right) \psi(-a u) du.
$$
Put $V_a:= \{x \in V\mid Q_V(x) = a\}$.
Adapting the proof of Lemma~\ref{lem: FCo}, we obtain:

\begin{lem}\label{lem: FCV}
For $a \in k$ and $y \in \AA^\times$, we have
\begin{eqnarray}
I^*(a,y;\varphi) 
&=& |y|_\AA^2 \cdot \sum_{x \in B_1^\times \backslash V_a} \text{\rm vol}(K_x^\times  \backslash K_{x,\AA}^\times/k_\AA^\times) \cdot 
\int_{K_{x,\AA}^\times \backslash B_{1,\AA}^\times} \varphi(yb^{-1}xb) d^\times b. \nonumber 
\end{eqnarray}
Here $K_x$ is the centralizer of $x$ in $B_1$, and $K_{x,\AA} = K_x \otimes_k k_\AA$.
\end{lem}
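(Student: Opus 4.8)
\smallskip
\noindent\emph{Sketch of the approach.} The proof is the verbatim analogue of that of Lemma~\ref{lem: FCo}, now with $\dim_k V=4$ in place of $\dim_k V^o=3$; I indicate only the places where something genuinely changes. First I would unfold the definitions: substituting
$$I(g;\varphi)=\int_{B_1^\times\backslash B_{1,\AA}^\times/k_\AA^\times}\Theta(g,h_b;\varphi)\,d^\times b
\qquad\text{and}\qquad
\Theta(g,h_b;\varphi)=\sum_{x\in V}\bigl(\omega^V(g)\varphi\bigr)(b^{-1}xb)$$
into the definition of $I^*(a,y;\varphi)$ and interchanging the integral over $k\backslash k_\AA$ with the integral over $B_1^\times\backslash B_{1,\AA}^\times/k_\AA^\times$. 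As in Lemma~\ref{lem: FCo}, this interchange is legitimate because $B_1$ is a division algebra, so $B_1^\times\backslash B_{1,\AA}^\times/k_\AA^\times$ is compact, $k\backslash k_\AA$ is compact, and the theta series is an absolutely convergent sum of a Schwartz function over the $A$-lattice $V\subset V(k_\AA)$, so Fubini applies.

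Next I would run the Weil representation. Using $\left(\begin{smallmatrix} y&uy^{-1}\\0&y^{-1}\end{smallmatrix}\right)=\left(\begin{smallmatrix}1&u\\0&1\end{smallmatrix}\right)\left(\begin{smallmatrix}y&0\\0&y^{-1}\end{smallmatrix}\right)$ together with the formulas for $\omega^V$ recalled in Section~\ref{sec: WRTS} — note that $\dim_k V$ is even, so $\omega^V$ descends to $\SL_2$, cf.\ \textit{Remark}~\ref{rem: sections} — one finds
$$\bigl(\omega^V\left(\begin{smallmatrix} y&uy^{-1}\\0&y^{-1}\end{smallmatrix}\right)\varphi\bigr)(b^{-1}xb)
=\psi\bigl(uQ_V(x)\bigr)\cdot|y|_\AA^{2}\cdot(y,y)_\AA^{4}\cdot\frac{\varepsilon^{V}(y)}{\varepsilon^{V}(1)}\cdot\varphi(y\,b^{-1}xb).$$
Here $(y,y)_\AA^{4}=1$, $\varepsilon^V(1)=1$ by Weil reciprocity, and the remaining factor $\varepsilon^V(y)/\varepsilon^V(1)$ collapses — via the Weil-index identities of \textit{Remark}~\ref{rem: sections} and the fact that $\dim_k V\equiv 0\bmod 4$ — to $\prod_v(\dfk,y_v)_v$, the value of the quadratic Hecke character of $F/k$, which is $1$ on $y\in k_\infty^\times$ since $\infty$ splits in $F$; this is the same simplification as in the proof of Theorem~\ref{thm: FC1}, and it is the mechanism that ultimately produces the nebentypus $\left(\tfrac{\cdot}{\dfk}\right)$ in Theorems~\ref{thm: MT1} and~\ref{thm: MT2}. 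Then $\int_{k\backslash k_\AA}\psi(uQ_V(x))\psi(-au)\,du$ equals $1$ when $Q_V(x)=a$ and $0$ otherwise, so after re-interchanging sum and integral only the $x\in V_a$ survive, leaving $I^*(a,y;\varphi)=|y|_\AA^{2}\cdot\sum_{x\in V_a}\int_{B_1^\times\backslash B_{1,\AA}^\times/k_\AA^\times}\varphi(y\,b^{-1}xb)\,d^\times b$.

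Finally I would organize $\sum_{x\in V_a}$ by $B_1^\times$-conjugacy classes, and this is the one structural difference from Lemma~\ref{lem: FCo}: whereas Skolem--Noether makes the conjugation action of $B^\times$ transitive on the trace-zero level set $V^o_a$, the action $b\star x=bxb^{-1}$ of $B_1^\times$ on $V_a$ has in general several orbits, so the sum genuinely becomes $\sum_{x\in B_1^\times\backslash V_a}$. For a representative $x$, its stabilizer is the centralizer $K_x^\times$ in $B_1^\times$, where $K_x=k(x)$ is a quadratic field when $x\notin k$ (the degenerate orbits with $x\in k\cdot\mathrm{Id}$, which occur only when $a$ is a square in $k$, are handled directly as in Lemma~\ref{lem: FCo}), so the orbit of $x$ is in bijection with $K_x^\times\backslash B_1^\times$. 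Collapsing each orbit into the integral over $B_1^\times\backslash B_{1,\AA}^\times/k_\AA^\times$ — using that $b\mapsto\varphi(y\,b^{-1}xb)$ is left $K_{x,\AA}^\times$-invariant — unfolds to $\text{vol}(K_x^\times\backslash K_{x,\AA}^\times/k_\AA^\times)\cdot\int_{K_{x,\AA}^\times\backslash B_{1,\AA}^\times}\varphi(y\,b^{-1}xb)\,d^\times b$, which is exactly the asserted formula.

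If I had to single out the main obstacle, it is the Weil-index/Hilbert-symbol bookkeeping for even $\dim_k V$: this behaves differently from the $n=3$ case of Lemma~\ref{lem: FCo} and is precisely the source of the nebentypus downstream, so it must be checked carefully that the prefactor contributes only $|y|_\AA^2$ on the relevant range of $y$. The other novelty — replacing the single transitive orbit of the anisotropic $3$-dimensional case by the genuine orbit decomposition of $V_a$ under $B_1^\times$, treated orbit-by-orbit including the square-value degenerate orbits — is routine. Everything else is identical to the proof of Lemma~\ref{lem: FCo}.
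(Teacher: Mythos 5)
Your proposal is correct and coincides with the paper's own treatment: the paper offers no argument for this lemma beyond ``Adapting the proof of Lemma~\ref{lem: FCo}, we obtain'', and your unfolding, the Weil-representation computation for the four-dimensional space $(V,\det)$, and the orbit-by-orbit decomposition of $V_a$ under $B_1^\times$-conjugation (with stabilizer $K_x^\times$ and the scalar orbits treated separately) is precisely that adaptation. Your extra bookkeeping showing that the prefactor is really $|y|_\AA^2\cdot\prod_v(y_v,\dfk)_v$ is sound, and it establishes the displayed formula on the set where this quadratic character is trivial (e.g.\ $y\in k^\times k_\infty^\times$), which is exactly the range in which the lemma is subsequently applied (Theorem~\ref{thm: TINFC} takes $y\in k_\infty^\times$); the paper's statement for arbitrary $y\in k_\AA^\times$ silently elides this character, so if anything you are being more careful than the source.
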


We shall take a special pure-tensor Schwartz function and determine the Fourier coefficients of the associated theta integral.

\subsection{Particular Schwartz function}\label{sec: TSN-PSF}

Note that the trace map $\tr: \Mat_2(F) \rightarrow F$ restricting to $V$ gives a $k$-linear functional on $V$.
For $x \in V$, put
$$x^{\natural}:= \left(x - \frac{\tr(x)}{2}\right) \cdot \sqrt{\dfk} \in B_1^o, $$
where 
$B_1^o$ is the space of of pure quaternions in $B_1$, i.e.\
$$B_1^o = \{b \in B \mid \tr(b) = 0\}.$$
Then 
$$
K_x = \begin{cases}
k(x^\natural), & \text{ a quadratic field over $k$ if $x^\natural \neq 0$,}\\
B_1, & \text{otherwise.}
\end{cases}
$$
For $a \in k$, notice that two elements $x_1$ and $x_2$ in $V_a$ belong to the same orbit of $B_1^\times$ (under the conjugation action) if and only if $\tr(x_1) = \tr(x_2)$.
This follows from 
$$
a = Q_V(x) = \frac{\tr(x)^2}{4} - \frac{(x^\natural)^2}{\dfk}, \quad \forall x \in V_a.
$$

Take
$$\Lambda := \Mat_2(O_F) \cap V =
\left\{\begin{pmatrix}a&\beta \\ -\beta'\nfk & d \end{pmatrix} \bigg|\ a,d \in A,\ \beta \in O_F\right\}
$$
and
$$O_{B_1} := \Mat_2(O_F) \cap B_1 = 
\left\{\begin{pmatrix} \alpha & \beta \\ \beta' \nfk & \alpha' \end{pmatrix}\bigg|\ \alpha,\beta \in O_F\right\}.
$$
It is checked that $O_{B_1}$ is an Eichler $A$-order in $B_1$ of type $(\dfk^+\nfk^+, \dfk^-\nfk^-)$, and $u^{-1}xu \in \Lambda$ for every $x \in \Lambda$ and $u \in O_{B_1}^\times$.
For each non-zero prime ideal $\pfk$ of $A$,
put $\Lambda_\pfk:= \Lambda\otimes_A O_\pfk$ and $\Lambda_\pfk^\natural := O_{B_{1,\pfk}}^o = \{b \in O_{B_{1,\pfk}}\mid \tr(b) = 0\}$.
Then:

\begin{lem}
For $x_\pfk \in V(k_\pfk)$ with $Q_V (x_\pfk) \in O_\pfk$, we have
$$
x_\pfk \in \Lambda_\pfk \quad \text{ if and only if } \quad 
\tr(x_\pfk) \in O_\pfk \text{ and } x_\pfk^\natural \in \Lambda_\pfk^\natural.
$$
\end{lem}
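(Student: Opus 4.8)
First I would note that the assertion is purely local at $\pfk$, so I pass to completions and work in explicit coordinates. Write $F_\pfk := F \otimes_k k_\pfk$, with ring of integers $O_{F_\pfk} = O_F \otimes_A O_\pfk$; then $V(k_\pfk)$ consists of the matrices $x_\pfk = \begin{pmatrix} a & \beta \\ -\nfk\beta' & d \end{pmatrix}$ with $a,d \in k_\pfk$ and $\beta \in F_\pfk$, the sublattice $\Lambda_\pfk = \Mat_2(O_{F_\pfk}) \cap V(k_\pfk)$ is cut out by $a,d \in O_\pfk$ and $\beta \in O_{F_\pfk}$, and $O_{B_{1,\pfk}} = \Mat_2(O_{F_\pfk}) \cap B_1(k_\pfk)$. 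From the definition one computes $x_\pfk^\natural = \begin{pmatrix} \sqrt{\dfk}(a-d)/2 & \sqrt{\dfk}\beta \\ -\nfk\sqrt{\dfk}\beta' & -\sqrt{\dfk}(a-d)/2 \end{pmatrix}$, which automatically has reduced trace zero; hence $x_\pfk^\natural \in \Lambda_\pfk^\natural$ if and only if $\sqrt{\dfk}\,(a-d)/2 \in O_{F_\pfk}$ and $\sqrt{\dfk}\,\beta \in O_{F_\pfk}$. With this in hand the \emph{only if} direction is immediate (using $\sqrt{\dfk} \in O_{F_\pfk}$ and $2 \in O_\pfk^\times$, as $q$ is odd), so the whole content of the lemma is the converse.

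For the converse I would split according to whether $\pfk$ divides $\dfk$. If $\pfk \nmid \dfk$ then $\dfk \in O_\pfk^\times$, so $\sqrt{\dfk} \in O_{F_\pfk}^\times$, and the two conditions coming from $x_\pfk^\natural \in \Lambda_\pfk^\natural$ give directly $a - d \in O_\pfk$ and $\beta \in O_{F_\pfk}$; combining $a-d \in O_\pfk$ with $\tr(x_\pfk) = a+d \in O_\pfk$ yields $a,d \in O_\pfk$, so $x_\pfk \in \Lambda_\pfk$. In this case the hypothesis $Q_V(x_\pfk) \in O_\pfk$ is not even used.

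The case $\pfk \mid \dfk$ is where the real work — and that hypothesis — comes in. Since $\dfk$ is square-free, $\ord_\pfk(\dfk) = 1$, so $\pfk$ is ramified in $F$; writing $\Pfk$ for the prime of $O_{F_\pfk}$, the element $\sqrt{\dfk}$ is a uniformizer and $\ord_\Pfk$ restricts to $2\ord_\pfk$ on $k_\pfk$. The condition $\sqrt{\dfk}\,(a-d)/2 \in O_{F_\pfk}$ gives $\ord_\Pfk(a-d) \geq -1$; but $a-d \in k_\pfk$ forces $\ord_\Pfk(a-d)$ to be even, hence $\geq 0$, so $a-d \in O_\pfk$, and as before $a,d \in O_\pfk$, in particular $ad \in O_\pfk$. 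Now I feed this into $Q_V(x_\pfk) = ad + \nfk\beta\beta' \in O_\pfk$ to get $\nfk\beta\beta' \in O_\pfk$; since $\gcd(\nfk,\dfk) = 1$ we have $\pfk \nmid \nfk$, so $\nfk \in O_\pfk^\times$ and therefore $\beta\beta' = \Norm_{F_\pfk/k_\pfk}(\beta) \in O_\pfk$. For the ramified quadratic extension $F_\pfk/k_\pfk$ one has $\ord_\pfk(\beta\beta') = \ord_\Pfk(\beta)$, so $\beta \in O_{F_\pfk}$, and thus $x_\pfk \in \Lambda_\pfk$, finishing the converse.

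The only genuinely delicate point is this last step at a ramified prime: there $\sqrt{\dfk}$ is not a unit, so $\sqrt{\dfk}\beta \in O_{F_\pfk}$ by itself only gives $\ord_\Pfk(\beta) \geq -1$, and one must exploit $Q_V(x_\pfk) \in O_\pfk$ together with $\pfk \nmid \nfk$ to exclude $\ord_\Pfk(\beta) = -1$; indeed the statement would be false without the determinant hypothesis, as $x = \begin{pmatrix} 0 & 1/\sqrt{\dfk} \\ \nfk/\sqrt{\dfk} & 0 \end{pmatrix}$ illustrates. As an alternative packaging of the same computation, one can invoke the identity $Q_V(x) = \tr(x)^2/4 - (x^\natural)^2/\dfk$ recorded earlier: $x_\pfk^\natural \in O_{B_{1,\pfk}}$ gives $\Nr(x_\pfk^\natural) \in O_\pfk$, while $(x_\pfk^\natural)^2/\dfk = \tr(x_\pfk)^2/4 - Q_V(x_\pfk) \in O_\pfk$ forces $\ord_\pfk(\Nr(x_\pfk^\natural)) \geq 1$, and reading this back through the explicit form of $x_\pfk^\natural$ again pins down $\beta \in O_{F_\pfk}$. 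Everything else is routine unwinding of the coordinates on $V$.
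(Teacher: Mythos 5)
Your proof is correct and is in substance the same as the paper's: the decisive step in both is to feed $Q_V(x_\pfk)\in O_\pfk$ and $\gcd(\nfk,\dfk)=1$ into the relation $Q_V(x_\pfk)=\tr(x_\pfk)^2/4-(x_\pfk^\natural)^2/\dfk$ to force the off-diagonal entry to be integral (equivalently, divisible by $\sqrt{\dfk}$ after twisting), which is exactly the paper's computation; your split into $\pfk\nmid\dfk$ and $\pfk\mid\dfk$, and your counterexample showing the determinant hypothesis is needed, are just a slightly more explicit packaging of the same argument. No gaps.
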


\begin{proof}
It is straightforward that when $x_\pfk \in \Lambda_\pfk$, one has $\tr(x_\pfk) \in O_\pfk$ and $x_\pfk^\natural \in \Lambda_\pfk^\natural$.
Conversely, suppose $\tr(x_\pfk) \in O_\pfk$ and $x_\pfk^\natural \in \Lambda_\pfk^\natural$.
Write
$$x_\pfk^\natural = \begin{pmatrix} a\sqrt{\dfk} & \beta \\ \beta'\nfk & -a\sqrt{\dfk} \end{pmatrix} \quad \text{ with } a \in O_\pfk,\ \beta \in O_{F,\pfk} := O_F \otimes_A O_\pfk, \quad \text{ and } \quad t = \tr(x_\pfk) \in O_\pfk.
$$
Then 
$$
Q_V(x_\pfk) = \frac{t^2}{4} - \frac{(x_\pfk^\natural)^2}{\dfk}
= \frac{t^2}{4} + \frac{a \dfk + \Nr_{F/k}(\beta)\nfk}{\dfk} \in O_\pfk.
$$
Since $\nfk$ is coprime to $\dfk$, we obtain that
$$\frac{\Nr_{F/k}(\beta)}{\dfk} \in O_\pfk,
$$
which shows that $\beta = \sqrt{\dfk} \cdot \tilde{\beta}$ for some $\tilde{\beta} \in O_{F,\pfk}$ (as $\dfk$ is square-free).
Therefore
$$x_\pfk = \frac{t}{2} + \frac{x_\pfk^\natural}{\sqrt{\dfk}}
= \begin{pmatrix}
t/2 + a & \tilde{\beta} \\ -\tilde{\beta}\nfk & t/2 - a
\end{pmatrix} \in \Lambda_\pfk.
$$
\end{proof}

Take
$$
\varphi_{\Lambda,\pfk} := \mathbf{1}_{\Lambda_\pfk} \in S(V(k_\pfk)),
$$
and
$$
\varphi^{\natural}_\pfk := \mathbf{1}_{\Lambda_\pfk^\natural} \in S(B_{1,\pfk}^o).
$$
The above lemma says that for $x_\pfk \in V(k_\pfk)$, we have
\begin{eqnarray}\label{eqn: compare-phi}
\varphi_{\Lambda,\pfk}(x_\pfk) = 1 \quad \text{ if and only if } \quad 
\tr(x) \in O_\pfk \text{ and } \varphi^\natural_\pfk (x^\natural_\pfk) = 1.
\end{eqnarray}

As $\dfk$ is monic with even degree, the field $F$ is \textit{real} over $k$, i.e.\ the infinite place $\infty$ of $k$ splits in $F$.
Fix an embedding $F\hookrightarrow k_\infty$, which induces a $k_\infty$-algebra isomorphism
$$B_{1,\infty} = B_1\otimes_k k_\infty \cong \Mat_2(k_\infty).$$
On the other hand, the natural inclusion $V \hookrightarrow \Mat_2(F)$ also induces an isomorphism $V(k_\infty) \cong \Mat_2(k_\infty)$ (as quadratic spaces over $k_\infty$).
Recall in Section~\ref{sec: PTI1} that we take
$$L_\infty := \varpi \cdot \Mat_2(O_\infty),$$
and
$$L_\infty' := \left\{\begin{pmatrix} a & b \\ c & d \end{pmatrix}
\in L_\infty \ \bigg|\ c \in \varpi^2 O_\infty\right\}. 
$$
Via the identification $V(k_\infty) \cong \Mat_2(k_\infty)$, 
we may view $L_\infty$ and $L_\infty'$ as two $O_\infty$-lattices in $V_\infty$.
Choose
$$
\varphi_{\Lambda,\infty} := \mathbf{1}_{L_\infty} - \frac{q+1}{2} \cdot \mathbf{1}_{L_\infty'} \in S(V(k_\infty))
\quad \text{ and } \quad
\varphi_\infty^\natural := \mathbf{1}_{L_\infty^o} - \frac{q+1}{2} \cdot \mathbf{1}_{L_\infty^{\prime,o}} \in S(B_{1,\infty}^o).
$$
It is straightforward that:

\begin{lem}
For $x \in V(k_\infty)$ with $ \tr(x) \in \varpi O_\infty$, we have
$$
 \varphi_{\Lambda,\infty}( x) = 
 \varphi_\infty^\natural(\sqrt{\dfk} \cdot  x^\natural).
$$
\end{lem}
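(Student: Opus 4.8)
The plan is to reduce the claimed identity to an elementary comparison of two $O_\infty$-lattice memberships. First I would unwind the definitions: by construction $\varphi_{\Lambda,\infty} = \mathbf{1}_{L_\infty} - \tfrac{q+1}{2}\,\mathbf{1}_{L_\infty'}$ on $V(k_\infty)$ and $\varphi_\infty^\natural = \mathbf{1}_{L_\infty^o} - \tfrac{q+1}{2}\,\mathbf{1}_{L_\infty^{\prime,o}}$ on $B_{1,\infty}^o$, where $L_\infty^o = L_\infty \cap B_{1,\infty}^o$ and $L_\infty^{\prime,o} = L_\infty' \cap B_{1,\infty}^o$ once $V(k_\infty)$ and $B_{1,\infty}$ are both identified with $\Mat_2(k_\infty)$ via the fixed splitting $F\hookrightarrow k_\infty$ (each identification being induced by the corresponding inclusion into $\Mat_2(F)\otimes_k k_\infty$). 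Since $\sqrt{\dfk}\,x^\natural$ is a pure quaternion for any $x$ (this is the defining property of $\natural$), it automatically lands in the trace-zero part, so it is enough to establish the two equivalences $x\in L_\infty \iff \sqrt{\dfk}\,x^\natural\in L_\infty$ and $x\in L_\infty'\iff \sqrt{\dfk}\,x^\natural\in L_\infty'$, granted $\tr(x)\in\varpi O_\infty$.

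The heart of the matter is an explicit coordinate computation: writing $x=\left(\begin{smallmatrix} a & \beta\\ -\nfk\beta' & d\end{smallmatrix}\right)$ with $a,d\in k_\infty$ and $\beta\in F\otimes_k k_\infty$, and using the image of $\sqrt{\dfk}$ in $k_\infty$ to make the two identifications with $\Mat_2(k_\infty)$ explicit, one checks directly that $\sqrt{\dfk}\,x^\natural$ corresponds to $x-\tfrac{\tr(x)}{2}\cdot 1$ as an element of $\Mat_2(k_\infty)$. In other words, the normalizing factor $\sqrt{\dfk}$ in the statement is inserted precisely to cancel the $\sqrt{\dfk}$ built into $x^\natural=(x-\tfrac{\tr(x)}{2})\sqrt{\dfk}$ against the one entering through the splitting $F\hookrightarrow k_\infty$; the verification is a short two-by-two matrix manipulation, of the same flavour as the identity $Q_V(x)=\tfrac{\tr(x)^2}{4}-\tfrac{(x^\natural)^2}{\dfk}$ recorded above, and it is the archimedean counterpart of the finite-place statement behind \eqref{eqn: compare-phi}.

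Granting this, the conclusion is immediate. Since $q$ is odd and $\tr(x)\in\varpi O_\infty$, the scalar matrix $\tfrac{\tr(x)}{2}\cdot 1$ lies in $\varpi\Mat_2(O_\infty)=L_\infty$, and, its lower-left entry being $0\in\varpi^2 O_\infty$, it lies in $L_\infty'$ as well. Hence $x$ and $\sqrt{\dfk}\,x^\natural=x-\tfrac{\tr(x)}{2}\cdot 1$ are congruent modulo $L_\infty$ and modulo $L_\infty'$, which yields both equivalences and therefore $\varphi_{\Lambda,\infty}(x)=\varphi_\infty^\natural(\sqrt{\dfk}\,x^\natural)$. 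I expect the only delicate point to be the bookkeeping of the two identifications of $V(k_\infty)$ and $B_{1,\infty}$ with $\Mat_2(k_\infty)$ together with the exact role of the factor $\sqrt{\dfk}$; once those are pinned down consistently, the lattice comparison carries no further difficulty.
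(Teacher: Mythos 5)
Your skeleton is indeed the intended ``straightforward'' argument (the paper offers no proof beyond that word), but the one step you explicitly postpone --- the coordinate identity --- is precisely where your proposal breaks down. With the paper's definition $x^\natural=\bigl(x-\tfrac{\tr(x)}{2}\bigr)\cdot\sqrt{\dfk}$, and with both identifications $V(k_\infty)\cong\Mat_2(k_\infty)$ and $B_{1,\infty}\cong\Mat_2(k_\infty)$ induced by the fixed embedding $F\hookrightarrow k_\infty$ (which sends $\sqrt{\dfk}$ to some $s\in k_\infty$ with $s^2=\dfk$), the element $\sqrt{\dfk}\cdot x^\natural$ corresponds to $s\cdot s\,\bigl(X-\tfrac{\tr(X)}{2}\bigr)=\dfk\,\bigl(X-\tfrac{\tr(X)}{2}\bigr)$, \emph{not} to $X-\tfrac{\tr(X)}{2}$: multiplying by $\sqrt{\dfk}$ compounds the factor built into $x^\natural$ rather than cancelling it (cancellation requires dividing by $\sqrt{\dfk}$). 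With the argument $\dfk\bigl(X-\tfrac{\tr(X)}{2}\bigr)$ your two lattice equivalences fail, since $|\dfk|_\infty=q^{\deg\dfk}>1$ rescales the lattices: for $X=\varpi\left(\begin{smallmatrix}0&1\\1&0\end{smallmatrix}\right)$ one has $\varphi_{\Lambda,\infty}(X)=1$ while $\dfk X\notin L_\infty$, so the right-hand side would vanish. Had you carried out the ``short two-by-two manipulation'' instead of deferring it, this mismatch would have surfaced.

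What is true, and what is actually used later, is the identity with $x^\natural/\sqrt{\dfk}=x-\tfrac{\tr(x)}{2}$ as the argument of $\varphi_\infty^\natural$; the printed ``$\sqrt{\dfk}\,\cdot$'' is best read as a normalization slip (consistently, in the proof of Theorem~\ref{thm: TINFC} the infinite integral is nonvanishing exactly when $\varpi^{-1}(y/\sqrt{\dfk})\,x^\natural$ is integral, which is what $\deg a+2\le 2\ord_\infty(y)$ guarantees, whereas $y\sqrt{\dfk}\,x^\natural$ would not be). Once the argument is $x-\tfrac{\tr(x)}{2}$, the rest of your reasoning is correct and complete: since $q$ is odd and $\tr(x)\in\varpi O_\infty$, the scalar $\tfrac{\tr(x)}{2}\cdot 1$ lies in $L_\infty'\subset L_\infty$, hence $x$ and $x-\tfrac{\tr(x)}{2}$ have the same $L_\infty$- and $L_\infty'$-memberships, and on trace-zero matrices $\mathbf{1}_{L_\infty}$ and $\mathbf{1}_{L_\infty'}$ restrict to $\mathbf{1}_{L_\infty^{o}}$ and $\mathbf{1}_{L_\infty^{\prime,o}}$, so the two Schwartz functions agree there. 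So either prove the statement in the corrected normalization and flag the discrepancy, or redo the bookkeeping honestly; as written, the asserted cancellation does not occur and the displayed equality is not established.
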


Let 
$$\varphi_\Lambda := \otimes_\pfk \varphi_{\Lambda,\pfk} \otimes \varphi_{\Lambda,\infty} \in S(V(\AA)).$$
Similar to Proposition~\ref{prop: TLo}, we have the following transformation law of $I(g;\varphi_\Lambda)$:

\begin{prop}\label{prop: TLN}
Given $\gamma \in \SL_2(k)$, $g \in \SL_2(k_\AA)$ and $\kappa \in \Kcal_0^1(\dfk \nfk \infty)$, we have
$$
I(\gamma g \kappa; \varphi_\Lambda)
= \chi_\dfk(\kappa) \cdot I(g;\varphi_\Lambda).
$$
Here for $\kappa = \begin{pmatrix} a&b \\ c&d\end{pmatrix} \in \Kcal_0^1(\dfk \nfk \infty)$
with $d = (d_v)_v \in O_\AA$ $($and so $d_\pfk \in O_\pfk^\times$ for each prime factor $\pfk$ of $\dfk)$, we put
$$
\chi_\dfk(\kappa) := \prod_{\pfk \mid \dfk}\left(\frac{d_\pfk}{\pfk}\right).
$$
\end{prop}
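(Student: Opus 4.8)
\emph{Proof proposal.} The plan is to imitate the proof of Proposition~\ref{prop: TLo}: reduce to a local identity for the Weil representation, then read off the nebentypus from the local Weil index at the primes dividing $\dfk$. Since $\dim_k V = 4$ is even, $\omega^V$ and $\Theta(\cdot,\cdot;\varphi)$ descend to genuine objects on $\SL_2(k_\AA)$ and $\Theta(\gamma g,h;\varphi) = \Theta(g,h;\varphi)$ for $\gamma\in\SL_2(k)$, so integrating over $h = h_b$ gives $I(\gamma g\kappa;\varphi_\Lambda) = I(g\kappa;\varphi_\Lambda)$; moreover $\Theta(g\kappa,h;\varphi) = \Theta(g,h;\omega^V(\kappa)\varphi)$, whence $I(g\kappa;\varphi_\Lambda) = I(g;\omega^V(\kappa)\varphi_\Lambda)$. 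Because $\varphi_\Lambda = \bigotimes_v\varphi_{\Lambda,v}$ and $\omega^V = \bigotimes_v\omega_v^V$, it suffices to prove that for each place $v$ and each local component $\kappa_v = \bigl(\begin{smallmatrix}a&b\\c&d\end{smallmatrix}\bigr)$ of $\kappa\in\Kcal_0^1(\dfk\nfk\infty)$ one has $\omega_v^V(\kappa_v)\varphi_{\Lambda,v} = \chi_{\dfk,v}(d)\,\varphi_{\Lambda,v}$, where $\chi_{\dfk,v}(d) = \left(\tfrac{d}{\pfk}\right)$ if $v = \pfk\mid\dfk$ and $\chi_{\dfk,v}(d) = 1$ otherwise; indeed $\prod_v\chi_{\dfk,v}(d_v) = \chi_\dfk(\kappa)$ since $c\equiv 0\bmod\dfk$ forces $d_\pfk\in O_\pfk^\times$ at each $\pfk\mid\dfk$, so that assembling the places yields $I(\gamma g\kappa;\varphi_\Lambda) = \chi_\dfk(\kappa)\,I(g;\varphi_\Lambda)$.

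At $\pfk\nmid\dfk\nfk$ the group $\Kcal_0^1(\dfk\nfk\infty)$ imposes no congruence, so I would prove the stronger statement that $\varphi_{\Lambda,\pfk} = \mathbf{1}_{\Lambda_\pfk}$ is fixed by all of $\omega_\pfk^V(\SL_2(O_\pfk))$. This is the usual ``unramified vector'' fact: $\SL_2(O_\pfk)$ is generated by the upper unipotents and $w := \bigl(\begin{smallmatrix}0&1\\-1&0\end{smallmatrix}\bigr)$; the unipotents fix $\mathbf{1}_{\Lambda_\pfk}$ because $Q_V(x) = ad + \nfk\Nr_{F/k}(\beta)\in O_\pfk$ for $x = \bigl(\begin{smallmatrix}a&\beta\\-\nfk\beta'&d\end{smallmatrix}\bigr)\in\Lambda_\pfk$; and $\omega_\pfk^V(w)$ fixes it because $\Lambda_\pfk$ is self-dual for the bilinear form of $Q_V$ (at these primes $F/k$ is unramified and $\nfk\in O_\pfk^\times$, so the off-diagonal block carries the unimodular form $\nfk\cdot\Nr_{F/k}$ on $O_{F,\pfk}$), whence $\varepsilon_\pfk^V(1) = 1$.

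For $v = \pfk\mid\dfk\nfk$ or $v = \infty$ I would insert the factorization of Proposition~\ref{prop: TLo} for $\kappa_v\in\SL_2(O_v)$ with $c\equiv 0\bmod v$,
\[
\kappa_v = \begin{pmatrix}1&bd^{-1}\\0&1\end{pmatrix}\begin{pmatrix}d^{-1}&0\\0&d\end{pmatrix}\begin{pmatrix}0&1\\-1&0\end{pmatrix}\begin{pmatrix}1&-d^{-1}c\\0&1\end{pmatrix}\begin{pmatrix}0&-1\\1&0\end{pmatrix}\qquad(d\in O_v^\times),
\]
and apply the Weil formulas (3)--(5) of Section~\ref{sec: WRTS}. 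At a finite $\pfk\mid\dfk\nfk$: the leftmost unipotent fixes $\mathbf{1}_{\Lambda_\pfk}$ ($bd^{-1}\in O_\pfk$, $Q_V(\Lambda_\pfk)\subset O_\pfk$); the composite $\omega_\pfk^V(w)\circ\omega_\pfk^V\!\bigl(\begin{smallmatrix}1&-d^{-1}c\\0&1\end{smallmatrix}\bigr)\circ\omega_\pfk^V(w)^{-1}$ also fixes $\mathbf{1}_{\Lambda_\pfk}$, because $\omega_\pfk^V(w)^{-1}\mathbf{1}_{\Lambda_\pfk}$ is supported on the $Q_V$-dual lattice $\Lambda_\pfk^\vee$ (strictly larger than $\Lambda_\pfk$, as $F/k$ is ramified resp.\ $\ord_\pfk(\nfk) = 1$) on which $Q_V$ takes values in $\pfk^{-1}O_\pfk$, so that the middle unipotent (with $-d^{-1}c\in\pfk O_\pfk$) acts by $1$ there and $\omega_\pfk^V(w)\circ\omega_\pfk^V(w)^{-1} = \id$; and the diagonal factor contributes $|d^{-1}|_\pfk^2(d^{-1},d^{-1})_\pfk^4\,\varepsilon_\pfk^V(d^{-1})/\varepsilon_\pfk^V(1) = \varepsilon_\pfk^V(d^{-1})/\varepsilon_\pfk^V(1)$, so that overall $\omega_\pfk^V(\kappa_\pfk)\mathbf{1}_{\Lambda_\pfk} = \bigl(\varepsilon_\pfk^V(d^{-1})/\varepsilon_\pfk^V(1)\bigr)\mathbf{1}_{\Lambda_\pfk}$. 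From $x = \tfrac{\tr(x)}2 + \tfrac{x^\natural}{\sqrt\dfk}$ one has $V\cong\langle 1\rangle\perp\tfrac1\dfk(B_1^o,\Nr)$, hence $\dis(V)\equiv\dfk$ modulo squares, and a short computation with Remark~\ref{rem: sections}(1),(2) gives $\varepsilon_\pfk^V(d^{-1})/\varepsilon_\pfk^V(1) = (d,\dis(V))_\pfk = (d,\dfk)_\pfk$. For $\pfk\mid\nfk$ this is the Hilbert symbol of two units at an odd place, hence $1$; for $\pfk\mid\dfk$, writing $\dfk = \pfk\cdot(\dfk/\pfk)$ with $\dfk/\pfk\in O_\pfk^\times$, it is the tame symbol $(d,\pfk)_\pfk = \left(\tfrac{d}{\pfk}\right) = \chi_{\dfk,\pfk}(d)$. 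The place $v = \infty$ is the only one not reducible to integrality of $Q_V$ on a single lattice, since $\varphi_{\Lambda,\infty} = \mathbf{1}_{L_\infty} - \tfrac{q+1}2\mathbf{1}_{L_\infty'}$; there I would run the same explicit (longer) computation as for $\varphi_\infty^o$ in Proposition~\ref{prop: TLo}, and no character appears because $\dfk$ is a square in $k_\infty$ ($\infty$ splits in $F$).

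The main obstacle is the bookkeeping at the ramified primes $\pfk\mid\dfk$: one must verify that $\varepsilon_\pfk^V(d^{-1})/\varepsilon_\pfk^V(1)$ equals $\left(\tfrac{d}{\pfk}\right)$ with no stray sign, and that the non-self-dual lattice $\Lambda_\pfk$ does not spoil the triviality of the remaining elementary factors; both are dealt with by the support/conductor argument above together with the congruence $c\equiv 0\bmod\pfk$. The computation at $\infty$ with the special Schwartz function is a secondary technical point, but it is essentially the one already carried out for $\varphi_\infty^o$.
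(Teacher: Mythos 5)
Your proposal is correct and follows essentially the approach the paper intends: the paper gives no separate proof of Proposition~\ref{prop: TLN}, deferring to the method of Proposition~\ref{prop: TLo}, i.e.\ reduction to local invariance of $\varphi_{\Lambda,v}$ under the Weil representation via the elementary-matrix decomposition, with the nebentypus arising from the diagonal factor. Your local computations (self-duality of $\Lambda_\pfk$ at $\pfk\nmid\dfk\nfk$, the dual-lattice support argument at $\pfk\mid\dfk\nfk$, and the Weil-index identity $\varepsilon_\pfk^V(d^{-1})/\varepsilon_\pfk^V(1)=(d,\operatorname{disc}V)_\pfk=(d,\dfk)_\pfk=\bigl(\tfrac{d}{\pfk}\bigr)$ using $\operatorname{disc}V\equiv\dfk$) correctly supply the details the paper leaves implicit.
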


This transformation law implies in particular that for $a \in k$, $y \in k_\AA^\times$, $\varepsilon \in O_\AA^\times$, and $u \in O_\AA$, we have
$$
I^*(a,y;\varphi_\Lambda) = I^*(\alpha^{-2}a, \alpha y \varepsilon; \varphi_\Lambda)=
I^*(a,y;\varphi_\Lambda) \cdot \psi(ay^2 u).
$$
Thus it suffices to consider $y \in k_\infty^\times$, and in this case we get
$$I^*(a,y;\varphi_\Lambda) = 0
\quad \text{ unless $a\in A$ with $\deg a + 2 \leq 2 \ord_\infty(y)$}.
$$
Next, we shall express $I^*(a,y;\varphi_\Lambda)$ in terms of the modified Hurwitz class numbers.

\subsection{Fourier coefficients of $I(g;\varphi_\Lambda)$}

For $y \in k_\infty^\times$ and $a \in A$ with $\deg a + 2 \leq 2 \ord_\infty(y)$, we get
\begin{eqnarray}
I^*(a,y;\varphi_\Lambda)
&=& |y|_\infty^2 \cdot \sum_{x \in B_1^\times \backslash V_a} \text{vol}(K_x^\times  \backslash K_{x,\AA}^\times/k_\AA^\times) \nonumber \\
&& \cdot \left(\prod_\pfk \int_{K_{x,\pfk}^\times \backslash B_{1,\pfk}^\times}\varphi_{\Lambda,\pfk}(b_\pfk^{-1}xb_\pfk) d^\times b_\pfk\right)
\cdot \int_{K_{x,\infty}^\times \backslash B_{1,\infty}^\times} \varphi_{\Lambda,\infty}(yb_\infty^{-1}xb_\infty) d^\times b_\infty.
\nonumber
\end{eqnarray}
Thus for $x \in V_a$,
$$
\prod_\pfk \int_{K_{x,\pfk}^\times \backslash B_{1,\pfk}^\times}\varphi_{\Lambda,\pfk}(b_\pfk^{-1}xb_\pfk) d^\times b_\pfk = 0
\quad \text{ unless \ \ $t = \tr(x) \in A$.}
$$
In this case, 
$$
K_x = \begin{cases}
k(\sqrt{\dfk(t^2-4a)}), & \text{ if $x \notin k$;}\\
B_1, & \text{ otherwise.}
\end{cases}
$$
Moreover, for $x \in V_a$ so that $K_x/k$ is an imaginary quadratic extension, the condition $\deg a + 2 \leq 2 \ord_\infty(y)$ implies that $y\cdot \tr(x) \in \varpi O_\infty$.
Hence by \eqref{eqn: compare-phi} we have
\begin{eqnarray}
&& \prod_\pfk \int_{K_{x,\pfk}^\times \backslash B_{1,\pfk}^\times}\varphi_{\Lambda,\pfk}(b_\pfk^{-1}xb_\pfk) d^\times b_\pfk
\cdot \int_{K_{x,\infty}^\times \backslash B_{1,\infty}^\times} \varphi_{\Lambda,\infty}(yb_\infty^{-1}xb_\infty) d^\times b_\infty \nonumber \\
&=& 
\prod_\pfk \int_{K_{x,\pfk}^\times \backslash B_{1,\pfk}^\times}\varphi_{\pfk}^\natural(b_\pfk^{-1}x^\natural b_\pfk) d^\times b_\pfk
\cdot \int_{K_{x,\infty}^\times \backslash B_{1,\infty}^\times} \varphi^\natural_\infty\big((y\sqrt{\dfk}) \cdot b_\infty^{-1}x^\natural b_\infty) d^\times b_\infty.
\nonumber
\end{eqnarray}

For $x \in V_a$ with $\tr(x) = t \in A$, one has $$(x^\natural)^2 = \dfk(t^2-4a).$$
As the Eichler $A$-order $O_{B_1}$ is of type $(\dfk^+\nfk^+,\dfk^-\nfk^-)$, similar to Theorem~\ref{thm: FC1} we obtain
\begin{eqnarray}
&& \text{vol}(K_x^\times  \backslash K_{x,\AA}^\times/k_\AA^\times) \cdot \left(\prod_\pfk \int_{K_{x,\pfk}^\times \backslash B_{1,\pfk}^\times}\varphi_{\Lambda,\pfk}(b_\pfk^{-1}xb_\pfk) d^\times b_\pfk\right)
\nonumber \\
&& \quad \quad \quad \quad \quad \quad \quad \quad \,
\cdot \int_{K_{x,\infty}^\times \backslash B_{1,\infty}^\times} \varphi_{\Lambda,\infty}(yb_\infty^{-1}xb_\infty) d^\times b_\infty \nonumber \\
&=& 
\text{vol}(K_x^\times  \backslash K_{x,\AA}^\times/k_\AA^\times) \cdot \left(\prod_\pfk \int_{K_{x,\pfk}^\times \backslash B_{1,\pfk}^\times}\varphi_{\pfk}^\natural(b_\pfk^{-1}x^\natural b_\pfk) d^\times b_\pfk\right) 
\nonumber \\
&& \quad \quad \quad \quad \quad \quad \quad \quad \,
\cdot \int_{K_{x,\infty}^\times \backslash B_{1,\infty}^\times} \varphi^\natural_\infty\big((y\sqrt{\dfk}) \cdot b_\infty^{-1}x^\natural b_\infty) d^\times b_\infty \nonumber \\
&=& \text{vol}(O_{B_\AA}^\times/O_{\AA}^\times) \cdot 
\begin{cases}
H^{\dfk^+\nfk^+,\dfk^-\nfk^-}(\dfk(t^2-4a)), & \text{ if $\dfk(t^2-4a) \preceq 0$;}\\
0, & \text{ otherwise.}
\end{cases} \nonumber
\end{eqnarray}

Notice that two elements $x_1, x_2 \in V_a$ belong to the same $B_1^\times$-orbit if and only if $\tr(x_1) = \tr(x_2)$.
From the above discussion, we conclude that:

\begin{thm}\label{thm: TINFC}
Given $a \in A$ and $y \in k_\infty^\times $ with $\deg a +2 \leq 2\ord_\infty(y)$, the following equality holds:
$$
I^*(a,y;\varphi_\Lambda)
= \text{\rm vol}(O_{{B_1},\AA}^\times/O_\AA^\times) \cdot |y|_\infty^2 \cdot \sum_{\subfrac{t \in A}{t^2\preceq 4a}} H^{\dfk^+ \nfk^+,\dfk^-\nfk^-}(\dfk(t^2-4a)).
$$
\end{thm}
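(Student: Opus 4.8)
The plan is to reduce the evaluation of $I^*(a,y;\varphi_\Lambda)$ to the degree-$3$ computation already performed in the proof of Theorem~\ref{thm: FC1}, applied this time to the indefinite division quaternion algebra $B_1$ together with its Eichler $A$-order $O_{B_1}$, which is of type $(\dfk^+\nfk^+,\dfk^-\nfk^-)$. First I would use Proposition~\ref{prop: TLN} to reduce to the case $y\in k_\infty^\times$, $a\in A$ with $\deg a + 2\le 2\ord_\infty(y)$, and then apply Lemma~\ref{lem: FCV}, which expresses $I^*(a,y;\varphi_\Lambda)$ as $|y|_\infty^2$ times a sum over $x\in B_1^\times\backslash V_a$ of $\mathrm{vol}(K_x^\times\backslash K_{x,\AA}^\times/k_\AA^\times)$ times the product over all places $v$ of the local orbital integrals $\int_{K_{x,v}^\times\backslash B_{1,v}^\times}\varphi_{\Lambda,v}(y_vb_v^{-1}xb_v)\,d^\times b_v$, where $y_v=1$ for finite $v$.

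The next step is to identify the relevant orbits. Two elements of $V_a$ lie in the same $B_1^\times$-orbit exactly when they have equal trace, so $B_1^\times\backslash V_a$ maps injectively to $k$ via $x\mapsto t:=\tr(x)$; and since the support of $\varphi_{\Lambda,\pfk}=\mathbf 1_{\Lambda_\pfk}$ consists of elements of trace in $O_\pfk$ while the trace is conjugation-invariant, the local integral at $\pfk$ vanishes unless $t\in O_\pfk$. Taking the product over all finite primes, the sum collapses to a sum over $t\in A$. For each such $t$ I would then translate everything from the quadratic space $V$ to the space of pure quaternions $B_1^o$ through $x\mapsto x^\natural=(x-t/2)\sqrt{\dfk}$: at the finite primes this is precisely \eqref{eqn: compare-phi}, and at $\infty$ the hypothesis $\deg a+2\le 2\ord_\infty(y)$ forces $yt\in\varpi O_\infty$, so the corresponding comparison lemma at $\infty$ turns $\varphi_{\Lambda,\infty}(yb_\infty^{-1}xb_\infty)$ into $\varphi_\infty^\natural\big((y\sqrt{\dfk})\,b_\infty^{-1}x^\natural b_\infty\big)$. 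Along the way one records that $K_x=k(x^\natural)$ when $x\notin k$ and that $(x^\natural)^2=\dfk(t^2-4a)$ (from $a=Q_V(x)=t^2/4-(x^\natural)^2/\dfk$), so $K_x=k(\sqrt{\dfk(t^2-4a)})$; the degenerate orbit $x\in k$, where $t^2=4a$, is taken care of by the convention for $H^{\dfk^+\nfk^+,\dfk^-\nfk^-}(0)$.

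After these substitutions the $t$-summand is exactly the quantity evaluated inside the proof of Theorem~\ref{thm: FC1}, with its discriminant ``$-a$'' replaced by $\dfk(t^2-4a)$ and with $B=B_1$, $O_B=O_{B_1}$; that computation (via Proposition~\ref{prop: VOL-CN}, Lemma~\ref{lem: CN}, the local optimal-embedding counts, and Proposition~\ref{prop: HCN}) yields $\mathrm{vol}(O_{B_1,\AA}^\times/O_\AA^\times)\cdot H^{\dfk^+\nfk^+,\dfk^-\nfk^-}(\dfk(t^2-4a))$ when $\dfk(t^2-4a)\preceq 0$ and $0$ otherwise. Finally, since $\dfk$ is monic of even degree it is a square in $k_\infty$, whence $\dfk(t^2-4a)\preceq 0$ if and only if $t^2-4a\preceq 0$, i.e.\ $t^2\preceq 4a$; summing over $t\in A$ gives the asserted identity.

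The hard part will be carrying out the second and third steps uniformly at all places at once: establishing that the non-vanishing $B_1^\times$-orbits in $V_a$ are precisely those of integral trace, and that the $\infty$-place substitution $x\mapsto x^\natural$ is legitimate under the stated bound on $\ord_\infty(y)$ — which is exactly the reason that hypothesis is imposed. One should also verify that the degree-$4$ Weil-representation scaling really produces the clean prefactor $|y|_\infty^2$ of Lemma~\ref{lem: FCV} with no residual Weil index or Hilbert symbol; this uses that $\mathrm{disc}(V)\equiv\dfk$ is a square at $\infty$ and that $B_1$ ramifies at an even number of finite primes. Once all of this is in place, matching the outcome with Theorem~\ref{thm: FC1} is routine bookkeeping.
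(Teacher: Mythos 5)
Your proposal follows the paper's own proof essentially step for step: it invokes Lemma~\ref{lem: FCV}, observes that the finite local integrals force $t=\tr(x)\in A$ and that $B_1^\times$-orbits in $V_a$ are classified by the trace, passes to pure quaternions via $x\mapsto x^\natural$ using \eqref{eqn: compare-phi} and its analogue at $\infty$ (where the bound $\deg a+2\le 2\ord_\infty(y)$ gives $y\,\tr(x)\in\varpi O_\infty$), and then reruns the computation of Theorem~\ref{thm: FC1} for the Eichler order $O_{B_1}$ of type $(\dfk^+\nfk^+,\dfk^-\nfk^-)$ with discriminant $\dfk(t^2-4a)$, including the scalar orbit handled by the convention for $H^{\dfk^+\nfk^+,\dfk^-\nfk^-}(0)$. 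This is the same argument as in the paper, so the proposal is correct.
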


\subsection{Alternative expression of the Fourier coefficients}

For $y \in k_\infty^\times$ and $a \in A$,
one may express $I^*(a,y;\varphi_\Lambda)$ as
$$
I^*(a,y;\varphi_\Lambda)
= |y|_\infty^2 \cdot \sum_{x \in V_a} \int_{B_1^\times  \backslash B_{1,\AA}^\times/k_\AA^\times} \varphi_\Lambda(y b_\infty ^{-1} x b_\infty ) d^\times b_\infty .
$$
As in Section~\ref{sec: AFC}, we apply the strong approximation theorem and get
$$
I^*(a,y;\varphi_\Lambda)
= |y|_\infty^2 \cdot \text{vol}(O_{\widehat{B}_1}^\times/\widehat{A}) \cdot \sum_{x \in \Lambda_a}
\int_{O_{B_1}^\times  \backslash B_{1,\infty}^\times/k_\infty^\times} \varphi_{\Lambda,\infty}(y b_\infty ^{-1} xb_\infty ) d^\times b_\infty .
$$
Let
$$\Gamma = \Gamma_{0,F}(\nfk)
:= \left\{\begin{pmatrix}    
a&b \\ c&d \end{pmatrix} \in \GL_2(O_F)\ \bigg|\ ad-bc \in \FF_q^\times,\ c \equiv 0 \bmod \nfk \right\}.
$$
Define an action $\star$ of $\Gamma$ on $\Lambda$ by:
$$\gamma \star x := \gamma x \gamma^* \cdot \det(\gamma)^{-1}, \quad \forall \gamma \in \Gamma,\ x \in \Lambda.$$
Given a non-zero $a \in A$ and $x \in \Lambda_a$, let
$$B_x:= \{b \in \Mat_2(F) \mid x b^* = \bar{b} x\}, \text{ and }
\Gamma_x := B_x^\times \cap \Gamma.
$$
Then $\Gamma_x$ is actually the stablizer of $x$ in $\Gamma$, and:
\begin{lem}\label{lem: B_x}
Given a non-zero $a\in A$ and $x \in \Lambda_a$, $B_x$ is a quaternion algebra over $k$ which is isomorphic to:
$$
\left(\frac{\dfk, a\nfk}{k}\right)
:= k +k\mathbf{i}+k\mathbf{j}+k\mathbf{ij},
\quad
\text{ where $\mathbf{i}^2 = \dfk$, $\mathbf{j}^2 = a\nfk$, and $\mathbf{ji} = -\mathbf{ij}$.}
$$
\end{lem}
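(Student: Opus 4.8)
The plan is to show that $B_x$ is a unital $4$-dimensional $k$-subalgebra of $\Mat_2(F)$ and then to pin down its isomorphism class by reducing $x$, via the conjugation symmetry of the construction, to a diagonal representative for which $B_x$ can be written down by hand. First I would record the elementary structural facts: the adjugate $b\mapsto\bar b$ and the involution $*$ are both $k$-linear anti-automorphisms of $\Mat_2(F)$ (for $*$ this follows from $(xy)^*=\bigl(J^{-1}\operatorname{adj}((xy)')J\bigr)=y^*x^*$ with $J=\begin{pmatrix}0&1\\\nfk&0\end{pmatrix}$), that $(\lambda b)^*=\lambda'b^*$ for $\lambda\in F$, and that $\bar b\,b=\det(b)I$. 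Since $a=\det x\neq 0$, the defining relation $xb^*=\bar bx$ is equivalent to $b^*=x^{-1}\bar bx$, so $B_x$ is the equalizer of the two $k$-linear ring anti-automorphisms $b\mapsto b^*$ and $b\mapsto x^{-1}\bar bx$; an equalizer of ring anti-automorphisms is a subring containing $1$, hence $B_x$ is a unital $k$-algebra (only a $k$-algebra, not an $F$-algebra, because of the factor $\lambda'$).

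Next I would exploit the symmetry $x\mapsto gxg^*$. For $g\in\GL_2(F)$ and $x\in V$ one has $(gxg^*)^*=gx^*g^*=gxg^*$, so $gxg^*\in V$, and using $\bar g g=\det(g)I$ a short manipulation of $xb^*=\bar bx$ gives $B_{gxg^*}=gB_xg^{-1}$; thus the isomorphism class of $B_x$ depends only on the $\GL_2(F)$-orbit of $x$ under $x\mapsto gxg^*$. Writing $x=\begin{pmatrix}A&\beta\\-\nfk\beta'&D\end{pmatrix}\in\Lambda_a$ with $A\neq 0$ (which may be arranged after a preliminary conjugation), the "completing the square" factorization $x=L\cdot\mathrm{diag}(A,a/A)\cdot L^*$ with $L=\begin{pmatrix}1&0\\-\nfk\beta'/A&1\end{pmatrix}$ (so that $L^*=\begin{pmatrix}1&\beta/A\\0&1\end{pmatrix}$) shows $B_x\cong B_{\mathrm{diag}(A,a/A)}$. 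A direct computation using the explicit formula $b^*=\begin{pmatrix}p'&-r'/\nfk\\-q'\nfk&s'\end{pmatrix}$ for $b=\begin{pmatrix}p&q\\r&s\end{pmatrix}$ gives
$$B_{\mathrm{diag}(A,E)}=\left\{\begin{pmatrix}p&q\\ \tfrac{E\nfk}{A}q'&p'\end{pmatrix}\ \middle|\ p,q\in F\right\},$$
which is visibly $4$-dimensional over $k$ and, with $\mathbf i=\mathrm{diag}(\sqrt\dfk,-\sqrt\dfk)$ and $\mathbf j=\begin{pmatrix}0&1\\ E\nfk/A&0\end{pmatrix}$ (so $\mathbf i^2=\dfk$, $\mathbf j^2=E\nfk/A$, $\mathbf{ji}=-\mathbf{ij}$), is isomorphic to $\left(\frac{\dfk,\,E\nfk/A}{k}\right)$. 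Taking $E=a/A$ this equals $\left(\frac{\dfk,\,a\nfk/A^2}{k}\right)\cong\left(\frac{\dfk,\,a\nfk}{k}\right)$, proving the lemma.

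The main obstacle is the orbit-reduction step — namely, justifying cleanly that every $x\in\Lambda_a$ lies in the $\GL_2(F)$-orbit of a diagonal element of the same determinant, including the degenerate cases $A=0$ and $A=D=0$. The most transparent route is to observe that $x\mapsto h_0x$ with $h_0=\mathrm{diag}(-\nfk,1)$ identifies $V$ with the space of ordinary $2\times 2$ hermitian matrices for the conjugation on $F/k$ and intertwines $x\mapsto gxg^*$ with the standard congruence action $Y\mapsto (h_0gh_0^{-1})\,Y\,\overline{(h_0gh_0^{-1})}^{\,t}$; one may then invoke the classical classification of nondegenerate binary hermitian forms over $F/k$ by their determinant in $k^\times/\Nr_{F/k}(F^\times)$, noting $\det(h_0x)=-\nfk a$ is the same for all $x\in\Lambda_a$. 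Everything else is bookkeeping with the explicit matrix formulas for $\bar x$, $x'$ and $x^*$.
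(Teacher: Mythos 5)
Your proposal is correct, and its core is the same as the paper's: use the equivariance $B_{gxg^*}=gB_xg^{-1}$ (the paper phrases this as $B_x=U^{-1}B_{x_U}U$ with $x_U=UxU^*$) to reduce to a diagonal representative, then compute $B$ of a diagonal element explicitly and read off the quaternion symbol; your formula for $b^*$, the factorization $x=L\,\mathrm{diag}(A,a/A)\,L^*$, and the identification $B_{\mathrm{diag}(A,E)}\cong\left(\frac{\dfk,\,E\nfk/A}{k}\right)$ all check out, as does the harmlessness of the square $A^2$. The only real divergence is in the degenerate cases you flag as the main obstacle: the paper just writes down explicit congruence matrices $U$ in the three cases $d_1\neq 0$; $d_1=0,\ d_2\neq 0$; $d_1=d_2=0$, carrying $x$ to a $k^\times$-multiple of $\mathrm{diag}(1,a)$ (a scalar in $k^\times$ does not change $B_x$), whereas you propose invoking the classification of binary hermitian forms over $F/k$ by determinant modulo norms. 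Your route is legitimate — the intertwining via $h_0=\mathrm{diag}(-\nfk,1)$ is exactly right, and the Hasse-principle classification does hold over global function fields — but it is heavier than needed; in fact mere diagonalizability of the hermitian form would already suffice, since a congruence multiplies $\det x$ by $\Nr_{F/k}(\det g)$ and $\left(\frac{\dfk,\,c\,\Nr_{F/k}(\lambda)}{k}\right)\cong\left(\frac{\dfk,\,c}{k}\right)$. Alternatively, the ``preliminary conjugation'' you mention is elementary to make precise: swap the two coordinates when $d_1=0\neq d_2$, and when $d_1=d_2=0$ apply a unipotent congruence, which produces $(1,1)$ entry $-\nfk\,\mathrm{Tr}_{F/k}(c\beta')$, nonzero for suitable $c$ by nondegeneracy of the trace form — which is in substance what the paper's case-by-case choice of $U$ accomplishes.
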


\begin{proof}
Write $x = \begin{pmatrix} d_1 & \beta \\ -\nfk \beta' & d_2 \end{pmatrix}$
where $d_1,\ d_2 \in A$ and $\beta \in O_F$ with $d_1d_2 + \nfk \beta\beta' = a$.
Take 
$$
U:= \begin{cases}
\begin{pmatrix}1 & 0 \\ \nfk \beta' & d_1 \end{pmatrix}, & \text{ if $d_1 \neq 0$;} \\
\begin{pmatrix}  d_2 & -\beta \\ 0 & a \end{pmatrix}, & \text{ if $d_1 = 0$ and $d_2 \neq 0$ } \\
\begin{pmatrix} -1 & \beta \\ \nfk \beta & \nfk \beta^2 \end{pmatrix}, & \text{ if $d_1 = d_2 = 0$.}
\end{cases}
$$
Then
$$
x_U:= U x U^* = \begin{pmatrix} 1& 0 \\ 0 &a \end{pmatrix} \cdot 
\begin{cases}
d_1, &\text{ if $d_1 \neq 0$;}\\
a d_2, &\text{ if $d_1 = 0$ and $d_2 \neq 0$;}\\
2 a, & \text{ if $d_1 = d_2 = 0$.}
\end{cases}
$$
It is straightforward that 
$B_x = U^{-1}  B_{x_U}  U$
and
$$
B_{x_U}
= \left\{\begin{pmatrix} \alpha & \beta \\ a \nfk \beta' & \alpha' \end{pmatrix}
\ \bigg|\ \alpha, \beta \in F\right\}.
$$
Thus
$$
B_x \cong B_{x_U} \cong
\left(\frac{\dfk,a\nfk}{k}\right).
$$
\end{proof}

\begin{rem} 
Observe that $B_x = B_1$ if and only if $x \in k^\times$.
In this case, $a$ is a square in $A$, and $\Gamma_x = O_{B_1}^\times$.
\end{rem}

The following lemma is straightforward.

\begin{lem}
For $a \in A$ and $y \in k_\infty^\times$, we have
\begin{eqnarray}
&& I^*(a,y;\varphi_\Lambda) \nonumber \\
&=& |y|^2_\infty \cdot \text{\rm vol}(O_{\widehat{B}_1}^{\times}/\widehat{A})
\cdot
\sum_{x \in \Gamma\backslash \Lambda_a}
\sum_{\gamma \in \Gamma_1 \backslash \Gamma / \Gamma_x}
\int_{(\Gamma_1 \cap \Gamma_{\gamma \star x})\backslash B_{1,\infty}^\times /k_\infty^\times} \varphi_{\Lambda,\infty}(yb_\infty^{-1}(\gamma \star x)b_\infty)d^\times b_\infty .
\nonumber
\end{eqnarray}
\end{lem}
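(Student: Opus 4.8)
The plan is to unwind the adelic integral defining $I^*(a,y;\varphi_\Lambda)$ and reorganize the resulting sum over $B_1^\times$-orbits in $V_a$ according to the finer orbit structure imposed by $\Gamma = \Gamma_{0,F}(\nfk)$. Starting from the expression
\[
I^*(a,y;\varphi_\Lambda)
= |y|_\infty^2 \cdot \text{vol}(O_{\widehat{B}_1}^\times/\widehat{A}) \cdot \sum_{x \in \Lambda_a}
\int_{O_{B_1}^\times  \backslash B_{1,\infty}^\times/k_\infty^\times} \varphi_{\Lambda,\infty}(y b_\infty ^{-1} xb_\infty ) d^\times b_\infty,
\]
I first observe that the sum over all of $\Lambda_a$ should be broken into $\Gamma$-orbits under the twisted action $\star$. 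The key point is that, although the integrand involves conjugation $b_\infty^{-1} x b_\infty$ by $B_{1,\infty}^\times$ rather than the $\star$-action, the two are compatible: for $\gamma \in \Gamma$ one has (up to the scalar $\det(\gamma)^{-1} \in \FF_q^\times$, which acts trivially by conjugation) that $\gamma \star x = \gamma x \gamma^*$, and because $B_1 = \{b \mid b^* = \bar b\}$, elements of $\Gamma \cap B_1^\times = O_{B_1}^\times$ act on $\Lambda$ by $\gamma \star x = \gamma x \bar\gamma \det(\gamma)^{-1} = \gamma x \gamma^{-1}$, i.e.\ by ordinary conjugation. So the $O_{B_1}^\times$-invariance of $\varphi_{\Lambda,\infty}$ and of the measure makes the inner integral depend only on the $O_{B_1}^\times$-conjugacy class of $x$, hence only on its image in $\Gamma_1 \backslash \Lambda_a$ where $\Gamma_1 := O_{B_1}^\times = \Gamma_x$ for $x \in k^\times$... more precisely on its $\Gamma_1$-orbit under $\star$.

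The second step is the standard orbit-counting bookkeeping. Write $\Lambda_a = \bigsqcup_{x \in \Gamma \backslash \Lambda_a} \Gamma \star x$, and for each representative $x$ decompose the $\Gamma$-orbit into $\Gamma_1$-orbits via the double-coset decomposition $\Gamma = \bigsqcup_{\gamma \in \Gamma_1 \backslash \Gamma / \Gamma_x} \Gamma_1 \gamma \Gamma_x$; the $\Gamma_1$-orbit of $\gamma \star x$ is then parametrized, with stabilizer $\Gamma_1 \cap \Gamma_{\gamma \star x} = \Gamma_1 \cap \gamma \Gamma_x \gamma^{-1}$ (conjugation here meaning the $\star$-conjugation, which on $\Gamma_x \subset B_x^\times$ is again honest conjugation since $B_{\gamma \star x} = \gamma B_x \gamma^{-1}$). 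Summing the inner integral over the $\Gamma_1$-orbit of $\gamma \star x$ replaces the integral over $O_{B_1}^\times \backslash B_{1,\infty}^\times/k_\infty^\times$ by one over $(\Gamma_1 \cap \Gamma_{\gamma \star x}) \backslash B_{1,\infty}^\times / k_\infty^\times$, because the sum over the finite set $\Gamma_1 \star (\gamma \star x)$ unfolds the $\Gamma_1$ on the left of the quotient down to the stabilizer. This is exactly the claimed formula.

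Concretely the steps are: (1) reduce the inner integral to a function of the $\Gamma_1$-$\star$-orbit of $x$ by invoking $O_{B_1}^\times$-invariance of $\varphi_{\Lambda,\infty}$, the measure, and the identity $\gamma \star x = \gamma x \gamma^{-1}$ for $\gamma \in O_{B_1}^\times$; (2) split $\sum_{x \in \Lambda_a}$ as $\sum_{x \in \Gamma\backslash\Lambda_a} \sum_{\gamma \in \Gamma_1\backslash\Gamma/\Gamma_x} \sum_{[\delta] \in (\Gamma_1 \cap \Gamma_{\gamma\star x})\backslash \Gamma_1}$ with $\delta \star \gamma \star x$ ranging over the $\Gamma_1$-orbit of $\gamma \star x$; (3) unfold each such finite sum into the quotient, using that $\varphi_{\Lambda,\infty}((y)(\delta b_\infty)^{-1}(\gamma\star x)(\delta b_\infty)) = \varphi_{\Lambda,\infty}(y b_\infty^{-1}(\delta\star\gamma\star x)b_\infty)$ and that $\delta \in \Gamma_1 = O_{B_1}^\times$ acts by conjugation; (4) collect terms. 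I expect the main obstacle to be purely notational: one must be careful that the $\star$-action involves the $*$-involution and a determinant twist, and verify at each stage that these extra ingredients either cancel (scalars act trivially under conjugation) or are harmless (the $*$-involution restricted to $B_1$ is $b \mapsto \bar b$, so $\star$ on $O_{B_1}^\times$ is genuine conjugation); once this is checked the computation is the routine "unfolding a sum over a lattice into a sum over orbits times an integral over a stabilizer quotient." No convergence issues arise since the assumption $\deg a + 2 \le 2\ord_\infty(y)$ together with the support of $\varphi_{\Lambda,\infty}$ makes all sums finite.
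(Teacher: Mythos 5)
Your argument is correct and is exactly the unfolding the paper intends (it labels the lemma ``straightforward'', having carried out the same manipulation explicitly in Section~3.2): decompose $\Lambda_a$ into $\Gamma$-orbits for $\star$, refine via the double cosets $\Gamma_1\backslash\Gamma/\Gamma_x$, and unfold the sum over $\Gamma_1/(\Gamma_1\cap\Gamma_{\gamma\star x})$ against the integral over $\Gamma_1\backslash B_{1,\infty}^\times/k_\infty^\times$, using that $\Gamma_1=O_{B_1}^\times\subset B_1^\times$ acts through $\star$ by ordinary conjugation and that $\Gamma_{\gamma\star x}=\gamma\Gamma_x\gamma^{-1}$. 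One phrase should be fixed: the orbit $\Gamma_1\star(\gamma\star x)$ and the coset space $(\Gamma_1\cap\Gamma_{\gamma\star x})\backslash\Gamma_1$ are in general infinite (only the stabilizer is finite), but the unfolding identity does not require finiteness, only the absolute convergence already implicit in the paper, so your conclusion stands.
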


The integral inside the above sum expression is determined by:

\begin{lem}
Let $a \in A$ and $y \in k_\infty^\times$ with $\deg a + 2 \leq 2 \ord_\infty(y)$. Take $x \in V_a$.
\begin{itemize}
    \item[(1)]
    If $B_{x} = B_1$, then
    $\Gamma_{x} = \Gamma_1$ and
$$
\int_{(\Gamma_1\cap \Gamma_{x})\backslash B_{1,\infty}^\times /k_\infty^\times} \varphi_{\Lambda,\infty}(yb_\infty ^{-1}x b_\infty )d^\times b_\infty
= \frac{1-q}{2} \cdot \text{\rm vol}(\Gamma_1 \backslash B_{1,\infty}^\times/k_\infty^\times).
$$
\item[(2)]
If $B_x \neq B_1$,
then $B_{x} \cap B_1 = K_{x}$, and
\begin{eqnarray}
&& \int_{(\Gamma_1 \cap \Gamma_{x})\backslash B_{1,\infty}^\times /k_\infty^\times} \varphi_{\Lambda,\infty}(yb_\infty ^{-1}x b_\infty )d^\times b_\infty  \nonumber \\
&=& \text{\rm vol}(O_{B_{1,\infty}}^\times/O_\infty^\times)\cdot
\begin{cases}
\displaystyle\frac{q-1}{\#(\Gamma_1 \cap \Gamma_{x})}, & \text{ if $K_{x}/k$ is imaginary;}\\
0, & \text{ otherwise.}
\end{cases}
\nonumber 
\end{eqnarray}
\end{itemize}
\end{lem}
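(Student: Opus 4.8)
The plan is to treat the two cases separately; in both, the key structural fact is that $K_{x,\infty}^\times$ centralizes $x$, so $\varphi_{\Lambda,\infty}(y b_\infty^{-1}x b_\infty)$ depends only on the coset $K_{x,\infty}^\times b_\infty$ and the integral descends to $K_{x,\infty}^\times\backslash B_{1,\infty}^\times$. \textbf{Case (1).} By the remark preceding the statement, $B_x=B_1$ forces $x\in k^\times$, so $a=\det x$ is a square, say $a=s^2$ with $s\in A$, and $x=sI$ acts centrally: $b_\infty^{-1}x b_\infty=sI$ for every $b_\infty$, and $\Gamma_x=B_1^\times\cap\Gamma=\Gamma_1$. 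Hence the left-hand side equals $\varphi_{\Lambda,\infty}(sy\,I)\cdot\text{vol}(\Gamma_1\backslash B_{1,\infty}^\times/k_\infty^\times)$, and it remains to compute the constant. Since $\deg a+2\le 2\ord_\infty(y)$ gives $\ord_\infty(sy)=\ord_\infty(y)-\tfrac12\deg a\ge 1$, the scalar matrix $sy\,I$ lies in $\varpi\,\Mat_2(O_\infty)=L_\infty$, and as its lower-left entry vanishes it lies in $L_\infty'$ too; therefore $\varphi_{\Lambda,\infty}(sy\,I)=1-\tfrac{q+1}{2}=\tfrac{1-q}{2}$, as claimed.

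\textbf{Case (2).} First I would pin down $B_x\cap B_1$. Feeding the relation $b^*=\bar b$ (valid on $B_1$) into the defining relation $xb^*=\bar bx$ of $B_x$ gives $B_x\cap B_1=\{b\in B_1:\ x\bar b=\bar bx\}$. Now $b\mapsto\bar b$ is an involution of $B_1$, and writing $x=\tfrac{t}{2}+\tfrac{1}{\sqrt\dfk}x^\natural$ with $\tfrac{t}{2},\tfrac{1}{\sqrt\dfk}$ central in $\Mat_2(F)$, the centralizer of $x$ in $B_1$ coincides with that of the \emph{nonzero} pure quaternion $x^\natural$, which in the division algebra $B_1$ is the quadratic field $K_x=k(x^\natural)$; since $\overline{x^\natural}=-x^\natural$ we have $\overline{K_x}=K_x$, whence $B_x\cap B_1=K_x$. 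Consequently $\Gamma_1\cap\Gamma_x=O_{B_1}^\times\cap B_x^\times\cap\Gamma=O_{B_1}^\times\cap K_x^\times$, using $O_{B_1}^\times=\Gamma_1\subseteq\Gamma$.

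Then, exactly as in the proof of Lemma~\ref{lem: Inf}, factoring over $K_{x,\infty}^\times\backslash B_{1,\infty}^\times$ writes the integral as $\text{vol}\big((O_{B_1}^\times\cap K_x^\times)\backslash K_{x,\infty}^\times/k_\infty^\times\big)$ times $J:=\int_{K_{x,\infty}^\times\backslash B_{1,\infty}^\times}\varphi_{\Lambda,\infty}(yb_\infty^{-1}x b_\infty)\,d^\times b_\infty$. When $K_x/k$ is imaginary, $O_{B_1}^\times\cap K_x^\times$ is finite and contains $\FF_q^\times$ with index $\#(O_{B_1}^\times\cap K_x^\times)/(q-1)$, while $\text{vol}(K_{x,\infty}^\times/k_\infty^\times)=e_\infty(K_x/k)\cdot\text{vol}(O_{K_{x,\infty}}^\times/O_\infty^\times)$ by the normalization of Section~\ref{sec: HCN}; so the first factor is $\tfrac{q-1}{\#(O_{B_1}^\times\cap K_x^\times)}\,e_\infty(K_x/k)\,\text{vol}(O_{K_{x,\infty}}^\times/O_\infty^\times)$. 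For $J$, the degree hypothesis forces $\tr(y b_\infty^{-1}x b_\infty)=y\,\tr(x)\in\varpi O_\infty$ (cf.\ the proof of Theorem~\ref{thm: TINFC}), so the lemma comparing $\varphi_{\Lambda,\infty}$ with $\varphi_\infty^\natural$ turns $J$ into $\int_{K_{x,\infty}^\times\backslash B_{1,\infty}^\times}\varphi_\infty^\natural\big((y\sqrt\dfk)\,b_\infty^{-1}x^\natural b_\infty\big)\,d^\times b_\infty$; since $K_x=K_{x^\natural}$, this is precisely the local orbital integral computed in the proof of Theorem~\ref{thm: FC1} with $(x,y)$ there replaced by $(x^\natural,y\sqrt\dfk)$ (the degree bound placing us in the stable range), equal by Corollary~\ref{cor: OE2} to $\tfrac{1}{e_\infty(K_x/k)}\cdot\tfrac{\text{vol}(O_{B_{1,\infty}}^\times/O_\infty^\times)}{\text{vol}(O_{K_{x,\infty}}^\times/O_\infty^\times)}$ if $K_x/k$ is imaginary and $0$ otherwise. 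Multiplying the two factors, $e_\infty(K_x/k)$ and $\text{vol}(O_{K_{x,\infty}}^\times/O_\infty^\times)$ cancel, leaving $\text{vol}(O_{B_{1,\infty}}^\times/O_\infty^\times)\cdot\tfrac{q-1}{\#(O_{B_1}^\times\cap K_x^\times)}$ when $K_x/k$ is imaginary and $0$ otherwise; since $\Gamma_1\cap\Gamma_x=O_{B_1}^\times\cap K_x^\times$, this is the asserted formula.

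The main obstacle is the last step --- the local orbital integral at $\infty$: one must verify that, under the stated degree bound, the auxiliary Schwartz function $\varphi_\infty^\natural=\mathbf 1_{L_\infty^o}-\tfrac{q+1}{2}\mathbf 1_{L_\infty^{\prime,o}}$ produces exactly the clean ``stable-range'' value $\tfrac{1}{e_\infty(K_x/k)}\cdot\tfrac{\text{vol}(O_{B_{1,\infty}}^\times/O_\infty^\times)}{\text{vol}(O_{K_{x,\infty}}^\times/O_\infty^\times)}$ (and vanishes when $\infty$ splits in $K_x$). This is exactly what the computation behind Theorem~\ref{thm: FC1} supplies, resting on Eichler's theory of local optimal embeddings (Appendix~\ref{sec: App-LOE}); the remaining steps are bookkeeping with the involutions $x\mapsto\bar x$, $x\mapsto x^*$ and with the Tamagawa-measure normalizations.
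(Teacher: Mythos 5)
Your proposal is correct and follows essentially the same route as the paper's own proof: in case (1) you observe that the integrand is the constant $\tfrac{1-q}{2}$ (the paper does exactly this), and in case (2) you unfold along $K_{x,\infty}^\times$, pass from $\varphi_{\Lambda,\infty}$ to $\varphi_\infty^\natural$ via the comparison lemma, and invoke the local optimal-embedding evaluation of Corollary~\ref{cor: OE2}, which is precisely the paper's argument; your verification that $B_x\cap B_1=K_x$ (using $b^*=\bar b$ on $B_1$, the reduction to the centralizer of $x^\natural$, and $\overline{K_x}=K_x$) is a useful addition, since the paper's proof leaves that clause of the statement implicit. One small overstatement: the degree hypothesis alone does not force $y\,\tr(x)\in\varpi O_\infty$ for an arbitrary $x\in V_a$; as in the proof of Theorem~\ref{thm: TINFC}, this inclusion uses that $K_x/k$ is imaginary (which yields $|\tr(x)|_\infty^2\le |a|_\infty$). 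When $y\,\tr(x)\notin\varpi O_\infty$ the comparison lemma is not available, but then no conjugate $y\,b_\infty^{-1}xb_\infty$ can lie in $L_\infty=\varpi\,\Mat_2(O_\infty)$, whose elements all have trace in $\varpi O_\infty$, so the integrand vanishes identically and the ``otherwise $0$'' branch holds trivially; with that one-line patch your treatment of case (2) is complete and matches the paper.
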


\begin{proof}
When $B_x = B_1$, we get $x \in k^\times$ with $x^2 = a$.
Thus the condition $\deg a + 2 \leq  2\ord_\infty(y)$ implies
$$
\varphi_{\Lambda,\infty}(yb_\infty^{-1}xb_\infty) = 1-\frac{q+1}{2} = \frac{1-q}{2}, \quad \forall b_\infty \in B_{1,\infty}^\times.
$$
Hence the assertion (1) holds.\\

For (2), the integral vanishes unless $K_x/k$ is imaginary.
In this case, $\Gamma_1\cap \Gamma_x$ is a finite subgroup of $K_x^\times$,
and 
\begin{eqnarray}
&& \int_{(\Gamma_1 \cap \Gamma_{x})\backslash B_{1,\infty}^\times /k_\infty^\times} \varphi_{\Lambda,\infty}(yb_\infty ^{-1}x b_\infty )d^\times b_\infty  \nonumber \\
&=& \frac{\text{vol}(K_{x,\infty}^\times/k_\infty^\times)}{\#(\Gamma_1\cap \Gamma_x)}
\cdot \int_{K_{x,\infty}^\times \backslash B_{1,\infty}^\times} \varphi_\infty^{\natural}\big((y\sqrt{\dfk}) b_\infty^{-1} x^\natural b_\infty\big) d^\times b_\infty \nonumber \\
&=& 
\text{\rm vol}(O_{B_{1,\infty}}^\times/O_\infty^\times)\cdot
\frac{q-1}{\#(\Gamma_1 \cap \Gamma_{x})}. \nonumber 
\end{eqnarray}
The last equality follows from Corollary~\ref{cor: OE2}.
\end{proof}

For $x \in \Lambda$, put
$$i(x):=
\begin{cases}
-H^{\dfk^+\nfk^+,\dfk^-\nfk^-}(0), & \text{ if $B_x = B_1$;} \\
\displaystyle\frac{q-1}{\#(\Gamma_1\cap \Gamma_x)}, & \text{ if $K_x/k$ is imaginary;} \\
0, & \text{ otherwise.}
\end{cases}
$$
Define
$$
\Ical(x) := \sum_{\gamma \in \Gamma_1 \backslash \Gamma/\Gamma_x} i(\gamma\star x).
$$
We then obtain:

\begin{thm}\label{thm: AFCN}
Given $a \in A$ and $y \in k_\infty^\times$ with $2\ord_\infty(y)+2 \geq \deg a$, we have:
$$
 I^*(a,y;\varphi_\Lambda)
= \text{\rm vol}(O_{B_\AA}^\times/O_\AA^\times)
\cdot |y|_\infty^2 \cdot \sum_{x \in \Gamma \backslash \Lambda_a} \Ical(x).
$$
\end{thm}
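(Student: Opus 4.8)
The plan is to feed the evaluation of the local archimedean integral (the second of the two lemmas immediately preceding the theorem) into the double-sum formula for $I^*(a,y;\varphi_\Lambda)$ supplied by the first of those two lemmas, and then to reconcile the various Tamagawa-measure normalizations so that each inner integral matches the quantity $i(\cdot)$. First I would record that for every $\gamma\in\Gamma$ the twisted conjugate $\gamma\star x$ lies again in $\Lambda_a\subseteq V_a$: the $\star$-action preserves $\Lambda$ by construction, and $Q_V(\gamma\star x)=\det(\gamma)\det(x)\det(\gamma^*)\det(\gamma)^{-2}=\det(x)=a$ because $\det(\gamma^*)=\det(\gamma)$. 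Moreover $\Gamma_1\cap\Gamma_{\gamma\star x}$ is precisely the stabilizer of $\gamma\star x$ in $\Gamma_1=O_{B_1}^\times$. Hence the integral-evaluation lemma applies verbatim to each $z=\gamma\star x$, and the inner integral in the double sum equals $\tfrac{1-q}{2}\,\text{\rm vol}(\Gamma_1\backslash B_{1,\infty}^\times/k_\infty^\times)$ if $B_z=B_1$, equals $\text{\rm vol}(O_{B_{1,\infty}}^\times/O_\infty^\times)\cdot\tfrac{q-1}{\#(\Gamma_1\cap\Gamma_z)}$ if $K_z/k$ is imaginary, and vanishes otherwise.

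Comparing with the definition of $i(z)$, it therefore suffices to establish the pointwise identity
\[
\text{\rm vol}(O_{\widehat{B}_1}^\times/\widehat A)\cdot\big(\text{inner integral at }z\big)\;=\;\text{\rm vol}(O_{B_{1,\AA}}^\times/O_\AA^\times)\cdot i(z),\qquad z\in\Lambda_a;
\]
summing this over $\gamma\in\Gamma_1\backslash\Gamma/\Gamma_x$ produces $\Ical(x)$, and summing the result over $x\in\Gamma\backslash\Lambda_a$ and restoring the factor $|y|_\infty^2$ gives the asserted formula. When $K_z/k$ is imaginary this identity is immediate: the Tamagawa measures on $B_{1,\AA}^\times$ and on $k_\AA^\times$ are restricted products over the places of $k$, so $\text{\rm vol}(O_{B_{1,\AA}}^\times/O_\AA^\times)=\text{\rm vol}(O_{B_{1,\infty}}^\times/O_\infty^\times)\cdot\text{\rm vol}(O_{\widehat{B}_1}^\times/\widehat A)$, and both sides of the identity reduce to $\text{\rm vol}(O_{B_{1,\AA}}^\times/O_\AA^\times)\cdot\tfrac{q-1}{\#(\Gamma_1\cap\Gamma_z)}$. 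When $B_z\neq B_1$ and $K_z/k$ is not imaginary both sides vanish, so nothing is to be proved.

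The remaining and genuinely delicate case is $B_z=B_1$, i.e. $z$ a scalar with $z^2=a$ (so $\Gamma_z=\Gamma_1$ and, by Lemma~\ref{lem: B_x}, $a$ is a square in $A$), where $i(z)=-H^{\dfk^+\nfk^+,\dfk^-\nfk^-}(0)$. Here I would invoke the two global normalizations simultaneously: Lemma~\ref{lem: vol-hcn} rewrites $-H^{\dfk^+\nfk^+,\dfk^-\nfk^-}(0)$ as $(q-1)/\text{\rm vol}(O_{B_{1,\AA}}^\times/O_\AA^\times)$, while the normalization $\text{\rm vol}(B_1^\times\backslash B_{1,\AA}^\times/k_\AA^\times)=2$ together with strong approximation — the Eichler order $O_{B_1}$ has class number one because $A$ is a principal ideal domain, exactly as in Section~\ref{sec: AFC} — pins down the product $\text{\rm vol}(\Gamma_1\backslash B_{1,\infty}^\times/k_\infty^\times)\cdot\text{\rm vol}(O_{\widehat{B}_1}^\times/\widehat A)$ in terms of that same global volume. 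Matching these two expressions, using that $\varphi_{\Lambda,\infty}$ is the constant $\tfrac{1-q}{2}$ on the relevant scalars under the hypothesis $\deg a+2\leq2\ord_\infty(y)$, is the only nontrivial bookkeeping step: it is where the ``Euler-characteristic'' flavour of $\text{\rm vol}(\Gamma_1\backslash B_{1,\infty}^\times/k_\infty^\times)$ has to be reconciled with the arithmetic normalization built into $H^{\cdot,\cdot}(0)$ in Remark~\ref{rem: H(0)}, and this is the step I expect to be the main obstacle.

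Once the scalar case is settled the three cases combine into the pointwise identity above; summing over $\Gamma_1\backslash\Gamma/\Gamma_x$ and then over $\Gamma\backslash\Lambda_a$ then completes the proof. As a consistency check one can compare the resulting formula with Theorem~\ref{thm: TINFC}: the terms coming from $z$ scalar (equivalently $t=\pm2\sqrt a$) must account for the $\dfk(t^2-4a)=0$ contributions there, and the terms with $K_z/k$ imaginary must match the contributions with $\dfk(t^2-4a)\prec 0$, which is precisely the content of the pointwise identity.
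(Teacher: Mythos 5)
Your route is the same as the paper's (which gives no more detail than the two lemmas preceding the statement): unfold $I^*(a,y;\varphi_\Lambda)$ into the double sum over $\Gamma\backslash\Lambda_a$ and $\Gamma_1\backslash\Gamma/\Gamma_x$, evaluate each archimedean integral by the second lemma, and convert normalizations using $\text{vol}(O_{B_{1,\AA}}^\times/O_\AA^\times)=\text{vol}(O_{B_{1,\infty}}^\times/O_\infty^\times)\cdot\text{vol}(O_{\widehat{B}_1}^\times/\widehat{A}^\times)$. Your treatment of the two generic cases (the imaginary case and the vanishing case) is correct. The genuine gap is that the one case you yourself single out as carrying all the content beyond the lemmas --- the scalar case $B_z=B_1$ --- is not carried out: you list the right ingredients (Tamagawa number $2$, strong approximation/class number one, Lemma~\ref{lem: vol-hcn}, constancy $\varphi_{\Lambda,\infty}=\tfrac{1-q}{2}$ on the relevant scalars) but stop at ``this is the step I expect to be the main obstacle,'' so the proposal only reduces the theorem to the displayed pointwise identity rather than proving it.

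Had you pushed that step through, you would have found that the identity, as you set it up with the paper's $i(z)=-H^{\dfk^+\nfk^+,\dfk^-\nfk^-}(0)$, does not close. Strong approximation gives $\text{vol}(O_{\widehat{B}_1}^\times/\widehat{A}^\times)\cdot\text{vol}(\Gamma_1\backslash B_{1,\infty}^\times/k_\infty^\times)=\text{vol}(B_1^\times\backslash B_{1,\AA}^\times/k_\AA^\times)=2$, so at a scalar $z$ the left-hand side equals $\tfrac{1-q}{2}\cdot 2=1-q<0$; whereas by Lemma~\ref{lem: vol-hcn} the right-hand side equals $\text{vol}(O_{B_{1,\AA}}^\times/O_\AA^\times)\cdot\bigl(-H^{\dfk^+\nfk^+,\dfk^-\nfk^-}(0)\bigr)=q-1>0$, since $H^{\dfk^+\nfk^+,\dfk^-\nfk^-}(0)<0$ by Remark~\ref{rem: H(0)}. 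The signs alone show the mismatch is not a matter of volume bookkeeping: the matching works only with $i(z)=+H^{\dfk^+\nfk^+,\dfk^-\nfk^-}(0)$ in the scalar case, and this is exactly what your own proposed consistency check forces --- in Theorem~\ref{thm: TINFC} the terms with $t^2=4a$ enter with coefficient $H^{\dfk^+\nfk^+,\dfk^-\nfk^-}(0)$, and a direct evaluation of the scalar orbits via Lemma~\ref{lem: FCV} (as in the $a=0$ case of Theorem~\ref{thm: FC1}) gives $\text{vol}(O_{B_{1,\AA}}^\times/O_\AA^\times)\cdot H^{\dfk^+\nfk^+,\dfk^-\nfk^-}(0)\cdot|y|_\infty^2$ per scalar class. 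So the obstacle you anticipated is real: a complete proof must either correct the sign of $i$ in the scalar case (the statement carries a sign slip there) or restrict to non-square $a$, where $\Lambda_a$ contains no scalars and your argument, granting the routine unfolding lemma, is complete and agrees with the paper.
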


The above theorem enables us to connect the Fourier coefficients of the theta integral  $I(g;\varphi_\Lambda)$ with the intersection numbers of the ``Hirzebruch-Zagier-type divisors'' on the ``Drinfeld-Stuhler modular surfaces'' in Section~\ref{sec: Intersection}.

\subsection{Extension of $I(g;\varphi_\Lambda)$}
\label{sec: Ext-TI}
Let
$$
\Kcal_\infty := \left\{ \begin{pmatrix} a&b \\ c&d \end{pmatrix} \in \GL_2(O_\infty) \ \bigg|\ c \equiv 0 \bmod \varpi\right\}
$$
and
$$
\Gamma_0(\dfk \nfk) :=
\left\{ \begin{pmatrix} a&b \\ c&d \end{pmatrix} \in \GL_2(A) \ \bigg|\ c \equiv 0 \bmod \dfk \nfk \right\}.
$$
Put $\Kcal_\infty^1 := \Kcal_\infty \cap \SL_2(k_\infty)$ and $\Gamma_0^1(\dfk \nfk) := \Gamma_0(\dfk\nfk) \cap \SL_2(A)$.
From the strong approximation theorem, the natural embedding $\SL_2(k_\infty)\hookrightarrow \SL_2(k_\AA)$ induces the following bijection
$$ \Gamma^1_0(\dfk\nfk) \backslash \SL_2(k_\infty)/\Kcal^1_\infty \longleftrightarrow 
\SL_2(k)\backslash \SL_2(k_\AA)/\Kcal_0^1(\dfk\nfk\infty).
$$
This allows us to view $I(g;\varphi_\Lambda)$ as a function on $\SL_2(k_\infty)/\Kcal^1_\infty$ satisfying
$$
I(\gamma g_\infty;\varphi_\Lambda)
= \chi_\dfk(\gamma) I(g_\infty;\varphi_\Lambda), \quad \forall g_\infty \in \SL_2(k_\infty) \text{ and } \gamma \in \Gamma_0^1(\dfk\nfk).
$$
We shall extend $I(\cdot;\varphi_\Lambda)$ to a function $\vartheta_\Lambda$ on $\GL_2(k_\infty)/k_\infty^\times \Kcal_\infty$ which is ``Drinfeld-type'', i.e.\ the following \textit{harmonic property} holds:
for $g_\infty \in \GL_2(k_\infty)$ we have
$$
\vartheta_\Lambda(g) + \vartheta_\Lambda\left(g \begin{pmatrix} 0&1\\ \varpi&0\end{pmatrix}\right) = 0 = 
\sum_{\kappa \in \GL_2(O_\infty)/\Kcal_\infty}\vartheta_\Lambda(g\kappa).
$$

\begin{rem}\label{rem: wt-2}
Let $f$ be a Drinfeld-type automorphic form on $\GL_2(k_\infty)/k_\infty^\times \Kcal_\infty$.
The harmonicity of $f$ implies that for $g \in \GL_2(k_\infty)$ we have
$$
\sum_{\epsilon \in \FF_q} f\left(g\begin{pmatrix} \pi_\infty &\epsilon \\ 0&1 \end{pmatrix}\right) = f(g).
$$
Extending $f$ to a function on the metaplectic group $\widetilde{\GL}_2(k_\infty)$ (introduced in Section~\ref{sec: Ext-PTI}) by
$$f(g,\xi) := \xi^4 \cdot f(g), \quad \forall \xi \in \CC^1.$$
The harmonicity implies in particular that $f$ is actually a weight-$2$ form, i.e.\ (cf.\ \cite[Example 4.2]{CLWY})
$$
f((1,\xi)\tilde{g}) = \xi^4 \cdot f(\tilde{g})
\quad \text{ and } \quad T_{\infty,2}f(\tilde{g}) = f(\tilde{g}), \quad \forall \xi \in \CC^1 \text{ and } \tilde{g} \in \widetilde{\GL}_2(k_\infty).
$$
Viewed as
analogue to classical weight-two modular forms, Drinfeld-type automorphic forms are objects of great interest in the study of
function field arithmetic.
We refer the readers to \cite{G-R}, \cite{CLWY}, \cite{CWY}, and \cite{Wei}) for further discussions.
\end{rem}

Let $w_\infty:= \begin{pmatrix} 0&1\\ \varpi&0\end{pmatrix}$.
We first prove that:

\begin{lem}\label{lem: har}
Given $g_\infty \in \SL_2(k_\infty)$, the following equality holds:
$$
\sum_{\kappa \in \SL_2(O_\infty)/\Kcal^1_\infty} I(g_\infty \kappa;\varphi_\Lambda) = 0 =
\sum_{\kappa \in \SL_2(O_\infty)/\Kcal^1_\infty} I(g_\infty w_\infty^{-1} \kappa w_\infty ;\varphi_\Lambda).
$$
\end{lem}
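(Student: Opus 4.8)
The plan is to reduce both identities to a local statement at $\infty$ which is then forced by the explicit shape of $\varphi_{\Lambda,\infty}$. Since $\dim_k V=4$ is even, $\omega^V$ is a genuine representation of $\SL_2(k_\AA)$, and the definition of $\Theta$ gives $I(gh;\varphi_\Lambda)=I(g;\omega^V(h)\varphi_\Lambda)$ for all $g,h\in\SL_2(k_\AA)$. Taking $h=\kappa$ with $\kappa\in\SL_2(O_\infty)$ placed at $\infty$ (and $1$ elsewhere) and summing over $\kappa\in\SL_2(O_\infty)/\Kcal_\infty^1$ — the summand being well defined because $\omega_\infty^V(\kappa_0)\varphi_{\Lambda,\infty}=\varphi_{\Lambda,\infty}$ for $\kappa_0\in\Kcal_\infty^1$ by Proposition~\ref{prop: TLN} localized at $\infty$ — the first sum in the Lemma becomes $I\big(g_\infty;\Phi\otimes\bigotimes_\pfk\varphi_{\Lambda,\pfk}\big)$ with $\Phi:=\sum_{\kappa\in\SL_2(O_\infty)/\Kcal_\infty^1}\omega_\infty^V(\kappa)\varphi_{\Lambda,\infty}$. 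Since $w_\infty$ normalizes $\Kcal_\infty^1$, the second sum likewise becomes $I\big(g_\infty;\Phi'\otimes\bigotimes_\pfk\varphi_{\Lambda,\pfk}\big)$ with $\Phi':=\sum_\kappa\omega_\infty^V(w_\infty^{-1}\kappa w_\infty)\varphi_{\Lambda,\infty}$. So it suffices to show that the theta integrals $I(g_\infty;\cdot)$ of $\Phi\otimes\bigotimes_\pfk\varphi_{\Lambda,\pfk}$ and of $\Phi'\otimes\bigotimes_\pfk\varphi_{\Lambda,\pfk}$ both vanish.

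The main step is the explicit evaluation of $\Phi$. Under $V(k_\infty)\cong\Mat_2(k_\infty)$ with $Q_V=\det$ this is a split quadratic space (a sum of two hyperbolic planes), so $\varepsilon_\infty^V(1)=1$; moreover, with respect to $\langle\cdot,\cdot\rangle_V$ and $\psi_\infty$, the lattice $L_\infty=\varpi\Mat_2(O_\infty)$ is self-dual and the dual of $L_\infty'$ is $L_\infty'':=\varpi O_\infty\oplus O_\infty\oplus\varpi O_\infty\oplus\varpi O_\infty$ (entries ordered $(1,1),(1,2),(2,1),(2,2)$), so that $\widehat{\varphi_{\Lambda,\infty}}=\mathrm{vol}(L_\infty)\mathbf{1}_{L_\infty}-\tfrac{q+1}{2}\mathrm{vol}(L_\infty')\mathbf{1}_{L_\infty''}$. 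Using the coset representatives $\{1\}\cup\{n(u)w:u\in\FF_q\}$ with $n(u)=\big(\begin{smallmatrix}1&u\\0&1\end{smallmatrix}\big)$ and $w=\big(\begin{smallmatrix}0&1\\-1&0\end{smallmatrix}\big)$, the formulas in Section~\ref{sec: WRTS} for $\omega_\infty^V$ on unipotent upper-triangular elements and on the Weyl element give $\Phi=\varphi_{\Lambda,\infty}+\big(\sum_{u\in\FF_q}\psi_\infty(uQ_V(\cdot))\big)\widehat{\varphi_{\Lambda,\infty}}$. Since $\sum_{u\in\FF_q}\psi_\infty(uz)$ equals $q$ when the coefficient of $\varpi$ in $z$ vanishes and $0$ otherwise, evaluating this on $\mathrm{supp}\,\widehat{\varphi_{\Lambda,\infty}}$ together with a short inclusion–exclusion of $O_\infty$-lattices collapses the expression to
$$\Phi \;=\; \tfrac{q+1}{2}\big(\mathbf{1}_{L_\infty}-\mathbf{1}_{M}\big),\qquad M:=\mathrm{diag}(\varpi^{-1},1)\,L_\infty\,\mathrm{diag}(\varpi,1).$$
Here the normalizing factor $\tfrac{q+1}{2}$ in $\varphi_{\Lambda,\infty}$ enters essentially.

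The theta integral of $\Phi\otimes\bigotimes_\pfk\varphi_{\Lambda,\pfk}$ then vanishes for a soft reason. Let $b_0=(b_{0,v})_v\in B_{1,\AA}^\times$ with $b_{0,\infty}=\mathrm{diag}(\varpi^{-1},1)$ (via $B_{1,\infty}^\times\cong\GL_2(k_\infty)$) and $b_{0,v}=1$ for $v\neq\infty$; then $M=b_{0,\infty}L_\infty b_{0,\infty}^{-1}$, so $\mathbf{1}_M\otimes\bigotimes_\pfk\varphi_{\Lambda,\pfk}=\omega^V(h_{b_0})\big(\mathbf{1}_{L_\infty}\otimes\bigotimes_\pfk\varphi_{\Lambda,\pfk}\big)$. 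Because $\Theta(g_\infty,h_b;\omega^V(h_{b_0})\phi)=\Theta(g_\infty,h_{bb_0};\phi)$ and the Haar measure on $B_1^\times\backslash B_{1,\AA}^\times/k_\AA^\times$ is invariant under right translation by $b_0$, the substitution $b\mapsto bb_0^{-1}$ yields $I\big(g_\infty;\mathbf{1}_M\otimes\bigotimes_\pfk\varphi_{\Lambda,\pfk}\big)=I\big(g_\infty;\mathbf{1}_{L_\infty}\otimes\bigotimes_\pfk\varphi_{\Lambda,\pfk}\big)$. Hence $I(g_\infty;\Phi\otimes\bigotimes_\pfk\varphi_{\Lambda,\pfk})=0$, which is the first identity of the Lemma.

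The second identity is handled by the same scheme: with the coset representatives $\{1\}\cup\{w_\infty^{-1}n(u)w\,w_\infty:u\in\FF_q\}$ for $w_\infty^{-1}\SL_2(O_\infty)w_\infty/\Kcal_\infty^1$, the parallel Fourier-transform and Gauss-sum computation shows that $\Phi'$ is again $\tfrac{q+1}{2}$ times the difference of the characteristic functions of two $O_\infty$-lattices conjugate under $B_{1,\infty}^\times$, and the argument of the previous paragraph once more forces the theta integral to vanish. (In this sense the defining data of $\varphi_{\Lambda,\infty}$ — the sublattice $L_\infty'$ and the coefficient $\tfrac{q+1}{2}$ — are chosen exactly so that both sums, over the $q+1$ ``edge directions'' at each endpoint of the Bruhat–Tits edge fixed by $\Kcal_\infty^1$, collapse to differences of $B_{1,\AA}^\times$-conjugate lattice indicators.) The only genuinely computational point, and the one I expect to be the main obstacle, is precisely this evaluation of $\Phi$ and $\Phi'$: carefully tracking $\widehat{\mathbf{1}_{L_\infty}}$, $\widehat{\mathbf{1}_{L_\infty'}}$, the Gauss-sum identity and the inclusion–exclusion, and checking that the outcome is exactly $\tfrac{q+1}{2}$ times a difference of conjugate lattice characteristic functions.
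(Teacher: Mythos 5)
Your handling of the first sum is correct and is essentially the paper's own argument: the paper likewise computes the local sum $\Phi=\sum_{\kappa}\omega^V_\infty(\kappa)\varphi_{\Lambda,\infty}$, using $\omega^V_\infty(\kappa)\mathbf{1}_{L_\infty}=\mathbf{1}_{L_\infty}$ and $\sum_{\kappa}\omega^V_\infty(\kappa)\mathbf{1}_{L_\infty'}=\mathbf{1}_{L_\infty}+\mathbf{1}_{w_\infty L_\infty w_\infty^{-1}}$, which is exactly your $\Phi=\tfrac{q+1}{2}\big(\mathbf{1}_{L_\infty}-\mathbf{1}_{M}\big)$ since your $M=\mathrm{diag}(\varpi^{-1},1)L_\infty\,\mathrm{diag}(\varpi,1)$ coincides with $w_\infty L_\infty w_\infty^{-1}$; the cancellation then comes from the right-translation invariance of the integral over $B_1^\times\backslash B_{1,\AA}^\times/k_\AA^\times$, which the paper uses implicitly and you spell out. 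Up to here your proposal is sound.

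The gap is in the second identity, at exactly the point you flag as the crux: the assertion that $\Phi'=\sum_{\kappa}\omega^V_\infty(w_\infty^{-1}\kappa w_\infty)\varphi_{\Lambda,\infty}$ is again $\tfrac{q+1}{2}$ times a difference of two lattice indicators is false. Carrying out your own recipe (for $\kappa=n(u)w$ one has $w_\infty^{-1}n(u)w\,w_\infty=a(-\varpi^{-1})\,w\,n(-u\varpi^{-1})$ with $a(t)=\mathrm{diag}(t,t^{-1})$, so an extra dilation by $\varpi$ enters before the Fourier transform), one finds $\sum_{\kappa}\omega^V_\infty(w_\infty^{-1}\kappa w_\infty)\mathbf{1}_{L_\infty}=q\sum_{\ell}\mathbf{1}_{\varpi M_\ell}$ and $\sum_{\kappa}\omega^V_\infty(w_\infty^{-1}\kappa w_\infty)\mathbf{1}_{L_\infty'}=q\big(\mathbf{1}_{\varpi M_{\ell_0}}+\mathbf{1}_{\varpi\,w_\infty M_{\ell_0}w_\infty^{-1}}\big)$, where $M_\ell:=\{x\in\Mat_2(O_\infty):(x\bmod\varpi)\,\ell=0\}$, $\ell$ runs over the $q+1$ lines of $\FF_q^2$, and $\ell_0=\FF_q e_1$; this is the computation in the paper's proof (up to a harmless overall $\varpi$-rescaling of its lattices $\kappa L_\infty''\kappa^{-1}$). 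Hence $\Phi'$ is a combination of $q+2$ distinct lattice indicators with an overall factor $q$; for instance at $x=\big(\begin{smallmatrix}0&0\\0&\varpi\end{smallmatrix}\big)$ one gets $\Phi'(x)=q-\tfrac{q(q+1)}{2}=\tfrac{q(1-q)}{2}$, which does not lie in $\{0,\pm\tfrac{q+1}{2}\}$, so no identity $\Phi'=\tfrac{q+1}{2}(\mathbf{1}_{N_1}-\mathbf{1}_{N_2})$ can hold. The vanishing you want still follows from your own invariance mechanism, but in the slightly more general form: all $q+2$ lattices occurring are conjugates of a single lattice by elements of $\GL_2(O_\infty)$ or by $w_\infty$, hence by elements of $B_{1,\infty}^\times$, so their theta integrals against $\bigotimes_\pfk\varphi_{\Lambda,\pfk}$ all coincide, and the coefficients sum to $q(q+1)-\tfrac{q(q+1)}{2}\cdot 2=0$. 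As written, however, your second identity rests on an incorrect intermediate evaluation and is therefore not proven.
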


\begin{proof}
Notice that $\omega_\infty^V(\kappa)\mathbf{1}_{L_\infty} = \mathbf{1}_{L_\infty}$ for every $\kappa \in \SL_2(O_\infty)$, and
$$
\sum_{\kappa \in \SL_2(O_\infty)/\Kcal^1_\infty}\omega_\infty^V(\kappa)\mathbf{1}_{L_\infty'}
= \mathbf{1}_{L_\infty} + \mathbf{1}_{w_\infty L_\infty w_\infty'}.
$$
Thus
\begin{eqnarray}
&& \sum_{\kappa \in \SL_2(O_\infty)/\Kcal_\infty^1}I(g_\infty\kappa;\varphi_\Lambda) \nonumber \\
&=&
(q+1)\cdot I(g_\infty;\otimes_\pfk \varphi_{\Lambda,\pfk} \otimes \mathbf{1}_{L_\infty})
- \frac{q+1}{2}\cdot
I\Big(g_\infty;\otimes_\pfk \varphi_{\Lambda,\pfk} \otimes \left(\mathbf{1}_{L_\infty}+\mathbf{1}_{w_\infty L_\infty w_\infty^{-1}}\right)\Big) \nonumber \\
&=&
(q+1)\cdot I(g_\infty;\otimes_\pfk \varphi_{\Lambda,\pfk} \otimes \mathbf{1}_{L_\infty})
- \frac{q+1}{2}\cdot 2 \cdot 
I(g_\infty;\otimes_\pfk \varphi_{\Lambda,\pfk} \otimes  \mathbf{1}_{L_\infty}) \nonumber \\
&=& 0. \nonumber 
\end{eqnarray}
Similarly, let 
$$
L_\infty'':= \begin{pmatrix} \varpi O_\infty & O_\infty \\ \varpi O_\infty & O_\infty \end{pmatrix}.
$$
Then
$$
\sum_{\kappa \in \SL_2(O_\infty)/\Kcal^1_\infty}\omega_\infty^V\left(w_\infty \kappa w_\infty^{-1}\right) \mathbf{1}_{L_\infty}
= 
q \cdot \sum_{\kappa \in SL_2(O_\infty)/\Kcal^1_\infty}
\mathbf{1}_{\kappa L_\infty'' \kappa^{-1}},
$$
and
$$
\sum_{\kappa \in \SL_2(O_\infty)/\Kcal^1_\infty}\omega_\infty^V(w_\infty \kappa w_\infty^{-1}) \mathbf{1}_{L_\infty'}
= q\cdot \left(\mathbf{1}_{L_\infty''} + \mathbf{1}_{w_\infty L_\infty'' w_\infty^{-1}}\right).
$$
Therefore 
\begin{eqnarray}
&& \sum_{\kappa \in \SL_2(O_\infty)/\Kcal_\infty^1}I\left(g_\infty w_\infty \kappa w_\infty^{-1};\varphi_\Lambda\right) \nonumber \\
&=&
\sum_{\kappa \in \SL_2(O_\infty)/\Kcal_\infty^1}
q\cdot I\left(g_\infty; 
\otimes_\pfk \varphi_{\Lambda,\pfk} \otimes \mathbf{1}_{\kappa L_\infty''\kappa^{-1}}\right) \nonumber \\
&& - \frac{q+1}{2}\cdot q\cdot 
I\Big(g_\infty;\otimes_\pfk \varphi_{\Lambda,\pfk} \otimes \left(\mathbf{1}_{L_\infty''}+\mathbf{1}_{w_\infty L_\infty'' w_\infty^{-1}}\right)\Big) \nonumber \\
&=&
(q+1)\cdot q \cdot I\left(g_\infty;\otimes_\pfk \varphi_{\Lambda,\pfk} \otimes \mathbf{1}_{L_\infty''}\right)
- \frac{q+1}{2}\cdot q \cdot 2 \cdot 
I\left(g_\infty;\otimes_\pfk \varphi_{\Lambda,\pfk} \otimes  \mathbf{1}_{L_\infty''}\right) \nonumber \\
&=& 0. \nonumber 
\end{eqnarray}
\end{proof}

Let $$\GL_2^+(k_\infty):= \{g \in \GL_2^+(k_\infty)\mid \ord_\infty(\det g) \equiv 0 \bmod 2\}.$$
The natural inclusion $\SL_2(k_\infty) \hookrightarrow \GL_2(k_\infty)$ gives a bijection $$\SL_2(k_\infty)/\Kcal^1_\infty \longleftrightarrow \GL_2^+(k_\infty)/k_\infty^\times \Kcal_\infty.$$
Thus $I(\cdot;\varphi_\Lambda)$ can be viewed as a function on 
$\GL_2^+(k_\infty)/k_\infty^\times \Kcal_\infty$.
For $g_\infty \in \GL_2(k_\infty)$, 
define $\vartheta_\Lambda(g_\infty)$ by:
$$
\vartheta_\Lambda(g_\infty) := \frac{2}{\text{vol}(O_{B_\AA}^\times/O_\AA^\times)} \cdot 
\begin{cases}
I(g_\infty;\varphi_\Lambda), & \text{ if $g_\infty \in \GL_2^+(k_\infty)$;}\\
-I(g_\infty w_\infty ;\varphi_\Lambda), & \text{ otherwise.}
\end{cases}
$$

The above lemma implies immediately that:

\begin{thm}\label{thm: Ext-DTAF}
The function $\vartheta_\Lambda$ on $\GL_2(k_\infty)/k_\infty^\times \Kcal_\infty$ satisfies the harmonic property, i.e.\ for $g_\infty \in \GL_2(k_\infty)$,
$$
\vartheta_\Lambda(g_\infty) + \vartheta_\Lambda(g_\infty w_\infty) = 0 = 
\sum_{\kappa \in \GL_2(O_\infty)/\Kcal_\infty}\vartheta_\Lambda(g_\infty \kappa).
$$
Moreover, for $\gamma \in \Gamma_0^{(1)}(\dfk\nfk)$ we have
$$
\vartheta_\Lambda(\gamma g_\infty) = \chi_\dfk(\gamma) \cdot \vartheta_\Lambda(g_\infty), \quad \forall g_\infty \in \GL_2(k_\infty).
$$
Here for $\gamma = \begin{pmatrix}  a&b \\ \dfk\nfk c & d \end{pmatrix} \in \Gamma_0^{(1)}(\dfk\nfk)$, 
$\chi_\dfk(\gamma)$ is equal to the Legendre quadratic symbol $\left(\frac{d}{\dfk}\right)$.
\end{thm}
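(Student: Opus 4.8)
The plan is to derive both assertions from Lemma~\ref{lem: har} together with the transformation rule $I(\gamma g_\infty;\varphi_\Lambda)=\chi_\dfk(\gamma)\,I(g_\infty;\varphi_\Lambda)$ for $\gamma\in\Gamma_0^{(1)}(\dfk\nfk)$ recorded just before that lemma (which is Proposition~\ref{prop: TLN} read through strong approximation), by unwinding the parity case-distinction in the definition of $\vartheta_\Lambda$. First I would isolate three elementary facts. \emph{(i)} Since $w_\infty^2=\varpi\cdot\mathrm{id}$ is central and $I(\cdot;\varphi_\Lambda)$, regarded on $\GL_2^+(k_\infty)/k_\infty^\times\Kcal_\infty$, is invariant under the center, $I(g_\infty w_\infty^2;\varphi_\Lambda)=I(g_\infty;\varphi_\Lambda)$. \emph{(ii)} The element $w_\infty$ normalizes both $\Kcal_\infty$ and $\Kcal^1_\infty$: conjugation by $w_\infty$ carries $\left(\begin{smallmatrix}a&b\\c&d\end{smallmatrix}\right)$ to $\left(\begin{smallmatrix}d& c/\varpi\\ \varpi b& a\end{smallmatrix}\right)$, and when $c\equiv 0\bmod\varpi$ this again has integral entries, unit determinant, and lower-left entry in $\varpi O_\infty$. \emph{(iii)} The natural maps $\SL_2(O_\infty)/\Kcal^1_\infty\to\GL_2(O_\infty)/\Kcal_\infty$ and $\SL_2(k_\infty)/\Kcal^1_\infty\to\GL_2^+(k_\infty)/k_\infty^\times\Kcal_\infty$ are bijections (surjectivity by a determinant-normalization, injectivity by comparing determinants); in particular $\vartheta_\Lambda$ descends to $\GL_2(k_\infty)/k_\infty^\times\Kcal_\infty$, and each sum over $\SL_2(O_\infty)/\Kcal^1_\infty$ in Lemma~\ref{lem: har} may be read as a sum over $\GL_2(O_\infty)/\Kcal_\infty$.

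For the harmonic property I would argue according to the parity of $\ord_\infty(\det g_\infty)$, using that right multiplication by $w_\infty$ flips this parity while right multiplication by an element of $\GL_2(O_\infty)$ preserves it. For the first relation: if $g_\infty\in\GL_2^+(k_\infty)$ then $g_\infty w_\infty\notin\GL_2^+(k_\infty)$, so by definition and \emph{(i)}, $\vartheta_\Lambda(g_\infty w_\infty)=-\tfrac{2}{\mathrm{vol}(O_{B_\AA}^\times/O_\AA^\times)}I(g_\infty w_\infty\cdot w_\infty;\varphi_\Lambda)=-\tfrac{2}{\mathrm{vol}(O_{B_\AA}^\times/O_\AA^\times)}I(g_\infty;\varphi_\Lambda)=-\vartheta_\Lambda(g_\infty)$; the case $g_\infty\notin\GL_2^+(k_\infty)$ is symmetric. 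For the second relation: if $g_\infty\in\GL_2^+(k_\infty)$ then by \emph{(iii)} $\sum_{\kappa\in\GL_2(O_\infty)/\Kcal_\infty}\vartheta_\Lambda(g_\infty\kappa)=\tfrac{2}{\mathrm{vol}(O_{B_\AA}^\times/O_\AA^\times)}\sum_{\kappa\in\SL_2(O_\infty)/\Kcal^1_\infty}I(g_\infty\kappa;\varphi_\Lambda)$, which vanishes by the first identity of Lemma~\ref{lem: har}; if $g_\infty\notin\GL_2^+(k_\infty)$, write $g_\infty\kappa w_\infty=(g_\infty w_\infty)(w_\infty^{-1}\kappa w_\infty)$ with $g_\infty w_\infty\in\GL_2^+(k_\infty)$, note by \emph{(ii)} that $\kappa\mapsto w_\infty^{-1}\kappa w_\infty$ maps a transversal of $\SL_2(O_\infty)/\Kcal^1_\infty$ to another one, and conclude $\sum_\kappa\vartheta_\Lambda(g_\infty\kappa)=-\tfrac{2}{\mathrm{vol}(O_{B_\AA}^\times/O_\AA^\times)}\sum_{\kappa\in\SL_2(O_\infty)/\Kcal^1_\infty}I\big((g_\infty w_\infty)\,w_\infty^{-1}\kappa w_\infty;\varphi_\Lambda\big)=0$ by the second identity of Lemma~\ref{lem: har}.

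For the transformation law, fix $\gamma\in\Gamma_0^{(1)}(\dfk\nfk)$; since $\det\gamma\in\FF_q^\times$ the elements $\gamma g_\infty$ and $g_\infty$ have the same determinant-parity. If $g_\infty\in\GL_2^+(k_\infty)$ the cited rule gives $\vartheta_\Lambda(\gamma g_\infty)=\tfrac{2}{\mathrm{vol}(O_{B_\AA}^\times/O_\AA^\times)}I(\gamma g_\infty;\varphi_\Lambda)=\chi_\dfk(\gamma)\vartheta_\Lambda(g_\infty)$; if $g_\infty\notin\GL_2^+(k_\infty)$, apply the rule to $\gamma\cdot(g_\infty w_\infty)$, using that it extends from $\SL_2(k_\infty)$ to $\GL_2^+(k_\infty)$ by center- and $\Kcal_\infty$-invariance of $I$, to get $\vartheta_\Lambda(\gamma g_\infty)=-\tfrac{2}{\mathrm{vol}(O_{B_\AA}^\times/O_\AA^\times)}I(\gamma g_\infty w_\infty;\varphi_\Lambda)=\chi_\dfk(\gamma)\vartheta_\Lambda(g_\infty)$. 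Finally, for $\gamma=\left(\begin{smallmatrix}a&b\\ \dfk\nfk c&d\end{smallmatrix}\right)\in\SL_2(A)$ I would identify $\chi_\dfk(\gamma)$ with $\left(\tfrac{d}{\dfk}\right)$ by unwinding the definition of $\chi_\dfk$ in Proposition~\ref{prop: TLN}: writing $\gamma g_\infty$ adelically as $\gamma\cdot(g_\infty,1,1,\dots)\cdot\kappa$ with $\kappa$ of trivial $\infty$-component and finite components equal to $\gamma^{-1}=\left(\begin{smallmatrix}d&-b\\ -\dfk\nfk c&a\end{smallmatrix}\right)$, the lower-right entry of $\kappa_\pfk$ is the reduction of $a$ for each $\pfk\mid\dfk$, so $\chi_\dfk(\gamma)=\prod_{\pfk\mid\dfk}\left(\tfrac{a}{\pfk}\right)=\left(\tfrac{a}{\dfk}\right)=\left(\tfrac{d}{\dfk}\right)$, the last step because $ad\equiv ad-\dfk\nfk bc=1\bmod\dfk$.

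The substantive input — the two vanishing identities and the nebentypus transformation — is already supplied by Lemma~\ref{lem: har} and Proposition~\ref{prop: TLN}, so I anticipate no genuine obstacle; the one point that needs care is the odd-parity case of the second harmonic relation, where fact \emph{(ii)} is used to verify that the conjugated cosets $w_\infty^{-1}\kappa w_\infty$ occurring in Lemma~\ref{lem: har} run over a transversal of $\GL_2(O_\infty)/\Kcal_\infty$, together with the routine check that $\vartheta_\Lambda$ is well defined on $\GL_2(k_\infty)/k_\infty^\times\Kcal_\infty$.
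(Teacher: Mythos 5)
Your proposal is correct and follows essentially the same route as the paper: the first harmonic relation comes straight from the definition of $\vartheta_\Lambda$ (using that $w_\infty^2$ is central), the second from the two vanishing identities of Lemma~\ref{lem: har} after the parity case-split and the identification $\SL_2(O_\infty)/\Kcal^1_\infty \leftrightarrow \GL_2(O_\infty)/\Kcal_\infty$, and the nebentypus law from Proposition~\ref{prop: TLN} read through strong approximation. The extra details you supply (normalization of $\Kcal_\infty$ by $w_\infty$, the bijection $\SL_2(k_\infty)/\Kcal^1_\infty \leftrightarrow \GL_2^+(k_\infty)/k_\infty^\times\Kcal_\infty$, and the computation $\chi_\dfk(\gamma)=\left(\frac{a}{\dfk}\right)=\left(\frac{d}{\dfk}\right)$ via the adelic component $\gamma^{-1}$) are correct fillings-in of steps the paper leaves implicit.
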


\begin{proof}
The second assertion follows directly from Proposition~\ref{prop: TLN}.
To show the harmonicity of $\vartheta_\Lambda$, by definition we get immediately that
$$\vartheta_\Lambda(g_\infty) + \vartheta_\Lambda(g_\infty w_\infty) = 0.
$$
Moreover, suppose $g_\infty \in \GL_2^+(k_\infty)$.
Then by Lemma~\ref{lem: har}, one has
$$\sum_{\kappa \in \GL_2(O_\infty)/\Kcal_\infty}\vartheta_\Lambda(g_\infty \kappa)
= \frac{2}{\text{vol}(O_{B_\AA}^\times/O_\AA^\times)} \cdot \sum_{\kappa \in \SL_2(O_\infty)/\Kcal_\infty^1} I(g_\infty \kappa; \varphi_\Lambda) = 0.
$$
When $g_\infty \notin \GL_2^+(k_\infty)$,
by Lemma~\ref{lem: har} again we get
$$\sum_{\kappa \in \GL_2(O_\infty)/\Kcal_\infty}\vartheta_\Lambda(g_\infty \kappa)
= \frac{-2}{\text{vol}(O_{B_\AA}^\times/O_\AA^\times)} \cdot \sum_{\kappa \in \SL_2(O_\infty)/\Kcal_\infty^1} I\big((g_\infty w_\infty) w_\infty^{-1} \kappa w_\infty; \varphi_\Lambda\big) = 0.
$$
Therefore the proof is complete.
\end{proof}

\begin{rem}\label{rem: DTS-FC}
In conclusion, we extend the theta integral $I(\cdot;\varphi_\Lambda)$ to a Drinfeld-type automorphic form $\vartheta_\Lambda$ on $\GL_2(k_\infty)$ for the congruence subgroup $\Gamma_0^1(\dfk\nfk)$ with nebentypus $\chi_\dfk$, whose Fourier expansion is: 
for $(x,y) \in k_\infty\times k_\infty^\times$,
$$
\vartheta_\Lambda\begin{pmatrix} y & x \\ 0 & 1\end{pmatrix}
= 2 \cdot \sum_{a \in A} \left(\sum_{\subfrac{t \in A}{t^2 \preceq 4a}}H^{\dfk^+\nfk^+,\dfk^-\nfk^-}\big(\dfk(t^2-4a)\big)\right) \cdot \big(\beta_{a,2}(y)\psi_\infty(a x)\big).
$$
\end{rem}

\section{Intersections of the Hirzebruch-Zagier-type divisors}\label{sec: Intersection}

Here we follow the notations in the last section.
Let $\dfk \in A_+$ be square-free with $\deg \dfk$ even and $F = k(\sqrt{\dfk})$.
Identifying $F_\infty := F\otimes_k k_\infty \cong k_\infty \times k_\infty$,
we denote the image of $\alpha \in F$ in $k_\infty^2$ by $(\alpha, \alpha')$ (i.e.\ $\alpha'$ is the Galois conjugate of $\alpha$ over $k$).
Let $\Hfk_F:= \Hfk\times \Hfk$, equipped
with the M\"obius action of $\GL_2(k_\infty)^2$.
The above embedding $F\hookrightarrow k_\infty\times k_\infty$ gives $\GL_2(F) \hookrightarrow \GL_2(k_\infty)^2$, which induces an action of $\GL_2(F)$ on $\Hfk_F$. 
In concrete terms, for $g = \begin{pmatrix}a&b \\ c&d \end{pmatrix} \in \GL_2(F)$ and $\vec{z} = (z_1,z_2) \in \Hfk_F$, define
$$
g \cdot \vec{z} := \left(\frac{az_1+b}{cz_1+d}\ ,\  \frac{a'z_2+b'}{c'z_2+d'}\right).
$$

For $\nfk \in A_+$,
recall that
$$\Gamma_{0,F}(\nfk)
= \left\{\begin{pmatrix} 
a&b \\ c&d \end{pmatrix} \in \GL_2(O_F)\ \bigg|\ ad-bc \in \FF_q^\times,\ c \equiv 0 \bmod \nfk \right\}.
$$
The \textit{Drinfeld-Stuhler modular surface for $\Gamma_{0,F}(\nfk)$} is $$\Scal_{0,F}(\nfk) := \Gamma_{0,F}(\nfk)\backslash \Hfk_F.$$
From the work of Stuhler \cite{Stu}, $\Scal_{0,F}(\nfk)$ is a moduli space of the so-called ``Frobenius-Hecke sheaves'' (an analogue of the Hilbert-Blumental abelian surfaces in the classical case) with additional ``level-$\nfk$ structure''.
This provides the algebraic structure of the surface $\Scal_{0,F}(\nfk)$.
For our purpose, we only consider $\Scal_{0,F}(\nfk)$ as a rigid analytic space, and study the intersections of the ``Hirzebruch-Zagier-type'' divisors on $\Scal_{0,F}(\nfk)$.

\subsection{Hirzebruch-Zagier-type divisors}

Recall in Section~\ref{sec: TSN} that we let
$$V = \{x \in \Mat_2(F) \mid x^* = x\} \quad \text{ and } \quad \Lambda  = V \cap \Mat_2(O_F).$$
Given $x \in \Lambda$ with $\det(x) \neq 0$,
let $\Ccal_x := \Gamma_x \backslash \Hfk$, the Drinfeld-Stuhler modular curve for $\Gamma_x$.
Put $$S_x := \begin{pmatrix}0&1 \\ \nfk & 0 \end{pmatrix} \bar{x}.$$
The closed immersion
$\Hfk \rightarrow \Hfk_F$ defined by $(z\mapsto (z,S_x z))$ induces a (rigid analytic) proper morphism
$f_x : \Ccal_x \rightarrow \Scal_{0,F}(\nfk)$.
We put 
$X_x := f_x(\Ccal_x)$ and $\Zcal_x:= f_{x,*}(\Ccal_x)$, the pushforward divisor of $\Ccal_x$ under $f_x$ on $\Scal_{0,F}(\nfk)$.
Let
$$
\widehat{\Gamma}_x := \{\gamma \in \Gamma_{0,F}(\nfk)\mid \gamma \star x = \pm x\}.
$$
Then $[\widehat{\Gamma}_x : \Gamma_x] = 1 \text{ or } 2$, and:
\begin{lem}
For $x \in \Lambda$ with $\deg(x) \neq 0$, one has
$$\Zcal_x = [\widehat{\Gamma}_x : \Gamma_x]\cdot X_x.
$$
\end{lem}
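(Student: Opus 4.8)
The plan is to realize $f_x$ as a quotient morphism between Drinfeld-Stuhler curves and read off its degree. Since $\Hfk$ is connected, $\Ccal_x=\Gamma_x\backslash\Hfk$ is an irreducible reduced curve, and $f_x$ is finite onto the reduced irreducible curve $X_x=f_x(\Ccal_x)$; hence, as divisors on $\Scal_{0,F}(\nfk)$,
\[
\Zcal_x=f_{x,*}[\Ccal_x]=\big(\deg f_x\big)\cdot X_x ,
\]
where $\deg f_x=[k(\Ccal_x):k(X_x)]$ is the number of points of a generic fibre (the cover is separable since $q$ is odd and its degree will turn out to be $\le 2$). So everything reduces to showing that this generic degree equals $[\widehat{\Gamma}_x:\Gamma_x]$.

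First I would pass to the uniformizing space. Writing $\widetilde{X}_x:=\{(z,S_xz):z\in\Hfk\}\subset\Hfk_F$, the morphism $f_x$ is induced by $\Hfk\xrightarrow{\ \sim\ }\widetilde{X}_x\hookrightarrow\Hfk_F\to\Scal_{0,F}(\nfk)$, and $X_x$ is the image of $\widetilde{X}_x$ in $\Gamma\backslash\Hfk_F$ with $\Gamma=\Gamma_{0,F}(\nfk)$. For $\gamma\in\Gamma$ the translate $\gamma\cdot\widetilde{X}_x$ is again a curve of the form $\{(w,S_{x_\gamma}w):w\in\Hfk\}$ with $S_{x_\gamma}=\gamma' S_x\gamma^{-1}$ (up to scalar), so $\gamma$ stabilizes $\widetilde{X}_x$ setwise exactly when $\gamma' S_x=\lambda S_x\gamma$ in $\PGL_2(k_\infty)$ for some $\lambda\in k_\infty^\times$. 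Because two distinct $\Gamma$-translates of $\widetilde{X}_x$ meet in at most a discrete set, two points of $\widetilde{X}_x$ are $\Gamma$-equivalent if and only if they are $\Stab_\Gamma(\widetilde{X}_x)$-equivalent; hence $X_x\cong\Stab_\Gamma(\widetilde{X}_x)\backslash\Hfk$ and $f_x$ is the evident quotient map.

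The crux is to identify $\Stab_\Gamma(\widetilde{X}_x)$ with $\widehat{\Gamma}_x$. Using $S_x=J\bar x$ with $J=\begin{pmatrix}0&1\\\nfk&0\end{pmatrix}$, together with $J^2=\nfk I$, $\bar J=-J$, $x^*=J^{-1}\bar x' J$ and $\gamma^*=J^{-1}\bar\gamma' J$, one checks the chain of equivalences
\[
\gamma' S_x=\lambda S_x\gamma \ \Longleftrightarrow\ x\gamma^*=\lambda\,\bar\gamma x \ \Longleftrightarrow\ \gamma\star x=\gamma x\gamma^*(\det\gamma)^{-1}=\lambda x .
\]
Taking determinants and using $\det\gamma\in\FF_q^\times$ forces $\lambda=\pm1$, so $\Stab_\Gamma(\widetilde{X}_x)=\{\gamma\in\Gamma:\gamma\star x=\pm x\}=\widehat{\Gamma}_x$; moreover $\gamma\mapsto\lambda$ is a homomorphism $\widehat{\Gamma}_x\to\{\pm1\}$ with kernel $\Gamma_x$, which re-derives $[\widehat{\Gamma}_x:\Gamma_x]\le 2$. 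Since the central scalar $-1$ acts trivially in the M\"obius action, every $\gamma\in\widehat{\Gamma}_x$ acts on $\widetilde{X}_x\cong\Hfk$ simply by $z\mapsto\gamma z$ — the very action that defines $\Ccal_x$ and $X_x$. Thus $f_x\colon\Ccal_x=\Gamma_x\backslash\Hfk\to\widehat{\Gamma}_x\backslash\Hfk=X_x$ is the canonical quotient, and as a generic $z\in\Hfk$ has stabilizer $\FF_q^\times\subseteq\Gamma_x$ in $\widehat{\Gamma}_x$, a generic fibre has $[\widehat{\Gamma}_x:\Gamma_x]$ points. Feeding $\deg f_x=[\widehat{\Gamma}_x:\Gamma_x]$ into the first display yields $\Zcal_x=[\widehat{\Gamma}_x:\Gamma_x]\cdot X_x$.

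The main obstacle I anticipate is the bookkeeping in the third paragraph: untangling which of the two archimedean places of $F$ governs which factor of $\Hfk_F$, so as to pin down the exact shape of the stabilizer relation $\gamma' S_x=\lambda S_x\gamma$, and then running the matrix manipulation to $\gamma\star x=\lambda x$ without sign slips. A minor but necessary side point is that the ``accidental'' intersections $\widetilde{X}_x\cap\gamma^{-1}\widetilde{X}_x$ for $\gamma\notin\widehat{\Gamma}_x$ are $0$-dimensional and therefore invisible to the divisor-theoretic pushforward $f_{x,*}$, so they do not alter the multiplicity computed above.
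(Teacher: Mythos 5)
Your proposal is correct and follows essentially the same route as the paper: reduce the statement to showing $\deg f_x=[\widehat{\Gamma}_x:\Gamma_x]$, and prove that $\Gamma$-identifications of points of $\widetilde{X}_x$ in general position force $\gamma\star x=\pm x$ (the paper does this pointwise via $(\gamma\star x)\bar{x}\in F^\times$ and a determinant argument, you via the setwise-stabilizer relation $\gamma'S_x=\lambda S_x\gamma$, which is the same computation). Your added remarks — that accidental $0$-dimensional intersections are invisible to the pushforward and that the generic stabilizer is $\FF_q^\times$ — are correct refinements of the paper's phrase ``general position,'' so there is no gap.
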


\begin{proof}
We need to show that the proper morphism $f_x : \Ccal_x \rightarrow X_x$
has degree equal to $[\widehat{\Gamma}_x :\Gamma_x]$.

Let $\boz_1, \boz_2 \in \Ccal_x$ be two points with $f_x(\boz_1) = f_x(\boz_2) \in X_x$.
Take representatives $\vec{z}_1 = (z_1,S_x z_1)$ and $\vec{z}_2 = (z_2, S_x z_2)$ of $\boz_1$ and $\boz_2$ on $\Hfk_F$, respectively.
There exists $\gamma \in \Gamma$ so that 
$$\vec{z}_1 = \gamma \cdot \vec{z}_2,\ \text{ i.e.\ }
(z_1,S_x z_1) = (\gamma z_2, \gamma' S_x z_2).
$$
Thus
$$
z_1 =  \gamma z_1 = \gamma ((\gamma'S_x)^{-1}S_x) z_1
= \big((\gamma \star x) \bar{x}\big) z_1.
$$
When $z_1$ is in ``general position'', e.g.\ the stablizer of $z_1$ in $\GL_2(F)$ is $F^\times$,
one has
$(\gamma\star x) \bar{x} \in F^\times$.
Taking the determinant of $(\gamma\star x) \bar{x}$, we obtain
$\gamma \star x = \pm x$, which says that $\gamma \in \widehat{\Gamma}_x$.
Therefore the result holds.
\end{proof}

Suppose now that $\nfk$ satisfies Assumption~\ref{apn: level}.
We shall study the number of intersections of $\Zcal_1$ and $\Zcal_x$ by lifting to a ``fine covering'' of $\Scal_{0,F}(\nfk)$.
More precisely, for $\mfk \in A_+$, we let
$$
\Gamma_F(\mfk) := \left\{ \gamma \in \GL_2(O_F)\ \bigg|\ \gamma \equiv \begin{pmatrix} 1&0 \\ 0&1 \end{pmatrix} \bmod \mfk\right\}.
$$
Choose $\mfk$ so that $\nfk^2$ divides $\mfk$.
Then 
$$\Gamma_F(\mfk)^*:= \{ \gamma^* \mid \gamma \in \Gamma_F(\mfk)\} \subset \Gamma_F(\nfk).$$
Consider the finite morphism
$$\pi:\Gamma_F(\mfk)\backslash \Hfk_F =: \Scal_F(\mfk) \twoheadrightarrow \Scal_{0,F}(\nfk).$$
For $x \in \Lambda$ with $\det(x) \neq 0$,
let $\Hfk_x := \{(z,S_x z)\mid z \in \Hfk\} \subset \Hfk_F$ and
$\widetilde{X}_x$ be the image of $\Hfk_x$ in $\Scal_F(\mfk)$ under the canonical map from $\Hfk_F$ onto $\Scal_F(\mfk)$.
Let $\Gamma_x(\mfk) := \Gamma_x \cap \Gamma_F(\mfk)$, and put
$\widetilde{\Ccal}_x := \Gamma_x(\mfk) \backslash \Hfk$.
We have:

\begin{lem}
Assume $\nfk^2 \det(x)$ divides $\mfk$.
Then the identification between $\Hfk \cong \Hfk_x$ induces an isomorphism $\tilde{f}_x: \widetilde{\Ccal}_x \cong \widetilde{X}_x$.
\end{lem}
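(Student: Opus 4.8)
The plan is to prove that $\tilde f_x$ is a finite surjective morphism of degree $1$ onto $\widetilde X_x$ and that, for the chosen fine level $\mfk$, both $\widetilde\Ccal_x$ and $\widetilde X_x$ are smooth, so that $\tilde f_x$, being a finite birational morphism onto a normal curve, is an isomorphism.

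The starting point is the algebraic identity that makes the construction equivariant. Writing $w=\begin{pmatrix}0&1\\\nfk&0\end{pmatrix}$, so that $S_x=w\bar x$ and $b^{*}=w^{-1}\bar b'w$, and using $w^{2}=\nfk I$, one checks from the defining relation $xb^{*}=\bar b x$ of $B_x$ (Lemma~\ref{lem: B_x}) that $S_x\,b=b'\,S_x$ for every $b\in B_x^{\times}$. Hence $z\mapsto(z,S_xz)$ carries the $\Gamma_x(\mfk)$-action on $\Hfk$ to the $\Gamma_x(\mfk)$-action on $\Hfk_x\subset\Hfk_F$, because $\gamma\cdot(z,S_xz)=(\gamma z,\gamma' S_xz)=(\gamma z,S_x\gamma z)$ for $\gamma\in B_x^{\times}$; so $\tilde f_x\colon\widetilde\Ccal_x\to\widetilde X_x$ is a well-defined morphism. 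It is surjective because $\widetilde X_x$ is \emph{defined} as the image of $\Hfk_x$, and it is finite because it sits in the evident commutative square with the proper morphism $f_x$, the finite covering $\pi$, and the covering $\widetilde\Ccal_x\to\Ccal_x$.

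The heart of the argument is $\deg\tilde f_x=1$, which I would verify on a generic fibre exactly as in the proof of the previous lemma. If $\gamma\in\Gamma_F(\mfk)$ satisfies $\gamma\cdot(z_1,S_xz_1)=(z_2,S_xz_2)$ with $z_1$ in general position (so $\Stab_{\GL_2(F)}(z_1)=F^{\times}$), then $(\gamma\star x)\bar x$ fixes $z_1$, hence equals $\lambda I$ for some $\lambda\in F^{\times}$; comparing determinants (note $\det(\gamma\star x)=\det x$ since $\det\gamma^{*}=\det\gamma\in\FF_q^{\times}$) gives $\lambda^{2}=(\det x)^{2}$, i.e.\ $\gamma\star x=\pm x$. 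Here the hypothesis $\nfk^{2}\det(x)\mid\mfk$ enters to exclude the sign $-$: since $\gamma\equiv I\bmod\mfk$ and $\det\gamma=1$ (because $\det\gamma\in O_F^{\times}=\FF_q^{\times}$ and $\deg\mfk>0$), expanding $\gamma\star x=\gamma x\gamma^{*}$ and using $\nfk\mid\mfk$ to clear the $1/\nfk$ in $\gamma^{*}=w^{-1}\bar\gamma'w$ shows $\gamma\star x\equiv x\bmod(\mfk/\nfk)$; as $q$ is odd, $\gamma\star x=-x$ would force $x\in(\mfk/\nfk)\Mat_2(O_F)$, hence $(\mfk/\nfk)^{2}\mid\det x$ in $A$, which together with $\det x\mid\mfk/\nfk^{2}$ is impossible for the chosen $\mfk$. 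Therefore $\gamma\star x=x$, so $\gamma\in B_x^{\times}\cap\Gamma_F(\mfk)=\Gamma_x(\mfk)$ and $z_2\sim_{\Gamma_x(\mfk)}z_1$: the generic fibre is a single point.

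Finally, for the chosen $\mfk$ the groups $\Gamma_F(\mfk)$ and $\Gamma_x(\mfk)$ act freely, so $\Scal_F(\mfk)$ and $\widetilde\Ccal_x$ are smooth, and the same congruence $\gamma\star x\equiv x\bmod(\mfk/\nfk)$ rules out transverse self-intersections of $\widetilde X_x$, so $\widetilde X_x$ is smooth; a finite morphism of degree $1$ between smooth (in particular normal) curves is an isomorphism. The step I expect to be the main obstacle is precisely this reduction-modulo-$\mfk$ estimate: because the involution $*$ contributes a factor $1/\nfk$, one must track denominators carefully to see that it is the factor $\nfk^{2}$ together with $\det x$ in the divisibility hypothesis — rather than the weaker $\nfk\det x\mid\mfk$ — that is needed both to exclude $\gamma\star x=-x$ and, more delicately, to verify the absence of self-intersections of $\widetilde X_x$ at its non-generic points.
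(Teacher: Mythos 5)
The gap is at the step you yourself flag as the main obstacle, and it is not a technicality: your injectivity argument works only over points $z_1$ in general position, where $\Stab_{\GL_2(F)}(z_1)=F^\times$ forces $(\gamma\star x)\bar{x}$ to be scalar and the determinant comparison yields $\gamma\star x=\pm x$. At the special points --- precisely the CM points, which are where the divisors actually meet and where this lemma is later used --- the stabilizer is the multiplicative group of an imaginary quadratic extension, $(\gamma\star x)\bar{x}$ need not be scalar, and the dichotomy $\gamma\star x=\pm x$ is unavailable. Your substitute, ``the same congruence rules out transverse self-intersections of $\widetilde{X}_x$,'' is an assertion rather than an argument, and excluding only \emph{transverse} crossings would not even suffice for normality: a tangential self-contact or a fold of the image would equally ruin the ``finite birational onto a normal curve'' conclusion. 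The paper's proof does not split into generic and special points: it reduces the lemma to bijectivity of $\tilde{f}_x$ and proves injectivity at \emph{every} point by showing, directly from $\nfk^2\det(x)\mid\mfk$, that $(\gamma\star x)x^{-1}=\gamma x\gamma^{*}x^{-1}\det(\gamma)^{-1}$ lies in $\Gamma_F(\nfk)$ --- the hypothesis is exactly what absorbs the denominator $\nfk$ coming from $\gamma^{*}$ and the denominator $\det(x)$ coming from $x^{-1}$ --- and then observing that such a congruence element fixing $z_1\in\Hfk$ must be the identity, whence $\gamma\star x=x$ and $\gamma\in\Gamma_x(\mfk)$; smoothness of $\widetilde{X}_x$ is then drawn from the freeness of the $\Gamma_F(\mfk)$-action on $\Hfk_F$, not from an intersection analysis. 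Without this uniform congruence argument (or an equivalent treatment of the special fibres) your proposal does not prove the lemma.

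A secondary error: $O_F^\times=\FF_q^\times$ is false, since $F=k(\sqrt{\dfk})$ is real over $k$ (the infinite place splits), so $O_F^\times$ has rank one; hence ``$\det\gamma=1$'' for $\gamma\in\Gamma_F(\mfk)$ is not justified as written, and with it the identities $\det\gamma^{*}=\det\gamma$ and $\det(\gamma\star x)=\det x$ on which your computation $\lambda^{2}=(\det x)^{2}$ relies (in general $\det\gamma^{*}=(\det\gamma)'$). The paper sidesteps any determinant normalization by keeping the factor $\det(\gamma)^{-1}\equiv 1 \bmod \mfk$ inside the congruence for $(\gamma\star x)x^{-1}$, which is the cleaner route here as well.
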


\begin{proof}
Notice that the defining equation of $\Hfk_x$ in $\Hfk_F$ makes it smooth everywhere.
As each point in $\Hfk_F$ has trivial stablizer in $\Gamma_F(\mfk)$, we may identify a sufficiently small admissible open neighborhood of a given point in $\Hfk_F$ with the corresponding affinoid subdomains in $\Scal_F(\mfk)$.
This assures the smoothness of $\widetilde{X}_x$.
Therefore it suffices to show that the morphism
from $\tilde{f}_x: \widetilde{C}_x\rightarrow \widetilde{X}_x$ is a bijection.

The surjectivity of $\tilde{f}_x$ comes directly from the definition.
On the other hand, let $\tilde{\boz}_1$ and $\tilde{\boz}_2$ be two points on $\widetilde{\Ccal}_x$ so that $\tilde{f}_x(\tilde{\boz}_1) = \tilde{f}_x(\tilde{\boz}_2)$.
Take representatives $\vec{z}_1 = (z_1,S_x z_1)$ and $\vec{z}_2 = (z_2, S_x z_2)$ of $\tilde{\boz}_1$ and $\tilde{\boz}_2$ on $\Hfk_x$, respectively.
Then there exists $\gamma \in \Gamma_F(\mfk)$ so that
$\vec{z}_1 = \gamma \cdot \vec{z}_2$, i.e.\
$$
(z_1, S_x z_1 ) = (\gamma z_2, \gamma' S_x z_2).
$$
Thus
$$ z_1 = \gamma z_2 = \big(\gamma (\gamma' S_x)^{-1} S_x\big) z_1 = \big((\gamma \star x)\bar{x}\big) z_1.
$$
Since $\nfk^2 \det (x)$ divides $\mfk$, we obtain that 
$$
(\gamma \star x)x^{-1} = \gamma x \gamma^* x^{-1} \cdot \det(\gamma)^{-1} \in \Gamma_F(\nfk), \quad \text{i.e. }
(\gamma \star x)x^{-1} \equiv \begin{pmatrix} 1&0 \\ 0 &1 \end{pmatrix} \bmod \nfk.
$$
As it fixes $z_1 \in \Hfk$, we obtain that 
$$
(\gamma \star x)x^{-1} = 1, \quad \text{i.e.} \quad
\gamma \star x = x.
$$
Hence $\gamma \in \Gamma_x \cap \Gamma_F(\mfk) = \Gamma_x(\mfk)$.
In other words, the morphism $\tilde{f}_x$ is bijective, and the proof is complete.
\end{proof}

\subsection{Formula of intersections}

Let $x \in \Lambda$ with $\det(x) \neq 0$, and $\mfk \in A_+$ with $\nfk^2 \det(x) \mid \mfk$.
We first verify the transversality of the intersections of $\widetilde{X}_1$ and $\widetilde{X}_x$ on $\Scal_F(\mfk)$.

\begin{lem}
Suppose $\widetilde{X}_1 \neq \widetilde{X}_x$.
Then $\widetilde{X}_1$ and $\widetilde{X}_x$ intersect transversally.
\end{lem}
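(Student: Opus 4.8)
The plan is to pull everything back to the universal cover $\Hfk_F$. By the previous lemma and its proof — in particular the trivial-stabilizer property of the $\Gamma_F(\mfk)$-action on $\Hfk_F$, and the isomorphisms $\tilde f_1\colon\widetilde\Ccal_1\cong\widetilde X_1$ and $\tilde f_x\colon\widetilde\Ccal_x\cong\widetilde X_x$ (both available since $\nfk^2\det(x)\mid\mfk$ forces $\nfk^2\mid\mfk$) — the quotient map $q_\mfk\colon\Hfk_F\to\Scal_F(\mfk)$ is everywhere a local isomorphism of rigid analytic spaces, and $\widetilde X_1$, $\widetilde X_x$ are smooth irreducible curves, so their intersection is finite and ``transversal'' is a pointwise condition. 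Given $P\in\widetilde X_1\cap\widetilde X_x$, I would choose lifts $\vec z_1\in\Hfk_1$ and $\vec z_x\in\Hfk_x$ with $q_\mfk(\vec z_1)=q_\mfk(\vec z_x)=P$; then $\vec z_x=\gamma\cdot\vec z_1$ for some $\gamma\in\Gamma_F(\mfk)$, so $\vec z_x$ lies on $\Hfk_x\cap(\gamma\cdot\Hfk_1)$, and a small neighbourhood of $\vec z_x$ maps isomorphically under $q_\mfk$ to a neighbourhood of $P$, carrying $\Hfk_x$ to $\widetilde X_x$ and $\gamma\cdot\Hfk_1$ to $\widetilde X_1$. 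Hence it suffices to show that $\Hfk_x$ and $\gamma\cdot\Hfk_1$ meet transversally at $\vec z_x$.

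Next I would make the two branches explicit. Writing $\vec z_x=(z_0,S_x z_0)$ with $z_0\in\Hfk$, the curve $\Hfk_x$ is the graph of the Möbius transformation attached to the invertible matrix $S_x$; reparametrising the first coordinate by $w=\gamma\cdot z$ and using the action convention $g\cdot(z_1,z_2)=(gz_1,g'z_2)$, one sees that $\gamma\cdot\Hfk_1$ is the graph of $\eta:=\gamma'S_1\gamma^{-1}$, where $\gamma'$ is the entrywise Galois conjugate. The condition $\vec z_x\in\gamma\cdot\Hfk_1$ says $\eta\cdot z_0=S_x\cdot z_0$. Since the tangent line to the graph of a Möbius transformation $M$ at $(z_0,M z_0)$ is spanned by $(1,M'(z_0))$ with $M'(z_0)=\det(M)/(c_M z_0+d_M)^2\neq 0$, the two graphs meet transversally at $\vec z_x$ exactly when $\eta'(z_0)\neq S_x'(z_0)$; by the chain rule this is equivalent to $\psi'(z_0)\neq 1$ for the element $\psi:=S_x^{-1}\eta\in\GL_2(F)$, which fixes $z_0$.

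The crux is to rule out $\psi'(z_0)=1$; this is where the argument genuinely uses that $\psi$ is defined over $F$, hence (via the embedding $F\hookrightarrow k_\infty$ used for the first factor) over $k_\infty$. Suppose $\psi'(z_0)=1$. The derivative of a Möbius transformation at a fixed point is the ratio of the two eigenvalues of a representing matrix, so $\psi$ must have a repeated eigenvalue $\lambda=\tr(\psi)/2$, and $\lambda\in k_\infty$ because $q$ is odd. By Cayley--Hamilton $\psi-\lambda I$ is then nilpotent; if it were nonzero it would have rank one, so the eigenvector line of $\psi$ would be the $k_\infty$-rational line $\ker(\psi-\lambda I)$. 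But $z_0$ is fixed by $\psi$, so $(z_0,1)$ spans an eigenvector line of $\psi$; as $\psi$ (non-scalar, with repeated eigenvalue) has only one such line, $z_0$ would be $k_\infty$-rational, contradicting $z_0\in\Hfk=\CC_\infty\setminus k_\infty$. Hence $\psi=\lambda I$ is scalar, i.e.\ $\gamma'S_1\gamma^{-1}=\lambda S_x$ for some $\lambda\in\CC_\infty^\times$, so the graphs of $\eta$ and of $S_x$ coincide and $\gamma\cdot\Hfk_1=\Hfk_x$; applying $q_\mfk$ and using $\gamma\in\Gamma_F(\mfk)$ gives $\widetilde X_1=q_\mfk(\gamma\cdot\Hfk_1)=q_\mfk(\Hfk_x)=\widetilde X_x$, contrary to hypothesis. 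Therefore $\psi'(z_0)\neq 1$, and the intersection at $P$ is transversal.

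I expect the passage to the universal cover and the tangent-line bookkeeping to be routine, given the freeness of the $\Gamma_F(\mfk)$-action already established in the previous lemma. The single essential point — and the only place the odd-characteristic hypothesis enters — is the last step: because $\psi$ is a matrix over $k_\infty$, a non-scalar $\psi$ with $\psi'(z_0)=1$ would be a scalar multiple of a nontrivial unipotent element of $\GL_2(k_\infty)$ and so would fix a $k_\infty$-rational point, never the non-rational point $z_0\in\Hfk$ it is forced to fix.
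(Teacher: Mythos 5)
Your proof is correct, and it reaches the conclusion by a genuinely different route from the paper's. Both arguments first pass to $\Hfk_F$ using the freeness of the $\Gamma_F(\mfk)$-action and reduce to the transversality of two branches upstairs; the paper translates the second curve, using $\gamma\Hfk_x=\Hfk_{\gamma\star x}$, and studies $\Hfk_1\cap\Hfk_{\gamma\star x}$, while you study $\Hfk_x\cap\gamma\Hfk_1$ with $\gamma\Hfk_1$ the graph of $\eta=\gamma'S_1\gamma^{-1}=S_{\gamma\star 1}$ --- the same reduction up to which factor is translated. From there the paper is computational: it writes out the entries $d_1,d_2,\beta$ of the element of $\Lambda$, extracts from the incidence and tangency conditions two quadratic equations in the common point $z$ with coefficients in $F\subset k_\infty$, and uses $z\notin k_\infty$ to force proportionality of coefficients, whence $\beta=0$, $d_1=d_2$, i.e.\ the element is scalar and the two branches coincide. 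You instead observe that equality of tangents at the common point $z_0$ says precisely that the $k_\infty$-rational element $\psi=S_x^{-1}\gamma'S_1\gamma^{-1}$ fixes $z_0$ with multiplier $\psi'(z_0)=1$, hence has a repeated eigenvalue $\lambda=\tr(\psi)/2\in k_\infty$ (this is where $q$ odd enters); a non-scalar matrix over $k_\infty$ with repeated eigenvalue has a unique eigenline, which is $k_\infty$-rational, contradicting the fact that its eigenvector $(z_0,1)^t$ has $z_0\in\Hfk=\CC_\infty-k_\infty$; so $\psi$ is scalar, the branches coincide, and pushing down gives $\widetilde{X}_1=\widetilde{X}_x$, contrary to hypothesis. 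The underlying ingredients (free action upstairs, tangent of a graph of a M\"obius map, $z_0\notin k_\infty$) are the same, but your version isolates the conceptual mechanism --- the multiplier at a fixed point in $\Hfk$ of a $k_\infty$-rational M\"obius transformation cannot equal $1$ unless the transformation is trivial --- and applies verbatim to any two distinct curves of the form $\Hfk_x$, whereas the paper's coefficient comparison is elementary bookkeeping that, as a byproduct, identifies explicitly when the two curves coincide. Your chain-rule step ($\eta'(z_0)=S_x'(z_0)\psi'(z_0)$ since $\psi(z_0)=z_0$) and the identification of $M'(z_0)$ with the ratio of eigenvalues are both sound, so I see no gap.
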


\begin{proof}
It suffices to check that the preimages of $\widetilde{X}_1$ and $\widetilde{X}_x$ in $\Hfk_F$ intersect transversally.
Since $\gamma \Hfk_x = \Hfk_{\gamma \star x}$ for every $\gamma \in \Gamma$, it is reduced to show the transversality of the intersection of $\Hfk_1$ and $\Hfk_x$ when $x \notin A$.

Suppose $\vec{z} (z,S_1z) = (z,S_x z) \in \Hfk_1 \cap \Hfk_x$.
Write $x = \begin{pmatrix} d_1 & \beta \\ -\nfk \beta' & d_2\end{pmatrix}$ with $d_1,d_2 \in k$ and $\beta \in F$,
and put $a := \det(x) = d_1d_2 + \nfk \beta\beta' \neq 0$.
Then $\bar{x} z = z$, i.e.\
$$
\frac{d_2 z - \beta}{\nfk\beta z + d_1} = z.
$$
Thus $\nfk \beta z^2 +(d_1-d_2)z +\beta = 0$.
Multiplying $\beta$ on both sides we get
\begin{eqnarray}\label{eqn: T1}
(a-d_1d_2)z^2 + (d_1-d_2)\beta z + \beta^2 = 0.
\end{eqnarray}
On the other hand, the tangent vectors of $\vec{z}$ along $\Hfk_1$ and $\Hfk_x$, respectively, are
$$
\left(1, -\frac{1}{\nfk z^2}\right) \quad \text{ and }\quad
\left(1, \frac{-a}{\nfk(d_2z-\beta)^2}\right).
$$
If these two coincide, we get
$a z^2 = (d_2z-\beta)^2$, which says that
\begin{eqnarray}\label{eqn: T2}
(a-d_2^2)z^2 + 2d_2 \beta z - \beta^2 = 0.
\end{eqnarray}
As $z \in \Hfk$, comparing the coefficients of the equations~\eqref{eqn: T1} and \eqref{eqn: T2} we get
$$
(a-d_1d_2)\cdot 2d_2 \beta = (a-d_2^2) \cdot (d_1-d_2)\beta \quad \text{ and } \quad
(a-d_1d_2)\cdot (-\beta^2) = (a-d_2^2)\cdot \beta^2.
$$
Hence either $\beta = 0$ or
$$2ad_2 - 2d_1d_2^2 = ad_1 -ad_2 - d_1d_2^2 + d_2^3 \quad \text{ and } \quad
d_1d_2-a = a-d_2^2.$$
In both cases we can obtain $d_1 =d_2$ and $\beta = 0$, which says that $x \in A$ and $\Hfk_x = \Hfk_1$.
Therefore as $\Hfk_x \neq \Hfk_1$, the two tangent vectors much be different, i.e.\
the intersection of $\Hfk_x$ and $\Hfk_1$ at $\vec{z}$ must be transversal.
\end{proof}

Let $\widetilde{\Zcal}_x$ be the prime divisor associated with $\widetilde{X}_x$ on $\Scal_F(\mfk)$. We get
$$
\pi_*(\widetilde{\Zcal}_x) = [\Gamma_x : \Gamma_x(\mfk)\cdot \FF_q^\times] \cdot \Zcal_x.$$






From the above lemmas, the intersection number of $\Zcal_1$ and $\Zcal_x$ is determined in the following:

\begin{prop}\label{prop: projection formula}
Given $x \in \Lambda$ with $\det(x) \neq 0$, suppose $\Zcal_1 \neq \Zcal_x$. Choose $\mfk \in A_+$ so that $\nfk^2 \det(x) \mid \mfk$.
The intersection number of $\Zcal_1$ and $\Zcal_x$ is equal to
$$\Zcal_1\cdot \Zcal_x = \frac{q-1}{[\Gamma_1:\Gamma_1(\mfk)]\cdot[\Gamma_x:\Gamma_x(\mfk)]}
\cdot \sum_{\gamma \in \Gamma/\Gamma_F(\mfk)} \widetilde{\Zcal}_1 \cdot \gamma \widetilde{\Zcal}_x.
$$
\end{prop}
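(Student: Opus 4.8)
The plan is to transfer the intersection from the (possibly singular) quotient $\Scal_{0,F}(\nfk)$ to the \emph{smooth} surface $\Scal_F(\mfk)$ along the finite morphism $\pi$ --- where, by the transversality lemma above, the relevant curves meet transversally --- and then apply the projection formula. Write $\Gamma:=\Gamma_{0,F}(\nfk)$ and $\Gamma(\mfk):=\Gamma_F(\mfk)$. Since $\Gamma(\mfk)$ is the principal congruence subgroup modulo $\mfk$, it is normal in $\Gamma$ (using $\nfk\mid\mfk$), acts freely on $\Hfk_F$ (as already noted in the proof of $\tilde{f}_x\colon\widetilde{\Ccal}_x\cong\widetilde{X}_x$), and satisfies $\FF_q^\times\cap\Gamma(\mfk)=\{1\}$ because $\deg\mfk>0$. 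Hence $\Scal_F(\mfk)$ is smooth, $\Scal_{0,F}(\nfk)$ has only finite-quotient singularities --- so it is $\QQ$-factorial and carries a well-behaved intersection pairing on $\QQ$-divisors --- and $\pi$ is the quotient of $\Scal_F(\mfk)$ by the finite group $\overline{\Gamma}:=\Gamma/(\Gamma(\mfk)\FF_q^\times)$, which permutes the curves via $\bar{\gamma}\cdot\widetilde{X}_y=\widetilde{X}_{\gamma\star y}$.

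\emph{Step 1 (pullback of $\Zcal_x$).} For any prime divisor $W$ on $\Scal_F(\mfk)$, the standard bookkeeping for a quotient map by a finite group --- over each component of $\pi^{-1}(\pi(W))$, the ramification index times the size of its $\overline{\Gamma}$-orbit times the degree of $\pi$ restricted to it equals $|\overline{\Gamma}|$ --- gives $\pi^{*}\pi_{*}W=\sum_{\bar{\gamma}\in\overline{\Gamma}}\bar{\gamma}W$. Taking $W=\widetilde{\Zcal}_x$ and inserting the identity $\pi_{*}\widetilde{\Zcal}_x=[\Gamma_x:\Gamma_x(\mfk)\FF_q^\times]\,\Zcal_x$ recorded just above yields the $\QQ$-divisor identity
$$\pi^{*}\Zcal_x=\frac{1}{[\Gamma_x:\Gamma_x(\mfk)\FF_q^\times]}\sum_{\bar{\gamma}\in\overline{\Gamma}}\bar{\gamma}\,\widetilde{\Zcal}_x,\qquad\text{and likewise}\qquad \pi_{*}\widetilde{\Zcal}_1=[\Gamma_1:\Gamma_1(\mfk)\FF_q^\times]\,\Zcal_1.$$

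\emph{Step 2 (projection formula and transversality).} Proper pushforward preserves intersection numbers, so by the projection formula and Step 1,
$$[\Gamma_1:\Gamma_1(\mfk)\FF_q^\times]\,(\Zcal_1\cdot\Zcal_x)=(\pi_{*}\widetilde{\Zcal}_1)\cdot\Zcal_x=\widetilde{\Zcal}_1\cdot\pi^{*}\Zcal_x=\frac{1}{[\Gamma_x:\Gamma_x(\mfk)\FF_q^\times]}\sum_{\bar{\gamma}\in\overline{\Gamma}}\widetilde{\Zcal}_1\cdot\bar{\gamma}\widetilde{\Zcal}_x.$$
Each summand $\widetilde{\Zcal}_1\cdot\bar{\gamma}\widetilde{\Zcal}_x=\widetilde{\Zcal}_1\cdot\widetilde{\Zcal}_{\gamma\star x}$ is the intersection number of the smooth curves $\widetilde{X}_1$ and $\widetilde{X}_{\gamma\star x}$ (here $\det(\gamma\star x)=\det x\neq0$, so $\gamma\star x\in\Lambda_{\det x}$ and the lemmas of this section apply). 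Moreover $\widetilde{X}_{\gamma\star x}\neq\widetilde{X}_1$ for every $\gamma$: otherwise $x$ would be $\Gamma$-equivalent under $\star$ to a scalar matrix $cI$, whence $\Hfk_{cI}=\Hfk_1$ and $\Zcal_x=\Zcal_{cI}=\Zcal_1$, contradicting the hypothesis $\Zcal_1\neq\Zcal_x$. Hence the transversality lemma applies to each summand, so each one equals the naive count $\#(\widetilde{X}_1\cap\widetilde{X}_{\gamma\star x})$.

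\emph{Step 3 and the main difficulty.} The surjection $\Gamma/\Gamma(\mfk)\twoheadrightarrow\overline{\Gamma}$ has every fibre of size $q-1$ --- its kernel is $\Gamma(\mfk)\FF_q^\times/\Gamma(\mfk)\cong\FF_q^\times$ --- and $\gamma\widetilde{\Zcal}_x$ depends only on the class of $\gamma$ in $\overline{\Gamma}$, so $\sum_{\bar{\gamma}\in\overline{\Gamma}}\widetilde{\Zcal}_1\cdot\bar{\gamma}\widetilde{\Zcal}_x=\tfrac{1}{q-1}\sum_{\gamma\in\Gamma/\Gamma(\mfk)}\widetilde{\Zcal}_1\cdot\gamma\widetilde{\Zcal}_x$. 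Substituting into Step 2, solving for $\Zcal_1\cdot\Zcal_x$, and using $[\Gamma_1:\Gamma_1(\mfk)]=(q-1)[\Gamma_1:\Gamma_1(\mfk)\FF_q^\times]$ together with the same identity for $\Gamma_x$ (because $\FF_q^\times\subseteq\Gamma_1\cap\Gamma_x$ while $\FF_q^\times\cap\Gamma_1(\mfk)=\FF_q^\times\cap\Gamma_x(\mfk)=\{1\}$) collapses the overall constant to $\dfrac{q-1}{[\Gamma_1:\Gamma_1(\mfk)]\,[\Gamma_x:\Gamma_x(\mfk)]}$, which is the asserted formula. The genuinely geometric input is already in hand (the transversality lemma), so the one delicate point is the divisor-theoretic bookkeeping over the \emph{singular} base: making $\pi^{*}\pi_{*}W=\sum_{\bar{\gamma}}\bar{\gamma}W$ precise, i.e. controlling the ramification indices of $\pi$, and confirming its compatibility with the $\QQ$-divisor intersection theory on the orbifold $\Scal_{0,F}(\nfk)$. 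Equivalently, one may bypass the singularities by taking the displayed formula as the \emph{definition} of $\Zcal_1\cdot\Zcal_x$ and verifying independence of the auxiliary modulus $\mfk$; that independence is then the remaining point requiring a short argument.
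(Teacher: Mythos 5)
Your overall route---push everything to the smooth cover $\Scal_F(\mfk)$, use $\pi^{*}\pi_{*}W=\sum_{\bar{\gamma}\in\overline{\Gamma}}\bar{\gamma}W$ with $\overline{\Gamma}:=\Gamma/(\Gamma_F(\mfk)\FF_q^\times)$ together with the projection formula, identify each summand with a transversal point count, and convert the $\overline{\Gamma}$-sum into a $\Gamma/\Gamma_F(\mfk)$-sum at the cost of a factor $q-1$---is exactly the formal skeleton the authors gesture at (their proof opens by saying the statement \emph{is} a rigid-analytic projection formula), and your bookkeeping is correct: the identity $[\Gamma_1:\Gamma_1(\mfk)]=(q-1)[\Gamma_1:\Gamma_1(\mfk)\FF_q^\times]$ and its analogue for $\Gamma_x$, the check $\det(\gamma\star x)=\det x$, the exclusion of $\widetilde{X}_1=\gamma\widetilde{X}_x$, and the fact that repeated components coming from $\widehat{\Gamma}_x$ (i.e.\ $\gamma\star x=-x$) are harmlessly absorbed by the divisor-level identity are all fine, and the constants collapse to the asserted $\frac{q-1}{[\Gamma_1:\Gamma_1(\mfk)][\Gamma_x:\Gamma_x(\mfk)]}$.

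The gap is the one you flag yourself, and it is not a minor verification: in this rigid-analytic, non-proper setting with quotient singularities there is no off-the-shelf $\QQ$-divisor intersection formalism on $\Scal_{0,F}(\nfk)$ from which ``pushforward preserves intersection numbers'' and $\pi^{*}\pi_{*}W=\sum_{\bar{\gamma}}\bar{\gamma}W$ can simply be quoted, and the paper never defines $\Zcal_1\cdot\Zcal_x$ through such a formalism. Its proof instead pins down the local multiplicity on the quotient as the stabilizer-weighted count $m_{\boz}(X_1,X_x)=\frac{(q-1)\,\#\Stab_{\Gamma}(\tilde{\boz}_1)}{\#\Stab_{\widehat{\Gamma}_1}(\tilde{\boz}_1)\,\#\Stab_{\widehat{\Gamma}_x}(\tilde{\boz}_x)}$ and then double-counts the fibres of $\Phi=\coprod_{\gamma\in\Gamma/\widehat{\Gamma}_x\Gamma_F(\mfk)}\widetilde{X}_1\cap\gamma\widetilde{X}_x\twoheadrightarrow X_1\cap X_x$; in other words, the ``delicate divisor-theoretic bookkeeping over the singular base'' that you defer is precisely the content of the paper's argument, carried out by hand with stabilizer indices. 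Your fallback of taking the displayed formula as the \emph{definition} of $\Zcal_1\cdot\Zcal_x$ does not rescue this: besides the $\mfk$-independence you mention (whose verification would again amount to the same counting), it would detach $\Zcal_1\cdot\Zcal_x$ from the pointwise orbifold multiplicities that the rest of the section actually uses (Theorem~\ref{thm: CN-Int} and the self-intersection computation in Lemma~\ref{lem: SIN} both rely on them). So the strategy and constants are right, but to have a proof you must either justify the quotient-intersection formalism you invoke or replace it by the explicit stabilizer count the paper gives.
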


\begin{proof}
Observe that $\Zcal_1$ is a $\QQ$-Cartier divisor on $\Scal_{0,F}(\nfk)$.
Thus the result is a rigid-analytic version of the projection formula for the intersection of $\Zcal_1$ and $\Zcal_x = \pi_*(\widetilde{\Zcal}_x)$.
We include the argument here for completeness.

Let $\Gamma = \Gamma_{0,F}(\nfk)$.
Given $x \in \Lambda$ with $\det(x) \neq 0$,
the irreducible curve
$X_x$ has normalization isomorphic to $\widehat{\Gamma}_x \backslash \Hfk$.
Notice that
for each $\boz \in X_1 \cap X_x$, 
take $\tilde{\boz}_1 $ and $\tilde{\boz}_x $ be two lifts of $\boz$ in $\widetilde{X}_1$ and $\widetilde{X}_x$, respectively.
The intersection multiplicity of $X_1$ and $X_x$ at $\boz$ is actually equal to
$$
m_{\boz}(X_1,X_x)
= \frac{(q-1)\cdot \#\Stab_{\Gamma}(\tilde{\boz}_1)}{\#\Stab_{\widehat{\Gamma}_1}(\tilde{\boz}_1) \cdot \#\Stab_{\widehat{\Gamma}_x}(\tilde{\boz}_x)}.
$$

Now, consider the disjoint union
$$
\Phi:= \coprod_{\gamma \in \Gamma/\widehat{\Gamma}_x\cdot \Gamma_F(\mfk)} \widetilde{X}_1 \cap \gamma \widetilde{X}_x
$$
which maps surjectively to $X_1 \cap X_x$ via the finite morphism $\pi$ on each component (we denote this surjection $\Phi \twoheadrightarrow X_1\cap X_x$ by $\tilde{\pi}$).
For each point $\boz \in X_1 \cap X_x$, the pre-image of $\boz$ in $\Phi$ has cardinality equal to
$$[\widehat{\Gamma}_1 : \Stab_{\widehat{\Gamma}_1}(\tilde{\boz}_1)\cdot  \Gamma_1(\mfk)] \cdot 
\frac{\#\Stab_{\Gamma}(\tilde{\boz}_1)}{ \#\Stab_{\widehat{\Gamma}_x}(\tilde{\boz}_x)}.
$$
Thus the cardinality of $\Phi$ can be expressed as
\begin{eqnarray}
\sum_{\gamma \in \Gamma/\widehat{\Gamma}_x\cdot \Gamma_F(\mfk)} \#(\widetilde{X}_1 \cap \gamma \widetilde{X}_x)
&=&
\sum_{\boz \in X_1\cap X_x} \#\big(\tilde{\pi}^{-1}(\boz)\big) \nonumber \\
&=&
\sum_{\boz \in X_1 \cap X_x}
[\widehat{\Gamma}_1 : \Stab_{\widehat{\Gamma}_1}(\tilde{\boz}_1)\cdot  \Gamma_1(\mfk)] \cdot 
\frac{\#\Stab_{\Gamma}(\tilde{\boz}_1)}{ \#\Stab_{\widehat{\Gamma}_x}(\tilde{\boz}_x)} \nonumber \\
&=& [\widehat{\Gamma}_1 : \Gamma_1(\mfk)\FF_q^\times] \cdot 
\sum_{\boz \in X_1 \cap X_x}
\frac{(q-1)\cdot \#\Stab_{\Gamma}(\tilde{\boz}_1)}{\#\Stab_{\widehat{\Gamma}_1}(\tilde{\boz}_1) \cdot \#\Stab_{\widehat{\Gamma}_x}(\tilde{\boz}_x)} \nonumber \\
&=& [\widehat{\Gamma}_1 : \Gamma_1(\mfk)\FF_q^\times] \cdot 
\sum_{\boz \in X_1 \cap X_x} m_{\boz}(X_1,X_x).\nonumber
\end{eqnarray}
Therefore
\begin{eqnarray}
\Zcal_1\cdot \Zcal_x
&=& [\widehat{\Gamma}_1:\Gamma_1]\cdot [\widehat{\Gamma}_x:\Gamma_x] \cdot \sum_{\boz \in X_1\cap X_x}m_{\boz}(X_1,X_x) \nonumber \\
&=& \frac{[\widehat{\Gamma}_1:\Gamma_1]\cdot [\widehat{\Gamma}_x:\Gamma_x]}{[\widehat{\Gamma}_1 : \Gamma_1(\mfk)\FF_q^\times]} \cdot
\sum_{\gamma \in \Gamma/\widehat{\Gamma}_x\cdot \Gamma_F(\mfk)} \#(\widetilde{X}_1 \cap \gamma \widetilde{X}_x) \nonumber \\
&=& 
\frac{q-1}{[\Gamma_1:\Gamma_1(\mfk)]\cdot[\Gamma_x:\Gamma_x(\mfk)]}
\cdot \sum_{\gamma \in \Gamma/\Gamma_F(\mfk)} \widetilde{\Zcal}_1 \cdot \gamma \widetilde{\Zcal}_x, \nonumber
\end{eqnarray}
where the last equality holds as $\widetilde{X}_1$ and $\gamma \widetilde{X}_x (= \widetilde{X}_{\gamma \star x})$ intersect transversally for every $\gamma \in \Gamma$. 
\end{proof}


\begin{prop}\label{prop: Intersection}
Let $x \in \Lambda$ with $\det(x) \neq 0$, and $\mfk \in A_+$ so that $\nfk^2 \det(x)$ divides $\mfk$.
Given $\gamma \in \Gamma$, suppose $\widetilde{\Zcal}_1 \neq \gamma \widetilde{\Zcal}_x$. 
We have
$$
\widetilde{\Zcal}_1 \cdot \gamma \widetilde{\Zcal}_x
= \sum_{\gamma_0 \in \Gamma_1(\mfk) \backslash \Gamma_F(\mfk)/\Gamma_{\gamma \star x}(\mfk)} \#(\Hfk_1 \cap \gamma_0 \gamma \Hfk_x).
$$
\end{prop}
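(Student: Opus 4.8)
The plan is to pull the whole computation back to the universal cover $\Hfk_F$, using the structural facts already in place: $\Gamma_F(\mfk)$ acts freely on $\Hfk_F$, so the quotient map $p\colon\Hfk_F\twoheadrightarrow\Scal_F(\mfk)$ is étale and $\Scal_F(\mfk)$ is smooth; by the isomorphism lemma (applicable to $y=1$ and $y=\gamma\star x$, since $\det(\gamma\star x)=\det(x)$ and $\nfk^2\det(x)\mid\mfk$) each curve $\widetilde{X}_y$ is smooth and $p$ restricts to an isomorphism $\Gamma_y(\mfk)\backslash\Hfk_y\xrightarrow{\sim}\widetilde{X}_y$; and two such curves meet transversally when distinct. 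Note that $\gamma\widetilde{\Zcal}_x=\widetilde{\Zcal}_{\gamma\star x}$ is the prime divisor carried by $\gamma\widetilde{X}_x=p(\gamma\Hfk_x)=p(\Hfk_{\gamma\star x})$, that $\widetilde{X}_1$ is proper (a Drinfeld--Stuhler curve for the division algebra $B_1$), so $\widetilde{X}_1\cap\gamma\widetilde{X}_x$ is a finite set, and that the hypothesis $\widetilde{\Zcal}_1\neq\gamma\widetilde{\Zcal}_x$ forces $\Hfk_1\neq\delta\Hfk_{\gamma\star x}$ for every $\delta\in\Gamma_F(\mfk)$ (else $p(\Hfk_1)=p(\Hfk_{\gamma\star x})$). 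Transversality and smoothness then give $\widetilde{\Zcal}_1\cdot\gamma\widetilde{\Zcal}_x=\#(\widetilde{X}_1\cap\gamma\widetilde{X}_x)$, so it suffices to count these points.

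To count them I would build a surjection from the lifts. Since $p^{-1}(\gamma\widetilde{X}_x)=\bigcup_{\delta\in\Gamma_F(\mfk)}\delta\Hfk_{\gamma\star x}$ and the fibres of $p|_{\Hfk_1}$ are the $\Gamma_1(\mfk)$-orbits, any $w\in\widetilde{X}_1\cap\gamma\widetilde{X}_x$ has a lift $\vec z\in\Hfk_1$, which lies in some $\gamma_0\Hfk_{\gamma\star x}$, i.e.\ $\vec z\in\Hfk_1\cap\gamma_0\Hfk_{\gamma\star x}$. Fixing representatives of the double cosets and using that $\Gamma_1(\mfk)$ stabilises $\Hfk_1$ and $\Gamma_{\gamma\star x}(\mfk)$ stabilises $\Hfk_{\gamma\star x}$ (so $\#(\Hfk_1\cap\gamma_0\Hfk_{\gamma\star x})$ depends only on the double coset of $\gamma_0$), this produces a well-defined surjection
\[
\Phi\colon \coprod_{\gamma_0\in\Gamma_1(\mfk)\backslash\Gamma_F(\mfk)/\Gamma_{\gamma\star x}(\mfk)}\big(\Hfk_1\cap\gamma_0\Hfk_{\gamma\star x}\big)\ \longrightarrow\ \widetilde{X}_1\cap\gamma\widetilde{X}_x,\qquad (\gamma_0;\vec z)\longmapsto p(\vec z),
\]
whose domain has cardinality the right-hand side of the proposition (after rewriting $\gamma_0\Hfk_{\gamma\star x}=\gamma_0\gamma\Hfk_x$).

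It remains to prove $\Phi$ is injective. Suppose $p(\vec z)=p(\vec z')$ with $\vec z\in\Hfk_1\cap\gamma_0\Hfk_{\gamma\star x}$ and $\vec z'\in\Hfk_1\cap\gamma_0'\Hfk_{\gamma\star x}$. Since $\vec z,\vec z'\in\Hfk_1$ lie over the same point of $\widetilde{X}_1$, the injectivity of $\tilde f_1$ together with the free action give a unique $\eta\in\Gamma_1(\mfk)$ with $\vec z'=\eta\vec z$; tracing the memberships $\vec z\in\gamma_0\Hfk_{\gamma\star x}$, $\vec z'\in\gamma_0'\Hfk_{\gamma\star x}$ through the injectivity of $\tilde f_{\gamma\star x}$ and the free action yields $\gamma_0'=\eta\gamma_0\tau^{-1}$ for some $\tau\in\Gamma_{\gamma\star x}(\mfk)$. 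Hence $\gamma_0$ and $\gamma_0'$ represent the same double coset, so $\gamma_0=\gamma_0'$, and $\eta\in\Gamma_1(\mfk)\cap\gamma_0\Gamma_{\gamma\star x}(\mfk)\gamma_0^{-1}$; thus injectivity comes down to showing this intersection is trivial.

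This last point is the heart of the matter, and the step I expect to be the real obstacle. An element $\eta$ of $\Gamma_1(\mfk)\cap\gamma_0\Gamma_{\gamma\star x}(\mfk)\gamma_0^{-1}$ lies in $\Gamma_1\subset B_1^\times$ and in $\gamma_0\Gamma_{\gamma\star x}\gamma_0^{-1}$, hence stabilises both of the distinct curves $\Hfk_1$ and $\gamma_0\Hfk_{\gamma\star x}$ in $\Hfk_F$. If $\Hfk_1\cap\gamma_0\Hfk_{\gamma\star x}=\emptyset$ the corresponding piece contributes to neither side, so we may assume it is a nonempty finite set (at most two points, being the fixed locus of a Möbius transformation); then $\eta$ permutes it, so $\eta^2$ fixes one of its points, whence $\eta^2=1$ by the free action. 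Since $B_1$ is a division algebra, $k[\eta]$ is a field and $(\eta-1)(\eta+1)=0$ forces $\eta=\pm1$, and $-1\notin\Gamma_F(\mfk)$ (again by the free action), so $\eta=1$. Therefore $\Phi$ is a bijection and the asserted equality of cardinalities follows. The remaining verifications — étaleness of $p$ near the intersection points, transversality, and the double-coset invariance of the counts — are straightforward from the lemmas above; the only genuinely arithmetic input is the triviality of $\Gamma_1(\mfk)\cap\gamma_0\Gamma_{\gamma\star x}(\mfk)\gamma_0^{-1}$ just described.
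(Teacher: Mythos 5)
Your proposal is correct, and its skeleton is the same as the paper's: reduce to a point count via transversality and smoothness, then exhibit a bijection between the double-coset--indexed union of the sets $\Hfk_1\cap\gamma_0\gamma\Hfk_x$ in $\Hfk_F$ and the intersection points of $\widetilde{X}_1$ and $\gamma\widetilde{X}_x$ in $\Scal_F(\mfk)$. The differences are in the injectivity step, and they are worth recording. The paper proves that two identified lifts force the identifying element $\delta\in\Gamma_F(\mfk)$ into $\Gamma_1(\mfk)$ (via $\delta\delta^*$ fixing a point, hence $\delta\delta^*=1$) and the suitably conjugated element into $\Gamma_{\gamma\star x}(\mfk)$, by redoing exactly the congruence computation that proved the isomorphism lemma; you instead invoke that lemma together with freeness of the $\Gamma_F(\mfk)$-action, which is equivalent and cleaner. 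More substantively, the paper's written argument only concludes that the two double cosets coincide, i.e.\ that distinct pieces have disjoint images; it does not explicitly address injectivity within a single piece, even though each nonempty piece consists of the two points $(z,S_1z)$ and $(\bar z,S_1\bar z)$, which a priori could be identified downstairs (for the full group this phenomenon really occurs --- it is the condition $\nu(z)=1$ in Section 4). Your third step, the triviality of $\Gamma_1(\mfk)\cap\gamma_0\Gamma_{\gamma\star x}(\mfk)\gamma_0^{-1}$ via $\eta^2=1$ from freeness, $k[\eta]$ a field since $B_1$ is division, and $-1\notin\Gamma_F(\mfk)$ (this tacitly uses $\deg\mfk>0$, which is also what the paper's freeness assertion requires), is precisely what rules this out at level $\mfk$ and guarantees that both points of each piece survive distinctly in $\Scal_F(\mfk)$. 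So your write-up is not only valid but fills in a point the paper's proof leaves implicit; the remaining ingredients (transversality, $\gamma\Hfk_x=\Hfk_{\gamma\star x}$, the two-point description of the fibrewise intersections) are quoted correctly from the surrounding lemmas.
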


\begin{proof}
It suffices to show that the union
$$\bigcup_{\gamma_0 \in \Gamma_1(\mfk) \backslash \Gamma_F(\mfk)/\Gamma_{\gamma \star x}(\mfk)} \Hfk_1 \cap \gamma_0 \gamma \Hfk_x \ (\subset \Hfk_F)$$
is disjoint and in bijection with the intersection points of $\widetilde{X}_1$ and $\gamma \widetilde{X}_x$ under the canonical map $\Hfk_F \rightarrow \Scal_F(\mfk)$.

The surjectivity is straightforward.
On the other hand, given $\gamma_1, \gamma_2 \in \Gamma_F(\mfk)$ and $\vec{z}_i \in \Hfk_1 \cap \gamma_i\gamma \Hfk_x$ for $i = 1,2$,
write
$$\vec{z}_i = (z_i, S_1 z_i) = (\gamma_i\gamma w_i, \gamma_i'\gamma' S_x w_i) \quad \text{ with } z_i,w_i \in \Hfk \text{ for $i=1,2$.}
$$
Suppose the image of $\vec{z}_1$ and $\vec{z}_2$ coincides in $\Scal_F(\mfk)$, i.e.\ there exists $\gamma_0 \in \Gamma_F(\mfk)$ so that
$(z_1, S_1 z_1) = (\gamma_0 z_2, \gamma_0' S_1 z_2)$.
Then
$$
S_1 z_1 = \gamma_0' S_1 z_2 = \gamma_0' S_1 \gamma_0^{-1} z_1,
$$
which says
$(\gamma_0 \gamma_0^*) z_1 = z_1$.
From our choice of $\mfk$, we get $\gamma_0 \gamma_0^* \in \Gamma_F(\nfk\det(x))$ which fixes $z_1$. This implies $\gamma_0 \gamma_0^* = 1$.
Thus $\gamma_0 \in \Gamma_1 \cap \Gamma_F(\mfk) = \Gamma_1(\mfk)$.
Moreover, let 
$$\gamma_3 = \gamma^{-1}\gamma_1^{-1} \gamma_0 \gamma_2 \gamma \in \Gamma_F(\mfk) \quad \text{ (as $\Gamma_F(\mfk)$ is normal in $\Gamma$)}.$$
We get $(w_1,S_x w_1) = (\gamma_3 w_2, \gamma_3' S_x w_2)$, which says
$$(\gamma_3 \cdot \bar{x}^{-1} \gamma_3^*\bar{x}) w_1 = w_1.$$
Similarly, from our choice of $\mfk$ we get $\gamma_3 \cdot \bar{x}^{-1} \gamma_3^*\bar{x} \in \Gamma_F(\nfk)$ and fixes $w_1$.
Thus 
$$\gamma_3 \cdot \bar{x}^{-1} \gamma_3^*\bar{x} = 1, \quad \text{ which shows that $x \gamma_3^* = \bar{\gamma}_3 x$.}$$
Therefore $\gamma_3 \in \Gamma_x \cap \Gamma_F(\mfk) = \Gamma_x(\mfk)$.
In conclusion, we have
$$\gamma_1 \cdot (\gamma \gamma_3 \gamma^{-1}) = \gamma_0 \gamma_2,$$
i.e.\ $\gamma_1$ and $\gamma_2$ represents the same double cosets in
$\Gamma_1(\mfk)\backslash \Gamma_F(\mfk)/\Gamma_{\gamma \star x}(\mfk)$.
This assures the injectivity and completes the proof.
\end{proof}

\begin{lem}\label{lem: Intersection}
Given $x \in \Lambda$ with $\det(x) \neq 0$.
For $\gamma \in \Gamma$ with $\gamma \Hfk_x \neq \Hfk_1$ one has
$$
\Hfk_1 \cap \gamma \Hfk_x
= \{\vec{z} = (z,S_1 z)\mid (\gamma \star x) \cdot z = z\}.
$$
Consequently, 
put
$$
\tilde{\iota}(x):= \begin{cases} 1 & \text{ if $K_{x}/k$ is an imaginary quadratic field extension;}\\
0 & \text{ otherwise.}
\end{cases}
$$
Then
$$
\#(\Hfk_1\cap \gamma\Hfk_x) = 2 \cdot \tilde{\iota}(\gamma \star x).
$$
\end{lem}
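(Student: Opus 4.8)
The plan is to separate the set-theoretic identity from the cardinality count, and to reduce everything to the classical analysis of the fixed points of a $2\times 2$ matrix acting on $\Hfk$.

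First I would record the identity $\gamma\Hfk_x=\Hfk_{\gamma\star x}$ (already used in the transversality lemma above), so that it suffices to analyze $\Hfk_1\cap\Hfk_y$ for $y:=\gamma\star x\in\Lambda$, which satisfies $\det(y)=\det(x)\neq 0$. Any point of $\Hfk_1$ and any point of $\Hfk_y$ is pinned down by its first coordinate, so a point of the intersection has the form $\vec z=(z,S_1z)$ with $S_1z=S_yz$. From $S_y=\begin{pmatrix}0&1\\ \nfk&0\end{pmatrix}\bar y$ and $S_1=\begin{pmatrix}0&1\\ \nfk&0\end{pmatrix}$ one gets $S_1^{-1}S_y=\bar y$, so the condition is $\bar y\cdot z=z$; since $\bar y=\det(y)\,y^{-1}$ and scalar matrices act trivially on $\Hfk$, this is equivalent to $y\cdot z=z$. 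This establishes the first assertion.

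For the count I would view $y\in\Mat_2(O_F)$ inside $\Mat_2(k_\infty)$ via the fixed embedding $F\hookrightarrow k_\infty$ (which exists because $\infty$ splits in $F$), noting that $\gamma\Hfk_x\neq\Hfk_1$ forces $y$ to be non-scalar. The fixed points of $y$ on $\PP^1(\CC_\infty)$ are the roots of a quadratic over $k_\infty$ whose discriminant is $\tr(y)^2-4\det(y)$, an element of $k$ (both $\tr(y)$ and $\det(y)=Q_V(y)$ lie in $k$). A fixed point lies in $\Hfk=\CC_\infty-k_\infty$ exactly when this discriminant is a non-square in $k_\infty$, in which case there are precisely two of them and they are distinct; otherwise (the square case, including the degenerate subcases where the quadratic has lower degree) none lies in $\Hfk$. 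To match this dichotomy with $\tilde{\iota}(y)$ I would use that $\dfk$ is a square in $k_\infty$: hence $\tr(y)^2-4\det(y)$ is a non-square in $k_\infty$ iff $\dfk\bigl(\tr(y)^2-4\det(y)\bigr)$ is, and the latter equals, up to a square in $k$, the norm $(y^\natural)^2$ of $y^\natural=(y-\tr(y)/2)\sqrt\dfk$; therefore it is a non-square iff $K_y=k(y^\natural)$ is an imaginary quadratic field, i.e. iff $\tilde{\iota}(y)=1$. Finally, $y$ non-scalar together with $B_1$ being a division algebra forces $y^\natural\neq 0$ and $(y^\natural)^2\neq 0$, which excludes the parabolic possibility and turns the above into a genuine two-way split. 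Combining, $\#(\Hfk_1\cap\gamma\Hfk_x)=2\cdot\tilde{\iota}(\gamma\star x)$.

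The step I expect to need the most care is the last matching, where three different fields must be kept straight: the discriminant $\tr(y)^2-4\det(y)$ sits in $k$, the fixed points are computed in $\CC_\infty$ over $k_\infty$, and $K_y$ is a quadratic subfield of the division algebra $B_1$. The translation ``$K_y$ imaginary over $k$'' $\Longleftrightarrow$ ``$\infty$ non-split'', which is exactly what converts the fixed-point count into $\tilde{\iota}(\gamma\star x)$, rests on $\dfk\in(k_\infty^\times)^2$ and on the absence of nilpotents in $B_1$; everything else is a direct computation.
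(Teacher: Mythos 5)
Your proposal is correct and follows essentially the same route as the paper: the set identity is the paper's computation repackaged through $\gamma\Hfk_x=\Hfk_{\gamma\star x}$ (which the paper itself invokes in the preceding transversality lemma), and the count is the same eigenvector/fixed-point dichotomy the paper indicates with its remark that $(z,1)^t$ must be an eigenvector of $\gamma\star x$. Your discriminant analysis, using $(y^\natural)^2=\dfk\big(\tr(y)^2-4\det(y)\big)/4$ with $\dfk\in(k_\infty^\times)^2$ and the absence of zero divisors in $B_1$, simply makes explicit the details the paper leaves implicit, and it is accurate.
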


\begin{proof}
Given $\vec{z} \in \Hfk_1 \cap \gamma \Hfk_x$, write $\vec{z} = (z,S_1 z) = (\gamma w, \gamma'S_x w)$ for $z,w \in \Hfk$.
We get
$$
\gamma'S_x \gamma^{-1} z = \gamma'S_x w = S_1 z.
$$
Thus
\begin{eqnarray}
z &=& \gamma S_x^{-1} (\gamma')^{-1} S_1 z \nonumber \\
&=& \gamma x (S_1^{-1} \bar{\gamma}' S_1) \cdot z \nonumber \\
&=& (\gamma \star x) \cdot z. \nonumber 
\end{eqnarray}
Conversely, given $z \in \Hfk$ so that $(\gamma \star x)\cdot z = z$,
we obtain
$$
\gamma'S_x \gamma^{-1} z = S_1 z.
$$
Let $w = \gamma^{-1} z$. Then
$$(z,S_1 z) = (\gamma w, \gamma' S_x w).$$
Hence $\vec{z} = (z,S_1 z) \in \Hfk_1 \cap \gamma \Hfk_x$.
This shows the first equality of (1).
Note that from the assumption that $\gamma \Hfk_x \neq \Hfk_1$, the element $\gamma \star x \notin k^\times$.
Observe that $(\gamma \star x)\cdot z = z$ if and only if the column vector $(z,1)^t$ is an eigen-vector of $\gamma \star x$.
This guarantees the second equality of (1).

\end{proof}

\begin{rem}\label{rem: CM-sup}
For non-zero $a \in A$, $x \in \Lambda_a$, and $\gamma \in \Gamma$ with $\gamma \Hfk_x \neq \Hfk_1$, let $t = \tr(\gamma \star x)$.
Then we get $2(\gamma \star x)^\natural \in O_{B_1}^o$ with
$$
 \big(2 (\gamma \star x)^{\natural}\big)^2
= \dfk(t^2-4a).
$$
Moreover, for $z \in \Hfk$ with $(\gamma \star x) \cdot z = z$, we must have
$\big(2(\gamma \star x)^\natural\big) \cdot z = z$.
Thus the above lemma tells in particular that the intersection of $\Zcal_1$ and $\Zcal_x$, when $\Zcal_1 \neq \Zcal_x$, are supported by the image of the CM points of $\Ccal_1$ with discriminant $\dfk(t^2-4a)$ for some $t \in A$ with $t^2-4a \prec 0$.
\end{rem}

Observe that for $\gamma \in \Gamma$, $\gamma_1 \in \Gamma_1$, and $\gamma_x \in \Gamma_x$, one has
$$\tilde{\iota}((\gamma_1\gamma \gamma_x) \star x) = \tilde{\iota}(\gamma \star x) = \#(\Gamma_1 \cap \Gamma_{\gamma \star x}) \cdot \iota(\gamma \star x).$$
We are now able to express the intersection number $\Zcal_1\cdot \Zcal_x$ as follows:

\begin{thm}\label{thm: CN-Int}
Given $x \in \Lambda$ with $\det x \neq 0$. Suppose $\Zcal_1 \neq \Zcal_x$, or equivalently, $\gamma \star x \notin k$ for every $\gamma \in \Gamma$.
Then
\begin{eqnarray}
\Zcal_1\cdot \Zcal_x
&=& 
2 \cdot \sum_{\gamma \in \Gamma_1\backslash \Gamma / \Gamma_x} \iota(\gamma \star x). \nonumber 
\end{eqnarray}
\end{thm}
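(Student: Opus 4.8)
**

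The plan is to combine the three ingredients already assembled in this section: the Proposition on the projection formula (Proposition~\ref{prop: projection formula}), the decomposition of transversal intersections on the fine cover (Proposition~\ref{prop: Intersection}), and the local computation of the number of intersection points (Lemma~\ref{lem: Intersection}). The hypothesis $\Zcal_1 \neq \Zcal_x$ means that no $\Gamma$-translate of $x$ lies in $k^\times$, so in particular $\widetilde{\Zcal}_1 \neq \gamma\widetilde{\Zcal}_x$ for every $\gamma \in \Gamma$, and all intersections in sight are transversal. First I would fix an auxiliary modulus $\mfk \in A_+$ with $\nfk^2\det(x) \mid \mfk$, so that Propositions~\ref{prop: projection formula} and \ref{prop: Intersection} apply. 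Combining them gives
\begin{eqnarray}
\Zcal_1 \cdot \Zcal_x
&=& \frac{q-1}{[\Gamma_1:\Gamma_1(\mfk)]\cdot [\Gamma_x:\Gamma_x(\mfk)]}
\cdot \sum_{\gamma \in \Gamma/\Gamma_F(\mfk)}\ \sum_{\gamma_0 \in \Gamma_1(\mfk)\backslash \Gamma_F(\mfk)/\Gamma_{\gamma\star x}(\mfk)} \#(\Hfk_1 \cap \gamma_0\gamma\Hfk_x). \nonumber
\end{eqnarray}
Then by Lemma~\ref{lem: Intersection} each term $\#(\Hfk_1 \cap \gamma_0\gamma\Hfk_x)$ equals $2\,\tilde\iota(\gamma_0\gamma \star x)$, and since $\gamma_0 \in \Gamma_F(\mfk)\subset\Gamma$ acts by $\star$ this is just $2\,\tilde\iota(\gamma_0\gamma\star x)$ as an element of the orbit.

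Next I would reorganize the double sum. The inner double coset sum over $\gamma_0 \in \Gamma_1(\mfk)\backslash\Gamma_F(\mfk)/\Gamma_{\gamma\star x}(\mfk)$ together with the outer sum over $\gamma \in \Gamma/\Gamma_F(\mfk)$ should be repackaged, via the standard coset bookkeeping, into a single sum over $\Gamma_1\backslash\Gamma/\Gamma_x$ with explicit multiplicities. Concretely: fixing a representative $\gamma$ of a class in $\Gamma_1\backslash\Gamma/\Gamma_x$, the number of pairs $(\gamma', \gamma_0)$ (with $\gamma' \in \Gamma/\Gamma_F(\mfk)$ landing in that class and $\gamma_0$ in the appropriate double coset space) contributing the common value $\tilde\iota(\gamma\star x)$ is a product of indices of the form $[\Gamma_1 : \Gamma_1(\mfk)\FF_q^\times]$ and $[\Gamma_x : \Gamma_x(\mfk)]$ divided by $\#(\Gamma_1 \cap \Gamma_{\gamma\star x})$. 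Here I would use the identity quoted just before the theorem, $\tilde\iota((\gamma_1\gamma\gamma_x)\star x) = \tilde\iota(\gamma\star x) = \#(\Gamma_1\cap\Gamma_{\gamma\star x})\cdot \iota(\gamma\star x)$, to convert $\tilde\iota$ into $\iota$ at the cost of exactly the factor $\#(\Gamma_1\cap\Gamma_{\gamma\star x})$ that appears in the denominator of the multiplicity. After this substitution the factors $[\Gamma_1:\Gamma_1(\mfk)]$, $[\Gamma_x:\Gamma_x(\mfk)]$, and $q-1 = \#\FF_q^\times$ all cancel against the multiplicities, leaving
\begin{eqnarray}
\Zcal_1 \cdot \Zcal_x = 2 \cdot \sum_{\gamma \in \Gamma_1\backslash\Gamma/\Gamma_x} \iota(\gamma\star x). \nonumber
\end{eqnarray}

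The main obstacle will be the orbit-counting bookkeeping in the middle step: carefully tracking how the classes in $\Gamma/\Gamma_F(\mfk)$ and the double cosets $\Gamma_1(\mfk)\backslash\Gamma_F(\mfk)/\Gamma_{\gamma\star x}(\mfk)$ fibre over $\Gamma_1\backslash\Gamma/\Gamma_x$, and verifying that the stabilizer factors match up so that every $\mfk$-dependent index cancels. One must also confirm $\mfk$-independence of the final answer (which is automatic once everything cancels), and handle the mild point that $\Gamma_{\gamma\star x}$ depends only on the orbit of $\gamma\star x$ up to the sign ambiguity encoded in $\widehat\Gamma_x$ versus $\Gamma_x$ — but this is precisely what the quoted $\tilde\iota$-identity takes care of, since $\iota$ is defined on $\Lambda$ and is insensitive to the $\pm$ action and to $\Gamma_1$-conjugation. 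I would also note at the outset that the case $\gamma\star x \in k^\times$ is excluded by hypothesis, so $K_{\gamma\star x}$ is always a genuine quadratic extension and $\tilde\iota, \iota$ are the relevant indicators; the self-intersection case $\Zcal_1 = \Zcal_x$ is treated separately (via the Euler-characteristic definition, Definition~\ref{defn: SIN}) and is not part of this statement.
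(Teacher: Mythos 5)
Your proposal is correct and follows essentially the same route as the paper: the paper's proof is exactly the chain projection formula (Proposition~\ref{prop: projection formula}) $\rightarrow$ Proposition~\ref{prop: Intersection} and Lemma~\ref{lem: Intersection} to get $2\tilde{\iota}(\gamma\star x)$ summed over $\Gamma_1(\mfk)\backslash\Gamma/\Gamma_x(\mfk)$ $\rightarrow$ regrouping into $\Gamma_1\backslash\Gamma/\Gamma_x$ with the index factors cancelling against $\#(\Gamma_1\cap\Gamma_{\gamma\star x})$ via the identity $\tilde{\iota}(\gamma\star x)=\#(\Gamma_1\cap\Gamma_{\gamma\star x})\cdot\iota(\gamma\star x)$. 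The coset bookkeeping you flag as the main obstacle is precisely the (equally implicit) middle step of the paper's computation, so there is nothing missing in substance.
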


\begin{proof}
We have
\begin{eqnarray}
\Zcal_1 \cdot \Zcal_x &=& 
\frac{q-1}{[\Gamma_1:\Gamma_1(\mfk)]\cdot[\Gamma_x:\Gamma_x(\mfk)]}
\cdot \sum_{\gamma \in \Gamma/\Gamma_F(\mfk)} \widetilde{\Zcal}_1 \cdot \gamma \widetilde{\Zcal}_x \nonumber \\
&=& 
\frac{q-1}{[\Gamma_1:\Gamma_1(\mfk)]\cdot [\Gamma_x:\Gamma_x(\mfk)]}
\cdot \sum_{\gamma \in \Gamma_1(\mfk) \backslash \Gamma / \Gamma_x(\mfk)} 2 \cdot \tilde{\iota}(\gamma \star x) \nonumber \\
&=&
\sum_{\gamma \in \Gamma_1\backslash \Gamma / \Gamma_x} \frac{q-1}{\#(\Gamma_1 \cap \Gamma_{\gamma \star x})} \cdot 2 \cdot \tilde{\iota}(\gamma \star x) \nonumber \\
&=& 
2 \cdot \sum_{\gamma \in \Gamma_1\backslash \Gamma / \Gamma_x} \iota(\gamma \star x). \nonumber 
\end{eqnarray}
\end{proof}

We now define the self-intersection number of $\Zcal_1$ (following \cite[p.\ 84]{H-Z}).
First, put
$$\text{vol}(X_1)\ := \ \frac{2}{[\widehat{\Gamma}_1:\Gamma_1]} \cdot
H^{\dfk^+\nfk^+,\dfk^-\nfk^-}(0)
$$
and
$$
\text{vol}(\Zcal_1) \ := \ [\widehat{\Gamma}_1:\Gamma_1] \cdot \text{vol}(X_1) \ =\   2H^{\dfk^+\nfk^+,\dfk^-\nfk^-}(0).
$$
For each point $\boz \in X_1$,
take a lift $\tilde{\boz} \in \Hfk_1$, and let
$$r_{\boz} := \frac{\#\big(\Stab_\Gamma(\tilde{\boz})\big)}{\#\big(\Stab_{\widehat{\Gamma}_1}(\tilde{\boz})\big)}.
$$
We set the following ``Plücker-type'' number:
$$
\mu_{\boz}(X_1)
:= \frac{q-1}{\#(\Stab_{\Gamma}(\tilde{\boz}))} \cdot \big(r_{\boz}(r_{\boz}-1)\big).
$$
\begin{defn}\label{defn: SIN}
The self-intersection number of $\Zcal_1$ is then defined to be:
\begin{eqnarray}
\Zcal_1\cdot \Zcal_1 := [\widehat{\Gamma}_1:\Gamma_1]^2 \cdot\left( - \text{vol}(X_1) + \sum_{\boz \in X_1} \mu_{\boz}(X_1)\right). \nonumber
\end{eqnarray}
\end{defn}
\begin{lem}\label{lem: SIN}
We may express the self-intersection number of $\Zcal_1$ as follows:
$$
\Zcal_1\cdot \Zcal_1 = 2 \cdot \sum_{\gamma \in \Gamma_1 \backslash \Gamma/\Gamma_1} \iota(\gamma \star 1).
$$
\end{lem}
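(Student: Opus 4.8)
The plan is to evaluate the right-hand side $2\sum_{\gamma\in\Gamma_1\backslash\Gamma/\Gamma_1}\iota(\gamma\star 1)$ directly and match it, term by term, against the two summands of Definition~\ref{defn: SIN}, namely $-\text{vol}(X_1)$ and $\sum_{\boz\in X_1}\mu_{\boz}(X_1)$ (each carrying the outer factor $[\widehat{\Gamma}_1:\Gamma_1]^2$). The organizing idea is to split the double coset space $\Gamma_1\backslash\Gamma/\Gamma_1$ into the cosets meeting $\widehat{\Gamma}_1$ (the ``diagonal'' part, where $\gamma\star 1\in k^\times$) and the remaining cosets (the ``off-diagonal'' part, where $\gamma\Hfk_1\neq\Hfk_1$ and $K_{\gamma\star 1}$ is a genuine quadratic field). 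This mirrors the dichotomy built into Definition~\ref{defn: SIN} between the volume term and the Plücker term, and the off-diagonal part will be handled exactly as in the proof of Theorem~\ref{thm: CN-Int}, with $\Hfk_1$ now playing the role of both $\Hfk_1$ and $\Hfk_x$.

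First I would treat the diagonal part. Since $\Gamma_1$ is normal of index at most $2$ in $\widehat{\Gamma}_1$, the cosets in $\Gamma_1\backslash\Gamma/\Gamma_1$ meeting $\widehat{\Gamma}_1$ are precisely the $[\widehat{\Gamma}_1:\Gamma_1]$ one-sided cosets of $\Gamma_1$ in $\widehat{\Gamma}_1$; for such a representative $\gamma$ one has $\gamma\star 1=\pm 1\in k^\times$, so $B_{\gamma\star 1}=B_1$ (by the remark after Lemma~\ref{lem: B_x}) and hence $\iota(\gamma\star 1)=-H^{\dfk^+\nfk^+,\dfk^-\nfk^-}(0)$. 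These cosets therefore contribute $-2[\widehat{\Gamma}_1:\Gamma_1]\,H^{\dfk^+\nfk^+,\dfk^-\nfk^-}(0)$ to the right-hand side; since $\text{vol}(X_1)=\tfrac{2}{[\widehat{\Gamma}_1:\Gamma_1]}H^{\dfk^+\nfk^+,\dfk^-\nfk^-}(0)$, this equals $-[\widehat{\Gamma}_1:\Gamma_1]^2\,\text{vol}(X_1)$, which is exactly the contribution of the $-\text{vol}(X_1)$ summand in Definition~\ref{defn: SIN}.

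For the off-diagonal part, if $\gamma\notin\widehat{\Gamma}_1$ then $\gamma\star 1\notin k$, so $K_{\gamma\star 1}=k(\gamma\star 1)$ is a quadratic extension, $\iota(\gamma\star 1)=\frac{q-1}{\#(\Gamma_1\cap\Gamma_{\gamma\star 1})}$ if $K_{\gamma\star 1}/k$ is imaginary and $0$ otherwise, and in the imaginary case the two fixed points $z,\bar z\in\Hfk$ of $\gamma\star 1$ (equivalently of $2(\gamma\star 1)^{\natural}\in O_{B_1}^o$, as in Remark~\ref{rem: CM-sup}) yield points of $X_1$ at which a second branch of $X_1$ crosses. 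I would set up a $\Gamma_1\times\Gamma_1$-equivariant correspondence between such off-diagonal double cosets and pairs consisting of a point $\boz\in X_1$ together with an ordered choice of a branch of $X_1$ at $\boz$ distinct from the ``base'' branch $\Hfk_1$; the orbit-counting used for $\Zcal_1\cdot\Zcal_x$ then shows that $r_{\boz}=[\Stab_\Gamma(\tilde\boz):\Stab_{\widehat{\Gamma}_1}(\tilde\boz)]$ is the number of local branches of $X_1$ at $\boz$, so the number of such ordered pairs at $\boz$ is $r_{\boz}(r_{\boz}-1)$, and the weight $\frac{q-1}{\#(\Gamma_1\cap\Gamma_{\gamma\star 1})}$ attached to a coset reassembles into $\frac{q-1}{\#\Stab_\Gamma(\tilde\boz)}$ once one passes between $\Ccal_1=\Gamma_1\backslash\Hfk$ and the normalization $\widehat{\Gamma}_1\backslash\Hfk$ of $X_1$ and accounts for the two fixed points $z,\bar z$. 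Collecting these contributions gives $[\widehat{\Gamma}_1:\Gamma_1]^2\sum_{\boz\in X_1}\mu_{\boz}(X_1)$, and adding the diagonal part recovers Definition~\ref{defn: SIN}.

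The hard part will be precisely this last bookkeeping: pinning down $r_{\boz}=[\Stab_\Gamma(\tilde\boz):\Stab_{\widehat{\Gamma}_1}(\tilde\boz)]$ as the branch count, tracking how the three stabilizer orders $\#\Stab_\Gamma(\tilde\boz)$, $\#\Stab_{\widehat{\Gamma}_1}(\tilde\boz)$ and $\#(\Gamma_1\cap\Gamma_{\gamma\star 1})$ interact, and verifying that the ``ordered pairs of distinct branches'' count $r_{\boz}(r_{\boz}-1)$ together with the factor $q-1$ matches the off-diagonal double cosets through $\boz$ with exactly the right multiplicity, including the correct power of $[\widehat{\Gamma}_1:\Gamma_1]$. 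Everything else — the splitting of the double coset space, the computation of the volume term, and the qualitative description of the branch correspondence — is parallel to arguments already carried out for $\Zcal_1\cdot\Zcal_x$ in Theorem~\ref{thm: CN-Int}, Proposition~\ref{prop: Intersection} and Lemma~\ref{lem: Intersection}.
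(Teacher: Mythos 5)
Your reduction of the lemma is exactly the paper's: you split $\Gamma_1\backslash\Gamma/\Gamma_1$ according to whether the double coset meets $\widehat{\Gamma}_1$, and your evaluation of the diagonal part is complete and correct — since $\Gamma_1$ is normal in $\widehat{\Gamma}_1$, those cosets contribute $2[\widehat{\Gamma}_1:\Gamma_1]\cdot\big(-H^{\dfk^+\nfk^+,\dfk^-\nfk^-}(0)\big)=-[\widehat{\Gamma}_1:\Gamma_1]^2\,\mathrm{vol}(X_1)$, matching the volume term of Definition~\ref{defn: SIN}. What remains, and what is the actual content of the lemma, is the identity $[\widehat{\Gamma}_1:\Gamma_1]^2\sum_{\boz\in X_1}\mu_{\boz}(X_1)=2\sum_{\gamma\in\Gamma_1\backslash\Gamma/\Gamma_1,\,\gamma\notin\widehat{\Gamma}_1}\iota(\gamma\star 1)$; here your proposal stops short. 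You defer all of the counting (you yourself call it ``the hard part''), and the guiding dictionary you propose is not the right one: you take $r_{\boz}=\#\Stab_{\Gamma}(\tilde{\boz})/\#\Stab_{\widehat{\Gamma}_1}(\tilde{\boz})$ to be the number of local branches of $X_1$ at $\boz$ and want the off-diagonal cosets to match ordered choices of branches distinct from the base one. But $r_{\boz}$ counts the sheets $\sigma\Hfk_1$, $\sigma\in\Stab_{\Gamma}(\tilde{\boz})$, through the fixed lift $\tilde{\boz}\in\Hfk_1$, i.e.\ the sheets in the $\Stab_{\Gamma}(\tilde{\boz})$-orbit of the base sheet, and these all map to the \emph{same} branch of $X_1$ downstairs; in particular an off-diagonal $\gamma\in\Stab_{\Gamma}(\tilde{\boz})\setminus\widehat{\Gamma}_1$ gives a sheet $\gamma\Hfk_1\neq\Hfk_1$ through $\tilde{\boz}$ whose image is again the base branch, yet it must be counted. (Note also that ``one branch distinct from the base'' would give $r_{\boz}-1$ choices, not $r_{\boz}(r_{\boz}-1)$, so the sketch is not internally consistent.) A correspondence phrased in terms of branches of $X_1$ would therefore miscount exactly the contributions that $\mu_{\boz}$ encodes.

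The paper avoids any geometric interpretation of $r_{\boz}$ and instead runs the off-diagonal part through the auxiliary cover: choosing $\mfk$ with $\nfk^2\mid\mfk$, it adapts the projection-formula computation of Proposition~\ref{prop: projection formula} to obtain $[\widehat{\Gamma}_1:\Gamma_1]^2\sum_{\boz}\mu_{\boz}(X_1)=\frac{q-1}{[\Gamma_1:\Gamma_1(\mfk)]^2}\sum_{\gamma}\widetilde{\Zcal}_1\cdot\gamma\widetilde{\Zcal}_1$, the sum over $\gamma\in\Gamma/\Gamma_F(\mfk)$ with $\gamma\notin\widehat{\Gamma}_1$, and then applies Proposition~\ref{prop: Intersection} and Lemma~\ref{lem: Intersection} to rewrite that sum as $\frac{[\Gamma_1:\Gamma_1(\mfk)]^2}{q-1}\cdot 2\sum_{\gamma\in\Gamma_1\backslash\Gamma/\Gamma_1,\,\gamma\notin\widehat{\Gamma}_1}\frac{q-1}{\#(\Gamma_1\cap\Gamma_{\gamma\star 1})}\,\tilde{\iota}(\gamma\star 1)$, which is the desired right-hand side. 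So the objects to count are transversal intersection points of $\widetilde{X}_1$ with its translates $\gamma\widetilde{X}_1$ on $\Scal_F(\mfk)$, indexed first by $\Gamma_1(\mfk)$-double cosets and then regrouped into $\Gamma_1$-double cosets with the stabilizer weight; to complete your argument you should carry out this covering computation (the ``orbit counting'' you allude to) rather than the branch dictionary.
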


\begin{proof}
Given $\gamma \in \Gamma$, notice that $\gamma \star 1 \in k$ if and only if $\gamma \in \widehat{\Gamma}_1$.
As $\Gamma_1$ is normal in $\widehat{\Gamma}_1$, one has
\begin{eqnarray}
2\cdot \sum_{\gamma \in \Gamma_1\backslash \Gamma/\Gamma_1} \iota(\gamma \star 1)
&=& 2\cdot \sum_{\gamma_1 \in \widehat{\Gamma}_1/\Gamma_1} \iota(\gamma \star 1)
+ 2 \cdot \sum_{\subfrac{\gamma \in \Gamma_1\backslash \Gamma/\Gamma_1}{\gamma \notin \widehat{\Gamma}_1}} \iota(\gamma \star 1) \nonumber \\
&=& [\widehat{\Gamma}_1:\Gamma_1]\cdot \big(-2H^{\dfk^+\nfk^+,\dfk^-\nfk^-}(0)\big)
+ 2 \cdot \sum_{\subfrac{\gamma \in \Gamma_1\backslash \Gamma/\Gamma_1}{\gamma \notin \widehat{\Gamma}_1}} \iota(\gamma \star 1). \nonumber
\end{eqnarray}
Hence the result holds if we show
\begin{eqnarray}\label{eqn: angle part}
[\widehat{\Gamma}_1:\Gamma_1]^2 \cdot \sum_{\boz \in X_1}\mu_{\boz}(X_1) \ = \ 2 \cdot \sum_{\subfrac{\gamma \in \Gamma_1\backslash \Gamma/\Gamma_1}{\gamma \notin \widehat{\Gamma}_1}} \iota(\gamma \star 1).
\end{eqnarray}

Take $\mfk \in A_+$ with $\nfk^2 \mid \mfk$.
Adapting the proof of Propostion~\ref{prop: projection formula} we get
$$
[\widehat{\Gamma}_1:\Gamma_1]^2 \cdot \sum_{\boz \in X_1}\mu_{\boz}(X_1)
= \frac{q-1}{[\Gamma_1:\Gamma_1(\mfk)]^2}\cdot \sum_{\subfrac{\gamma \in \Gamma/\Gamma_F(\mfk)}{\gamma \notin \widehat{\Gamma}_1}} \widetilde{\Zcal}_1 \cdot \gamma \widetilde{\Zcal}_1.
$$
From Proposition~\ref{prop: Intersection} and Lemma~\ref{lem: Intersection}, we have
\begin{eqnarray}
\sum_{\subfrac{\gamma \in \Gamma/\Gamma_F(\mfk)}{\gamma \notin \widehat{\Gamma}_1}} \widetilde{\Zcal}_1 \cdot \gamma \widetilde{\Zcal}_1 
&=& \sum_{\subfrac{\gamma \in \Gamma_1(\mfk)\backslash \Gamma/\Gamma_1(\mfk)}{\gamma \notin \widehat{\Gamma}_1}} 2 \cdot  \tilde{\iota}(\gamma \star 1) \nonumber \\
&=&\frac{[\Gamma_1:\Gamma_1(\mfk)]^2}{q-1} \cdot 
\sum_{\subfrac{\gamma \in \Gamma_1\backslash \Gamma/\Gamma_1}{\gamma \notin \widehat{\Gamma}_1}} \frac{q-1}{\#(\Gamma_1\cap \Gamma_{\gamma \star 1})} \cdot 2 \cdot  \tilde{\iota}(\gamma \star 1) \nonumber \\
&=& \frac{[\Gamma_1:\Gamma_1(\mfk)]^2}{q-1} \cdot 2 \cdot 
\sum_{\subfrac{\gamma \in \Gamma_1\backslash \Gamma/\Gamma_1}{\gamma \notin \widehat{\Gamma}_1}} \iota(\gamma \star 1). \nonumber
\end{eqnarray}
Therefore the equality~\eqref{eqn: angle part} follows and the proof is complete.
\end{proof}

For nonzero $a \in A$, consider the following \textit{Hirzebruch-Zagier-type divisor}
$$\Zcal(a) := \sum_{\Gamma \backslash \Lambda_a} \Zcal_x.$$
From Theorem~\ref{thm: AFCN}, Theorem~\ref{thm: CN-Int}, and Lemma~\ref{lem: SIN}, we finally arrive at:

\begin{cor}\label{cor: TINFC}
Given nonzero $a \in A$ and $y \in k_\infty^\times$ with $\deg a \leq 2 \ord_\infty(y)+2$, we have
$$\text{\rm vol}(O_{B_\AA}^\times/O_\AA)^{-1} \cdot I^*(a,y;\varphi_\Lambda) = \frac{|y|_\infty^2}{2} \cdot \sum_{x \in \Gamma \backslash \Lambda_a} \Zcal_1 \cdot \Zcal(a),
$$
and
$$
\text{\rm vol}(O_{B_\AA}^\times/O_\AA)^{-1} \cdot I^*(0,y;\varphi_\Lambda) = - \frac{|y|_\infty^2}{2} \cdot \text{\rm vol}(\Zcal_1).
$$
\end{cor}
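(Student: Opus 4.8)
The plan is to derive the first identity by substituting Theorem~\ref{thm: AFCN} and then converting the adelic weights $\Ical(x)$ into geometric intersection numbers via Theorem~\ref{thm: CN-Int} and Lemma~\ref{lem: SIN}, and to obtain the second identity (the case $a=0$) directly from Theorem~\ref{thm: TINFC}.

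Concretely, fix nonzero $a\in A$ and $y\in k_\infty^\times$ with $\deg a\leq 2\ord_\infty(y)+2$. Theorem~\ref{thm: AFCN} gives $\text{vol}(O_{B_\AA}^\times/O_\AA^\times)^{-1}I^*(a,y;\varphi_\Lambda)=|y|_\infty^2\sum_{x\in\Gamma\backslash\Lambda_a}\Ical(x)$ with $\Ical(x)=\sum_{\gamma\in\Gamma_1\backslash\Gamma/\Gamma_x}i(\gamma\star x)$, and I would split $\Gamma\backslash\Lambda_a$ according to whether $\Zcal_x=\Zcal_1$. By the equivalence recorded in Theorem~\ref{thm: CN-Int}, $\Zcal_x=\Zcal_1$ forces $\gamma\star x\in k$ for some $\gamma$; since $\det(\gamma\star x)=a$ for all $\gamma\in\Gamma$, this can occur only when $a$ is a square, in which case the relevant $x$ are the $\Gamma$-orbit(s) of the scalar matrices $\pm\sqrt a\cdot\mathrm{Id}$, for which $\Gamma_x=\Gamma_1$, $\Zcal_x=\Zcal_1$ (scalars act trivially as M\"obius transformations), and $i(\gamma\star x)=i(\gamma\star 1)$. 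For every orbit with $\Zcal_x\neq\Zcal_1$ one has $\gamma\star x\notin k$ throughout, so the degenerate value $-H^{\dfk^+\nfk^+,\dfk^-\nfk^-}(0)$ never occurs in $i(\gamma\star x)$ and, comparing the definitions, $i(\gamma\star x)=\iota(\gamma\star x)$; hence $\Ical(x)=\tfrac12\,\Zcal_1\cdot\Zcal_x$ by Theorem~\ref{thm: CN-Int}. For the scalar orbits, Lemma~\ref{lem: SIN} gives $\sum_{\gamma\in\Gamma_1\backslash\Gamma/\Gamma_1}i(\gamma\star 1)=\sum_{\gamma}\iota(\gamma\star 1)=\tfrac12\,\Zcal_1\cdot\Zcal_1$. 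Summing over all orbits and using $\Zcal(a)=\sum_{x\in\Gamma\backslash\Lambda_a}\Zcal_x$ together with bilinearity of the intersection pairing produces $\text{vol}(O_{B_\AA}^\times/O_\AA^\times)^{-1}I^*(a,y;\varphi_\Lambda)=\tfrac{|y|_\infty^2}{2}\sum_{x\in\Gamma\backslash\Lambda_a}\Zcal_1\cdot\Zcal_x=\tfrac{|y|_\infty^2}{2}\,\Zcal_1\cdot\Zcal(a)$, which is the first claim.

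For $a=0$ I would bypass Theorem~\ref{thm: AFCN} and argue from Theorem~\ref{thm: TINFC}: the constraint $t^2\preceq 0$ there holds only for $t=0$ (for $t\neq 0$ the field $k(\sqrt{t^2})=k$ is not imaginary), so the sum collapses to the single term $H^{\dfk^+\nfk^+,\dfk^-\nfk^-}(\dfk\cdot 0)=H^{\dfk^+\nfk^+,\dfk^-\nfk^-}(0)$; invoking $\text{vol}(\Zcal_1)=2H^{\dfk^+\nfk^+,\dfk^-\nfk^-}(0)$ then yields the constant-term identity, with the sign as fixed by the normalization of $\vartheta_\Lambda$ (cf.\ Lemma~\ref{lem: har} and Definition~\ref{defn: SIN}).

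The main obstacle is the bookkeeping for the square-discriminant case: one must check that the artificial weight $-H^{\dfk^+\nfk^+,\dfk^-\nfk^-}(0)$ built into $i$ (and into $\iota$) matches precisely the self-intersection contribution prescribed by Definition~\ref{defn: SIN}, so that the adelic identity of Theorem~\ref{thm: AFCN} and the geometric identities of Theorem~\ref{thm: CN-Int} and Lemma~\ref{lem: SIN} glue consistently. Apart from this, and the analogous (lower-stakes) verification that $i=\iota$ on the imaginary-quadratic terms — which is exactly the reason $\varphi_\Lambda$, equivalently the $\varphi^\natural_v$, were normalized the way they were — the proof is a direct substitution of the earlier results.
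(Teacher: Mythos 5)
Your proof of the first identity is correct and is precisely the paper's (implicit) argument: substitute Theorem~\ref{thm: AFCN}, apply Theorem~\ref{thm: CN-Int} to the orbits with $\Zcal_x\neq\Zcal_1$ and Lemma~\ref{lem: SIN} to the scalar orbits (which exist only when $a$ is a square, since $\det(\gamma\star x)=\det x$ for $\gamma\in\Gamma$), and sum using $\Zcal(a)=\sum_{x\in\Gamma\backslash\Lambda_a}\Zcal_x$. Your bookkeeping for the square case --- that such an orbit is the orbit of a scalar $\pm\sqrt{a}\cdot\mathrm{Id}$, with $\Gamma_x=\Gamma_1$, $\Zcal_x=\Zcal_1$ and $i(\gamma\star x)=i(\gamma\star 1)$, so that $\Ical(x)=\tfrac12\,\Zcal_1\cdot\Zcal_1$ --- is exactly what is needed, and your reading $\iota=i$ is the intended one (the paper's displayed relation between $\tilde\iota$ and $\iota$ drops a factor $q-1$, but the usage in Theorem~\ref{thm: CN-Int} and Lemma~\ref{lem: SIN} pins it down). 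You also correctly interpret the misprinted summand in the statement as $\Zcal_1\cdot\Zcal_x$.

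The constant-term identity is where your write-up has a real gap. From Theorem~\ref{thm: TINFC} (or directly: only the orbit of $x=0$ survives, contributing $\mathrm{vol}(B_1^\times\backslash B_{1,\AA}^\times/k_\AA^\times)\cdot\varphi_\Lambda(0)=1-q$) one gets $\text{vol}(O_{B_\AA}^\times/O_\AA^\times)^{-1}\,I^*(0,y;\varphi_\Lambda)=|y|_\infty^2\,H^{\dfk^+\nfk^+,\dfk^-\nfk^-}(0)$; with the definition you invoke, $\text{vol}(\Zcal_1)=2H^{\dfk^+\nfk^+,\dfk^-\nfk^-}(0)$, this is $+\tfrac{|y|_\infty^2}{2}\,\text{vol}(\Zcal_1)$, not $-\tfrac{|y|_\infty^2}{2}\,\text{vol}(\Zcal_1)$. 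Appealing to ``the normalization of $\vartheta_\Lambda$'' cannot close this: $\vartheta_\Lambda$ does not appear in the identity, which concerns $I^*(0,y;\varphi_\Lambda)$ alone, and the factor $2/\text{vol}(O_{B_\AA}^\times/O_\AA^\times)$ in the definition of $\vartheta_\Lambda$ is sign-neutral here. What is actually happening is a sign-convention clash in the paper itself: since $H^{\dfk^+\nfk^+,\dfk^-\nfk^-}(0)<0$, the minus sign in the corollary (and the constant term $-\text{vol}(\Zcal_1)\beta_{0,2}(y)$ in Theorem~\ref{thm: MT2}) forces $\text{vol}(\Zcal_1)=-2H^{\dfk^+\nfk^+,\dfk^-\nfk^-}(0)>0$, in conflict with the printed definition. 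A complete proof should make this explicit --- either prove the identity in the form $\text{vol}(O_{B_\AA}^\times/O_\AA^\times)^{-1}I^*(0,y;\varphi_\Lambda)=|y|_\infty^2 H^{\dfk^+\nfk^+,\dfk^-\nfk^-}(0)$ and record the needed convention $\text{vol}(\Zcal_1)=-2H^{\dfk^+\nfk^+,\dfk^-\nfk^-}(0)$, or flag the sign misprint --- rather than attribute the sign to $\vartheta_\Lambda$.
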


\begin{rem}
From \textit{Remark}~\ref{rem: DTS-FC}, we may express the Fourier expansion of the Drinfeld-type automorphic form $\vartheta_\Lambda$ in terms of the corresponding intersection numbers: for $(x,y) \in k_\infty^\times \times k_\infty$,
$$
\vartheta_\Lambda\begin{pmatrix} y & x \\ 0&1\end{pmatrix}
=  -\text{vol}(\Zcal_1)\beta_{0,2}(y) + \sum_{0 \neq a \in A}\big(\Zcal_1\cdot \Zcal(a)\big) \cdot \big(\beta_{a,2}(y)\psi_\infty(ax)\big).
$$
\end{rem}

\appendix

\section{Local optimal embeddings}\label{sec: App-LOE}

Here we recall the needed properties of local optimal embeddings from a quadratic order into a hereditary order of a quaternion algebra over a local field. Further details are referred to \cite[Chapter 2, Section 3]{Vie} and \cite[Chapter 5, Section 1.1]{CLWY}. \\

Let $(L,|\cdot|_L)$ be a non-archimedean local field, and $O_L$ be the ring of integers in $L$.
Given a separable quadratic algebra $E$ over $L$ and a quaternion algebra $\Dcal$ over $L$ together with a fixed embedding $\iota: E\hookrightarrow \Dcal$, it is known that every embedding from $E$ into $\Dcal$ must be conjugates of $\iota$ by elements of $\Dcal^\times$.
Let $\Ocal$ be an $O_L$-order in $E$ and $O_\Dcal$ a maximal $O_L$-order in $\Dcal$.
Put
$$
\Ecal(\Ocal,O_\Dcal) := \{b \in \Dcal^\times \mid b^{-1}Eb \cap O_\Dcal = b^{-1} \Ocal b \}.
$$
Here we identify $E$ as a subalgebra of $\Dcal$ via $\iota$.
For $\alpha \in E$, $b \in \Ecal(\Ocal,O_\Dcal)$, and $\kappa \in O_\Dcal^\times$, one has
$$\alpha \cdot b \cdot  \kappa \in \Ecal(\Ocal,O_\Dcal).$$
Moreover, the following result holds
(cf.\ \cite[Chapter 2, Theorem 3.1 and 3.2]{Vie}):

\begin{lem}\label{lem: OES1}
\begin{itemize}
    \item[(1)] Let $O_E$ be the maximal $O_L$-order in $E$. Then
$$
e(O_E,O_\Dcal):= \#\left(E^\times \backslash \Ecal(O_E,O_\Dcal)/O_\Dcal^\times\right)
=
\begin{cases}
2, & \text{ if $\Dcal$ is division and $E/L$ is inert;}\\
0, & \text{ if $\Dcal$ is division and $E/L$ is split;}\\
1, & \text{ otherwise.}
\end{cases}
$$
\item[(2)] If $\Ocal \subsetneq  O_E$, then
$$
e(\Ocal,O_\Dcal):= \#\left(E^\times \backslash \Ecal(\Ocal,O_\Dcal)/O_\Dcal^\times\right)
= 
\begin{cases}
0, & \text{ if $\Dcal$ is division;} \\
1, & \text{ otherwise.}
\end{cases}
$$
\end{itemize}
\end{lem}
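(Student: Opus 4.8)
The plan is to split the argument according to whether $\Dcal$ is split, i.e.\ isomorphic to $\Mat_2(L)$, or a division algebra, because the lattice structure of the maximal order $O_\Dcal$ is entirely different in the two cases. In both cases I would first reformulate the count: by the Skolem--Noether theorem every embedding of $E$ into $\Dcal$ is a $\Dcal^\times$-conjugate of the fixed embedding $\iota$, and the stabilizer of $\iota$ under conjugation is precisely the centralizer $E^\times$; moreover an embedding $x\mapsto b^{-1}xb$ is optimal for $(\Ocal,O_\Dcal)$ exactly when $b\in\Ecal(\Ocal,O_\Dcal)$. Hence $e(\Ocal,O_\Dcal)$ is the number of double cosets $E^\times\backslash\Ecal(\Ocal,O_\Dcal)/O_\Dcal^\times$, and the problem becomes a counting problem inside $\Dcal^\times$.

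\textbf{The split case.} Here I would take $O_\Dcal=\Mat_2(O_L)$ and view $V=L^2$ as a rank-one module over $E$ through $\iota$. For $b\in\GL_2(L)$, the ring $E\cap bO_\Dcal b^{-1}$ is the multiplier ring $\{e\in E\mid e\cdot(bO_L^2)\subseteq bO_L^2\}$ of the lattice $bO_L^2\subset V$; multiplying $b$ on the right by $O_\Dcal^\times=\GL_2(O_L)$ does not change this lattice, and multiplying on the left by $E^\times$ rescales it by a homothety. Thus $e(\Ocal,O_\Dcal)$ equals the number of $E^\times$-homothety classes of $O_L$-lattices in $V$ whose multiplier ring is exactly $\Ocal$, i.e.\ of proper fractional $\Ocal$-ideals modulo homothety. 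Over the local ring $O_L$ this set is a singleton: every fractional $\Ocal$-ideal is principal because $\Pic(\Ocal)=1$ for a quadratic order over a non-archimedean local field, which I would deduce from the conductor sequence $O_E^\times\to(O_E/\mathfrak c)^\times/(\Ocal/\mathfrak c)^\times\to\Pic(\Ocal)\to\Pic(O_E)=1$ together with the surjectivity of $O_E^\times\to(O_E/\mathfrak c)^\times$ (units lift in the complete local ring $O_E$). Hence $e(\Ocal,O_\Dcal)=1$ in every sub-case of (1) and (2) with $\Dcal$ split.

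\textbf{The division case.} Now $O_\Dcal=\{x\in\Dcal\mid\ord_L(\Nr x)\geq 0\}$ is the unique maximal order, so for \emph{any} embedding of $E$ into $\Dcal$ one automatically has $E\cap O_\Dcal=O_E$. Two consequences are immediate: if $\Ocal\subsetneq O_E$ then no embedding is optimal, giving $e(\Ocal,O_\Dcal)=0$ (part (2)); and if $E\cong L\times L$ is split then $E$ has zero divisors and cannot embed into the division algebra $\Dcal$ at all, giving $e(O_E,O_\Dcal)=0$. In the remaining case $E/L$ is a field and $\Ocal=O_E$, where every embedding is optimal, so $e(O_E,O_\Dcal)=\#(E^\times\backslash\Dcal^\times/O_\Dcal^\times)$. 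I would finish this with the valuation of the reduced norm: $\ord_L\circ\Nr$ identifies $\Dcal^\times/O_\Dcal^\times$ with $\ZZ$ (a uniformizer of $\Dcal$ has reduced-norm valuation $1$), while $\Nr|_E=\mathrm N_{E/L}$ has $\ord_L(\mathrm N_{E/L}(E^\times))=e(E/L)\ZZ$; since $g\in E^\times O_\Dcal^\times$ iff $\ord_L(\Nr g)$ lies in this subgroup, the double coset space is $\ZZ/e(E/L)\ZZ$, of order $2$ when $E/L$ is inert and $1$ when $E/L$ is ramified, as claimed.

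The part I expect to require the most care is the split case, namely establishing the dictionary between optimal embeddings for $(\Ocal,\Mat_2(O_L))$ and $\Ocal$-lattices up to homothety, and then the local triviality $\Pic(\Ocal)=1$ for an arbitrary (possibly non-maximal) quadratic order; the Skolem--Noether reduction, the valuation bookkeeping in the division case, and the two vanishing statements are routine. For a complete treatment one may simply invoke \cite[Chapter 2, Theorems 3.1 and 3.2]{Vie}.
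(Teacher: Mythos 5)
The paper offers no proof of this lemma at all: it is quoted verbatim from Vign\'eras \cite[Chapter 2, Theorems 3.1 and 3.2]{Vie}. So the only comparison is with that standard reference, and your sketch is essentially the classical argument: in the split case, the dictionary between double cosets $E^\times\backslash\Ecal(\Ocal,O_\Dcal)/O_\Dcal^\times$ and $E^\times$-homothety classes of $O_L$-lattices with multiplier ring exactly $\Ocal$, together with triviality of the local Picard group; in the division case, uniqueness of the maximal order (forcing $E\cap bO_\Dcal b^{-1}=O_E$ for every $b$, whence the two vanishing statements) and the valuation $\ord_L\circ\Nr$ identifying $\Dcal^\times/O_\Dcal^\times$ with $\ZZ$. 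Two points should be tightened. First, passing from ``lattices with multiplier ring exactly $\Ocal$ modulo $E^\times$'' to $\Pic(\Ocal)$ tacitly uses that such a lattice is an invertible (proper) $\Ocal$-ideal; for quadratic orders this is true, or one can argue directly that a local lattice with multiplier ring $\Ocal$ is principal, but it deserves a sentence. Second, your valuation bookkeeping contains a slip: $\ord_L\big(\mathrm{N}_{E/L}(E^\times)\big)$ equals $f(E/L)\,\ZZ=(2/e(E/L))\,\ZZ$, not $e(E/L)\,\ZZ$ --- in the unramified case $\mathrm{N}_{E/L}(\pi_L)=\pi_L^2$ and the norm is surjective on units, giving $2\ZZ$, while in the ramified case a uniformizer of $E$ has norm of valuation one, giving $\ZZ$. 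Hence the double-coset space is $\ZZ/f(E/L)\ZZ$; the group $\ZZ/e(E/L)\ZZ$ you wrote would have order $1$ in the inert case and $2$ in the ramified case, i.e.\ the opposite of the (correct) counts you then assert and of the lemma. With these two repairs the argument is complete and correct, and it gives a self-contained proof where the paper simply cites \cite{Vie}.
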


Suppose $\Dcal$ is not division (i.e.\ $\Dcal \cong \Mat_2(L)$). Let $O_\Dcal'$ be a hereditary $O_L$-order in $O_\Dcal$.
Put
$$\Ecal(\Ocal,O_\Dcal') := \{b \in \Dcal^\times \mid b^{-1}Eb \cap O_\Dcal' = b^{-1}\Ocal b\}.$$
Then for $\alpha \in E$, $b \in \Ecal(\Ocal,O_\Dcal')$, and $\kappa' \in (O_\Dcal')^\times$, one has
$$ \alpha \cdot b \cdot \kappa' \in \Ecal(\Ocal,O_\Dcal').$$
Moreover (cf.\ \cite[Chapter 2, Theorem 3.2]{Vie}):

\begin{lem}\label{lem: OES2}
\begin{itemize}
    \item[(1)]
$$
e(O_E,O_\Dcal') := \#(E^\times \backslash \Ecal(O_E,O_\Dcal')/(O_\Dcal')^\times) = 
\begin{cases}
0, & \text{ if $E/L$ is inert;}\\
1, & \text{ if $E/L$ is ramified;}\\
2, & \text{ if $E/L$ is split.}
\end{cases}
$$
\item[(2)]
If $\Ocal \subsetneq  O_E$, then
$$
e(\Ocal,O_\Dcal'):= \#(E^\times \backslash \Ecal(\Ocal,O_\Dcal')/(O_\Dcal')^\times) = 2.
$$
\end{itemize}
\end{lem}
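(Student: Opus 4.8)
The plan is to deduce both statements from the action of $E^\times$ (via the fixed embedding $\iota$) on the Bruhat--Tits tree $\Tcal$ of $\PGL_2(L)$. Since $\Dcal\cong\Mat_2(L)$, the maximal $O_L$-orders of $\Dcal$ are exactly the orders $R_v$ attached to the vertices $v$ of $\Tcal$, and the hereditary non-maximal $O_L$-orders are exactly the orders $O_e:=R_v\cap R_{v'}$ attached to the (geometric) edges $e=\{v,v'\}$ of $\Tcal$; so $O_\Dcal'=O_{e_0}$ for a unique edge $e_0=\{v_0,v_1\}$. The group $\Dcal^\times$ acts transitively on oriented edges, and $L^\times(O_\Dcal')^\times$ is the full stabilizer of the oriented edge $\vec e_0=(v_0,v_1)$; since $L^\times\subseteq E^\times$, the assignment $b\mapsto b\cdot\vec e_0$ therefore identifies the double coset space $E^\times\backslash\Ecal(\Ocal,O_\Dcal')/(O_\Dcal')^\times$ with the set of $E^\times$-orbits of oriented edges $\vec e=(v,v')$ whose underlying geometric edge is \emph{$\Ocal$-optimal}, i.e.\ satisfies $\iota(E)\cap(R_v\cap R_{v'})=\iota(\Ocal)$. (One checks that membership in $\Ecal(\Ocal,O_\Dcal')$ depends only on the oriented edge $b\cdot\vec e_0$, and that $\Ocal$-optimality is a property of the underlying geometric edge.) Thus $e(\Ocal,O_\Dcal')$ is the number of $E^\times$-orbits of $\Ocal$-optimal oriented edges.

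Next I would record the standard ``conductor $=$ distance'' dictionary. Write $\Ocal_c:=O_L+\mfk_L^c O_E$ for the $O_L$-order of $E$ of conductor $\mfk_L^c$, and let $\Tcal_E\subseteq\Tcal$ be the minimal $\iota(E)^\times$-invariant subtree: a single vertex if $E/L$ is an unramified field, a single edge $e_\ast$ (inverted by a uniformizer $\pi_E\in E^\times$) if $E/L$ is ramified, and a full apartment (a line, on which $E^\times$ acts by translations) if $E\cong L\times L$. In every case $O_E^\times$ fixes $\Tcal_E$ pointwise, one has $\iota(E)\cap R_v=\Ocal_{d(v,\Tcal_E)}$ for every vertex $v$, and consequently for an edge $e=\{v,v'\}$
$$\iota(E)\cap O_e=\Ocal_{d(v,\Tcal_E)}\cap\Ocal_{d(v',\Tcal_E)}=\Ocal_{\max\{d(v,\Tcal_E),\,d(v',\Tcal_E)\}}.$$
Hence, for $c\ge1$, the $\Ocal_c$-optimal edges are precisely the ``depth-$c$'' edges, with one endpoint at distance $c-1$ and the other at distance $c$ from $\Tcal_E$; and the $O_E$-optimal edges (the case $c=0$) are precisely the edges contained in $\Tcal_E$. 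The one genuinely nontrivial input is that $\iota(E)^\times$ acts transitively on the sphere of radius $c$ about $\Tcal_E$ for every $c$ --- equivalently, that there is a single $E^\times$-orbit of $\Ocal_c$-optimal vertices --- which is precisely the $\Dcal\cong\Mat_2(L)$ case of Lemma~\ref{lem: OES1}(1).

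Granting this, each class of $\Ocal$-optimal \emph{geometric} edges above is a single $E^\times$-orbit, so $e(\Ocal,O_\Dcal')$ equals the number of orientations of such an edge that are \emph{not} matched up by an element of $E^\times$. For $c\ge1$ (part (2)) no element of $E^\times$ can reverse a depth-$c$ edge: it preserves $\Tcal_E$, hence the ``toward $\Tcal_E$'' direction on each geodesic, so the two orientations give two distinct orbits and $e(\Ocal,O_\Dcal')=2$ in all three cases. For $c=0$ (part (1)): if $E/L$ is an unramified field, $\Tcal_E$ is a vertex, contains no edge, and there is nothing to count, giving $0$; if $E/L$ is ramified, the unique optimal edge is $e_\ast$ itself and $\pi_E$ inverts it, identifying its two orientations, giving $1$; if $E\cong L\times L$, the optimal edges are the edges of the fixed apartment, on which $E^\times$ acts by translations only, so the two orientations again give two orbits, giving $2$. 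This produces the asserted values.

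The main obstacle is exactly this orientation bookkeeping in part (1): one must be sure the three cases yield $0,1,2$ and not, say, $1,2,4$, and this comes down to the clean dichotomy above --- in the ramified case the orientation-reversing element $\pi_E$ genuinely lies in $E^\times$, whereas in the split case reversing orientation would require the Weyl element, which does not. An alternative, more computational route (closer to \cite{Vie}) avoids the tree: write $O_\Dcal'=R_{v_0}\cap R_{v_1}$ with $R_{v_0},R_{v_1}$ adjacent maximal orders, observe that $b^{-1}\iota(E)b\cap O_\Dcal'=\bigl(b^{-1}\iota(E)b\cap R_{v_0}\bigr)\cap\bigl(b^{-1}\iota(E)b\cap R_{v_1}\bigr)$, and reduce the count to Lemma~\ref{lem: OES1} together with an elementary analysis of how the two maximal orders containing $O_\Dcal'$ are positioned relative to $\iota(E)$; but the tree picture makes the multiplicities most transparent.
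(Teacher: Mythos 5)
Your argument is correct. Note, though, that the paper does not prove this lemma at all: both Lemma~\ref{lem: OES1} and Lemma~\ref{lem: OES2} are simply quoted from Vign\'eras \cite[Ch.\ II, Th\'eor\`emes 3.1--3.2]{Vie}, so what you have supplied is a genuine proof where the paper only gives a reference. Your route --- identifying non-maximal hereditary orders with edges of the Bruhat--Tits tree, checking that the stabilizer of an oriented edge is $L^\times(O_\Dcal')^\times$ so that $e(\Ocal,O_\Dcal')$ counts $E^\times$-orbits of $\Ocal$-optimal oriented edges, and then using the conductor--distance dictionary $\iota(E)\cap R_v=\Ocal_{d(v,\Tcal_E)}$ together with the orientation analysis (reversal by $\pi_E$ in the ramified case, no reversal by the split torus or on depth-$c$ edges) --- is in fact close in spirit to Vign\'eras' own tree-based computation, and it cleanly explains why the answers are $0,1,2$ and $2$. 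Two small points to tidy up: the transitivity of $E^\times$ on the sphere of radius $c$ about $\Tcal_E$ for $c\geq 1$ is the non-division case of Lemma~\ref{lem: OES1}(2), not (1) (part (1) covers only $c=0$); and the conductor--distance formula, while standard, is the one substantive ingredient you invoke without proof or reference, so in a final write-up you should either verify it (a short lattice computation in each of the three cases for $E$) or cite it explicitly, since everything else in your counting reduces to it plus Lemma~\ref{lem: OES1}.
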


\section{Special local integrals}\label{sec: App-SLI}

Given $c \in \ZZ_{\geq 0}$, put
$\Ocal(c) := O_L + \pi_L^c O_E$, where $\pi_L \in O_L$ is a uniformizer in $L$.
For $x \in E\backslash L$,
we can find a unique $c_x \in \ZZ_{\geq 0}$ if $x \in O_E$ so that $O_L[x] = \Ocal(c_x)$; and put $c_x := -1$ if $x \notin O_E$.
Let $\Dcal^o$ be the space of pure quaternions in $\Dcal$, i.e.\
$$\Dcal^o:= \{ b \in \Dcal \mid \tr(b) = 0\}.$$
Put $O_\Dcal^o := O_\Dcal\cap \Dcal^o$ and $O_{\Dcal}^{\prime, o} := O_{\Dcal}'\cap \Dcal^o$.
We observe that:

\begin{lem}
Given $x \in E \backslash L$ with $\tr(x) = 0$, one has
$$\mathbf{1}_{O_\Dcal^o}(b^{-1}xb) = \sum_{\ell = 0}^{c_x} \mathbf{1}_{\Ecal(\Ocal(\ell),O_\Dcal)}(b),
$$
Moreover, if $\Dcal$ is not division, then
$$
\mathbf{1}_{O_\Dcal^{\prime,o}}(b^{-1}xb) = \sum_{\ell = 0}^{c_x} \mathbf{1}_{\Ecal(\Ocal(\ell),O_\Dcal')}(b).$$
\end{lem}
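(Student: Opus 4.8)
The plan is to reduce both identities to a pointwise statement about the conjugation orbit of $x$ under $\Dcal^\times$, matching the integrand on the left to a sum of indicator functions of optimal-embedding sets on the right. Fix $x \in E\backslash L$ with $\tr(x) = 0$, so that $E = L[x]$ (using separability of $E/L$, which holds since $\tr(x) = 0 \ne x$ as $x \notin L$; here $q$ odd guarantees $E/L$ is separable). First I would observe that, since every embedding of $E$ into $\Dcal$ is $\Dcal^\times$-conjugate to the fixed $\iota$ (recalled at the start of Appendix~\ref{sec: App-LOE}), for any $b \in \Dcal^\times$ the element $b^{-1}xb$ lies in the fixed copy of $E$ inside $\Dcal$ only after re-conjugating; more precisely $b^{-1}xb \in O_\Dcal$ if and only if $b^{-1}O_L[x]b \subseteq O_\Dcal$, because $O_\Dcal$ is an $O_L$-order and $b^{-1}xb$ generates $b^{-1}O_L[x]b$ over $O_L$. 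Since $\tr(b^{-1}xb) = \tr(x) = 0$ automatically, one has $b^{-1}xb \in O_\Dcal^o$ if and only if $b^{-1}xb \in O_\Dcal$, which reduces the left-hand side to $\mathbf{1}_{O_\Dcal}(b^{-1}xb)$.

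Next I would disassemble the condition $b^{-1}O_L[x]b \subseteq O_\Dcal$ according to \emph{which} order $O_L[x]$ is the full preimage of. The key point: $b^{-1}Eb \cap O_\Dcal$ is an $O_L$-order in $b^{-1}Eb$, hence of the form $b^{-1}\Ocal(\ell)b$ for a unique $\ell \ge 0$ (the orders of a separable quadratic algebra over a local field are exactly the $\Ocal(c) = O_L + \pi_L^c O_E$, $c \ge 0$). Now $b^{-1}O_L[x]b \subseteq O_\Dcal$ holds precisely when $b^{-1}\Ocal(c_x)b \subseteq b^{-1}\Ocal(\ell)b$, i.e.\ when $\ell \le c_x$ — using that $O_L[x] = \Ocal(c_x)$ by definition of $c_x$, and that $\Ocal(c_x) \subseteq \Ocal(\ell) \iff \ell \le c_x$. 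Therefore
$$
\mathbf{1}_{O_\Dcal^o}(b^{-1}xb) = \mathbf{1}_{O_\Dcal}(b^{-1}xb) = \sum_{\ell=0}^{c_x}\mathbf{1}_{\{b\,:\,b^{-1}Eb\cap O_\Dcal = b^{-1}\Ocal(\ell)b\}}(b) = \sum_{\ell=0}^{c_x}\mathbf{1}_{\Ecal(\Ocal(\ell),O_\Dcal)}(b),
$$
where the middle equality is the partition of $\{b : b^{-1}O_L[x]b \subseteq O_\Dcal\}$ by the exact order $\ell$; each $b$ in the left set contributes exactly one term, namely its own $\ell$, and that $\ell$ satisfies $0 \le \ell \le c_x$. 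This is the first claim. For the second claim, with $\Dcal \cong \Mat_2(L)$ and $O_\Dcal'$ a hereditary (Eichler) $O_L$-order, I would run the identical argument with $O_\Dcal$ replaced by $O_\Dcal'$: the set $b^{-1}Eb \cap O_\Dcal'$ is again an $O_L$-order $b^{-1}\Ocal(\ell)b$, the containment $b^{-1}O_L[x]b \subseteq O_\Dcal'$ is again equivalent to $\ell \le c_x$, and $\tr(b^{-1}xb)=0$ forces membership in $O_\Dcal^{\prime,o}$ to coincide with membership in $O_\Dcal'$. This yields the second displayed formula verbatim.

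The only genuinely substantive point — and the step I expect to need the most care — is the claim that $b^{-1}Eb \cap O_\Dcal$ (resp.\ $\cap\, O_\Dcal'$) is itself one of the $\Ocal(\ell)$, equivalently that the $O_L$-orders of the separable quadratic algebra $b^{-1}Eb \cong E$ are \emph{exactly} the $\Ocal(c)$; this is standard (the conductor of an order determines it in the quadratic case), but one must handle uniformly the three cases $E/L$ split, inert, and ramified, and in the split case check that the intersection is not something degenerate. Everything else is bookkeeping about containments $\Ocal(c_x)\subseteq\Ocal(\ell)$ and the triviality $\tr(b^{-1}xb)=\tr(x)=0$. I would also remark in passing that when $c_x = -1$ (i.e.\ $x\notin O_E$) both sides are the empty sum, i.e.\ $0$, consistent with $b^{-1}xb$ never lying in $O_\Dcal$; this edge case is covered automatically by the convention $\sum_{\ell=0}^{-1} = 0$.
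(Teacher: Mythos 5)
Your proposal is correct and takes essentially the same route as the paper: both arguments reduce $\mathbf{1}_{O_\Dcal^o}(b^{-1}xb)$ (resp.\ $\mathbf{1}_{O_\Dcal^{\prime,o}}(b^{-1}xb)$) via $\tr(x)=0$ to membership of $x$ in $bO_\Dcal b^{-1}$, identify $E\cap bO_\Dcal b^{-1}$ as a quadratic order $\Ocal(\ell)$, and use $\Ocal(c_x)\subseteq \Ocal(\ell)\iff \ell\le c_x$ together with the uniqueness of this $\ell$ (the paper phrases this as disjointness of the sets $\Ecal(\Ocal(\ell),O_\Dcal)$, you as a partition by the conductor). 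The content is identical, so no further comment is needed.
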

\begin{proof}
Notice that $\mathcal{E}\left(\mathcal O(\ell),\mathcal O_{\mathcal{D}}\right)$ 
and 
$\mathcal{E}\left( \mathcal O(\ell'),\mathcal O_{\mathcal{D}}\right)$
are disjoint if $\ell\neq \ell'$. 
Thus for $b \in \Dcal^\times$ one has
$$\sum^{c_x}_{\ell=0}\mathbf{1}_{\mathcal{E}(\mathcal O(\ell),\mathcal O_{\mathcal{D}})}(b)=0\ \text{or}\ 1.
$$
Suppose the value is $1$, i.e. $b\in\mathcal{E}(\mathcal O(\ell_0),\mathcal O_{\mathcal{D}})$ for some $0\leq \ell_0\leq c_x$. 
Then 
$$x\in \mathcal O[x]=\mathcal O(c_x)\subset\mathcal{O}(\ell_0)\subset E\cap b\mathcal{O}_{\mathcal{D}}b^{-1}\subset b\mathcal{O}_{\mathcal{D}}b^{-1}.$$
Since $\tr(x)=0$, we get $b^{-1}xb\in\mathcal{O}^\circ_{\mathcal{D}}$, i.e. $\mathbf{1}_{\mathcal{O}^\circ_{\mathcal{D}}}(b^{-1}xb)=1$.

Conversely, let $b\in\mathcal{D}^\times$ with $\mathbf{1}_{\mathcal{O}^\circ_{\mathcal{D}}}(b^{-1}xb)=1$. Then $x\in b\mathcal{O}^\circ_{\mathcal{D}}b^{-1}$, which implies $\mathcal{O}(c_x)\subset E\cap b\mathcal{O}_{\mathcal{D}}b^{-1}$. Thus there exists $\ell_0$ with $0\leq \ell_0\leq c_x$ such that 
$$E\cap b\mathcal{O}_{\mathcal{D}}b^{-1}=\mathcal{O}(\ell_0).$$
which means that $b\in\mathcal{E}(\mathcal{O}(\ell_0),\mathcal{O}_{\mathcal{D}})$. Therefore $$\sum^{c_x}_{\ell=0}\mathbf{1}_{\mathcal{E}(\mathcal{O}(\ell),\mathcal{O}_{\mathcal{D}})}(b)= \mathbf{1}_{\mathcal{E}(\mathcal{O}(\ell_0),\mathcal{O}_{\mathcal{D}})}(b)=1.
$$
\end{proof}

Suppose Haar measures of $\Dcal^\times$ and $E^\times$ are chosen, respectively.
The above lemma leads to:

\begin{cor}\label{cor: OE1}
For $x \in E\backslash L$ with $\tr(x) = 0$, one has
$$
\int_{E^\times \backslash \Dcal^\times} \mathbf{1}_{O_\Dcal^o}(b^{-1}x b) d^\times b
\ =\ \frac{\text{\rm vol}(O_\Dcal^\times)}{\text{\rm vol}(O_E^\times)} \cdot \sum_{\ell = 0}^{c_x}\#\left(\frac{O_E^\times}{\Ocal(\ell)^\times}\right) \cdot e(\Ocal(\ell),O_\Dcal).
$$
Suppose $\Dcal$ is not division, then
$$
\int_{E^\times \backslash \Dcal^\times}
\mathbf{1}_{O_\Dcal^{\prime,o}}(b^{-1}xb) d^\times b
\ =\ \frac{\text{\rm vol}((O_\Dcal')^\times)}{\text{\rm vol}(O_E^\times)} \cdot \sum_{\ell = 0}^{c_x}\#\left(\frac{O_E^\times}{\Ocal(\ell)^\times}\right) \cdot e(\Ocal(\ell),O_\Dcal').
$$
\end{cor}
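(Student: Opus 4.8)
The plan is to integrate, term by term, the identity from the preceding lemma over $E^\times\backslash\Dcal^\times$. Since $\tr(x)=0$ and $x\in E\setminus L$, that lemma gives
$$\mathbf{1}_{O_\Dcal^o}(b^{-1}xb)=\sum_{\ell=0}^{c_x}\mathbf{1}_{\Ecal(\Ocal(\ell),O_\Dcal)}(b),\qquad b\in\Dcal^\times,$$
which is an empty sum (and both sides vanish) when $c_x=-1$. Each function on the right is left $E^\times$-invariant, hence descends to $E^\times\backslash\Dcal^\times$, and $\Ecal(\Ocal(\ell),O_\Dcal)$ is a finite union of double cosets $E^\times b\,O_\Dcal^\times$ whose number is, by definition, $e(\Ocal(\ell),O_\Dcal)$. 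So it suffices to compute the volume in $E^\times\backslash\Dcal^\times$ of one such double coset and then sum.

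For a representative $b$ of a double coset contained in $\Ecal(\Ocal(\ell),O_\Dcal)$, the stabilizer of the point $E^\times b$ under right translation by $O_\Dcal^\times$ is $O_\Dcal^\times\cap b^{-1}E^\times b=b^{-1}\big(E^\times\cap bO_\Dcal^\times b^{-1}\big)b$. The defining property $E\cap bO_\Dcal b^{-1}=\Ocal(\ell)$ of $\Ecal(\Ocal(\ell),O_\Dcal)$ forces $E^\times\cap bO_\Dcal^\times b^{-1}=\Ocal(\ell)^\times$, so this double coset is identified, as a quotient by $E^\times$, with $\big(b^{-1}\Ocal(\ell)^\times b\big)\backslash O_\Dcal^\times$. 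With the quotient of the chosen Haar measures on $\Dcal^\times$ and $E^\times$, its volume is therefore $\mathrm{vol}(O_\Dcal^\times)/\mathrm{vol}(\Ocal(\ell)^\times)$, independent of the representative. Writing $\mathrm{vol}(O_\Dcal^\times)/\mathrm{vol}(\Ocal(\ell)^\times)=\big(\mathrm{vol}(O_\Dcal^\times)/\mathrm{vol}(O_E^\times)\big)\cdot\#(O_E^\times/\Ocal(\ell)^\times)$ — the index being finite since $[O_E:\Ocal(\ell)]$ is finite — and summing over the $e(\Ocal(\ell),O_\Dcal)$ double cosets and over $0\le\ell\le c_x$ produces exactly the asserted formula. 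The hereditary case is verbatim the same, with $O_\Dcal$ replaced throughout by $O_\Dcal'$ (and $\Dcal$ not division), using the second identity of the preceding lemma.

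The one point requiring care is the measure bookkeeping, namely that the quotient measure on $E^\times\backslash\Dcal^\times$ restricted to a double coset $E^\times bO_\Dcal^\times$ equals $\mathrm{vol}(O_\Dcal^\times)/\mathrm{vol}\big(E^\times\cap bO_\Dcal^\times b^{-1}\big)$, the denominator being measured with the Haar measure on $E^\times$. This follows from the standard disintegration $\int_{\Dcal^\times}\phi\,d^\times b=\int_{E^\times\backslash\Dcal^\times}\big(\int_{E^\times}\phi(\alpha b)\,d^\times\alpha\big)\,d\dot{b}$ applied to $\phi=\mathbf{1}_{bO_\Dcal^\times}$, together with the observation that $\alpha b\in bO_\Dcal^\times$ with $\alpha\in E^\times$ forces $\alpha\in E^\times\cap bO_\Dcal^\times b^{-1}=\Ocal(\ell)^\times$. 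I expect this to be the main obstacle, and even it is entirely routine; the remainder is a formal manipulation of the lemma.
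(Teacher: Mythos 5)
Your proposal is correct and follows essentially the same route as the paper: integrate the preceding lemma term by term, decompose each $\Ecal(\Ocal(\ell),O_\Dcal)$ into $e(\Ocal(\ell),O_\Dcal)$ double cosets $E^\times b O_\Dcal^\times$, identify $E^\times\cap bO_\Dcal^\times b^{-1}=\Ocal(\ell)^\times$, and assign each coset volume $\mathrm{vol}(O_\Dcal^\times)/\mathrm{vol}(\Ocal(\ell)^\times)=\bigl(\mathrm{vol}(O_\Dcal^\times)/\mathrm{vol}(O_E^\times)\bigr)\cdot\#\bigl(O_E^\times/\Ocal(\ell)^\times\bigr)$. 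Your explicit disintegration argument just spells out the measure identity that the paper uses in its first display without comment.
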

\begin{proof}
Given $0\leq \ell\leq c_x$, one has
\begin{align*}
\text{vol}(E^{\times}\backslash\mathcal{E}(\mathcal{O}(\ell),\mathcal{O}_{\mathcal{D}}))&=\sum_{b\in E^{\times}\backslash\mathcal{E}(\mathcal{O}(\ell),\mathcal{O}_D)/\mathcal{O}^\times_{\mathcal{D}}}\frac{\text{vol}(\mathcal{O}^\times_{\mathcal{D}})}{\text{vol}(E^\times\cap b\mathcal{O}^\times_{\mathcal{D}}b^{-1})}\\
&=\frac{\text{vol}(\mathcal{O}^\times_{\mathcal{D}})}{\text{vol}\left(\mathcal{O}^\times_E\right)}\cdot\#\left(\frac{\mathcal{O}^\times_E}{\mathcal{O}(\ell)^\times}\right)\cdot e\left(\mathcal{O}(\ell),\mathcal{O}_{\mathcal{D}}\right).
\end{align*}
Thus
\begin{align*}
\int_{E^\times\backslash\mathcal{D}^\times}\mathbf{1}_{\mathcal{O}^\circ_{\mathcal{D}}}(b^{-1}xb)d^\times b&=\sum^{c_x}_{\ell=0}\int_{E^\times\backslash\mathcal{D}^\times}\mathbf{1}_{\mathcal{E}(\mathcal{O}(\ell),\mathcal{O}_{\mathcal{D}})}(b)d^\times b\\
&=\frac{\text{vol}(\mathcal{O}^\times_{\mathcal{D}})}{\text{vol}(\mathcal{O}^\times_E)}\cdot\sum^{c_x}_{\ell=0}\#\left(\frac{\mathcal{O}^\times_E}{\mathcal{O}(\ell)^\times}\right)\cdot e\left(\mathcal{O}(\ell),\mathcal{O}_{\mathcal{D}}\right).
\end{align*}

\end{proof}

Let $q_L$ be the cardinality of the residue field of $L$.
Since 
$$\text{vol}((O_\Dcal')^\times) = \frac{1}{q_L+1}\cdot \text{vol}(O_\Dcal^\times),$$
combining Lemma~\ref{lem: OES1},  Lemma \ref{lem: OES2}, and Corollary~\ref{cor: OE1} we obtain:

\begin{cor}\label{cor: infinite part}\label{cor: OE2}
Suppose $\Dcal$ is not division. Then for $x \in O_E\backslash O_L$ with $\tr(x) = 0$, one has
\begin{eqnarray}
&&
\int_{E^\times\backslash\Dcal^\times}
\left(\mathbf{1}_{O_\Dcal^o}(b^{-1}x b)- \frac{q_L+1}{2}\cdot \mathbf{1}_{O_\Dcal^{\prime,o}}(b^{-1}x b)\right) d^\times b \nonumber \\
&=& 
\begin{cases}
\displaystyle \frac{1}{e(E/L)} \cdot \frac{\text{\rm vol}(O_\Dcal^\times)}{\text{\rm vol}(O_E^\times)}, &\text{ if $E$ is a field,} \\
0, & \text{otherwise.}
\end{cases}
\nonumber
\end{eqnarray}
Here $e(E/L)$ is the ramification index of $E/L$.
\end{cor}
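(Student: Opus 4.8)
The plan is to combine the two integral formulas from Corollary~\ref{cor: OE1} with the embedding‑number data from Lemmas~\ref{lem: OES1} and \ref{lem: OES2}, exploiting the fact that only the maximal order $O_E$ survives after forming the weighted difference. First I would write $c := c_x \geq 0$ (this is $\geq 0$ since $x \in O_E \setminus O_L$) and apply Corollary~\ref{cor: OE1} to both pieces of the integrand. Using $\text{vol}((O_\Dcal')^\times) = (q_L+1)^{-1}\text{vol}(O_\Dcal^\times)$, the quantity in question becomes
$$
\frac{\text{vol}(O_\Dcal^\times)}{\text{vol}(O_E^\times)} \cdot \sum_{\ell=0}^{c} \#\!\left(\frac{O_E^\times}{\Ocal(\ell)^\times}\right) \cdot \left( e(\Ocal(\ell),O_\Dcal) - \frac{q_L+1}{2}\cdot \frac{1}{q_L+1}\cdot e(\Ocal(\ell),O_\Dcal') \right),
$$
so that the $\ell$-th summand carries the factor $e(\Ocal(\ell),O_\Dcal) - \tfrac12 e(\Ocal(\ell),O_\Dcal')$.

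Next I would evaluate this factor term by term. Since $\Dcal$ is not division, Lemma~\ref{lem: OES1}(1) gives $e(O_E,O_\Dcal)=1$ and Lemma~\ref{lem: OES1}(2) gives $e(\Ocal,O_\Dcal)=1$ for any proper suborder $\Ocal \subsetneq O_E$; thus $e(\Ocal(\ell),O_\Dcal)=1$ for every $\ell$ with $0 \le \ell \le c$, regardless of the splitting type of $E/L$. For the hereditary‑order embedding numbers, Lemma~\ref{lem: OES2}(2) gives $e(\Ocal(\ell),O_\Dcal')=2$ whenever $\Ocal(\ell)\subsetneq O_E$, i.e. for $1 \le \ell \le c$; and Lemma~\ref{lem: OES2}(1) gives $e(O_E,O_\Dcal') \in \{0,1,2\}$ according to whether $E/L$ is inert, ramified, or split. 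Consequently, for $1 \le \ell \le c$ the factor $e(\Ocal(\ell),O_\Dcal) - \tfrac12 e(\Ocal(\ell),O_\Dcal') = 1 - \tfrac12\cdot 2 = 0$, so all these terms vanish, and only the $\ell = 0$ term (corresponding to $\Ocal(0) = O_E$, where $\#(O_E^\times/O_E^\times)=1$) can contribute. That surviving term equals
$$
1 - \frac12 e(O_E,O_\Dcal') = \begin{cases} 1, & E/L \text{ inert};\\ \tfrac12, & E/L \text{ ramified};\\ 0, & E/L \text{ split}.\end{cases}
$$

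Finally I would match this against the claimed answer. When $E$ is not a field ($E/L$ split) the surviving factor is $0$, giving $0$ as asserted. When $E$ is a field, $e(E/L)=1$ in the inert case and $e(E/L)=2$ in the ramified case, so in both cases the surviving factor equals $1/e(E/L)$, and the integral equals $\tfrac{1}{e(E/L)}\cdot \tfrac{\text{vol}(O_\Dcal^\times)}{\text{vol}(O_E^\times)}$, matching the statement. I do not anticipate a genuine obstacle here — the content is entirely in the bookkeeping of Eichler's optimal‑embedding numbers — but the one point requiring care is to confirm that $c_x \ge 0$ under the hypothesis $x \in O_E \setminus O_L$ (so that the $\ell=0$ term is really present and the summation is nonempty), and to keep straight that the normalization $\text{vol}((O_\Dcal')^\times)=(q_L+1)^{-1}\text{vol}(O_\Dcal^\times)$ is exactly what makes the weight $\tfrac{q_L+1}{2}$ collapse the $\ell \ge 1$ contributions. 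Tracking the ramified case, where the half‑integer $\tfrac12 e(O_E,O_\Dcal')=\tfrac12$ conspires with $e(E/L)=2$ to produce $1/e(E/L)$, is the only place where a careless sign or factor would break the identity.
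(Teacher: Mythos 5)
Your proposal is correct and is essentially the paper's own argument: the paper proves this corollary precisely by combining Corollary~\ref{cor: OE1} with Lemmas~\ref{lem: OES1} and \ref{lem: OES2} and the volume relation $\text{vol}((O_\Dcal')^\times)=\frac{1}{q_L+1}\text{vol}(O_\Dcal^\times)$, so that only the $\ell=0$ term survives. Your case check of $1-\tfrac12 e(O_E,O_\Dcal')$ against $1/e(E/L)$ matches the intended bookkeeping exactly.
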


\end{document}